\DeclareSymbolFont{timesoperators}{T1}{ptm}{m}{n}
\renewcommand{\operator@font}{\mathgroup\symtimesoperators}
\colorlet{darkblue}{blue!90!black}
\colorlet{darkred}{red!90!black}
\colorlet{darkgreen}{green!60!black}
\long\def\mateText#1{{\color{darkred}Mate:\ #1}}
\colorlet{symbols}{black!50}
\definecolor{connection}{rgb}{0.7,0.1,0.1}
\tikzset{
root/.style={circle,fill=black!50,inner sep=0pt, minimum size=3mm},
        dot/.style={circle,fill=black,inner sep=0pt, minimum size=0.3mm},
        xi-b/.style={very thin,draw=black,diamond,fill=blue!10,inner sep=0pt, minimum size=1.5mm},
        dotred/.style={circle,fill=black!50,inner sep=0pt, minimum size=2mm},
        var/.style={circle,fill=black!10,draw=black,inner sep=0pt, minimum size=3mm},
        kernel/.style={semithick,shorten >=2pt,shorten <=2pt},
        kernels/.style={snake=zigzag,shorten >=2pt,shorten <=2pt,segment amplitude=1pt,segment length=4pt,line before snake=2pt,line after snake=5pt,},
        rho/.style={densely dotted,draw=black!80!white,semithick,shorten >=2pt,shorten <=2pt},
        rhodelta/.style={densely dotted,draw=black,very thick,shorten >=2pt,shorten <=2pt},
           testfcn/.style={dotted,semithick,shorten >=2pt,shorten <=2pt},
        renorm/.style={shape=circle,fill=white,inner sep=1pt},
        labl/.style={shape=rectangle,fill=white,draw=black,inner sep=2pt},
        xic/.style={very thin,circle,fill=symbols,draw=black,inner sep=0pt,minimum size=1.2mm},
        Z/.style={very thin,rectangle,fill=blue!50,draw=black,inner sep=0pt,minimum size=1.2mm},
        xi/.style={very thin,circle,fill=blue!10,draw=black,inner sep=0pt,minimum size=1.1mm},
        xismall/.style={very thin,circle,fill=blue!10,draw=black,inner sep=0pt,minimum size=0.7mm},
        xi3/.style={very thin,circle,fill=green!80!black,draw=black,inner sep=0pt,minimum size=1.1mm},
        arg/.style={very thin,rectangle,fill=blue!50,draw=black,inner sep=0pt,minimum size=1.2mm},
        xix/.style={crosscircle,fill=blue!10,draw=black,inner sep=0pt,minimum size=1.2mm},
	xib/.style={very thin,circle,fill=blue!10,draw=black,inner sep=0pt,minimum size=1.6mm},
	xie/.style={very thin,circle,fill=green!50!black,draw=black,inner sep=0pt,minimum size=1.6mm},
	xid/.style={very thin,circle,fill=symbols,draw=black,inner sep=0pt,minimum size=1.6mm},
	xibx/.style={crosscircle,fill=blue!10,draw=black,inner sep=0pt,minimum size=1.6mm},
	kernels2/.style={very thick,draw=connection,segment length=12pt},
	kernels3/.style={very thick,draw=green!70!black,segment length=12pt},
	kerneldiff/.style={very thick,draw=green!70!black,decorate, decoration={snake, segment length=3.5pt, amplitude=1.5pt}},
	kernels4/.style={thick,draw=blue!70!black,segment length=12pt},
	not3/.style={thin,circle,draw=green!70!black,fill=green!80!black,inner sep=0pt,minimum size=0.5mm},
	vari/.style={very thin,circle,fill=white,draw=black,inner sep=0pt,minimum size=1.1mm},
	noise/.style={very thin,rectangle,fill=white,draw=black,inner sep=0pt,minimum size=1.1mm},
	base/.style={very thin,circle,fill=black,draw=black,inner sep=0pt,minimum size=1.1mm},	
	int/.style={very thin,rectangle,fill=black,draw=black,inner sep=0pt,minimum size=0.7mm},
			not/.style={thin,circle,fill=symbols,draw=connection,fill=connection,inner sep=0pt,minimum size=0.5mm},
not+/.style={thin,circle, fill=symbols,draw=connection,fill=connection,inner sep=0pt,minimum size=0.8mm},
			bnot/.style={thin,circle,draw=black,fill=black,inner sep=0pt,minimum size=0.2mm},
			bnot+/.style={thin,circle,draw=black,fill=black,inner sep=0pt,minimum size=0.8mm},
	>=stealth,
        }
\def\DeclareSymbol#1#2#3{\expandafter\gdef\csname MH@symb@#1\endcsname{\tikz[baseline=#2,scale=0.15,draw=symbols,line join=round]{#3}}\expandafter\gdef\csname MH@symb@#1s\endcsname{\scalebox{0.7}{\tikz[baseline=#2,scale=0.15,draw=symbols,line join=round]{#3}}}}
\def\<#1>{\csname MH@symb@#1\endcsname}
\def\DeclareSymbolSmall#1#2#3{\expandafter\gdef\csname MH@symb@#1\endcsname{\tikz[baseline=#2,scale=0.11,draw=symbols,line join=round]{#3}}\expandafter\gdef\csname MH@symb@#1s\endcsname{\scalebox{0.7}{\tikz[baseline=#2,scale=0.15,draw=symbols,line join=round]{#3}}}}
\def\<#1>{\csname MH@symb@#1\endcsname}
\newenvironment{claim}[1][MM]
               {\list{$\bullet$}{%
  \setbox\@tempboxa\hbox{#1}\@tempdima\wd\@tempboxa%
  \setlength{\labelwidth}{\@tempdima}
  \advance\@tempdima by 1em%
  \setlength{\leftmargin}{\@tempdima}
  \setlength{\parsep}{1mm}\setlength{\itemindent}{0mm}%
  \setlength{\labelsep}{2mm}\setlength{\itemsep}{0mm}%
  \setlength{\topsep}{1mm}%
}}{\endlist}
\newtheorem{theorem}{Theorem}[section]
\newtheorem{lemma}[theorem]{Lemma}
\newtheorem{proposition}[theorem]{Proposition}
\newtheorem{corollary}[theorem]{Corollary}
\theoremstyle{definition}
\newtheorem{assumption}[theorem]{Assumption}
\newtheorem{definition}[theorem]{Definition}
\newtheorem{example}[theorem]{Example}
\theoremstyle{remark}
\newtheorem{remark}[theorem]{Remark}
\def\scal#1{\langle #1 \rangle}
\newcommand{\vn}[1]{{\vert\kern-0.3ex\vert\kern-0.3ex\vert #1 
    \vert\kern-0.3ex\vert\kern-0.3ex\vert}}
\newcommand{\bn}[1]{{[\kern-0.5ex] #1 
    [\kern-0.5ex]}}
\def\dash{\leavevmode\unskip\kern0.18em--\penalty\exhyphenpenalty\kern0.18em}
\def\slash{\leavevmode\unskip\kern0.15em/\penalty\exhyphenpenalty\kern0.15em}
\newcommand{\N}{\mathbb{N}}
\newcommand\bone{\mathbf{1}}
\newcommand\cR{\mathcal{R}}
\newcommand\cK{\mathcal{K}}
\newcommand\cD{\mathcal{D}}
\newcommand\cB{\mathcal{B}}
\newcommand\cC{\mathcal{C}}
\newcommand\CC{{\mathcal{C}}}
\newcommand\cN{\mathcal{N}}
\newcommand\cI{\mathcal{I}}
\newcommand\cJ{\mathcal{J}}
\newcommand\cS{\mathcal{S}}
\newcommand\cG{\mathcal{G}}
\newcommand\cV{\mathcal{V}}
\newcommand\cW{\mathcal{W}}
\newcommand\cO{\mathcal{O}}
\newcommand\cF{\mathcal{F}}
\newcommand\scR{\mathscr{R}}
\newcommand\scZ{\mathscr{Z}}
\newcommand\scD{\mathscr{D}}
\newcommand\scT{\mathscr{T}}
\newcommand\scP{\mathscr{P}}
\newcommand{\scJ}{\mathscr{J}}
\newcommand{\scG}{\mathscr{G}}
\newcommand\frK{\mathfrak{K}}
\newcommand\frD{\mathfrak{D}}
\newcommand\frs{\mathfrak{s}}
\newcommand\frm{\mathfrak{m}}
\newcommand\frb{\mathfrak{b}}
\newcommand{\bv}{\mathbf{v}}
\newcommand{\bi}{\mathbf{i}}
\newcommand{\bw}{\mathbf{w}}
\newcommand{\bY}{\mathbf{Y}}
\newcommand{\us}{\underline{s}}
\newcommand{\uone}{\underline{1}}
\def\one{\mathbf{1}}
\def\les{\lesssim}
\def\half{\textstyle{1\over 2}}
\def\quarter{\textstyle{1\over 4}}
\def\eps{\varepsilon}
\def\Dir{{\textnormal{\tiny Dir}}}
\newcommand{\R}{\mathbb{R}}
\newcommand{\E}{\mathbf{E}}
\DeclareMathOperator{\supp}{supp}
\DeclareMathOperator{\intr}{int}
\DeclareMathOperator{\Ercf}{Ercf}
\DeclareMathOperator{\Tr}{Tr}
\def\d{\partial}
\def\id{\mathrm{id}}
\begin{document}
\title{Boundary renormalisation of SPDEs}

\author{Máté Gerencsér$^1$ and Martin Hairer$^2$}
\institute{%
Technische Universität Wien, Austria, \email{mate.gerencser@tuwien.ac.at}%
\and%
Imperial College London, UK, \email{m.hairer@imperial.ac.uk}%
}

\maketitle

\begin{abstract}
We consider the continuum parabolic Anderson model (PAM)
and the dynamical $\Phi^4$ equation
on the $3$-dimensional cube with boundary conditions.
While the Dirichlet solution theories are relatively standard, the case of Neumann \slash Robin boundary conditions gives rise to a divergent boundary renormalisation.
Furthermore for $\Phi^4_3$ a `boundary triviality' result is obtained: if one approximates the equation with Neumann boundary conditions and the usual bulk renormalisation, then the 
limiting process coincides with the one obtained using Dirichlet boundary conditions. \\[.4em]
\noindent {\scriptsize \textit{Keywords:} Boundary renormalisation, singular SPDE, regularity structures}\\
\noindent {\scriptsize\textit{MSC classification:} 60H15, 60L30} 
\end{abstract}

\tableofcontents

\section{Introduction}

Most of the existing literature on singular SPDEs (and associated operators) considers equations on flat domains 
without boundaries, like $\mathbb{T}^d$ or $\R^d$.
There are also some recent results where boundary conditions are considered and this raises analytical complications but where the final statement is completely analogous to the periodic case. Examples are \cite{Chouk2021}, \cite{Cyril19}, \cite[Thms.~1.1,~1.5]{GH17}, \cite[Thm.~1.1]{HP19}.

In some situations, however, the effect of the boundaries is more drastic.
A notable example is the open KPZ equation
for which both in its derivation from exclusion processes \cite{Corwin2018, Parekh2018, GPS} and its solution theory via regularity structures \cite{GH17}
an approximation-dependent finite boundary correction term arises. A similar phenomenon was observed in the context of a stochastic homogenisation problem in \cite{HP19}.

The goal of the present article is to extend the list of nontrivial boundary effects by two well-known SPDEs endowed with appropriate boundary conditions. In contrast to the previous examples, the boundary renormalisation in these examples does not remain bounded but diverges logarithmically. A peculiar consequence is that if this divergence comes with a positive sign (which turns out to be the case for the dynamical $\Phi^4_3$ model) then removing the boundary renormalisation does not destroy the convergence of the Wong--Zakai approximations but rather ``trivialises'' the boundary condition to a vanishing Dirichlet one. More precisely, the sequence of smooth approximations with any \textit{fixed} boundary condition of Robin type (including Neumann) converges to the same limit as that with Dirichlet boundary conditions, but it is possible to choose an $\eps$-dependent  Robin boundary condition in such a way that the limit still 
exists but is different.
This is a mechanism somewhat analogous to that underlying the triviality result of \cite{triv}, although 
here the limit is still nontrivial in the bulk.

\subsection{The parabolic Anderson model}

Consider the $3$-dimensional continuum
parabolic Anderson model formally given by
\begin{equ}[e:PAM]\tag{PAM}
(\partial_t-\Delta)u=u\xi
\end{equ}
on $D=(-1,1)^3$, with $\xi$ denoting spatial white noise (constant in time),
and with some initial condition $u_0\in\cC^{\delta}(D)$, $\delta>-1/2$.
Fix a symmetric nonnegative $\rho\in\cC^\infty_c(\R^3)$ integrating to $1$,
set $\rho_\eps(x)=\eps^{-3}\rho(\eps^{-1}x)$, and let $\xi_\eps = \rho_\eps *\xi$.

It is then known \cite{H0} that if we endow $D$ with \textit{periodic} boundary conditions
and consider the sequence of problems
\begin{equ}[e:ePAM]\tag{$\eps$-PAM}
(\partial_t-\Delta )v=v(\xi_\eps-C_\eps)\;,
\end{equ}
then a suitable (diverging) choice of $C_\eps$ yields a limit $u$ independent 
of $\rho$, which we then call ``the'' solution to the otherwise ill-posed problem \eqref{e:PAM}.

First we show the existence of the Dirichlet solution of \eqref{e:PAM}, where our result is 
rather unsurprising.
\begin{theorem}\label{thm:PAM-D}
Let for $\eps\in(0,1]$ $u^{\Dir}_\eps$ be the solution to
\eqref{e:ePAM} with boundary conditions
\begin{equ}
u_\eps^{\Dir}=0\quad\text{on }\R^+\times\partial D.
\end{equ}
Then for any $\kappa>0$, $u_\eps^{\Dir}$ converges in probability in
$\cC\big([0,1],\cC^{\delta\wedge(1/2)-\kappa}(D)\big)\cap\cC^{1/2-\kappa}_{\text{loc}}\big((0,1]\times \bar D\big)$ to a limit $u^{\Dir}$
independent of $\rho$ as $\eps \to 0$.
\end{theorem}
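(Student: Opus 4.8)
The plan is to deploy the standard machinery of regularity structures, noting that the Dirichlet case should not require any boundary-specific renormalisation beyond the usual bulk constant $C_\eps$. First I would recall the minimal regularity structure $\mathscr{T}$ needed to solve \eqref{e:PAM}: since $\xi \in \mathcal{C}^{-3/2-\kappa}$ and the relevant fixed point map $u \mapsto u\xi$ has the structure $u = G(u\xi) + Gu_0$, one needs symbols up to homogeneity slightly above $0$, i.e. $\Xi$, $\mathcal{I}(\Xi)$, $\mathcal{I}(\Xi)\Xi$, $X_i\mathcal{I}(\Xi)$ and polynomial symbols, exactly as in the periodic case of \cite{H0}. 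The only new analytic input is that the heat kernel $G$ must be replaced by the Dirichlet heat kernel $G^{\Dir}$ on $D$. Following the approach of \cite{GH17}, one writes $G^{\Dir} = G + G^{\text{bdry}}$ where $G$ is the usual parabolic kernel on $\R^3$ and $G^{\text{bdry}}$ is a smooth remainder near the boundary (obtained by reflection and a partition-of-unity argument); crucially, the singular part of $G^{\Dir}$ coincides with that of $G$, so the abstract integration operator $\mathcal{I}$ built from $G$ has the same analytic properties, and $G^{\text{bdry}}$ contributes only to the reconstruction of the composition with classical (smooth-in-the-interior) functions.

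The second step is the stochastic estimates: one must show that the canonical lift $\hat{Z}_\eps = (\Pi_\eps, \Gamma_\eps)$ built from $\xi_\eps$, after Wick renormalisation of the symbol $\mathcal{I}(\Xi)\Xi$ by the constant $C_\eps = \E[(\mathcal{I}(\rho_\eps*\xi))(0)\cdot(\rho_\eps*\xi)(0)]$ — equivalently $C_\eps \sim \tfrac{1}{(2\pi)^3}\int \frac{|\hat\rho(\eps k)|^2}{|k|^2}\,dk$, diverging like $\eps^{-1}$ — converges to a limit $\hat{Z}$ in the appropriate space of admissible models, with bounds uniform in $\eps$. Here the key point for the \emph{Dirichlet} problem is that no extra boundary counterterm is needed: the only potentially dangerous symbol that could feel the boundary is $\mathcal{I}(\Xi)\Xi$, and with Dirichlet conditions the renormalised second chaos object $\Pi_\eps\big(\mathcal{I}(\Xi)\Xi\big)$ tested against a test function localised at a point $x$ near $\partial D$ has a variance that stays bounded (the boundary contributions to $G^{\Dir}$ entering one of the two legs only make the kernel \emph{smaller}), so the bulk constant $C_\eps$ already does the job. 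This is precisely where the contrast with the Neumann/Robin case will appear later in the paper, so I would phrase the bound so as to isolate the sign of the boundary contribution.

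The third step is to combine these via the standard abstract solution map: by the fixed-point theorem in the weighted (singular-at-$t=0$) modelled-distribution spaces $\mathcal{D}^{\gamma,\eta}$ adapted to the boundary (as in \cite{GH17} or \cite{HP19}), one obtains for each $\eps$ a modelled distribution $U_\eps$ solving the abstract equation, whose reconstruction is $v_\eps$, and continuity of the solution map in the model plus convergence $\hat Z_\eps \to \hat Z$ gives convergence of $u_\eps^{\Dir}$. The regularity claimed, $\mathcal{C}([0,1],\mathcal{C}^{\delta\wedge(1/2)-\kappa}(D))$ together with $\mathcal{C}^{1/2-\kappa}_{\text{loc}}$ away from $t=0$, follows from the regularity of the reconstruction of $U_\eps$ (which lives at homogeneity $\min(\delta, \alpha+2)$ with $\alpha = -3/2-\kappa$, giving $1/2-\kappa$ in the interior) combined with the $\eta$-exponent encoding the initial-data regularity $\delta$; $\rho$-independence of the limit is automatic once the limiting model is shown to be $\rho$-independent, which is the usual consequence of the BPHZ-type characterisation.

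The main obstacle I anticipate is \emph{not} the renormalisation (which is classical here) but the boundary analysis of the modelled-distribution calculus: one must check that the abstract integration operator $\mathcal{I}$ associated to $G^{\Dir}$, together with the boundary-adapted weights, satisfies the Schauder estimate $\mathcal{I}:\mathcal{D}^{\gamma,\eta}\to\mathcal{D}^{\gamma+2,\bar\eta}$ with the correct exponents near $\partial D$ and that the reconstruction theorem applies up to the boundary — including the interplay between the $t=0$ singularity and the spatial boundary, which requires a two-parameter weight system. This is done in \cite{GH17} for a $1$-dimensional spatial boundary; adapting it to the $3$-dimensional cube $D=(-1,1)^3$, whose boundary has edges and corners, requires a localisation argument reducing to the half-space case near each face, with the edge/corner contributions being higher-order and harmless because the singular symbols here have homogeneity $>-2$. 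I would set this up once, in a preliminary section, so that it can be reused verbatim for the Neumann problem and for $\Phi^4_3$.
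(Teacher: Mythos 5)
Your overall architecture (regularity structures plus the boundary-weighted calculus of \cite{GH17}) is the right family of tools, but the route you sketch diverges from the paper's and, more importantly, has two genuine gaps. First, a structural point: the paper does \emph{not} solve \eqref{e:PAM} directly. It first applies the Hairer--Labb\'e change of variables $v=ue^{Y_\eps}$, where $Y_\eps$ solves $\Delta Y_\eps=\xi_\eps$ with \emph{Neumann} data (independently of the boundary condition imposed on $u$), and then solves \eqref{e:etPAM}. Your direct fixed point $u=G^{\Dir}(u\xi-C_\eps u)+G^{\Dir}u_0$ is a conceivable alternative, but then your regularity structure is too small: with $|\Xi|=-3/2-\kappa$ the expansion of $u$ must be carried to order $>3/2$ before multiplying by $\Xi$, which forces trees with four noises into the structure, and these carry a \emph{second}, logarithmically divergent renormalisation constant (cf.\ Remark~\ref{rem:reno-PAM}: $C_\eps=\ell_\eps(\<PAM-2a-Small>)+\ell_\eps(\<PAM-4a-Small>)+4\ell_\eps(\<PAM-4b-Small>)$, the last two of order $|{\log\eps}|$). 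Your single Wick constant of order $\eps^{-1}$ does not renormalise those trees, so the models you propose would not converge.

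Second, and more seriously, you have not identified the actual analytic obstruction that Theorem~\ref{thm:PAM-D} must overcome, and the heuristic you offer in its place does not address it. The danger is not that some second-chaos variance blows up near $\partial D$ (in your own setup, with $\cI$ built from the translation-invariant kernel, the BPHZ model does not see the boundary at all). The danger is deterministic: the right-hand side of the fixed point reconstructs, near a face of the cube, to a distribution whose homogeneity/weight is below $-1$ ($-3/2-\kappa$ in your direct formulation, $-1-2\kappa$ for the term $v|\nabla Y_\eps|^2$ in the paper's), so by \cite[Thm~4.9]{GH17} there is \emph{no canonical extension} of $\cR f$ across the spatial boundary, and the convolution $\int G^{\Dir}(x,y)\,(\cR f)(y)\,dy$ is a priori undefined. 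Your assertion that the edge and corner contributions are ``harmless because the singular symbols have homogeneity $>-2$'' uses the wrong threshold: $-2$ is the critical exponent for the codimension-$2$ sets ($t=0$ and the edges), whereas the faces have codimension $1$ and the critical exponent there is $-1$, which is exactly what is violated. The paper resolves this in Appendix~A (Lemma~\ref{lem:Dirichlet-convolution}): because the Dirichlet kernel vanishes on $P_1$, one gains the improved bound $|D_2^kG_n(x,\cdot)|\lesssim |x|_{P_1}2^{n(|k|_\frs+|\frs|-1)}$, i.e.\ one extra power of $|x|_{P_1}$, which is precisely enough to define $\zeta(G(x,\cdot))$ for $\zeta$ of regularity in $(-2,-1)$ without ever extending $\cR f$ to the boundary. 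Some substitute for this step (or for the explicit boundary analysis of Lemma~\ref{lem:reno Psi^2 PAM} plus an extension operator $\hat\cR^\eps$, as in the Neumann case) is indispensable; without it your proof does not close.
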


However, in order make sense of a notion of ``Neumann solution'' for \eqref{e:PAM}, 
one needs to renormalise the boundary condition as well! A similar phenomenon 
was previously observed in the case of the KPZ equation in \cite{GH17} and in the case of
a stochastic homogenisation problem in \cite{HP19}, but the boundary renormalisation 
remained bounded in these examples.
\begin{theorem}\label{thm:PAM-N}
Given a constant $a_\rho$, let  $ u_\eps$ for $\eps\in(0,1]$ be the
solution to \eqref{e:ePAM} with boundary conditions
\begin{equ}
\partial_n u_\eps=-\Big(a_\rho+\tfrac{|{\log \eps}|}{8\pi}\Big) u_\eps\quad
\text{on }\R^+\times\partial D.
\end{equ}
Then $u_\eps$ converges (in the same sense as in Theorem \ref{thm:PAM-D}) 
to a limit $u$. Moreover, one can choose $a_\rho$ to depend on $\rho$
in such a way that the limit does not depend on $\rho$.
\end{theorem}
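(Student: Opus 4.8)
The plan is to work within the theory of regularity structures, adapting the boundary techniques of \cite{GH17} to the three-dimensional, spatially-white setting. First I would fix the regularity structure for \eqref{e:PAM} generated by the noise symbol $\Xi$, the abstract integration map $\cI$ associated to the (parabolic) heat kernel, and polynomials, and I would work in a scale of weighted modelled-distribution spaces $\cD^{\gamma,(\eta,\sigma)}$ whose weights blow up both as $t\downarrow 0$ (exponent $\eta$, governed by the regularity $\delta$ of $u_0$) and as $\mathrm{dist}(x,\partial D)\downarrow 0$ (exponent $\sigma$). The Neumann Green's function on $D=(-1,1)^3$ is decomposed as $G^{\Neu}=K+\hat K+R$, where $K$ is the usual compactly supported $2$-regularising singular kernel, $R$ is smooth, and $\hat K$ is the finite sum of the copies of $K$ reflected through the faces (and, near edges and corners, iterated reflections) of the cube; $\hat K$ is smooth in the interior of $D$ but its derivatives blow up near $\partial D$, which is precisely what the weight $\sigma$ is tuned to absorb. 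The Robin condition is encoded, as in \cite{GH17}, through a boundary integral operator acting on the trace of the reconstructed solution, so that \eqref{e:ePAM} with the stated boundary condition becomes a fixed point problem in $\cD^{\gamma,(\eta,\sigma)}$.

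Second, I would identify the renormalisation. The only product requiring renormalisation is $u\,\Xi$, and the divergent subtrees of the associated regularity structure are the ``bulk cherry'' $\cI(\Xi)\,\Xi$, handled exactly as in the periodic case \cite{H0} by the constant $C_\eps$, together with its reflected analogue in which one of the two kernels is replaced by $\hat K$. A direct computation shows that the expectation of this reflected object carries, at a point at distance $d$ from $\partial D$, a boundary-layer term of size $\frac{1}{8\pi d}$ (the coefficient $\tfrac1{8\pi}=\tfrac12\cdot\tfrac1{4\pi}$ coming from the $3$d Newtonian kernel $\tfrac1{4\pi|x-y|}$ together with the factor $\tfrac12$ from a single face reflection), and that convolving a term of this shape with $G^{\Neu}$ and taking the boundary trace produces, once the normal integration is cut off at scale $\eps$, a contribution $-\tfrac{|\log\eps|}{8\pi}u$ — that is, exactly a divergent Robin-type counterterm. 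Thus the natural renormalised model is $\hat Z_\eps=M_\eps Z_\eps$, with $M_\eps$ built from the bulk constant $C_\eps$ together with the boundary constant $a_\rho+\tfrac{|\log\eps|}{8\pi}$, and I would then verify (by the standard identification-of-the-equation argument, run also on the boundary operator) that the reconstruction of the renormalised abstract fixed point solves precisely \eqref{e:ePAM} with the boundary condition $\partial_n u_\eps=-(a_\rho+\tfrac{|\log\eps|}{8\pi})u_\eps$.

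Third — and this is the \emph{main obstacle} — I would prove that $\hat Z_\eps$ converges in probability, as $\eps\to0$, to a limiting model $\hat Z$ in the space of admissible models equipped with the boundary-weighted seminorms. The bulk estimates are those of the periodic model; the new input is the stochastic bound for trees involving $\hat K$. For these one estimates second moments of $(\hat\Pi^x_\eps\tau)(\varphi^\lambda_x)$ with $x$ near $\partial D$, tracking the blow-up as $d=\mathrm{dist}(x,\partial D)\downarrow 0$ and checking its compatibility with $\sigma$, and one shows the relevant Wick-ordered quantities form a Cauchy sequence. The key point is that the subtraction of the boundary counterterm $\tfrac{|\log\eps|}{8\pi}$ is exactly what renders convergent the otherwise log-divergent integral $\int\!\!\int \hat K(x,y)\hat K(x,z)\,\varrho_\eps(y,z)\,dy\,dz$ controlling this second moment. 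I expect the genuinely delicate part to be obtaining these bounds uniformly up to $\partial D$ with the correct weights, in particular near edges and corners of the cube, where several reflected kernels interact and the singularities compound (although the lower-dimensional nature of those strata keeps their contributions subleading).

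Finally, with model convergence in hand, continuity of the weighted solution map gives the limit $u$ for $u_\eps$, and the surviving finite part of the reflected-cherry expectation shows that $u$ retains a nontrivial boundary behaviour. For the last assertion, I would observe that the law of $\hat Z$ depends on $\rho$ only through finitely many finite constants — here, beyond the (universal) bulk constant, a single boundary constant: the $O(1)$ remainder left in the reflected-cherry expectation after the $\tfrac{|\log\eps|}{8\pi}$ subtraction. Since, by the identification of the equation carried out above, this remainder enters the limiting problem exactly as an additive shift of the Robin coefficient, choosing $a_\rho$ to cancel it makes the limiting model, hence $u$, independent of $\rho$.
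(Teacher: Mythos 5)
Your identification of the divergence mechanism is essentially correct: the reflected part of the Neumann kernel makes the expectation of the order\dash$(-1-2\kappa)$ object behave like $\tfrac{1}{8\pi|x|_{\d}}$ near a face (the $\tfrac18$ indeed coming from $\tfrac{1}{4\pi}$ evaluated at the doubled distance to the reflection), integrating this non\dash integrable singularity down to scale $\eps$ produces $\tfrac{|\log\eps|}{8\pi}$, a Dirac mass on $\d D$ on the right\dash hand side is equivalent to a Robin coefficient, and $a_\rho$ absorbs the $\rho$\dash dependent finite remainder. Note, however, that the paper does not attack $u\xi$ directly: it first performs the substitution $v=ue^{Y_\eps}$ with $\Delta Y_\eps=\xi_\eps$, $\d_n Y_\eps=0$, so that the singular product becomes $v(|\nabla Y_\eps|^2-C_\eps)-2\nabla v\cdot\nabla Y_\eps$, and the explicit boundary computation (Lemma \ref{lem:reno Psi^2 PAM}) is carried out for $\E|\nabla Y_\eps|^2$. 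Your direct route is not obviously unworkable, but it obliges you to treat \emph{all} negative\dash degree trees of the untransformed structure, in particular the four\dash noise tree of degree $-4\kappa$ whose bulk counterterm is itself of order $|\log\eps|$; one must verify that its reflected versions produce only an integrable (in fact $\log|x|_\d$\dash type) boundary singularity and hence no further boundary counterterm. The paper devotes a substantial part of Section \ref{sec:reg str} to exactly this verification for the transformed analogues, and your proposal is silent on these objects.

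The more serious gap is where you place the counterterm. You propose a renormalised model $\hat Z_\eps=M_\eps Z_\eps$ with $M_\eps$ containing the boundary constant $a_\rho+\tfrac{|\log\eps|}{8\pi}$, and assert that this subtraction is what makes the second\dash moment bounds for the boundary trees converge. Neither is how the mechanism works. The model bounds are local: for symbols of degree above $-1$ it suffices (Proposition \ref{prop:model boundary extension}) to test $\Pi_x\tau$ against $\varphi^\lambda_x$ with $\lambda\lesssim|x|_\d$, and on that range the $\tfrac{1}{8\pi|x|_\d}$ term is a bounded function; the models converge \emph{without} any boundary subtraction, and the quantity $\int\!\!\int\hat K(x,y)\hat K(x,z)\varrho_\eps(y,z)\,dy\,dz$ is in any case a zeroth\dash chaos (expectation) contribution, finite for each interior $x$ uniformly in $\eps$ — the logarithm comes from integrating it in $x$ across the boundary layer, not from the $\eps\to0$ limit of a second moment. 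The divergence only materialises when one must \emph{extend} the reconstruction of $f\cdot\big(\text{order }{-1-2\kappa}\text{ object}\big)$ — a distribution canonically defined only on the interior and of degree below $-1$ — to a distribution on all of $\R\times D$ that can be paired with the Green's function. The canonical ($\eps$\dash regularised) extension diverges, the renormalised extension $\scR$ of Proposition \ref{prop:boundary correction} converges, and their difference is exactly $\big(a_\rho+\tfrac{|\log\eps|}{8\pi}\big)\delta_\d(\cR f)$; this is the content of Lemma \ref{lem:hatR-PAM}. A renormalisation map acting on the structure group cannot produce a counterterm supported on a hypersurface, so this step cannot be packaged as model renormalisation: you need the modified reconstruction operator $\hat\cR^\eps$ (together with the extra product symbols and the extension results of \cite[App.~C]{HP19}) to make the fixed point well posed uniformly in $\eps$. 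Without supplying that construction, the plan does not close.
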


\begin{remark}\label{rem:reno-PAM}
The constant $C_\eps$ is the same as in the translation invariant case, although it will be obtained somewhat differently. It will be decomposed as $C_\eps=\ell_\eps(\<PAM-2a-Small>)+\ell_\eps(\<PAM-4a-Small>)+4\ell_\eps(\<PAM-4b-Small>)$, with the `usual' tree notation of regularity structures
and $\ell_\eps$ denoting the BPHZ renormalisation character, see Section~\ref{sec:reg str}
and the references \cite{CH,BHZ}.
Here $\ell_\eps(\<PAM-2a-Small>)$ is of order $\eps^{-1}$ and $\ell_\eps(\<PAM-4a-Small>)$ and $\ell_\eps(\<PAM-4b-Small>)$ are of order $|{\log \eps}|$.
This might be surprising to readers familiar with the renormalisation of PAM: from e.g. \cite{HP15} one would expect instead a decomposition of the form $C_\eps=\ell_\eps(\<other-2-Small>)+\ell_\eps(\<other-4-Small>)$.
It follows from more general identities between renormalisation constants \cite{BGHZ, Mate19} that these two expressions are equivalent, although this will not play an explicit role in the present paper.
\end{remark}

\begin{remark}\label{rem:dimension}
The dimension $d=3$ is crucial for the boundary renormalisation to appear.
Indeed, it is easy to see that in $2$ dimensions the trick in \cite{HL_2D} (and described below) can be used to solve both the Dirichlet and the Neumann problem in a quite straightforward way.
For $\xi$ with H\"older-regularity between $-4/3$ and $-3/2$ (corresponding to ``dimension'' strictly between $8/3$ and $3$), 
the setup of Section \ref{sec:proofs} allows one to apply the results of \cite{GH17} directly, with the translation invariant models from \cite{CH},
resulting again in `standard' statements for both the Dirichlet and the Neumann problems.

One can also justify the $d=3$ threshold by simple power counting: nontrivial boundary behaviour may only be expected if a product in the regularity structure creates a nonintegrable singularity near the boundary, that is, of order below $-1$.
The lowest degree product for \eqref{e:PAM} with $\alpha$-regular noise $\xi$ is of order $2\alpha+2$, which indeed shows that $\alpha=-3/2$ is critical.
\end{remark}

\begin{remark}\label{rem:paracontrolled}
In \cite{Ismael} paracontrolled calculus was
employed to give meaning to the Neumann PAM.
In their result however, the renormalisation in \eqref{e:ePAM}
took the form of $\lambda_\eps$ (in place of $C_\eps$) for some deterministic
 \emph{function} $\lambda_\eps$.
Theorem \ref{thm:PAM-N} can be seen as giving a precise form of this
function, namely showing that one can choose
$\lambda_\eps=C_\eps+\hat C_\eps\delta_\partial$.
Here $C_\eps$ is the same constant as in the translation-invariant case,
$\hat C_\eps$ is the logarithmic term from the theorem,
and $\delta_\partial$ is the `Dirac distribution on $\partial D$'
(see \eqref{eq:dirac} for the precise definition).
Hence our solution theory is parametrised by two real constants, instead of a space of (locally) smooth functions.
\end{remark}

\subsection{The dynamical \texorpdfstring{$\Phi^4_3$}{Phi\^4\_3} equation}
Take the dynamical $\Phi^4_3$ equation
\begin{equ}[e:Phi43]\tag{$\Phi^4_3$}
(\partial_t-\Delta)u=-u^3+\xi
\end{equ}
on $D=(-1,1)^3$, this time $\xi$ standing for the $1+3$-dimensional space-time white noise,
and with initial condition $u_0\in\CC^\delta(D)$ with $\delta>-2/3$.
Consider the regularised equation
\begin{equ}[e:Phi43-eps]\tag{$\eps$-$\Phi^4_3$}
(\partial_t-\Delta)u=-u^3+3C_\eps u+\xi_\eps.
\end{equ}

We again start by solving the Dirichlet problem.
\begin{theorem}\label{thm:Phi4-D}
Let for $\eps\in(0,1]$ $u_\eps^{\Dir}$ be the solution of \eqref{e:Phi43-eps} with boundary conditions
\begin{equ}
u_\eps^{\Dir}=0\quad\text{on }\R^+\times\partial D.
\end{equ}
Then for any $\kappa>0$, and some random time $T>0$, $u_\eps^{\Dir}$ converges in probability in
$\cC([0,T],\cC^{\delta\wedge(-1/2)-\kappa}(D))$ to a limit $u^{\Dir}$
independent of $\rho$ as $\eps \to 0$.
\end{theorem}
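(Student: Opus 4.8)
The plan is to realise \eqref{e:Phi43-eps} with Dirichlet data on $D$ as the reconstruction of a fixed point in a regularity structure adapted to the cube, following the strategy of \cite{H0} for the solution theory and \cite{GH17} for the handling of boundaries. First I would set up the (now standard) regularity structure for $\Phi^4_3$, which requires only the five trees $\<Phi-2>$, $\<Phi-3>$, $\<Phi-4new>$, $\<Phi-5new>$ together with the polynomial sector, and the associated renormalisation group consisting of the subtractions that produce the bulk constant $C_\eps$; the relevant admissible models are the BPHZ models of \cite{CH,BHZ}, restricted to the space-time domain $\R^+\times D$. The key point in comparison to the torus case is that the heat kernel must be replaced by the Dirichlet heat kernel $G^{\Dir}$ on $D$, which I would decompose as $G^{\Dir}=K+K_{\d}+\text{(smooth)}$ where $K$ is the usual compactly supported singular kernel used in the translation-invariant theory and $K_{\d}$ is a kernel that is smooth away from the diagonal on $\partial D$ but whose regularising properties near $\partial D$ must be tracked. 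Because the Dirichlet kernel vanishes on $\partial D$, the correction $K_{\d}$ here is \emph{harmless}: the associated modelled distributions gain regularity near the boundary rather than losing it, so no boundary renormalisation is needed (contrast Theorems \ref{thm:PAM-N} and the $\Phi^4_3$-Neumann case). This is precisely where the Dirichlet case is "unsurprising''.

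Concretely, the steps are: (i) construct the extended/renormalised BPHZ models $Z_\eps=(\Pi_\eps,\Gamma_\eps)$ on $\R^+\times D$ and show, using the convergence theorem of \cite{CH} adapted as in \cite[Sec.~6]{GH17}, that $Z_\eps\to Z$ in the appropriate model topology in probability, with the limit independent of $\rho$ — here one must check that the boundary terms in the kernel decomposition do not contribute any additional divergent renormalisation, which is the content of the power-counting remark (Remark \ref{rem:dimension}) showing that for Dirichlet data all products remain integrable up to the boundary; (ii) formulate the abstract fixed point problem $U=\mathcal{G}^{\Dir}\big(\mathbf{1}_{t>0}(-U^3+\Xi)\big)+G^{\Dir}u_0$ in a weighted space of modelled distributions $\mathcal{D}^{\gamma,\eta}$ on $\R^+\times D$ that encodes both the initial-time blow-up (exponent $\eta$, tied to $\delta$) and the Dirichlet boundary behaviour, using the integration operator associated with $G^{\Dir}$ as in \cite[Sec.~4--5]{GH17}; (iii) solve this fixed point on a short random time interval $[0,T]$ by the usual contraction argument, with $T$ depending only on $\|Z\|$ and $\|u_0\|_{\CC^\delta}$; (iv) invoke the reconstruction theorem and continuity of the solution map to identify $\mathcal{R}U_\eps$ with the classical PDE solution $u_\eps^{\Dir}$ of \eqref{e:Phi43-eps}, and pass to the limit to obtain convergence in $\cC([0,T],\cC^{\delta\wedge(-1/2)-\kappa}(D))$, the exponent $-1/2$ being the regularity of the stochastic convolution (the tree $\<Phi-2>$) which caps the regularity gain.

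The main obstacle I expect is not the algebra or the fixed point — those are as in \cite{H0} — but the analytic bookkeeping near the spatial boundary: one must build a calculus of modelled distributions on the non-smooth domain $D=(-1,1)^3$ (which has edges and corners, not just smooth faces), control the composition of the Dirichlet-kernel integration operator with the nonlinearity in the weighted spaces, and verify the Schauder-type and multiplicative estimates uniformly up to $\partial D$ including at the edges. The cleanest route is to reflect across each face to reduce, at least microlocally near the interior of a face, to the half-space estimates of \cite{GH17}, and to absorb the lower-dimensional edge/corner strata into the weights by an inclusion-exclusion over the faces; checking that this reflection procedure is consistent with the Dirichlet (odd) extension and that it does not reintroduce singular behaviour at the edges is the delicate part. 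A secondary technical point is establishing the claimed $\cC^{1/2-\kappa}_{\mathrm{loc}}$ type smoothing in the interior — here just the $\cC^{\delta\wedge(-1/2)-\kappa}$ statement is asked for — which follows from parabolic regularisation away from $t=0$ once the solution is known to exist.
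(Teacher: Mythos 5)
Your overall architecture (boundary-adapted heat kernel, weighted spaces of modelled distributions in the style of \cite{GH17}, fixed point, reconstruction) matches the paper's, and you correctly sense that the vanishing of the Dirichlet kernel at $\partial D$ is what makes this case tractable. But the mechanism you invoke is wrong, and the error sits exactly where the real work is. You claim that ``for Dirichlet data all products remain integrable up to the boundary'' and cite Remark \ref{rem:dimension} for this. That remark says the opposite: in $d=3$ the lowest-degree product has degree below $-1$, so its singularity at the codimension-one boundary is \emph{non}-integrable, and this remains true with Dirichlet data. Concretely, $\E\Psi_{\eps,\infty}^2-\ell_\eps(\<Phi-2-Small>)$ behaves like a constant times $|x|_\d^{-1}$ near $\partial D$ (the reflected cross term $\cI_0(s)=\tfrac1{32\pi s}$ does not go away just because the kernel is odd under reflection \dash it merely changes sign). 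Consequently the reconstructions of $f\bw_{\eps,\infty}^2$ and of $\bw_{\eps,\infty}^3$ live in weighted spaces with boundary exponent in $(-2,-1)$, below the threshold $\sigma\wedge\alpha>-1$ at which \cite[Thm~4.9]{GH17} provides a canonical extension of $\cR f$ across $\R_+\times\partial D$. Your steps (ii)--(iii) therefore cannot be run as written: the integration operator is not defined on these inputs by the existing theory.

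What rescues the Dirichlet case is not integrability of the products but a new Schauder-type estimate (the paper's Lemma \ref{lem:Dirichlet-convolution}): because $G(x,\cdot)$ vanishes \emph{linearly} at $P_1=\R\times\partial D$, the pairing $\cR f\bigl(G(x,\cdot)\bigr)$ can be defined directly via a dyadic partition of unity adapted to the distance to the boundary, without ever extending $\cR f$ to a distribution on all of $\R\times D$; the gain of one power of $|y|_{P_1}$ from the kernel exactly compensates the non-integrable singularity of order in $(-2,-1)$. This is the step your proposal is missing, and without it you would be forced to construct a boundary extension $\hat\cR$ anyway, which is precisely the door through which the boundary renormalisation enters in the Neumann case. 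A secondary inaccuracy: the models cannot simply be the BPHZ models of \cite{CH} ``restricted to $\R^+\times D$''. The symbols built on $\cI\Xi$ must be realised with the Dirichlet heat kernel $\cG_\infty$, and the convergence of the resulting model components \dash in particular those whose translation-invariant counterparts are entirely cancelled by the renormalisation, such as the zeroth-chaos part of $\<Phi-4new>$ and the first-chaos part of $\<Phi-5new>$ \dash requires the boundary-weighted kernel calculus of Lemma \ref{lem:kernel-convolution}; this is genuinely new relative to \cite{CH,GH17} and is not covered by your appeal to those references.
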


\begin{remark}
Although $u^{\Dir}$ is a distribution, it does satisfy homogeneous Dirichlet boundary condition in a relatively strong sense, see Remark \ref{rem:Dirichlet} below.
\end{remark}

For the Neumann problem we once again see boundary renormalisation, this time with a twist: if one does not change the boundary conditions (i.e. $b_\eps\equiv 0$ below), then the solutions still converge, but to the Dirichlet solution $u^{\Dir}$!
\begin{theorem}\label{thm:Phi4-N}
Fix $b\in(-\infty,\infty]$ and let $(b_\eps)_{\eps\in(0,1]}\subset\R$ be such that $\eps b_\eps\to 0$ and
\begin{equ}\label{eq:boundary-reno}
\lim_{\eps\to 0}\frac{|{\log\eps}|}{32\pi}-b_\eps=b.
\end{equ}
Given a constant $a_\rho$, let for $\eps\in(0,1]$ $u_\eps$ be the solution of \eqref{e:Phi43-eps} with boundary conditions
\begin{equ}
\d_n u_\eps=3\big(a_\rho+b_\eps\big) u_\eps\quad
\text{on }\R^+\times\partial D.
\end{equ}
Then $u_\eps$ converges (in the same sense as in Theorem \ref{thm:Phi4-D}) to a limit $u$. Moreover, one can chose $a_\rho$ to depend on $\rho$ in such a way that the limit $u=u^b$ only depends on $b$, and this dependence is continuous. Finally, one has $u^\infty=u^{\Dir}$.
\end{theorem}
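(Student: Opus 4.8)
The plan is to realise $u_\eps$ as the reconstruction of the solution of an abstract fixed point problem posed in a regularity structure adapted to the cube $D$, using the framework of \cite{GH17} for singular SPDEs on domains with boundary. The starting point is the decomposition of the Neumann heat kernel $G^\Neu$ on $D$ as $G^\Neu = G + G^{\mathrm b}$, where $G$ is the translation invariant heat kernel on $\R^{1+3}$ and $G^{\mathrm b}$ is the sum of the contributions of the reflections of $D$ across its faces (the edge and corner reflections being lower dimensional and, by power counting, irrelevant for the renormalisation). Away from $\partial D$ the kernel $G^{\mathrm b}$ is smooth, but the model components built from it develop extra singular behaviour as the distance $r$ to $\partial D$ tends to $0$. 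Lifting \eqref{e:Phi43-eps} to this structure, with weighted modelled distributions keeping track of the behaviour near $\partial D$ as in \cite{GH17}, the \emph{bulk} part of the model is renormalised exactly as in the translation invariant theory, the corresponding BPHZ character producing the constant $C_\eps$ of Theorem \ref{thm:Phi4-D}; all the new effects come from $G^{\mathrm b}$.

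The heart of the proof is the analysis of the boundary divergences. A power-counting argument (of the same type as in Remark \ref{rem:dimension}) shows that the only product in the structure producing a nonintegrable transversal singularity at $\partial D$ is $\big(\cI^\Neu\Xi\big)^2$, which appears inside the cubic nonlinearity. Writing $\cI^\Neu\xi_\eps = \Psi_\eps + \Psi^{\mathrm b}_\eps$ with $\Psi_\eps := G*\xi_\eps$ and $\Psi^{\mathrm b}_\eps := G^{\mathrm b}*\xi_\eps$, the bulk square $\Psi_\eps^2$ is renormalised by (the relevant part of) $C_\eps$ as usual, while the remaining contributions $2\,\Psi_\eps\Psi^{\mathrm b}_\eps$ and $(\Psi^{\mathrm b}_\eps)^2$ have expectations that, at a point at distance $r$ from $\partial D$, behave like $c/r$ rather than staying bounded, with $c$ an explicit heat-kernel integral. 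The key analytic claim is then that, once regularised at the mollification scale $r\sim\eps$, this $\min(c/r,c/\eps)$-type profile may be replaced, up to an error converging as $\eps\to 0$ in the relevant weighted topology, by $\hat C_\eps\,\delta_\partial$ with $\hat C_\eps = \tfrac{|{\log\eps}|}{32\pi}+O(1)$, the $O(1)$ part depending on $\rho$. Since, by Green's identity for $G^\Neu$, a bulk source $\lambda\,u\,\delta_\partial$ is indistinguishable from shifting the Robin coefficient by $\lambda$, the problem \eqref{e:Phi43-eps} with boundary condition $\d_n u_\eps = 3(a_\rho+b_\eps)u_\eps$ is, at the level of the reconstructed solution, the same as one with a convergent (boundary-renormalised) model and effective Robin coefficient $3\big(a_\rho+b_\eps-\hat C_\eps\big)$; here the sign comes from the $-u^3$ nonlinearity together with the \emph{positivity} of the profile. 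Condition \eqref{eq:boundary-reno} is exactly what makes $b_\eps-\hat C_\eps$ converge, and choosing $a_\rho$ to cancel the $\rho$-dependence of the $O(1)$ makes the limit $\rho$-independent; the hypothesis $\eps b_\eps\to 0$ keeps the Robin boundary layer wide compared to the mollification scale, so that the scale-$\eps$ reflection analysis is unaffected.

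Granting this, convergence to $u^b$ for finite $b$ follows along the now-standard lines: verify the stochastic (chaos and, if needed, spectral-gap) estimates for the boundary-renormalised model, using the convergence theorem of \cite{CH} for the bulk part and the boundary bookkeeping of \cite{GH17} for the reflected kernels; then solve the abstract fixed point problem on a random interval $[0,T]$ with kernel $G^\Neu$ and the effective Robin data, exactly as in the proof of Theorem \ref{thm:Phi4-D}, and reconstruct to get $u^b$. Continuity of $b\mapsto u^b$ reduces to continuity of the abstract solution map in the effective Robin coefficient, which holds because the fixed point bounds are uniform as this coefficient ranges over any interval $[-\infty,\Lambda]$.

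It remains to handle $b=\infty$, that is $b_\eps\equiv 0$ (more generally $b_\eps = o(|{\log\eps}|)$), in which case the effective Robin coefficient $3\big(a_\rho-\hat C_\eps\big)$ tends to $-\infty$. The plan is to show that the solution map extends continuously to this degenerate endpoint, with limit the Dirichlet solution $u^\Dir$ of Theorem \ref{thm:Phi4-D}: concretely, one checks that the Robin heat kernel on $D$ with coefficient $\lambda$ converges as $\lambda\to-\infty$ to the Dirichlet heat kernel in the topology in which the relevant weighted modelled distributions are estimated — near each face this reduces to the elementary fact that the one-dimensional Robin heat kernel converges to the Dirichlet one — and that the corresponding noise liftings converge to their Dirichlet counterparts; combined with the uniform-in-$\lambda$ fixed point bounds this yields $u^b\to u^\Dir$ as $b\to\infty$, hence $u^\infty=u^\Dir$. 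I expect the main difficulties to be, first, the core renormalisation estimate justifying that the singular $c/r$ profile may be traded for $\hat C_\eps\,\delta_\partial$ with precisely the constant $\tfrac1{32\pi}$, and second, making the Robin-to-Dirichlet degeneration rigorous uniformly in the singular data.
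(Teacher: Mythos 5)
Your proposal identifies the correct mechanism — the $1/r$ boundary profile of the renormalised square, its logarithmic integral $\tfrac{|\log\eps|}{32\pi}$, the equivalence between a $\delta_\partial$-source and a Robin shift, and the degeneration of the Robin kernel to the Dirichlet one — and in that sense it follows the same route as the paper. However, there are two genuine gaps. The first concerns the case $b=\infty$ (and the precise identification of the limit in general): you compute $\hat C_\eps$ once, from the Neumann-integrated noise, and then declare the effective Robin coefficient to be $3(a_\rho+b_\eps-\hat C_\eps)$. But the boundary profile of $\E\Psi^2$ depends on which boundary condition the reference field $\Psi$ itself satisfies. If you keep the Neumann reference field, the divergent coefficient $\hat C_\eps$ multiplies the distributions $\delta_\partial v$ and $\delta_\partial\Psi$; the first can be resummed into the kernel, but the second leaves an inhomogeneous source of the form $\hat C_\eps\, G^{\lambda_\eps}\ast(\delta_\partial\Psi)$ with $\hat C_\eps\to\infty$ and $G^{\lambda_\eps}\to G^{\Dir}$ — an $\infty\times 0$ competition you do not control. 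If instead you move the divergence into the reference field by giving it an $\eps$-dependent Robin coefficient $c_\eps$, then the profile of $\E\Psi_{\eps,c_\eps}^2$ acquires an additional $c_\eps$-dependent correction $\int_\eps^1 s^{-1}\cJ^0_{sc_\eps}(1)\,ds$ which itself diverges (at order $\log|\log\eps|$ when $c_\eps\to\infty$), so the matching condition becomes an implicit equation in $c_\eps$. The paper closes this loop by inverting the map $f(c)=c-\int_{Kc^{-1}}^1 s^{-1}\cJ^0_{sc}(1)\,ds$ (Lemma \ref{lem:reno Phi43 square} and Proposition \ref{prop:bound for cJ}); without some version of this self-consistency argument your identification $u^\infty=u^{\Dir}$ is not justified, and even for finite $b$ the value of the $\rho$-dependent constant is not pinned down.

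The second gap is your power-counting claim that the square is the only problematic product. The Wick cube $\Psi^3-3C_\eps\Psi$ has homogeneity below $-3/2$, so its extension as a distribution across $\partial D$ is not canonical, and — more importantly — its boundary-renormalised version involves the products $\delta_\partial R^1_\eps\,\Psi_{\eps,c_\eps}$, $(\scR R^2_\eps)\Psi_{\eps,c_\eps}$ and $R^3_\eps\Psi_{\eps,c_\eps}$, i.e.\ the remainders of the square multiplied by the field itself. Making sense of $\delta_\partial\Psi$ requires showing that the free field, although only a distribution, admits a well-defined trace on the boundary hyperplanes, with continuity in the mollification, the Robin parameter and the hyperplane (Section \ref{sec:restrict}, Lemmas \ref{lem:restrict2}--\ref{lem:restriction} and Corollary \ref{cor:restrict1}). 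This "compatibility of the cube with the boundary renormalisation of the square" is an unavoidable ingredient of the fixed point argument and is absent from your outline.
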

\begin{remark}\label{rem:reno-Phi}
The renormalisation in the bulk is given by the same $C_\eps$ as in the translation invariant case. It is of the form $C_\eps=\ell_\eps(\<Phi-2-Small>)-3\ell_\eps(\<Phi-4-Small>)$, where $\ell_\eps(\<Phi-2-Small>)$ is of order $\eps^{-1}$
and $\ell_\eps(\<Phi-4-Small>)$ is of order $|{\log \eps}|$.
\end{remark}

\subsection{The linearisation step}\label{sec:lin}
As usual for singular SPDEs, we start by comparing the solutions to the linear equation with additive noise.
Since this step is nontrivially affected by the boundary conditions, we give a short outline below.

First consider the case of the PAM.
Introduce, for $\eps\in[0,1]$,
$Y_\eps$ as the solution of
\begin{equ}\label{eq:Y}
\Delta Y_\eps=\xi_\eps\quad
\text{on }D,\quad\quad \partial_n Y_\eps=0\quad \text{on }\partial D,
\end{equ}
in other words,
\begin{equ}\label{eq:Y2}
Y_\eps(z)=\int_D G(z,z')\xi_\eps(z')\,dz',
\end{equ}
where $G$ is the Neumann Green's function on $D$.
Equation \eqref{eq:Y} of course only makes classical sense for $\eps> 0$,
but $Y_0$ can be simply interpreted by \eqref{eq:Y2} in the distributional sense.

 Notice then that if $u$ solves \eqref{e:ePAM} for some $\eps>0$,
then $v=ue^{Y_\eps}$ satisfies 
\begin{equ}[e:etPAM]\tag{tPAM}
(\partial_t -\Delta )v=v(|\nabla Y_\eps|^2-C_\eps)- 2\nabla v\cdot\nabla Y_\eps
\end{equ}
on $(0,1)\times D$ and $v(0,x)=u_0(x)e^{Y_\eps}(x)$.
This is essentially the same transformation that is used in
the two-dimensional case in \cite{HL_2D}, where
it allows one to bypass the abstract theory completely.
This is not the case here, but nevertheless it will make the boundary
renormalisation easier to handle.

\begin{remark}\label{rem:bcPAM}
It is worth pointing out that the choice of boundary condition for $Y_\eps$
is \emph{not} related to the choice of boundary condition of \eqref{e:PAM},
i.e.\ the former will always be Neumann with vanishing data.
\end{remark}

In contrast to Remark \ref{rem:bcPAM}, for the dynamical $\Phi^4_3$ equation the reference linear solution \emph{does} depend of the choice of boundary conditions.
Introduce, for $\eps\in[0,1]$ and $a\in[0,\infty)$, $\Psi_{\eps,a}$ as the stationary solution of
\begin{equ}\label{eq:Psi}
(\d_t-\Delta) \Psi_{\eps,a}=\xi_\eps\quad
\text{on }D,\quad\quad \partial_n \Psi_{\eps,a}=-3a\Psi_{\eps,a}\quad \text{on }\partial D,
\end{equ}
in other words,
\begin{equ}\label{eq:Psi2}
\Psi_{\eps,a}(z)=\int_{\R\times D}\cG_{3a}(z,z')\xi_\eps(z')\,dz',
\end{equ}
where $\cG_{3a}$ are the Robin heat kernels (see Section~\ref{sec:robin} for a detailed discussion).
As before, equation \eqref{eq:Psi} 	only makes classical sense for $\eps> 0$,
but $\Psi_{0,a}$ can be simply interpreted by \eqref{eq:Psi2} in the distributional sense.
Recall that boundary conditions involving the normal derivative of the solution can be incorporated on the right-hand side of the equation as a Dirac mass on the boundary. For example, \eqref{eq:Psi2} can be equivalently written as
\begin{equ}
\Psi_{\eps,a}(z)=\int_{\R\times D}\cG_{0}(z,z')\big(\xi_\eps-3a\Psi_{\eps,a}\delta_\d\big)(z')\,dz',
\end{equ}
see e.g. \cite[Rem~1.5]{HP19}.
It is actually not obvious that this expression also makes sense at $\eps=0$. 
To multiply $\delta_\d$ and $\Psi_{0,a}$, one would have to make sense of the restriction of the distribution $\Psi_{0,a}$ to a lower dimensional subset, which is of course for a general distribution not possible.
In the case of $\Psi_{0,a}$ it turns out it is, see Section \ref{sec:restrict}.

Using the above mentioned equivalence, one sees that if $u$ solves \eqref{e:Phi43-eps} for $\eps>0$ with boundary condition $\d_n u=3(a_\rho+ b_\eps) u$, then $v=u-\Psi_{\eps,c_\eps}$ satisfies the equation
\begin{equs}[eq:Phi4-lin]
(\d_t-\Delta)v&=-v^3-3v^2\Psi_{\eps,c_\eps}-3v(\Psi_{\eps,c_\eps}^2-C_\eps-(a_\rho+b_\eps+c_\eps)\delta_\d)
\\
&\qquad-(\Psi_{\eps,c_\eps}^3-3C_\eps\Psi_{\eps,c_\eps}-3(a_\rho+b_\eps+c_\eps)\delta_\partial\Psi_{\eps,c_\eps}),
\end{equs}
with boundary condition $\d_nv=-3c_\eps v$.
This suggests that it might be a good idea to look for $c_\eps$ for which the distributions $\Psi_{\eps,c_\eps}^2-C_\eps-(a_\rho+b_\eps+c_\eps)\delta_\d$ converge. This is \emph{almost} the case: in Lemma \ref{lem:reno Phi43 square} we construct a sequence $c_\eps$ such that  $\Psi_{\eps,c_\eps}^2-\ell_\eps(\<Phi-2-Small>)-(a_\rho + b_\eps+c_\eps)\delta_\d$ converges to a $\rho$-independent limit.

We remark that the $a=\infty$ endpoint of \eqref{eq:Psi} and \eqref{eq:Psi2} also makes perfect sense and corresponds to the case of homogeneous Dirichlet boundary condition (see Section \ref{sec:robin}). This fact will be essential in the proof of Theorem \ref{thm:Phi4-N}.

The rest of the article is structured as follows. In Section \ref{sec:prelim} we set up a number of technical tools concerning function spaces and singular kernels with singularities at boundaries.
In Section \ref{sec:calculations} we compute explicitly the boundary renormalisation of a few stochastic objects corresponding to the simplest `trees' associated to each equation.
In Section \ref{sec:reg str} we define the regularity structures and the models associated to the equations, which contain a few further terms that, while do not require boundary renormalisation, are affected by the boundary conditions and therefore fall outside of the scope of the generic construction of models \cite{CH}.
In Section \ref{sec:proofs} we formulate each equation in the analytic framework of \cite{GH17}, which then yields the proofs of the main results.

\subsection*{Acknowledgements}

{\small
MG thanks the support of the Austrian Science Fund (FWF) through the Lise Meitner programme M2250-N3 during a significant part of the project. MH gratefully acknowledges support from the Royal Society through a research professorship. Thanks also to 
Etienne Pardoux for numerous discussions on the topic of this article.
}

\section{Preliminaries}\label{sec:prelim}

\subsection{H\"older spaces}\label{sec:prepare}

In the computations we encounter space-time function or distribution spaces that exhibit one or more of the following complications:
they may have temporal singularities at the initial time;
they may have spatial singularities at the boundary or at the edges of the cubic domain;
they may themselves `live' on the boundary of the domain $D$.
A completely general framework to deal with all of these seems rather cumbersome,
therefore we choose to stay in our very concrete examples.

We have two main settings: a $3$-dimensional spatial and a $1+3$-dimensional spatiotemporal one. The former is used in dealing with the stochastic objects of the PAM, while the latter is used for the stochastic objects of the $\Phi^4_3$ as well as the analytic side of both equations.
We now introduce some notation that refer to different, but rather analogous objects in the two settings.

In the spatial case ($d=3$) we use the Euclidean scaling, according to which sizes of vectors are measured and denoted by $\|\cdot\|$.
The distance function to a given set $S$ is denoted by $|\cdot|_S$ and the projection to $S$ (wherever well-defined) is denoted by $\pi_S$.
The scaled dimension therefore coincides with the usual dimension, and is also denoted by $\frs=3$.
It will be convenient to shift the whole problem by
$(1,1,1)$, that is, to work on the cube $D=[0,2]^3$.
We denote by $\d$ the boundary of $D$ and
by $\d^2$ the edges of $D$.
By symmetry, large part of the calculation can be restricted
on the tetrahedron $Q=\{x\in\R^3:\,0\leq x_3\leq x_1\leq x_2\leq 1\}$ (see Figure~\ref{fig:Q}),
where some notations simplify: we have $|x|_\d=x_3$, $|x|_{\d^2}=x_1$, and $\pi_\d (x_1,x_2,x_3)=(x_1,x_2,0)$. Let us also use the shorthand $Q_\d=\pi_\d Q=\{y=(y_1,y_2,0)\,:\,0\leq y_1\leq y_2\leq 1\}$.

\begin{figure}
\begin{center}
\begin{tikzpicture}[tdplot_main_coords]
\def\R{2}
\def\RR{4}

  \foreach \x in {0,90,180,270} {\coordinate (Base-\x) at ($(\x:\RR)-(\RR,0)$); };

\begin{scope}[yshift=\RR cm]
  \foreach \x in {0,90,180,270} {\coordinate (Top-\x) at ($(\x:\RR)-(\RR,0)$) ; };
\end{scope}

  \foreach \x in {0,90,180,270} {\coordinate (base-\x) at ($(\x:\R)-(\R,0)$); };

\begin{scope}[yshift=\R cm]
  \foreach \x in {0,90,180,270} {\coordinate (top-\x) at ($(\x:\R)-(\R,0)$) ; };
\end{scope}
\begin{scope}[yshift=5 cm]
\coordinate (arrow3) at (0,0);
\end{scope}

\coordinate (arrow2) at (-5,5);
\coordinate (arrow1) at (-5,-5);

\draw[black!15,thick] (base-270)
  \foreach \x in {0,90,180,270} {-- (base-\x)};
\draw[black!15,thick] (top-270)
  \foreach \x in {0,90,180,270} {-- (top-\x)};
\foreach \x in {0,90,180,270} {\draw[black!15,thick] (base-\x) -- (top-\x);}

\draw[black!35,thick] (Base-270)
  \foreach \x in {0,90,180,270} {-- (Base-\x)};
\draw[black!35,thick] (Top-270)
  \foreach \x in {0,90,180,270} {-- (Top-\x)};
\foreach \x in {0,90,180,270} {\draw[black!35,thick] (Base-\x) -- (Top-\x);}

\foreach \x in {1,2,3} {\draw[black!35,thick,->,shorten >=0.3cm] (0,0) -- (arrow\x);}
\foreach \x in {1,2,3} {\node at (arrow\x) {$x_\x$};}

\draw[thick] (base-0) -- (base-90) -- (base-180);
\draw[thin] (base-180) -- (base-0);
\draw[thick] (base-0) -- (top-180);
\draw[thick] (base-90) -- (top-180);
\draw[thick] (base-180) -- (top-180);
\draw[fill=blue!80!black, fill opacity=0.3] (base-90) -- (base-180) -- (top-180);
\draw[fill=blue, fill opacity=0.2] (base-90) -- (base-0) -- (top-180);
\end{tikzpicture}
\end{center}
\caption{The tetrahedron $Q$ (blue) and the domain $D$ (gray).}\label{fig:Q}
\end{figure}
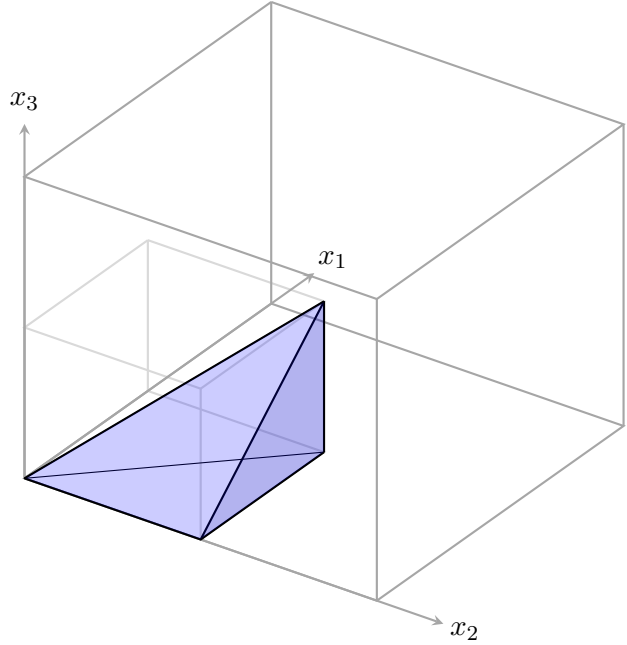

The analogous objects in the spatiotemporal case ($d=4$) are as follows. The scaling is parabolic, and correspondingly the scaled dimension is $\frs=5$.
Points in $\R^4$ are denoted by $(x_0,x_1,x_2,x_3)$ or $(t,x_1,x_2,x_3)$, depending on convenience.
This time $\d$ and $\d^2$ denote the boundary and the edges of $\R\times D$, respectively, and we set
$Q=\{x\in\R^4:\,0\leq x_3\leq x_1\leq x_2\leq 1\}$ as well as $Q_\d=\pi_\d Q$.
Similarly as before, on $Q$, $|x|_\d=x_3$, $|x|_{\d^2}=x_1$, and $\pi_\d (x_0,x_1,x_2,x_3)=(x_0,x_1,x_2,0)$.
Below we collect a few properties of H\"older spaces and their weighted variants.

First recall that for $\alpha<0$, the H\"older space $\CC^\alpha$ on $\R^d$ is defined as the space of distributions $u$ that satisfy the bounds
\begin{equ}\label{eq:holder def0}
u(\psi_y^\lambda)\leq C \lambda^\alpha\;,
\end{equ}
for some $C$, uniformly in $y$ over compacts, $\lambda\in(0,1]$, and appropriately normalised test functions.
Here and below $\psi_y^\lambda$ is the rescaling (to scale $\lambda$) and recentering (around $y$) of $\psi$ in such a way that its integral is the same as that of $\psi$. More details can be found e.g. in \cite[Sec~3]{H0}.
Keeping in mind that the lower dimensional set $\d$ admits a different scaling, it is natural to define $\CC^\alpha(\d)$ as the set of distributions that vanish on test functions whose support does not intersect $\d$ and satisfy the bounds 
\begin{equ}
u(\psi_y^\lambda)\leq C \lambda^{\alpha-1}.
\end{equ}
The best proportionality constant $C$ is also denoted by $\|u\|_{\CC^\alpha(\d)}$.
Alternatively, we may put the extra scaling factor to the test function: for any set $S$ with a well-defined (scaled) dimension $\frm$,
we set $\psi_y^{\lambda,S}=\lambda^{\frs-\frm}\psi_y^\lambda$, in which case the required bound takes the more natural form $u(\psi_y^{\lambda,\d})\leq C \lambda^{\alpha}$. 
The definition of $\CC^\alpha(\d)$ for $\alpha\in[0,1)$ is straightforward. Extending the scale to $\alpha\geq 1$ is more delicate, but we do not need that generality.
Note that it follows by definition that, for distributions $u$ supported on $\d$,
\begin{equ}\label{eq:extending from boundary easy}
\|u\|_{\CC^{(\alpha\wedge0)-1}(\R^d)}\leq\|u\|_{\CC^\alpha(\d)}.
\end{equ}
A particular special case is the `Dirac distribution' on $\d$ given by
\begin{equ}\label{eq:dirac}
\scal{\delta_\partial,\varphi}:=\int_{\partial }\varphi(x)\,dx.
\end{equ}
\begin{remark}
Although $\delta_\d$ is a distribution with H\"older regularity $-1$, it can be multiplied with functions $f$ well short of being $\CC^1$ by setting
\begin{equ}
\scal{\delta_\partial f,\varphi}:=\int_{\partial }f(x)\varphi(x)\,dx.
\end{equ}
This expression is perfectly meaningful if $f:\d\to\R$ is integrable or if $f: \R^d\to\R$ has an integrable trace on $\d$.
\end{remark}

For weighted H\"older spaces we take a domain $S \subset \R^d$ and a `boundary' $P \subset S$, with scaled codimension (as a subset of $S$) $k$.
The few cases of interest to us are the triples $(S,P,k)$ given by $(\R^d,\d,1)$, $(\R^d,\d^2,2)$, $(\d,\d^2,1)$, and in the spatiotemporal case $(\R^d,P_0,2)$.
Furthermore, as mentioned before, we often replace $\R^d$ by $Q$ and $\d$ by $Q_\d$, respectively.
For $\eta\leq\alpha\leq 0$ we define the space $\CC^{\alpha,\eta}_P(S)$ as the set of distributions on $\R^d\setminus P$ that vanish on test functions whose support does not intersect $S$ and satisfy the bounds
\begin{equ}
u(\psi_x^{\lambda,S})\leq C \lambda^\alpha |x|_P^{\eta-\alpha}
\end{equ}
uniformly in $x\in S\setminus P$, $\lambda\in(0,\half|x|_{P}]$, and normalised test functions $\psi$ with support in the unit ball.
For $\eta\leq\alpha\in(0,1)$ and $\eta\leq 0$, we set $\CC^{\alpha,\eta}_P(S)$ to be
the set of functions that belong to $\CC^{0,(\eta\wedge0)}_P(S)$ and satisfy the bounds
\begin{equ}
|u(x)-u(y)|\leq C \|x-y\|^\alpha|x|_P^{\eta-\alpha}
\end{equ}
uniformly in $x,y\in S\setminus P$ such that $\|x-y\|\leq \half|x|_P$.

The first important property of such spaces is that whenever the singularity is integrable, that is when $\eta>-k$, 
they are canonically included in the space of distributions on all of $\R^d$.
In our setting this can be formulated in the following way, which is a small modification of e.g.\ \cite[Prop~6.9]{H0}, \cite[Prop~2.15]{GH17}:
\begin{proposition}\label{prop:weighted holder extend}
Let $(S,P,k)$ be as above and let $-k<\eta\leq\alpha< 1$. Then the space $\CC^{\alpha,\eta}_P(S)$ canonically 
embeds into $\CC^{\eta}(S)$ in the sense that, for every $\zeta \in \CC^{\alpha,\eta}_P(S)$ there exists a unique 
$\hat \zeta \in \CC^{\eta}(S)$ such that $\zeta(\phi) = \hat \zeta(\phi)$ for all test functions $\phi$ with $\supp \phi \cap P = \emptyset$.
\end{proposition}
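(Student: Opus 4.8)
The plan is to construct $\hat\zeta$ by a partition-of-unity / dyadic decomposition argument, localising away from $P$ and summing, with the weight condition $\eta>-k$ ensuring absolute convergence. First I would fix a smooth partition of unity on $S\setminus P$ adapted to the distance to $P$: choose $\chi\in\cC^\infty_c$ supported in an annulus $\{1/2\le |x|_P\le 2\}$ (in the appropriate scaling) such that, writing $\chi_n(x)=\chi(2^n\,\cdot\,x)$ with the scaling dilation, one has $\sum_{n\ge n_0}\chi_n\equiv 1$ on a neighbourhood of $P$ in $S$ and each $\chi_n$ is supported where $|x|_P\sim 2^{-n}$. For a test function $\phi$ on $\R^d$ (or on $S$) we then *define* $\hat\zeta(\phi):=\sum_{n\ge n_0}\zeta(\chi_n\phi)$, where each term makes sense because $\chi_n\phi$ has support disjoint from $P$.

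The key step is the quantitative bound. On the support of $\chi_n$ the function $\chi_n\phi$ is, after rescaling to unit scale, a test function of the relevant regularity with derivative bounds losing a factor $2^{nj}$ per derivative; testing $\zeta\in\cC^{\alpha,\eta}_P(S)$ against it and using $|x|_P\sim 2^{-n}$ in the defining bound $u(\psi^{\lambda,S}_x)\le C\lambda^\alpha|x|_P^{\eta-\alpha}$ gives $|\zeta(\chi_n\phi)|\lesssim 2^{-n\eta}\cdot 2^{-nk}\,\|\phi\|$, where the extra $2^{-nk}$ is the volume of the $n$-th annular region inside $S$ (here $k$ is precisely the scaled codimension of $P$ in $S$). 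Hence $\sum_n|\zeta(\chi_n\phi)|\lesssim\sum_n 2^{-n(\eta+k)}\|\phi\|<\infty$ exactly when $\eta+k>0$, i.e. $\eta>-k$; this yields both convergence of the series and the bound $|\hat\zeta(\phi)|\lesssim\|\phi\|_{\cC^r}$ for a finite $r$, and the more refined localised estimate $\hat\zeta(\psi^{\lambda,S}_y)\lesssim\lambda^\eta$ (summing only over the $O(1)$ scales $2^{-n}\gtrsim\lambda$ near $y$, plus the tail), which is the bound defining $\cC^\eta(S)$. For $y$ far from $P$ one recovers the $\cC^\alpha\subset\cC^\eta$ bound directly.

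I would then check the two remaining points. \emph{Independence of the partition of unity / well-definedness:} if $\tilde\chi_n$ is another such family, the difference $\sum_n\zeta((\chi_n-\tilde\chi_n)\phi)$ telescopes and the same geometric estimate shows it vanishes; equivalently, $\hat\zeta$ restricted to $\phi$ with $\supp\phi\cap P=\emptyset$ agrees with $\zeta$ because then only finitely many terms are nonzero and they sum to $\zeta(\phi)$ by the partition property — this also gives the stated compatibility $\zeta(\phi)=\hat\zeta(\phi)$. \emph{Uniqueness:} if $\hat\zeta_1,\hat\zeta_2\in\cC^\eta(S)$ both extend $\zeta$, their difference is a distribution in $\cC^\eta(S)$ supported on $P$; since $P$ has scaled codimension $k$ and $\eta>-k$, testing against $\psi^{\lambda,S}_y$ for $y\in P$ forces the difference to be $0$ (a distribution of regularity $>-k$ cannot be supported on a set of codimension $k$ — this is the standard argument, cf. \cite[Prop~6.9]{H0}).

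The main obstacle is bookkeeping the scalings correctly: there are two distinct scalings in play (the ambient parabolic/Euclidean one and the induced one on the lower-dimensional $S=\d$ or $Q_\d$), and the test-function normalisation $\psi^{\lambda,S}_y=\lambda^{\frs-\frm}\psi^\lambda_y$ must be tracked through the decomposition so that the exponent in the geometric series comes out as $\eta+k$ and not something off by the codimension. Once the scaling conventions from Section~\ref{sec:prepare} are pinned down, the estimates are routine and essentially identical to \cite[Prop~6.9]{H0} and \cite[Prop~2.15]{GH17}, which is why I would present this largely by reference, spelling out only the annular volume factor $2^{-nk}$ that encodes the role of $k$.
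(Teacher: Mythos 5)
Your proposal is correct and is essentially the standard dyadic-annulus extension argument that the paper itself invokes only by reference to \cite[Prop~6.9]{H0} and \cite[Prop~2.15]{GH17}: the geometric series with exponent $\eta+k>0$ coming from the $2^{-nk}$ annular volume factor, agreement with $\zeta$ away from $P$, and uniqueness via the fact that a distribution of regularity above $-k$ cannot be supported on a set of scaled codimension $k$. Apart from a harmless slip in which range of scales $2^{-n}$ contributes to a test function at scale $\lambda$ near $P$ (it is $2^{-n}\lesssim\lambda$), the argument matches the intended proof.
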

Note that in the `opposite' direction one has the trivial inclusion $\CC^{\eta}(S)\subset\CC^{\eta,\eta}_P(S)$.
When the singularity is non-integrable, there is in some cases still a 
natural way to obtain a distribution on $\R^d$.
\begin{proposition}\label{prop:boundary correction}
Let $\eta\in(-2,-1)$. For $u\in \CC_{\d}^{0,\eta}(Q)$ define the distribution $\scR u$ by
\begin{equ}\label{eq:boundary correction}
(\scR u)(\psi):=\int_{Q} u(x)\big(\psi(x)- \psi(\pi_\d x)\big)\,dx,\quad\psi\in\cC_c^\infty(Q).
\end{equ}
Then the mapping $u\mapsto\scR u$ is continuous from $\CC_{\d}^{0,\eta}(Q)$ to $\CC^{\eta}(Q)$.
\end{proposition}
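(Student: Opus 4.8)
Here is the plan.

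The plan is to check directly that $\scR u$ satisfies the testing bound defining $\CC^\eta(Q)$, with proportionality constant a multiple of the $\CC^{0,\eta}_\d(Q)$-norm of $u$, which we abbreviate by $\|u\|$; since $u\mapsto\scR u$ is manifestly linear, this is exactly the asserted continuity, and the same convergence estimate will show that the defining formula (a shorthand for a dyadic sum) makes sense. Throughout, $\frs$ is the scaled dimension ($3$ or $5$), $B(x,\mu)$ a ball in the corresponding scaling, and we use that on $Q$ one has $|x|_\d=x_3$, that $\pi_\d$ zeroes out the last coordinate, and that $\d$ has scaled codimension $1$. Fix $z\in Q$, $\lambda\in(0,1]$, a normalised test function with rescaled-recentred copy $\psi^\lambda_z$, and set $r=|z|_\d$ and $F(x):=\psi^\lambda_z(x)-\psi^\lambda_z(\pi_\d x)$. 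If $r\ge 2\lambda$ then $B(z,\lambda)$ misses $\d$, so $\psi^\lambda_z\circ\pi_\d\equiv0$ on $Q$ and $(\scR u)(\psi^\lambda_z)=u(\psi^\lambda_z)\lesssim\|u\|\,r^{\eta}\lesssim\|u\|\,\lambda^{\eta}$ using $\eta<0$. The substance is the regime $r<2\lambda$, where the shadow $\pi_\d B(z,\lambda)$ is nonempty, and there I would decompose $F$ dyadically in $x_3$.

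First I would treat the layers $\{x\in Q:x_3\sim 2^{-n}\lambda\}$, $n\ge0$, lying at distance $\le\lambda$ from $\d$. On such a layer I would cover the relevant support of $F$ by $\cO(2^{n(\frs-1)})$ balls of radius $\mu_n\sim 2^{-n}\lambda$ centred at points $x_i$ with $|x_i|_\d\in[2\mu_n,4\mu_n]$, take a subordinate smooth partition of unity $\sum_i\chi_i$, and write this part of $(\scR u)(\psi^\lambda_z)$ as $\sum_iu(\chi_iF)$. The decisive point — this is where the first-order vanishing of $F$ on $\d$ is used — is that $\|F\|_{L^\infty}\lesssim|x|_\d\,\|\nabla\psi^\lambda_z\|_\infty\sim 2^{-n}\lambda^{-\frs}$ on this layer, while $\|\partial^\alpha(\chi_iF)\|_\infty$ has no gain (a top-order derivative may fall on $\chi_i$, or be a normal derivative of $F$), so that $\chi_iF$ equals $c_i$ times a normalised test function at scale $\mu_n$ with $c_i\lesssim 2^{-n}\lambda^{-\frs}\mu_n^{\frs}\sim 2^{-n(\frs+1)}$. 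Hence $u(\chi_iF)\lesssim c_i\|u\||x_i|_\d^{\eta}$, and the total over this layer is $\lesssim 2^{n(\frs-1)}\cdot 2^{-n(\frs+1)}(2^{-n}\lambda)^{\eta}\|u\|=\|u\|\,\lambda^{\eta}\,2^{-n(2+\eta)}$; summing over $n\ge0$ converges precisely because $\eta>-2$.

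Next I would treat the columns above the shadow, $\{x\in Q:x_3\sim 2^m\lambda\}$ with $m\ge1$ (only finitely many occur since $x_3\le x_1\le1$), where $F(x)=-\psi^\lambda_z(\pi_\d x)$ is constant in $x_3$ and varies at scale $\lambda$ in the directions along $\d$. The important twist is that here one must cover by $\cO(2^m)$ balls of radius $\sim\lambda$ — \emph{not} $2^m\lambda$ — centred at height $\sim 2^m\lambda$ (admissible since $\lambda\le\tfrac12\,2^m\lambda$), on each of which $\chi_iF$ is $\cO(1)$ times a normalised test function at scale $\lambda$; this gives a contribution $\lesssim 2^m(2^m\lambda)^{\eta}\|u\|=\|u\|\,\lambda^{\eta}\,2^{m(1+\eta)}$, and summing over $m\ge1$ converges precisely because $\eta<-1$. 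The transition layer $x_3\sim\lambda$, where both terms of $F$ can be present, is absorbed into either family via the crude bound $|F|\lesssim\lambda^{-\frs}$. Adding the two families yields $|(\scR u)(\psi^\lambda_z)|\lesssim\|u\|\,\lambda^{\eta}$, as required.

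I expect the real work to be twofold. First, the normalisation bookkeeping that extracts the gain $c_i\sim 2^{-n(\frs+1)}$ near $\d$: one genuinely needs the first-order vanishing $F=O(|x|_\d)$ together with sharp scaled derivative bounds, paired against the correct count $\cO(2^{n(\frs-1)})$ of unit-scale pieces per layer — and it is reassuring that the two endpoint restrictions on $\eta$ (at $-2$ and at $-1$) are consumed, one each, by the two families of layers, matching the hypothesis $\eta\in(-2,-1)$ exactly. Second, the geometry of the covering: using the layer width $2^{-n}\lambda$ near $\d$ but scale $\lambda$ for the columns, since with radius $2^m\lambda$ the derivative bounds on $\chi_iF$ would blow up. A further, routine point is that near $\d^2$ and the auxiliary faces of $Q$ the covering balls and partition of unity must be taken inside $Q$; since $Q$ is Lipschitz (indeed piecewise flat) this is handled as usual, e.g.\ by reflection across those faces. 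Finally, the absolute convergence just established also shows that $(\scR u)(\psi)$ is well defined and independent of the chosen dyadic decomposition.
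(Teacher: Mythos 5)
Your proof is correct, and the case split is the same as the paper's ($\lambda\lessgtr\tfrac12|z|_\d$, with the far regime reducing to the defining bound on $u$ and the near regime exploiting the first-order vanishing of $\psi_z^\lambda-\psi_z^\lambda\circ\pi_\d$ at $\d$). The execution differs in one substantive way: the paper treats $u$ via the pointwise bound $|u(x)|\lesssim|x|_\d^\eta$ and estimates the near-boundary regime by a single integral of $|x|_\d^{1+\eta}\lambda^{-\frs-1}$ over $\supp\psi_z^\lambda$, invoking only $1+\eta>-1$; you instead pair $u$ against rescaled test functions on a dyadic cover, which uses nothing beyond the distributional definition of $\CC^{0,\eta}_\d(Q)$. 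Your version is also more complete on one point the paper's two-line estimate leaves implicit: the ``column'' region above the shadow $\pi_\d\supp\psi_z^\lambda$, where $\psi_z^\lambda$ vanishes but $\psi_z^\lambda\circ\pi_\d$ does not. There the mean-value bound is useless and one must use the supremum bound on $\psi_z^\lambda\circ\pi_\d$ together with $\eta<-1$ for the sum (or integral) over heights $2^m\lambda$ to converge; your accounting correctly shows that the two endpoints of the hypothesis $\eta\in(-2,-1)$ are consumed by the two families of layers, which is exactly the right diagnosis and slightly sharper than what the paper writes down.
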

\begin{remark}
Note that while the projection $\pi_P$ may be not be well-defined on a set of measure $0$, the integral in \eqref{eq:boundary correction} is well-defined.
\end{remark}
\begin{proof}
Since $\scR$ is linear, it suffices to show that it is bounded. 
Take $u\in \CC_{\d}^{0,\eta}(Q)$ with norm $1$, 
and a test function of the form $\psi_y^\lambda$.
We distinguish two cases, depending on whether $\lambda\lessgtr\half|y|_{\d}$. If $\lambda\leq \half|y|_{\d}$, then one simply has
\begin{equ}
|\scR u(\psi_y^\lambda)|=|u(\psi_y^\lambda)|\leq |y|_{\d}^{\eta}\lesssim \lambda^{\eta}.
\end{equ}
If $\lambda\geq \half|x|_{\d }$, then estimating $\psi_y^\lambda(x)- \psi_y^\lambda(\pi_\d x)$ by $|y|_\d|\nabla\psi_y^\lambda|$ yields the bound
\begin{equ}
|\scR u_\eps(\psi_y^\lambda)|\lesssim\int_{Q\cap\supp\psi_y^\lambda}
|x|_{\d }^{\eta}|x|_{\d }\lambda^{-\frs-1}\,dy
\lesssim  \lambda^{\eta}.
\end{equ}
using that $1+\eta>-1$ implies that $|x|_{\d}^{1+\eta}$ is integrable. This finishes the proof.
\end{proof}
\begin{remark}\label{rem:mini}
If $u$ happens to be a (uniformly) smooth function, then the extension
defined by \eqref{eq:boundary correction} differs from the `obvious' one by $\delta_P f$ with some smooth function $f$ on $P$. However, $u$ and $\scR u$ always coincide on test functions supported away from the boundary.
\end{remark}
Multiplying weighted distributions follows the usual rules in the regularity exponent, while the behaviour of the weight exponent can be read out from e.g. \cite[Prop~6.12]{H0}.
\begin{proposition}\label{prop:weighted holder mult}
Let $(S,P,k)$ be as above, $\eta\leq\alpha\leq 0$ and $\bar \eta\leq \bar\alpha<1$, such that $\alpha+\bar\alpha>0$.
Then the multiplication map is continuous from $\CC_P^{\alpha,\eta}(S)\times\CC_P^{\bar\alpha,\bar\eta}(S)$ to $\CC_P^{\alpha,(\alpha+(\bar\eta\wedge0))\wedge\eta}(S)$.
\end{proposition}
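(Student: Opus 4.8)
The plan is to reduce everything to testing against rescaled bump functions $\psi_x^{\lambda,S}$ and to split the analysis according to the position of the test function relative to $P$, exactly as in the proof of Proposition~\ref{prop:boundary correction}. Write $\zeta\in\CC_P^{\alpha,\eta}(S)$ and $\bar\zeta\in\CC_P^{\bar\alpha,\bar\eta}(S)$, both of norm $1$. Since $\alpha+\bar\alpha>0$ and $\bar\alpha\in(0,1)$, the product $\zeta\cdot\bar\zeta$ is defined via the usual Bony/paraproduct decomposition (or equivalently via the local estimates of \cite[Thm~4.7]{H0}), so the only thing to check is that the weighted bounds survive. First I would record the two deterministic estimates that feed into the multiplication: on the annulus $\|x-y\|\sim\mu$ with $\mu\le\tfrac12|x|_P$ one has $|\zeta(\psi_x^{\mu,S})|\lesssim\mu^\alpha|x|_P^{\eta-\alpha}$, and since $\bar\alpha>0$, $\bar\zeta$ is a genuine function with $|\bar\zeta(x)|\lesssim|x|_P^{\bar\eta\wedge0}$ and Hölder modulus $|x|_P^{\bar\eta-\bar\alpha}$ at scale $\|x-y\|\le\tfrac12|x|_P$. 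The point of the target exponent $(\alpha+(\bar\eta\wedge0))\wedge\eta$ is then visible: the ``small scale'' contribution near $P$ multiplies the worst blow-up $|x|_P^{\eta-\alpha}$ of $\zeta$ by the size $|x|_P^{\bar\eta\wedge0}$ of $\bar\zeta$, giving weight exponent $\alpha+(\bar\eta\wedge0)$, while far from $P$ one simply recovers the ambient multiplication bound with exponent $\eta$ (using $\bar\zeta\in\CC^{\bar\alpha}\hookrightarrow\CC^0$ locally there), hence the $\wedge\eta$.

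Concretely, fix $x\in S\setminus P$ and $\lambda\le\tfrac12|x|_P$ and test $(\zeta\bar\zeta)(\psi_x^{\lambda,S})$. I would decompose $\bar\zeta=\bar\zeta(x)+(\bar\zeta-\bar\zeta(x))$ on the support of $\psi_x^{\lambda,S}$; the constant piece contributes $\bar\zeta(x)\,\zeta(\psi_x^{\lambda,S})$, bounded by $|x|_P^{\bar\eta\wedge0}\cdot\lambda^\alpha|x|_P^{\eta-\alpha}=\lambda^\alpha|x|_P^{(\alpha+(\bar\eta\wedge0))-\alpha}$, which is the desired bound with $\alpha$ in the regularity slot. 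For the remainder one uses that $\bar\zeta-\bar\zeta(x)$ vanishes to order $\bar\alpha$ at $x$ together with the paraproduct estimate, which costs $\lambda^{\bar\alpha}$ but improves the $\zeta$-testing to scale $\lambda$; since $\alpha+\bar\alpha>0$ this term is lower order in $\lambda$ and carries at least as good a weight. When instead $\lambda>\tfrac12|x|_P$ — which only occurs once one invokes the canonical extension — one is in the regime covered by Proposition~\ref{prop:boundary correction}/\ref{prop:weighted holder extend}: the weighted norms at scales below $|x|_P$ patch together, via a dyadic sum $\sum_{2^{-n}\le\lambda}$ over annuli, into an ordinary $\CC^{(\alpha+(\bar\eta\wedge0))\wedge\eta}$ bound, the sum converging precisely because the weight exponent is the one claimed (and, in the non-integrable range, because of the $\scR$-type cancellation already built into the definition of the product as a distribution on $\R^d$). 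Finally I would note that when $\bar\alpha\le0$ the statement is the standard weighted multiplication of \cite[Prop~6.12]{H0} and requires no boundary-specific argument; the content here is purely the case where one factor is an honest (weighted-continuous) function.

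The main obstacle I anticipate is bookkeeping rather than a conceptual difficulty: one must be careful that the recentring in $\psi_x^{\lambda,S}$ and the extra scaling factor $\lambda^{\frs-\frm}$ are handled consistently when $S=\d$ (or $S=Q_\d$) carries its own lower scaled dimension, since then both $\zeta$ and $\bar\zeta$ are distributions on a lower-dimensional set and the ``ambient'' space against which $\scR$ extends is $\R^d$ with the full scaling — so the exponents $\alpha,\eta$ etc. are measured with the $S$-intrinsic normalisation while the conclusion $\CC_P^{\alpha,\dots}(S)$ must be read with the same convention. I would isolate this once at the start by fixing notation for $\psi_x^{\lambda,S}$ uniformly, after which the estimates above go through verbatim in all four triples $(S,P,k)$ of interest. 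The only genuinely delicate point is ensuring $\alpha+\bar\alpha>0$ is used in exactly the right place, namely to guarantee that the ``resonant'' product term (the one that would otherwise obstruct multiplying two rough distributions) is absent or convergent; this is why the hypothesis cannot be weakened and why no boundary renormalisation enters at the level of this proposition.
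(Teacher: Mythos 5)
The paper offers no proof of this proposition beyond the pointer to \cite[Prop~6.12]{H0}, so there is nothing to compare your argument against line by line; your direct route via the decomposition $\bar\zeta=\bar\zeta(x)+(\bar\zeta-\bar\zeta(x))$ and local Young multiplication (valid because $\alpha+\bar\alpha>0$ forces $\bar\alpha>0$, so $\bar\zeta$ is a function) is the standard one and is sound in outline. Two remarks, the first of which is substantive.

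The displayed equality $|x|_P^{\bar\eta\wedge0}\cdot\lambda^\alpha|x|_P^{\eta-\alpha}=\lambda^\alpha|x|_P^{(\alpha+(\bar\eta\wedge0))-\alpha}$ is an algebra slip: the left-hand side is $\lambda^\alpha|x|_P^{\eta+(\bar\eta\wedge0)-\alpha}$, and since $\eta\le\alpha$ one has $\eta+(\bar\eta\wedge0)\le(\alpha+(\bar\eta\wedge0))\wedge\eta$, with equality only when $\eta=\alpha$ or $\bar\eta\ge0$. So what your computation actually yields is the weight exponent $\eta+(\bar\eta\wedge0)$, strictly weaker than the one in the statement whenever $\eta<\alpha$ and $\bar\eta<0$. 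This is not a gap you can close by working harder: taking $\zeta=|x|_P^{\eta}$ and $\bar\zeta=|x|_P^{\bar\eta}$ shows the product is genuinely no better than weight $\eta+\bar\eta$, so the exponent one can prove is $\eta+(\bar\eta\wedge0)$ — which is also what \cite[Prop~6.12]{H0} gives upon specialising its formula $(\eta_1+\alpha_2)\wedge(\alpha_1+\eta_2)\wedge(\eta_1+\eta_2)$ to $\alpha_2=0$, $\eta_2=\bar\eta\wedge0$. The discrepancy is invisible in this paper because every application has $\eta=\alpha$, but your write-up should either restrict to that case or state the exponent it actually establishes rather than asserting the equality above.

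Second, the entire discussion of scales $\lambda>\half|x|_P$ is unnecessary and slightly confused: by definition, membership in $\CC^{\alpha,w}_P(S)$ only requires the product to be a distribution on $\R^d\setminus P$ satisfying bounds at scales $\lambda\le\half|x|_P$, and extension across $P$ is the separate content of Propositions \ref{prop:weighted holder extend} and \ref{prop:boundary correction} — there is no $\scR$-type cancellation ``built into'' the product itself. Dropping that paragraph both shortens and corrects the argument. (Also, the case $\bar\alpha\le0$ you mention at the end cannot occur under the hypotheses.)
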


Finally, in some examples like $\Phi^4_3$ we have two boundaries with \emph{three} different singularities, one at each boundary and one at their intersection.
For such a setup we take $S$ as before but now with two boundaries $P_0$ and $P_1$. We only ever encounter situations when their codimensions are $2$ and $1$ respectively, and the codimension of their intersection is $3$.
We take a `weight triple' $w=(\eta,\sigma,\mu)$, and a regularity exponent $\alpha<1$. We always assume $\eta,\sigma,\mu\leq \alpha$, as well as $\mu\leq 0\wedge\eta\wedge\sigma$.
Then for $\alpha\leq 0$ we define the space $\CC^{\alpha,w}_{P_0,P_1}(S)$ as the set of distributions on $\R^d\setminus (P_0\cup P_1)$ that vanish on test functions whose support does not intersect $S$ and satisfy the bounds
\begin{equ}
u(\psi^{\lambda,S}_x)\leq C \lambda^{\alpha}|x|_{P_0}^{\eta-\alpha}|x|_{P_1}^{\mu-\eta}
\end{equ}
uniformly over $\lambda\leq \half|x|_{P_0}\leq \quarter|x|_{P_1}$ and
\begin{equ}
u(\psi^{\lambda,S}_x)\leq C \lambda^{\alpha}|x|_{P_1}^{\sigma-\alpha}|x|_{P_0}^{\mu-\sigma}
\end{equ}
uniformly over $\lambda\leq \half|x|_{P_1}\leq \quarter|x|_{P_0}$.
\begin{remark}
This definition is a slight refinement of the one in \cite[Def~4.7]{GH17}. Indeed, if we denote the (only $3$-parameter) spaces therein by $\tilde\CC^{w}_{P_0,P_1}$, then as long as $\eta>-2$ and $\sigma>-1$, $\CC^{\alpha,w}_{P_0,P_1}$ embeds into $\tilde\CC^{w}_{P_0,P_1}$.
\end{remark}
For $\alpha\in(0,1)$, 
we set $\CC^{\alpha,w}_{P_0,P_1}(S)$ to be
the set of functions that belong to $\CC^{0,(\eta\wedge0,\sigma\wedge0,\mu)}_P(S)$ and satisfy the bounds
\begin{equ}
|u(x)-u(y)|\leq C \|x-y\|^\alpha|x|_{P_0}^{\eta-\alpha}|x|_{P_1}^{\mu-\eta}
\end{equ}
uniformly in $x,y\in S\setminus (P_0\cup P_1)$ such that $\|x-y\|\leq\half |x|_{P_0}\leq\quarter|x|_{P_1}$ and the corresponding symmetric bounds near $P_1$.
The following properties either follow directly from the definition or are straightforward adaptations of some simple results of \cite{GH17}.
\begin{proposition}\label{prop:weighted holder2}
Consider the above setting of $S$, $P_0$, $P_1$, $w$, and $\alpha$.
\begin{enumerate}[(i)]
\item If $\sigma>0$, then the trace function $\Tr_{P_1}$ onto $P_1$ maps $\CC_{P_0,P_1}^{\alpha,w}(S)$ continuously into $\CC_{P_0\cap P_1}^{\sigma,\eta\wedge\mu}(P_1)$.
\item If $\eta>-2$, $\sigma>-1$, and $\mu>-3$, then the space $\CC^{\alpha,w}_{P_0,P_1}(S)$ continuously embeds into $\CC^{\eta\wedge\sigma\wedge\mu}(S)$.
\item If $\alpha,\bar\alpha\in(0,1)$ and $\eta,\sigma\leq0$, then the multiplication map is continuous from $\CC_{P_0}^{\alpha,\eta}(S)\times\CC_{P_1}^{\alpha,\sigma}(S)$ to $\CC_{P_0,P_1}^{\alpha,(\eta,\sigma,\eta+\sigma)}(S)$.
\end{enumerate}
\end{proposition}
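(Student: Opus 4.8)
The plan is to treat the three items separately, in each case reducing to the corresponding single-boundary statement already available (from \cite{GH17}, or obtained by combining Propositions \ref{prop:weighted holder extend} and \ref{prop:weighted holder mult} above), and localizing in space according to which of the two boundaries $P_0$, $P_1$ one is close to. The basic geometric observation is that $S\setminus(P_0\cup P_1)$ decomposes (up to a bounded factor in all relevant weights) into a region where $|x|_{P_0}\lesssim|x|_{P_1}$ — on which only the $(\eta,\mu)$-behaviour near $P_0$ is active and $|x|_{P_1}$ is comparable to a fixed length — and a symmetric region where $|x|_{P_1}\lesssim|x|_{P_0}$. Wherever a distribution is tested at scale $\lambda$ around $x$, the support of the test function lives in a ball of radius $\lambda\leq\half|x|_{P_i}$, so it cannot see the `other' boundary; this is exactly what lets one import the single-boundary estimates.

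For (i), I would fix $y\in P_1\setminus P_0$ and define $\Tr_{P_1}u$ at $y$ by testing $u$ against test functions on $S$ concentrated transversally to $P_1$; since $\sigma>0$ the single-boundary trace theory (the $P_1$-analogue of the classical trace onto a codimension-$1$ set, cf.\ the reconstruction/trace results underlying \cite[Def~4.7]{GH17}) shows this limit exists and, restricting to the region $\lambda\leq\half|y|_{P_1}$ near $P_1$, inherits the bound $\lambda^{\sigma-\alpha}$ from the $\lambda^\alpha|x|_{P_1}^{\sigma-\alpha}$ factor — but only after one has controlled, uniformly, the two sub-regimes $|y|_{P_0}\lessgtr|y|_{P_1}$, which produces exactly the weight $|y|_{P_0\cap P_1}^{(\eta\wedge\mu)-\sigma}$ on $P_1$ (when close to $P_0\cap P_1$ the relevant ambient weight is $|x|_{P_0}^{\eta-\alpha}$ resp.\ $|x|_{P_0}^{\mu-\sigma}$, and one keeps the worse of the two). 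For (ii), the content is that the three local singularities are all integrable, so Proposition \ref{prop:weighted holder extend} (one-boundary version) applied in each of the two regions glues to a global distribution; the only thing to check is consistency of the two local extensions on the overlap, where $|x|_{P_0}\sim|x|_{P_1}$, and that the resulting global object tests against a generic $\psi_y^\lambda$ with the bound $\lambda^{\eta\wedge\sigma\wedge\mu}$ — this follows by splitting $\supp\psi_y^\lambda$ according to distance to each boundary and summing a geometric series in dyadic shells, using $\eta>-2$, $\sigma>-1$, $\mu>-3$ precisely to make each shell-sum converge. For (iii), given $f\in\CC_{P_0}^{\alpha,\eta}(S)$ and $g\in\CC_{P_1}^{\alpha,\sigma}(S)$ (both honest functions since $\alpha,\bar\alpha>0$), the product $fg$ is a function, and one simply estimates $|(fg)(x)-(fg)(y)|$ by the Leibniz-type splitting $|f(x)-f(y)||g(x)|+|f(y)||g(x)-g(y)|$, inserting the respective weighted Hölder bounds and using $|x|_{P_i}\lesssim\|x-y\|+\dots$ to absorb the cross terms; in the regime $\lambda\leq\half|x|_{P_0}\leq\quarter|x|_{P_1}$ the factor $|x|_{P_1}$ is comparable to a length bounded below, so the only genuine weight blowup is $|x|_{P_0}^{\eta-\alpha}$, and near the corner one reads off the weight $|x|_{P_0\cap P_1}^{\eta+\sigma}$ from multiplying $|x|_{P_0}^{\eta}|x|_{P_1}^{\sigma}$, giving the claimed $(\eta,\sigma,\eta+\sigma)$.

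The main obstacle I expect is item (i): making the transversal trace onto the codimension-$1$ set $P_1$ rigorous while simultaneously tracking the singular weights at $P_0$ and at $P_0\cap P_1$. One has to verify that the Cauchy sequence defining the trace converges \emph{uniformly} in $y$ up to the corner set, with the correct rate, and that the limiting object on $P_1$ satisfies a two-parameter weighted bound with the \emph{right} corner exponent $\eta\wedge\mu$ rather than something lossy; this requires care in the overlap region $|y|_{P_0}\sim|y|_{P_1}$ where neither single-boundary description is cleanly dominant. Items (ii) and (iii) are, as the statement says, either immediate from the definitions or routine dyadic-decomposition arguments parallel to those already appearing in \cite{GH17}, so I would present them briefly and spend the bulk of the argument on (i).
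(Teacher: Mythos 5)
The paper gives no proof of this proposition at all --- it is stated with the preceding remark that the properties ``either follow directly from the definition or are straightforward adaptations of some simple results of [GH17]'' --- so your plan fills a deliberate gap, and it does so correctly: localizing to the two regimes $|x|_{P_0}\lessgtr|x|_{P_1}$ and importing the single-boundary estimates is exactly the intended routine argument, and your exponent bookkeeping in (ii) and (iii) checks out. Item (i) is in fact less delicate than you fear: since $\mu\le 0\wedge\eta\wedge\sigma$ by the standing assumption the corner exponent $\eta\wedge\mu$ is just $\mu$, and the trace at $z\in P_1$ is obtained by approaching $P_1$ transversally, so one always works in the regime $|x|_{P_1}\ll|x|_{P_0}$ where only the second defining bound (with weight $|x|_{P_1}^{\sigma-\alpha}|x|_{P_0}^{\mu-\sigma}$) is ever invoked --- the overlap region $|x|_{P_0}\sim|x|_{P_1}$ never enters, and the only residual care needed is the standard chaining argument to reconcile the constants $\tfrac12$ versus $\tfrac14$ in the admissible ranges of $\lambda$ and $\|x-y\|$.
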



\subsection{Kernel bounds}\label{sec:kernel computations}
Next we derive some general bounds on integrals involving singular kernels.
The two important quantities for our bounds are the scaled dimension $\frs$ and the ``blowup'' of the kernel that is denoted by $\frb>0$.
We are looking at a very specific blowup scenario in which we assume
\begin{equ}[e:constraint]
\frb\leq\frs-1,\qquad 2\frb-\frs=1. 
\end{equ}
In the two examples of the paper, we will have $\frs=3,\frb=2$ (PAM), and $\frs=5,\frb=3$ ($\Phi^4$).
Typically the kernels we work with are \emph{not} translation invariant, which motivates the following definition.
Let $\scG$ be the class of functions $G:\R^d\times\R^d\to\R$ that admit a decomposition
\begin{equ}
G(x,y)=\sum_{n\in\N}G^n(x,y),
\end{equ}
and such that there exists a reflection $T:\R^d\to\R^d$, two sets $A_1, A_2\subset \R^d$,
and a constant $C>0$ such that:
\begin{claim}
\item $G^n$ is supported in $\{(x,y) \in A_1 \times A_2\,:\, \|x-Ty\|\le C 2^{-n}\}$;
\item one has the bounds $|D_1^kD_2^\ell G^n(x,y)|\le C 2^{n(\frb+|k|+|\ell|)}$ for all 
$x\in\intr A_1$, $y\in \intr A_2$ and all multiindices $k,\ell$ with $|k|\leq 1$, $|\ell|\leq 1$.
\end{claim}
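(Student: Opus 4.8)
I read the target statement as the assertion that the concrete kernels of the paper lie in the class $\scG$: for \eqref{e:PAM} the kernel $(x,y)\mapsto\nabla_x G(x,y)$, where $G$ is the Neumann Green's function on $D$ (so $\frs=3$, $\frb=2$; it is $\nabla Y_\eps$, not $Y_\eps$, that enters the nonlinearity of \eqref{e:etPAM}), and for \eqref{e:Phi43} the Robin heat kernels $\cG_{3a}$ on $\R\times D$ ($\frs=5$, $\frb=3$); more precisely, that after localising near a single face of $D$ each such kernel splits as (bulk kernel, as in \cite{CH}) $+$ (finite sum of elements of $\scG$, one per isometry of the reflection group generated by the faces meeting at the nearest edge) $+$ (remainder, uniformly $C^1$ up to $\partial D$). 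The plan is: reduce to the translation-invariant kernel on $\R^d$ by the method of images, quote the classical dyadic decomposition for that piece, and transport it through the (isometric) reflections.

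\emph{Step 1: the free kernel.} Recall the standard fact (\cite[Sec~3]{H0}, and the kernel hypotheses underlying \cite{CH}) that the relevant constant-coefficient fundamental solution $\Phi$ — the gradient of $(-\Delta)^{-1}$ on $\R^3$, of blowup $\frb=2$ in the Euclidean scaling $\frs=3$, resp.\ the heat kernel on $\R^{1+3}$, of blowup $\frb=3$ in the parabolic scaling $\frs=5$ — admits a decomposition $\Phi=\sum_{n\in\N}\Phi_n$ with $\Phi_n$ smooth, supported in $\{\|z\|\le C2^{-n}\}$ (in the appropriate scaling), and $|D^k\Phi_n(z)|\le C2^{n(\frb+|k|)}$ with $|k|$ the scaled degree; in the parabolic case $|k|\le1$ then only involves spatial first derivatives, matching the bound to be proved. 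The constraint $\frb\le\frs-1$ makes $\Phi$ locally integrable, so the series converges in $\cD'$; for the heat kernel one truncates the Gaussian tail to get compact supports and absorbs the smooth leftover into $\Phi_0$.

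\emph{Step 2: images near a face.} Fix a face $F\subset\partial D$ and let $T$ be the reflection across its hyperplane — an isometry of the Euclidean (resp.\ parabolic) scaling. On a neighbourhood $A_1=A_2=U$ of the interior of $F$ that avoids $\partial^2 D$, the reflection principle gives $G(x,y)=\Phi(x-y)+\Phi(x-Ty)+R(x,y)$: here $\Phi(x-y)+\Phi(x-Ty)$ is the exact Neumann kernel of the half-space bounded by $F$ (its $\Delta_y$ has the single source $\delta_x$ inside $D$ and vanishing normal derivative on $F$), while $R(x,\cdot)$ solves a homogeneous equation in $D$ with smooth boundary data on the remaining faces, hence $R\in C^\infty(\bar U\times\bar U)$ with uniform bounds by elliptic boundary regularity. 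Setting $G^n(x,y):=\Phi_n(x-Ty)$ for $n\ge1$ and collecting $R$, $\Phi_0(x-Ty)$, and the bulk pieces $\Phi_n(x-y)$ appropriately, the chain rule gives $D_1^kD_2^\ell G^n(x,y)=(D^{k+\ell}\Phi_n)(x-Ty)\cdot L^{\otimes\ell}$ with $L$ the linear part of $\mp T$; since $\|L\|=1$ this yields $|D_1^kD_2^\ell G^n|\le\|D^{k+\ell}\Phi_n\|_\infty\le C2^{n(\frb+|k|+|\ell|)}$, and the support property $\{\|x-Ty\|\le C2^{-n}\}$ is immediate, so the face contribution of $G$ is an element of $\scG$. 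For $\cG_{3a}$ the image term becomes $\Phi(x-Ty)-3a\int_0^\infty e^{-3as}\Phi(x-Ty-s\nu_F)\,ds$ (with $\nu_F$ the inward normal); the integral term is one degree more regular at the reflected diagonal and decays exponentially into $D$, so it joins the remainder.

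\emph{Step 3: edges, and the main obstacle.} Near an edge or corner where faces $F_i$ meet, the half-space kernel is replaced by a finite sum $\sum_\tau\Phi(x-\tau y)$ over the group generated by the $T_{F_i}$, and the same argument produces one element of $\scG$ per $\tau$; since $\tau$ may be a rotation one either reads the definition's ``reflection'' as ``scaling isometry'', or observes that the mixed terms are harmlessly more regular on the tetrahedron $Q$ because $\{x=\tau y\}\cap(Q\times Q)\subset\partial^2 D$ — I would check which case holds and phrase the statement accordingly. The genuinely delicate point in Steps 2--3 is the remainder: after peeling off the finitely many ``free kernel composed with an isometry'' terms one must verify that what is left is $C^1$ up to $\partial D$ with bounds uniform over the localisation patches. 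For $(-\Delta)^{-1}$ this is the classical statement that the Neumann Green's function is smooth off the union of the diagonal and its reflections, obtained via a parametrix/image construction together with elliptic boundary regularity; for $\cG_{3a}$ one additionally invokes the explicit Robin-heat-kernel representation of Section~\ref{sec:robin} and the $e^{-3as}$ weight to control the correction term. Step 1 is entirely standard and poses no difficulty.
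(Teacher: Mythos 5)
The passage you are asked to address is not a lemma or proposition; it is the pair of defining conditions in the definition of the class $\scG$ in Section~2.2. There is nothing to prove here --- it is a definition --- so your proposal is answering a different question, namely ``the Green's functions and heat kernels in this paper belong to $\scG$ (after suitable decomposition).'' You flag this reading yourself at the outset, but it is a misidentification of the statement.

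On the substance of what you did prove: the paper never needs, and never claims, that the full Neumann Green's function $G$ on $D$, nor the Robin heat kernel $\cG_{3a}$, lies in $\scG$. Instead $\scG$ is deliberately tailored to single image terms. For example, in the proof of Lemma~3.2 the Green's function is first expanded by the reflection principle as $\sum_{b\in B} K^b$ with $K^b(x,\bar x)=K(x,\bar x^b)$, and only then does the paper observe, in one line, that each individual term $\nabla_x K^b(x,y)\mathbf{1}_{y\in S}$ is in $\scG$ with $T(z)=z^b$, $A_1=\R^3$, $A_2=S$. This is immediate: write $K=\sum_n K_n$ with the standard dyadic decomposition, set $G^n(x,y)=\nabla K_n(x-Ty)\mathbf{1}_{A_1\times A_2}$, and use that $T$ is an isometry so the chain rule introduces no scaling factor. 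Similarly for $\cG_{3a}$, Remark~2.11 points out that each kernel $\mathbf{1}_{x,y\in\R^4_u}\cK(x-y^r)$ fits into the $\scG$ framework with $T(y)=y^r$ and bounds uniform in $r$, and then $\cR_a$ is twice a convex combination of these. Your Step~2/Step~3 strategy of peeling off an image sum and arguing that the remainder is uniformly $C^1$ up to the boundary via elliptic regularity is therefore doing considerably more work than the paper requires, and it is also where your sketch is incomplete (the claimed uniform $C^1$ control of the remainder near edges and corners is the genuinely delicate point, and you note yourself that it would need to be checked). The shorter route --- keep the image sum explicit throughout and apply the $\scG$-bounds term by term --- is what the paper actually does, and it avoids the remainder estimate entirely.
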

A trivial but important consequence of the first point is that $G^n$ is identically $0$ for $2^{-n}\lesssim d(x,TA_1)$.
We moreover assume $A_1$, $A_2$ to be sufficiently ``nice'': for our applications it will 
be more than sufficient if we assume them to consist of a finite union of direct products 
of intervals.
If we want to emphasise the choice of parameters, we write $T(G), A_1(G), A_2(G)$.
\begin{example}\label{example1}
Consider the $1+1$-dimensional heat kernel $\cK$. The homogeneous Neumann heat kernel on the positive half line is then given by 
\begin{equ}\label{eq:example1}
\cG(t,x,t',x')=\cK(t-t',x-x')\bone_{x,x'\geq 0}+\cK(t-t',x+x')\bone_{x,x'\geq 0}.
\end{equ}
Both terms belong to $\scG$: for the first one has $T=\id$, $A_1=A_2=\R\times\R_+$, while for the second one has $T(t',x')=(t',-x')$, $A_1=A_2=\R\times\R_+$. In such a simple situation the above formalism would be an overkill, and some of the calculations below were actually performed in \cite{GH17}. In higher dimensions however, there are several different reflections that need to be handled, hence the more generic setup of $\scG$.
\end{example}
For $G\in\scG$ we define
\begin{equ}[e:mollification]
G_\eps(x,y)=\int G(x,z)\rho_\eps(y-z)\,dz.
\end{equ}
We need three basic bounds for kernels of this type.
We remark the elementary bounds, for $\delta_1,\delta_2>0$, $\alpha_1<0<\alpha_2$,
\begin{equ}
\sum_{n\in\N:\, 2^{-n}\lesssim\delta_1}2^{n\alpha_1}\lesssim \delta_1^{-\alpha_1},\qquad \sum_{n\in\N:\, 2^{-n}\gtrsim\delta_2}2^{n\alpha_2}\lesssim \delta_2^{-\alpha_2},
\end{equ}
that are repeatedly used in the proofs.
\begin{lemma}\label{lem:kernels1}
Let $G,\hat G\in\scG$ with $A_1 = \hat A_1$ and such that \eqref{e:constraint} holds.
Take $\gamma\in(0,1)$. Then for all $x,x'\in A_1$ with $|x-x'|\lesssim d(x, \hat T\hat A_2)$ one has
\begin{equs}
\int G_\eps\hat G_\eps(x,y)\,dy&\lesssim |\eps\vee d(x,\hat T\hat A_2)|^{-1};\label{eq:kernels1-a}
\\
\int G_\eps\hat G_\eps(x,y)-G_\eps\hat G_\eps(x',y)\,dy&\lesssim \|x-x'\|^\gamma|\eps\vee d(x,\hat T\hat A_2)|^{-1-\gamma}.\label{eq:kernels1-b}
\end{equs}
\end{lemma}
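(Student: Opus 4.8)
The plan is to reduce both bounds to dyadic sums over the scale decompositions $G = \sum_n G^n$, $\hat G = \sum_m \hat G^m$, and to exploit the support constraints to see that only pairs $(n,m)$ with comparable scales contribute. First I would fix $x,x'\in A_1$ with $\|x-x'\|\lesssim d(x,\hat T\hat A_2)$ and write $d := d(x,\hat T\hat A_2)$, $r := \eps\vee d$. For the mollified kernels, note that $G_\eps^n(x,\cdot)$ is (essentially) supported in a set of diameter $\lesssim 2^{-n}\vee\eps$ around the point $Tx$ (using \eqref{e:mollification} and property of $\rho_\eps$) and satisfies $|G_\eps^n(x,y)|\lesssim 2^{n\frb}$, with an analogous statement for $\hat G_\eps^m$. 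Since $\hat G_\eps^m(x,\cdot)$ vanishes unless $2^{-m}\vee\eps \gtrsim d$, only scales $2^{-m}\gtrsim d$ contribute once $\eps\lesssim d$, and similarly the overlap of the supports of $G_\eps^n(x,\cdot)$ and $\hat G_\eps^m(x,\cdot)$ forces $2^{-(n\wedge m)}\gtrsim \|Tx - \hat T x\|$ or one of the scales to be $\gtrsim$ the other — in any case the key point is that for each $n$ only boundedly many $m$ (those with $2^{-m}\sim 2^{-n}\vee d$, morally) give a nonzero integral.

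For \eqref{eq:kernels1-a}: integrate in $y$. The integral $\int G_\eps^n \hat G_\eps^m(x,y)\,dy$ is bounded by $2^{n\frb}2^{m\frb}$ times the measure of the overlapping support, which is $\lesssim (2^{-n}\vee 2^{-m}\vee\eps)^{\frs}$; using $A_1=\hat A_1$ (so the reflections are compatible and the supports of $G^n$ and $\hat G^m$ in the $y$-variable are both near $x$, hence genuinely overlap on a set of size controlled by the larger scale). Summing first over the constrained $m$ and then over $n$ with $2^{-n}\gtrsim d$ (when $\eps\le d$) or all $n$ (when $\eps>d$), and using $2\frb-\frs=1$ together with $\frb\le\frs-1$ to get convergence of the geometric series at the fine-scale end, produces $\sum_{2^{-n}\gtrsim r}2^{n(2\frb-\frs)} = \sum_{2^{-n}\gtrsim r}2^{n}\lesssim r^{-1}$, which is exactly the claimed $|\eps\vee d|^{-1}$. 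The two elementary dyadic summation bounds quoted before the lemma are what make this step mechanical.

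For \eqref{eq:kernels1-b}: write the difference $G_\eps\hat G_\eps(x,y) - G_\eps\hat G_\eps(x',y)$ scale by scale and use the derivative bounds $|D_1 G^n|\lesssim 2^{n(\frb+1)}$ (and likewise for $\hat G$) to gain a factor $\|x-x'\|\,2^{(n\wedge m)}$ — more precisely, telescoping through a product one gains $\|x-x'\|$ against one extra power of the coarser of the two scales, i.e. a factor $\|x-x'\|\,(2^{-(n\wedge m)}\vee\eps)^{-1}$. Since we only need $\gamma\in(0,1)$, I would interpolate: bound the difference by (its size)$^{1-\gamma}$ times (the gradient bound)$^\gamma$, turning the gain into $\|x-x'\|^\gamma (2^{-(n\wedge m)}\vee\eps)^{-\gamma}$. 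Carrying this through the same dyadic sum as above replaces $r^{-1}$ by $r^{-1-\gamma}\|x-x'\|^\gamma$, which is the assertion. A minor subtlety is that the recentering shifts the support, so one must check that the overlap geometry is not destroyed when passing from $x$ to $x'$; this is where the hypothesis $\|x-x'\|\lesssim d(x,\hat T\hat A_2)$ enters, ensuring $x'$ sees essentially the same scales.

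The main obstacle I anticipate is bookkeeping the support constraints so that the double dyadic sum over $(n,m)$ genuinely collapses to a single sum: one must verify that the overlap of $\supp G_\eps^n(x,\cdot)$ and $\supp \hat G_\eps^m(x,\cdot)$ — which sit near $Tx$ and $\hat T x$ respectively — is nonempty only when $2^{-n}\vee 2^{-m}\vee\eps \gtrsim \|Tx-\hat T x\|$, and that $\|Tx - \hat T x\|$ is comparable to $d(x,\hat T\hat A_2)$ up to constants (both reflections fixing the relevant boundary face, by the $A_1=\hat A_1$ hypothesis). Once that geometric reduction is in place, everything else is the two quoted elementary estimates plus the algebra $2\frb-\frs=1$.
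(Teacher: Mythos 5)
Your overall architecture (dyadic decomposition, overlap-measure bound, restriction of the $\hat G$-sum to $2^{-m}\gtrsim d$, Hölder interpolation of the increment) matches the paper's, but there is a genuine gap at the central quantitative step. You bound the measure of $\supp G_\eps^n(x,\cdot)\cap\supp\hat G_\eps^m(x,\cdot)$ by $(2^{-n}\vee 2^{-m}\vee\eps)^{\frs}$, i.e.\ by the \emph{coarser} scale; the intersection of two sets is at most as large as the smaller one, so the correct bound is $(\eps\vee(2^{-n}\wedge 2^{-m}))^{\frs}$, governed by the \emph{finer} scale. This is not cosmetic: with your bound the off-diagonal part of the double sum diverges (for fixed $m$ and $2^{-n}\le 2^{-m}$ one gets $2^{m(\frb-\frs)}\sum_{n\ge m}2^{n\frb}=\infty$). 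You try to escape this by claiming that for each $n$ only boundedly many $m$ contribute, but that collapse is false: take $T=\hat T=\id$ (the first term of the Neumann kernel in Example~\ref{example1}), where $\supp G^n_\eps(x,\cdot)$ and $\supp\hat G^m_\eps(x,\cdot)$ are concentric neighbourhoods of $x$ and intersect for \emph{every} pair $(n,m)$. The double sum genuinely carries all pairs; what saves it is the min-scale overlap bound, which makes the sum over the finer index converge geometrically to its diagonal term via $\frb-\frs\le-1<0$ (resp.\ $\frb-\frs+\gamma<0$ for \eqref{eq:kernels1-b}). On the diagonal $m\sim n$ your two errors cancel, which is why you land on the correct single sum $\sum_{2^{-n}\gtrsim r}2^{n(2\frb-\frs)}$, but the off-diagonal terms are simply not controlled by your argument.

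A secondary inaccuracy: in \eqref{eq:kernels1-b} you claim the telescoped increment gains $\|x-x'\|$ against the \emph{coarser} scale $(2^{-(n\wedge m)}\vee\eps)^{-1}$. Telescoping the product puts the increment on one factor or the other, losing $2^{n}$ or $2^{m}$ respectively, so the worst case is the \emph{finer} scale $2^{n\vee m}$; after interpolation the loss is $(2^{n\gamma}\vee 2^{m\gamma})$, as in \eqref{eq:triv eps bound}. Since this error overestimates the gain it would not break the final summation once the overlap bound is corrected, but as stated it is unjustified. Also note that mollification in the second variable changes the sup bound to $(\eps\vee 2^{-n})^{-\frs}2^{n(\frb-\frs)}$ (the $L^1$ mass spread over the mollified support), which is what makes the $[x]\lesssim\eps$ regime close; your plan does not record this and would need it.
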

\begin{proof}
Let us use the shorthand $[x]=d(x,\hat T\hat A_2)$.
We only prove the second bound since the first one is easier.
First note that one has the decomposition
\begin{equ}
\hat G_\eps(x,y)=\sum_{n\in\N} \hat G^{n}_{\eps}(x,y),
\end{equ}
where $\supp \hat G^n_\eps(x,\cdot) \subset\{y:\|x-\hat T y\|\lesssim\eps+2^{-n},d(y,\hat A_2)\leq\eps\}$
and one has bounds
\begin{equs}
|\hat G_\eps^n(x,y)|
&\lesssim (\eps\vee 2^{-n})^{-\frs}2^{n(\frb-\frs)}
\\
|\hat G_\eps^n(x,y)-\hat G_\eps^n(x',y)|
&\lesssim |x-x'|^\gamma (\eps\vee 2^{-n})^{-\frs}2^{n(\frb-\frs+\gamma)}
\end{equs}
for all $x,x'\in A_1$ and $y\in\R^d$, and similarly for $G$.
In particular, the volume of $(\supp G^n_\eps)\cap(\supp \hat G^m_\eps)$ is bounded by $(\eps\vee(2^{-n}\wedge 2^{-m}))^{\frs}$.
Therefore, the left-hand side of \eqref{eq:kernels1-b} is bounded by
\begin{equs}[eq:triv eps bound]
\,&|x-x'|^\gamma\sum_{n,m\in\N}(\eps\vee(2^{-n}\wedge 2^{-m}))^{\frs}(\eps\vee2^{-n})^{-\frs}2^{n(\frb-\frs)}
\\
&\qquad\qquad\times (\eps\vee2^{-m})^{-\frs}2^{m(\frb-\frs)}(2^{n\gamma}\vee 2^{m\gamma})\\
&=|x-x'|^\gamma\sum_{n,m\in\N}2^{(n+m)(\frb-\frs)}(\eps\vee2^{-n}\vee2^{-m})^{-\frs}(2^{n\gamma}\vee 2^{m\gamma})
\end{equs}
At this stage the roles of $n$ and $m$ are symmetric, so we can bound the above sum by
\begin{equs}[eq:triv eps bound2]
\,\sum_{2^{-n}\gtrsim 2^{-m}} & 2^{(n+m)(\frb-\frs)}(\eps\vee 2^{-n})^{-\frs}2^{m\gamma}
\lesssim
\sum_{n\in\N}2^{n(2\frb-2\frs+\gamma)}(\eps\vee 2^{-n})^{-\frs}
\\
&
\lesssim \eps^{-\frs}\sum_{2^{-n}\lesssim\eps}2^{n(2\frb-2\frs+\gamma)}
+\sum_{2^{-n}\gtrsim\eps}2^{n(2\frb-\frs+\gamma)}\lesssim \eps^{-1-\gamma},
\end{equs}
where we used that $\frb-\frs+\gamma<0$ and $2\frb-\frs=1$. This yields the required bound if $[x]\lesssim\eps$.
For $[x]\gtrsim\eps$ we make use of the property that $\hat G^m_\eps$ is identically $0$ for $2^{-m}\lesssim [x]$.
Therefore, instead of \eqref{eq:triv eps bound}, we now get the bound
\begin{equs}
\,&|x-y|^\gamma\sum_{\substack{2^{-m}\gtrsim [x]\\ n\in\N}}2^{(n+m)(\frb-\frs)}(2^{-n}\vee2^{-m})^{-\frs}(2^{n\gamma}\vee 2^{m\gamma}).
\end{equs}
For the part of the sum where $2^{-n}\gtrsim 2^{-m}$, we get a bound
\begin{equ}
\sum_{2^{-n}\gtrsim 2^{-m}\gtrsim [x]}2^{n\frb}2^{m(\frb-\frs+\gamma)}\lesssim\sum_{2^{-m}\gtrsim [x]}2^{m(2\frb-\frs+\gamma)}\lesssim [x]^{-1-\gamma}\;,
\end{equ}
as required. Concerning the $2^{-m}\gtrsim 2^{-n}$ regime, we can write
\begin{equ}
\sum_{2^{-m}\gtrsim (2^{-n}\vee [x])}2^{m\frb}2^{n(\frb-\frs+\gamma)}\lesssim\sum_{2^{-m}\gtrsim[x]}2^{m(2\frb-\frs+\gamma)}\lesssim[x]^{-1-\gamma}\;,
\end{equ}
which finishes the proof.
\end{proof}
\begin{lemma}\label{lem:kernels2}
Assume the setting of Lemma \ref{lem:kernels1}.
Then for all $x\in A_1$ with $d(x,\hat T\hat A_2)\gtrsim \eps$, one has
\begin{equ}\label{eq:kernels2}
\Big|\int \big(G_\eps\hat G_\eps-G\hat G\big)(x,y)\,dy\Big|\lesssim \eps^\gamma \big(d(x,\hat T\hat A_2)\big)^{-1-\gamma}.
\end{equ}
\end{lemma}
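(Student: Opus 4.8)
The plan is to write the difference of products as a telescoping sum,
\begin{equ}
G_\eps\hat G_\eps-G\hat G=(G_\eps-G)\,\hat G_\eps+G\,(\hat G_\eps-\hat G),
\end{equ}
insert the dyadic decompositions $G=\sum_n G^n$, $\hat G=\sum_m\hat G^m$ (so that $G_\eps^n=G^n*\rho_\eps$ etc.), and run the same Littlewood--Paley summation as in the proof of Lemma~\ref{lem:kernels1}; the only genuinely new input is a quantitative estimate on the dyadic mollification differences. Writing $[x]=d(x,\hat T\hat A_2)$ as in the previous proof (so that $[x]\gtrsim\eps$ by hypothesis), the first step is to record the bounds on $G_\eps^n-G^n$: it is supported in $\{y:\|x-Ty\|\lesssim 2^{-n}+\eps\}$; writing $G_\eps^n(x,y)-G^n(x,y)=\int\rho_\eps(w)\bigl(G^n(x,y-w)-G^n(x,y)\bigr)\,dw$ and using $\supp\rho_\eps\subset\{\|w\|\lesssim\eps\}$ together with the first--order derivative bound on $G^n$, one gets the $L^1$ estimate $\|G_\eps^n(x,\cdot)-G^n(x,\cdot)\|_{L^1}\lesssim(2^n\eps\wedge 1)\,2^{n(\frb-\frs)}\le\eps^\gamma 2^{n(\frb+\gamma-\frs)}$ (interpolating the trivial bound $2^{n(\frb-\frs)}$ with the mean--value bound $\eps\,2^{n(\frb+1-\frs)}$), and the pointwise bounds $|G_\eps^n-G^n|\lesssim 2^{n\frb}$ everywhere and $|G_\eps^n(x,y)-G^n(x,y)|\lesssim\eps\,2^{n(\frb+1)}$ for $y$ at distance $\gtrsim\eps$ from $\partial\,\supp G^n(x,\cdot)$. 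The analogous statements hold for $\hat G_\eps^m-\hat G^m$, and crucially both $\hat G^m$ and $\hat G_\eps^m$ vanish identically unless $2^{-m}\gtrsim[x]$, so in both terms of the telescoping sum the ``hatted'' dyadic index only runs over scales $2^{-m}\gtrsim[x]\gtrsim\eps$.

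\textbf{The summation.} For $\int(G_\eps-G)\hat G_\eps\,dy=\sum_{n,m}\int(G_\eps^n-G^n)\hat G_\eps^m\,dy$ I would split according to whether $2^{-n}\vee\eps\le 2^{-m}$ or $2^{-n}>2^{-m}$. In the first regime one uses $|\int(G_\eps^n-G^n)\hat G_\eps^m\,dy|\le\|G_\eps^n-G^n\|_{L^1}\|\hat G_\eps^m\|_{L^\infty}\lesssim\eps^\gamma 2^{n(\frb+\gamma-\frs)}2^{m\frb}$; since $\frb+\gamma-\frs=\gamma-(\frb-1)<0$, summing over $n\ge m$ gives $\eps^\gamma 2^{m(2\frb+\gamma-\frs)}=\eps^\gamma 2^{m(1+\gamma)}$ by \eqref{e:constraint}, and then $\sum_{2^{-m}\gtrsim[x]}$ gives $\eps^\gamma[x]^{-1-\gamma}$, while the part of the sum with $2^{-n}<\eps$ contributes $\eps^{\frb-1}[x]^{-\frb}$. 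In the second regime, where $\hat G_\eps^m$ has the smaller support (and both indices lie in the low--frequency range), one bounds the integral by $\|\hat G_\eps^m\|_{L^1}$ times the pointwise bound $|G_\eps^n-G^n|\lesssim\eps\,2^{n(\frb+1)}$ away from $\partial\,\supp G^n(x,\cdot)$, the remaining $\eps$--collar of $\partial\,\supp G^n(x,\cdot)$ being a set on which only $|G_\eps^n-G^n|\lesssim 2^{n\frb}$ holds but whose volume is small; in both sub--cases the sum telescopes to $\eps\,2^{m(2\frb+1-\frs)}=\eps\,2^{2m}$, hence $\eps[x]^{-2}$ after the $m$--sum. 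The term $\int G(\hat G_\eps-\hat G)\,dy$ is handled identically with the roles of the two kernels interchanged (now $G^n$ carries only the generic $\scG$--bounds and the $\eps$--gain sits in $\hat G_\eps^m-\hat G^m$, which as noted forces $2^{-m}\gtrsim[x]$), again producing $\eps^\gamma[x]^{-1-\gamma}+\eps[x]^{-2}$.

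\textbf{Conclusion and main obstacle.} Collecting the pieces, $\int(G_\eps\hat G_\eps-G\hat G)(x,y)\,dy$ is bounded by a sum of terms of the form $\eps^\gamma[x]^{-1-\gamma}$, $\eps^{\frb-1}[x]^{-\frb}$ and $\eps[x]^{-2}$; since \eqref{e:constraint} forces $\frb\ge 2$ and $\gamma\in(0,1)$, we have $\frb-1-\gamma>0$ and $1-\gamma>0$, so using $[x]\gtrsim\eps$ one gets $\eps^{\frb-1}[x]^{-\frb}=\eps^\gamma[x]^{-1-\gamma}(\eps/[x])^{\frb-1-\gamma}\lesssim\eps^\gamma[x]^{-1-\gamma}$ and likewise $\eps[x]^{-2}\lesssim\eps^\gamma[x]^{-1-\gamma}$, which is the claimed estimate. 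The delicate point is the bookkeeping of the dyadic sums rather than any single inequality: one must extract the factor $\eps^\gamma$ \emph{uniformly} (which in the regime of comparable scales means using the $L^1$--estimate on the mollification difference, not its sup norm, so that no residual logarithm survives the borderline case $\frb=\frs-1$), keep the two sums coupled so that the $n$--sum is dominated by $n\sim m$ and correctly transfers the power of $[x]$ (summing the $n$--sum independently to a constant would produce the too--large bound $[x]^{-\frb}$), and, where the kernels $G^n,\hat G^m$ fail to be smooth up to the boundary of their supports, control the $\eps$--collar on which only the crude bound $|G_\eps^n-G^n|\lesssim 2^{n\frb}$ is available, using a sharp estimate for its volume together with the fact that it is non-empty only for dyadic scales bounded in terms of $[x]$.
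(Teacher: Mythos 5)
Your proposal is correct and follows essentially the same route as the paper's proof: telescope the difference of products, decompose both kernels dyadically, use that the hatted scale is forced into the range $2^{-m}\gtrsim [x]\gtrsim\eps$, split according to the relative sizes of the two dyadic scales, and treat separately the $\eps$-collar of $\partial A_2$ where only the crude supremum bound on the mollification difference is available, before invoking $2\frb-\frs=1$ and $[x]\gtrsim\eps$ to absorb the terms $\eps^{\frs-\frb}[x]^{-\frb}$ and $\eps[x]^{-2}$ into $\eps^\gamma[x]^{-1-\gamma}$. The only (harmless) differences are which factor carries the mollification in the telescoping identity and your packaging of the comparable-scale regime as an $L^1$--$L^\infty$ pairing with an interpolated $L^1$ bound on $G^n_\eps-G^n$, where the paper instead multiplies pointwise bounds by support volumes.
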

\begin{proof}
We write $G_\eps\hat G_\eps-G\hat G=(G_\eps-G)\hat G+G_\eps(\hat G_\eps-\hat G)$
and bound the two corresponding integrals separately (unfortunately the two cases are not exactly symmetric).
We first treat $(G_\eps-G)\hat G$. Writing $G^n_\Delta=G_\eps^n-G^n$, the quantity to bound is
\begin{equ}
\sum_{m,n\in\N}\int \big(G_{\Delta}^n\hat G^m(x,y)\big)\,dy.
\end{equ}
Using again the shorthand $[x]=d(x,\hat T\hat A_2)$, we see 
that the sum over $m$ can be restricted to the range $2^{-m}\gtrsim[x]$, since $\hat G^m$ 
vanishes otherwise.
The easiest case is $2^{-n}\lesssim\eps$, one can simply use the bounds $|{\supp G^n_\Delta}|\lesssim 2^{-n\frs}$, $\sup|G^n_\Delta|\lesssim 2^{n\frb}$, $\sup|G^m|\lesssim 2^{m\frb}$. This yields
\begin{equ}
\sum_{\substack{2^{-m}\gtrsim[x]\\2^{-n}\lesssim\eps}}
\Big|\int \big(G_{\Delta}^n\hat G^m(x,y)\big)\,dy\Big|
\lesssim\sum_{\substack{2^{-m}\gtrsim[x]\\2^{-n}\lesssim\eps}}2^{n(\frb-\frs)}2^{m\frb}\lesssim\eps^{\frs-\frb}[x]^{-\frb}.
\end{equ}
Recalling that $[x]\gtrsim\eps$, $\frs\geq\frb+1$, and $2\frb-\frs=1$, one sees that this is indeed bounded by $\eps[x]^{-2}$, as required.
For $2^{-n}\gtrsim\eps$ we split the integral into two regions, depending on the distance of $y$ to $\d A_2$.
Define $\d^\eps=\{y: d(y,\d A_2)\lesssim\eps\}$.
If $y\notin \d^\eps$, then one can use the differentiability of $G$ in the second variable to get the bound $|G_\Delta^n(x,y)|\lesssim \eps^\gamma 2^{n(\frb+\gamma)}$. Therefore,
\begin{equ}\label{eq:sum1}
\sum_{\substack{2^{-m}\gtrsim[x]\\2^{-n}\gtrsim\eps}}
\Big|\int_{(\d^\eps)^c} \big(G_{\Delta}^n\hat G^m(x,y)\big)\,dy\Big|
\lesssim
\sum_{\substack{2^{-m}\gtrsim[x]\\2^{-n}\gtrsim\eps}}
\big(2^{-n\frs}\wedge 2^{-m\frs}\big)\eps^\gamma 2^{n(\frb+\gamma)} 2^{m\frb}.
\end{equ}
For the regime $2^{-n}\gtrsim 2^{-m}$ one has
\begin{equ}
\sum_{2^{-n}\gtrsim 2^{-m}\gtrsim[x]}2^{-m\frs}\eps^\gamma 2^{n(\frb+\gamma)}2^{m\frb}\lesssim
\eps^\gamma\sum_{2^{-m}\gtrsim[x]}2^{m(2\frb-\frs+\gamma)}\lesssim
\eps^{\gamma}[x]^{-1-\gamma}.
\end{equ}
For $2^{-m}\gtrsim 2^{-n}\gtrsim [x]$ one gets
\begin{equ}
\sum_{2^{-m}\gtrsim 2^{-n}\gtrsim[x]}2^{-n\frs}\eps^\gamma 2^{n(\frb+\gamma)}2^{m\frb}
\lesssim\eps^\gamma\sum_{2^{-m}\gtrsim|x|}2^{m(2\frb-\frs+\gamma)}
\lesssim\eps^{\gamma}[x]^{\frs-2\frb-\gamma}=\eps^{\gamma}[x]^{-1-\gamma}.
\end{equ}
Finally, in the case $2^{-n}\lesssim [x]$ the sum becomes
\begin{equ}
\sum_{2^{-m}\gtrsim[x]\gtrsim 2^{-n}\gtrsim\eps}2^{-n\frs}\eps^\gamma 2^{n(\frb+\gamma)}2^{m\frb}\lesssim\eps^\gamma[x]^{-\frb}\sum_{[x]\gtrsim 2^{-n}} 2^{n(\frb+\gamma-\frs)}\lesssim\eps^{\gamma}[x]^{\frs-2\frb-\gamma}.
\end{equ}
All of these bounds are of the required order. It remains to treat the $\d^\eps$ portion of the integral.
Note that in this case the size of the region of integration is at most of order $\eps \big(2^{-n(\frs-1)}\wedge 2^{-m(\frs-1)}\big)$.
Combining this with the trivial supremum bounds one gets
\begin{equ}\label{eq:sum2}
\sum_{\substack{2^{-m}\gtrsim[x]\\2^{-n}\gtrsim\eps}}
\Big|\int_{\d^\eps} \big(G_{\Delta}^n\hat G^m(x,y)\big)\,dy\Big|
\lesssim
\sum_{\substack{2^{-m}\gtrsim[x]\\2^{-n}\gtrsim\eps}}
\eps \big(2^{-n(\frs-1)}\wedge 2^{-m(\frs-1)}\big) 2^{n\frb} 2^{m\frb}.
\end{equ}
Again, first bound the sum over $2^{-n}\gtrsim 2^{-m}$:
\begin{equ}
\sum_{2^{-n}\gtrsim 2^{-m}\gtrsim[x]}\eps 2^{-m(\frs-1)}2^{n\frb}2^{m\frb}\lesssim
\eps\sum_{2^{-m}\gtrsim[x]}2^{m(2\frb-\frs+1)}\lesssim
\eps[x]^{-2},
\end{equ}
as required.
Next, in the case $[x]\lesssim 2^{-n}\lesssim 2^{-m}$ one has
\begin{equ}
\sum_{2^{-m}\gtrsim 2^{-n}\gtrsim[x]}\eps 2^{-n(\frs-1)} 2^{n\frb}2^{m\frb}
\lesssim\eps \sum_{2^{-n}\gtrsim[x]}2^{n(2\frb-\frs-1)}\lesssim \eps[x]^{-2},
\end{equ}
as required.
Finally, for $2^{-n}\lesssim[x]$ the sum becomes
\begin{equs}
\sum_{2^{-m}\gtrsim[x]\gtrsim 2^{-n}\gtrsim\eps}
\eps 2^{-n(\frs-1)} 2^{n\frb}2^{m\frb}
&\lesssim\eps[x]^{-\frb}\sum_{[x]\gtrsim 2^{-n}\gtrsim\eps} 2^{n(\frb-\frs+1)}
\\
&\lesssim\eps^{\gamma}[x]^{-\frb}\sum_{[x]\gtrsim 2^{-n}\gtrsim\eps} 2^{n(\frb-\frs+\gamma)}\lesssim \eps^{\gamma}[x]^{-1-\gamma},
\end{equs}
using that $\frb-\frs+\gamma<0$.
Combining all the cases finishes the term $(G_\eps-G)\hat G$.

It now remains to do a similar calculation for $G_\eps(\hat G_\eps-\hat G)$.
Writing $\hat G_\Delta^m=G^m_\eps-G^m$, the quantity to bound is
\begin{equ}
\sum_{m,n\in\N}\int \big(G_\eps^n\hat G^m_\Delta(x,y)\big)\,dy.
\end{equ}
As before, the sum over $m$ can be restricted to the regime $2^{-m}\gtrsim[x]$, since both $\hat G^m$ and $\hat G^m_\eps$ vanish otherwise. For the former this is obvious and for the latter this follows from the assumption $\eps\lesssim[x]$.
To bound the sum over $2^{-n}\lesssim \eps$, one can use $\|G^n_\eps\|_{L^1}=\|G^n\|_{L^1}\lesssim 2^{n(\frb-\frs)}$, with the trivial bound $\sup|G^m_{\Delta}|\lesssim 2^{m\frb}$. This yields the same bound as before, namely
\begin{equ}
\sum_{\substack{2^{-m}\gtrsim[x]\\2^{-n}\lesssim\eps}}
\Big|\int \big(G_\eps^n\hat G^m_\Delta(x,y)\big)\,dy\Big|
\lesssim\sum_{\substack{2^{-m}\gtrsim[x]\\2^{-n}\lesssim\eps}}2^{n(\frb-\frs)}2^{m\frb}\lesssim\eps^{\frs-\frb}[x]^{-\frb}.
\end{equ}
For $2^{-n}\gtrsim\eps$ we split the integral to $\d^\eps$ and $(\d^\eps)^c$ as before, and use that if $y\notin \d^\eps$, 
then one due to the differentiability of $\hat G$ in the second variable one has the bound $|\hat G_\Delta^m(x,y)|\lesssim \eps^\gamma 2^{m(\frb+\gamma)}$.
Therefore,
\begin{equ}
\sum_{\substack{2^{-m}\gtrsim[x]\\2^{-n}\gtrsim\eps}}
\Big|\int_{(\d^\eps)^c} \big(G_{\eps}^n\hat G^m_\Delta(x,y)\big)\,dy\Big|
\lesssim
\sum_{\substack{2^{-m}\gtrsim[x]\\2^{-n}\gtrsim\eps}}
\big(2^{-n\frs}\wedge 2^{-m\frs}\big)2^{n\frb}\eps^\gamma 2^{m(\frb+\gamma)}.
\end{equ}
We leave it as an exercise to the reader to treat this sum similarly to the one in \eqref{eq:sum1}.
Finally, concerning the integral over $\d^\eps$ we get
\begin{equ}
\sum_{\substack{2^{-m}\gtrsim[x]\\2^{-n}\gtrsim\eps}}
\Big|\int_{\d^\eps} \big(G_{\eps}^n\hat G^m_\Delta(x,y)\big)\,dy\Big|
\lesssim
\sum_{\substack{2^{-m}\gtrsim[x]\\2^{-n}\gtrsim\eps}}
\eps \big(2^{-n(\frs-1)}\wedge 2^{-m(\frs-1)}\big) 2^{n\frb} 2^{m\frb}.
\end{equ}
The right-hand side is now precisely the same as in \eqref{eq:sum2}, and therefore using the already established bound the proof is finished.
\end{proof}

\begin{lemma}\label{lem:kernels3}
Let $G\in\scG$, let $A\subset\R^d$ be an open convex set, and define $\tilde G(x,y)=G(x,y)\bone_{y\in A}$.
Take $\gamma\in(0,1),\gamma'\in[-1,1]$.
Then for all $x,x'$ with $\|x-x'\|\lesssim d(x,\d A)$ one has
\begin{equs}[eq:kernels3]
\Big|\int \Big(\tilde G_\eps^2(x,y)  -\tilde G_\eps^2(x',y)\Big)- & \Big(G_\eps^2(x,y)\bone_{y\in A}  -G_\eps^2(x',y)\bone_{y\in A}\Big)\,dy\Big|
\\
&\lesssim \|x-y\|^\gamma\eps^{\gamma'} (d(x,\d A))^{-1-\gamma-\gamma'}.
\end{equs}
(Recall the notation introduced in \eqref{e:mollification}.)
\end{lemma}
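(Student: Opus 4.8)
Here is the plan I would follow.

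The starting point is to isolate the discrepancy between ``restrict-then-mollify'' and ``mollify-then-restrict''. Write
\begin{equ}
\tilde G_\eps(x,y)=G_\eps(x,y)\bone_{y\in A}+R_\eps(x,y),\qquad R_\eps(x,y):=\int G(x,z)\,\rho_\eps(y-z)\big(\bone_{z\in A}-\bone_{y\in A}\big)\,dz,
\end{equ}
and set $S_\eps:=\tilde G_\eps+G_\eps\bone_{\cdot\in A}=2G_\eps\bone_{\cdot\in A}+R_\eps$. The key elementary observation is that, since $A$ is convex, $R_\eps(x,\cdot)$ is supported in the $\eps$-collar $\d^\eps_A:=\{y:d(y,\d A)\lesssim\eps\}$: if $y$ and $z$ lie on opposite sides of $\d A$ and $\|y-z\|\lesssim\eps$, the segment $[y,z]$ meets $\d A$ within $O(\eps)$ of $y$. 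Decomposing $R_\eps=\sum_nR_\eps^n$ (and $S_\eps=\sum_nS_\eps^n$) along the decomposition of $G$, one checks --- using $\|\rho_\eps\|_{L^1}=1$, $\|\rho_\eps\|_{L^\infty}\lesssim\eps^{-\frs}$ and the interpolated bound $|G^n(x,z)-G^n(x',z)|\lesssim\|x-x'\|^\gamma2^{n(\frb+\gamma)}$ --- that $R_\eps^n$ and $S_\eps^n$ obey exactly the per-scale sup and $\gamma$-Hölder-in-$x$ bounds used for $G_\eps^n$ in the proof of Lemma~\ref{lem:kernels1}, with $\supp R_\eps^n(x,\cdot),\,\supp S_\eps^n(x,\cdot)\subset\{y:\|x-T(G)y\|\lesssim2^{-n}+\eps\}$ and, in addition, $\supp R_\eps^n(x,\cdot)\subset\d^\eps_A$. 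Moreover, since the $y$-support of $\tilde G$ lies in $A$ and (in our applications, $A$ being the cube and $T(G)$ a face reflection) $A$ lies on one side of the mirror hyperplane of $T(G)$, one has $d(x,T(G)A)\ge d(x,\d A)$ for $x\in A$, so these kernels vanish unless $2^{-n}\gtrsim d(x,\d A)$ whenever $d(x,\d A)\gtrsim\eps$.

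Next I would reduce to a product estimate: since $\bone_{\cdot\in A}^2=\bone_{\cdot\in A}$ one has $(G_\eps\bone_{\cdot\in A})^2=G_\eps^2\bone_{\cdot\in A}$, hence
\begin{equ}
\tilde G_\eps^2-G_\eps^2\bone_{\cdot\in A}=\big(\tilde G_\eps-G_\eps\bone_{\cdot\in A}\big)\big(\tilde G_\eps+G_\eps\bone_{\cdot\in A}\big)=R_\eps\,S_\eps,
\end{equ}
so the quantity to control is $\big|\int(R_\eps S_\eps(x,y)-R_\eps S_\eps(x',y))\,dy\big|$. Splitting $R_\eps S_\eps(x,\cdot)-R_\eps S_\eps(x',\cdot)=(R_\eps(x,\cdot)-R_\eps(x',\cdot))S_\eps(x,\cdot)+R_\eps(x',\cdot)(S_\eps(x,\cdot)-S_\eps(x',\cdot))$ gives two structurally identical terms, in each of which one factor or its increment is the collar-supported $R_\eps$. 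For scales $n$ on the $S_\eps$-factor and $m$ on the $R_\eps$-factor, the integrand is supported in a set of measure $\lesssim\eps\,(\eps\vee(2^{-n}\wedge2^{-m}))^{\frs-1}$ --- the intersection of the $\eps$-collar with a ball of radius $\lesssim(2^{-n}\wedge2^{-m})+\eps$ --- and this is exactly the one extra power of $\eps$ relative to the analogous estimate in Lemma~\ref{lem:kernels1}. Combining this with the sup bounds, attributing the $\gamma$-increment to a single factor (producing $\|x-x'\|^\gamma(2^{n\gamma}\vee2^{m\gamma})$) and restricting to $2^{-n},2^{-m}\gtrsim d(x,\d A)$ exactly as in Lemmas~\ref{lem:kernels1} and~\ref{lem:kernels2}, the double series is geometric; summing it with $2\frb-\frs=1$ and $\frb\le\frs-1$ yields $\lesssim\|x-x'\|^\gamma\,\eps\,(\eps\vee d(x,\d A))^{-2-\gamma}$, and the regime $d(x,\d A)\lesssim\eps$ (no scale restriction) gives the same bound thanks to the $(\eps\vee2^{-n})$ cut-offs. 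Finally, for any $\gamma'\in[-1,1]$ one writes $\eps=\eps^{\gamma'}\eps^{1-\gamma'}\le\eps^{\gamma'}(\eps\vee d(x,\d A))^{1-\gamma'}$ and uses $(\eps\vee d(x,\d A))^{-1-\gamma-\gamma'}\le d(x,\d A)^{-1-\gamma-\gamma'}$ (valid since $-1-\gamma-\gamma'<0$) to obtain the claimed bound $\|x-x'\|^\gamma\eps^{\gamma'}(d(x,\d A))^{-1-\gamma-\gamma'}$.

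I expect the main obstacle to lie in the first step rather than in the (essentially mechanical, if tedious) summation: one must verify that mollifying the truncated kernel $\tilde G$ does not spoil the $\scG$-type per-scale bounds, that the remainder $R_\eps$ is genuinely confined to the $\eps$-collar of $\d A$, and --- the most delicate point --- that the active scales of $R_\eps$ and $S_\eps$ remain tied to $d(x,\d A)$, which is where the compatibility between the reflection $T(G)$ and the convex set $A$ enters. Everything else is a variation on the bookkeeping already carried out in Lemmas~\ref{lem:kernels1} and~\ref{lem:kernels2}.
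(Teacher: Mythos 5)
Your proposal is correct and follows essentially the same strategy as the paper: the paper observes directly that the integrand of \eqref{eq:kernels3} vanishes for $d(y,\d A)\gtrsim\eps$ and then bounds the two bracketed terms separately on the $\eps$-collar, which is exactly the gain of one power of $\eps$ in the support volume that you extract via the factorisation $\tilde G_\eps^2-G_\eps^2\bone_{\cdot\in A}=R_\eps S_\eps$; the ensuing dyadic summation over $2^{-n},2^{-m}\gtrsim d(x,\d A)$ and the reduction of general $\gamma'$ to the trivial ($\gamma'=-1$) bound are identical to the paper's treatment of the extremal cases $\gamma'\in\{-1,1\}$. The subtlety you flag about the active scales being tied to $d(x,\d A)$ (compatibility of $T(G)$ with $A$) is likewise used, though not belaboured, in the paper.
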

\begin{proof}
Let us again use the shorthand $[x]=d(x,\d A)$.
Clearly it suffices to consider the extremal cases $\gamma'\in\{-1,1\}$. 
For $\gamma'=-1$ we can bound the integrals of the two terms in the big brackets as in \eqref{eq:triv eps bound}-\eqref{eq:triv eps bound2}.
In fact this gives the required bound not only for $\gamma'=-1$, but also for $\gamma'=1$ in case $[x]\lesssim\eps$.

In the case $\gamma'=1$, $[x]\gtrsim \eps$,
the integrand in \eqref{eq:kernels3} vanishes identically on $[y]\gtrsim\eps$.
On the remaining region we again bound the integrals of the two terms in the big brackets separately. 
They are essentially identical calculations, we only detail the first one.
One can write
\begin{equs}
\int_{[y]\lesssim\eps} & \tilde G_\eps^2(x,y)  -\tilde G_\eps(x',y)\,dy
\\&\lesssim\|x-x'\|^\gamma\sum_{n,m\in\N}\Big|(\supp \tilde G_\eps^n)\cap(\supp\tilde G_\eps^m)\cap \{[y]\lesssim\eps\}\Big|2^{n\frb}2^{m\frb}(2^{n\gamma}\vee 2^{m\gamma}).
\end{equs}
Since $[x]\gtrsim \eps$, only terms with $2^{-n},2^{-m}\gtrsim [x]$ contribute to the sum. In this case one has
\begin{equ}
\Big|(\supp \tilde G_\eps^n)\cap(\supp\tilde G_\eps^m)\cap \{[y]\lesssim\eps\}\Big|
\lesssim \eps (2^{-n}\wedge 2^{-m})^{\frs-1}.
\end{equ}
Since the above sum is symmetric under $n \leftrightarrow m$, we can bound it by
\begin{equ}
\eps\sum_{2^{-m}\gtrsim 2^{-n}\gtrsim[x]}2^{n(\frb+\gamma+1-\frs)}2^{m\frb}
\lesssim \eps\sum_{2^{-n}\gtrsim[x]}2^{n(2\frb+\gamma+1-\frs)}
\lesssim \eps[x]^{-2-\gamma},
\end{equ}
as required.
\end{proof}

\subsection{Convergence of the Robin kernels}\label{sec:robin}
The purpose of this section is to make the folklore fact
\[
\textit{``the $-\infty$ Robin boundary condition is the $0$ Dirichlet boundary condition''}
\]
precise in a form that suits the setup of \cite{GH17}.
The dimension plays no role here, but we stick to the $1+3$-dimensional setting that we will use later.
As a warm-up example, let us recall the construction of Robin heat kernels on the upper half space $\R^4_u=\{(x_0,x_1,x_2,x_3)\,:\,x_3>0\}$.
For $x\in\R^4$, $r\in\R$ denote $x^r=(x_0,x_1,x_2,-x_3-r)$. Then for any $a\in(-\infty,\infty)$ the function
\begin{equs}\label{eq:Robin half space}
\hat \cG_a(x,y)&=\cK(x-y)+\cK(x-y^0)-\int_0^\infty 2 a e^{-ar}\cK(x-y^r)\,dr
\end{equs}
is easily seen to satisfy, for each fixed $y\in\R^4_u$,
\begin{equs}
(\d_{x_0}-\Delta_x) \hat \cG_a(x,y)&=\delta_{x=y}&\quad&\text{on }\R^4_u;\\
\d_n \hat \cG_a(x,y)&=-\d_{x_3}\hat \cG_a(x,y)=-a\hat \cG_a(x,y)&\quad &\text{on }\d\R^4_u.
\end{equs}
Therefore $\hat \cG_a$ \dash more precisely, its product with the indicator of $(\R^4_u)^2$ \dash is indeed the Robin heat kernel.
Taking $a=\infty$ in \eqref{eq:Robin half space} the measure $ae^{-ar}\,dr$ becomes the Dirac mass at $0$, and we recover the Dirichlet heat kernel. Loosely speaking, to build the Robin kernels on the cube $D$, one needs to repeat the procedure of reflecting and averaging in \eqref{eq:Robin half space} for each face of the cube ad infinitum.

\begin{remark}\label{rem:uniform-in-a}
Note that for $a\in[0,\infty]$, the function $\cR_a(x,y)=\bone_{x,y\in\R^4_u}\int_0^\infty 2ae^{-ar}\cK(x-y^r)\,dr$ is twice a convex combination of the kernels
$\bone_{x,y\in\R^4_u}\cK(x-y^r)$ that each fit in the framework of Section \ref{sec:kernel computations} with $A_1=A_2=\R^4_u$ and $T(y)=y^r$ (hence also with $T(y)=y^0$). Therefore $\cR_a$ satisfy the bounds therein uniformly over $a\in[0,\infty]$.
\end{remark}

To formulate the result, recall the following concept of kernel remainders from \cite{GH17}.
For the present section it is more convenient to work on the cube $D=(-1,1)^3$ and recall that we denote by $\d$  the boundary of $\R\times D$.
\begin{definition}\label{def:Z}
Denote by $\scZ_{\beta,\d}$ the set of functions
$Z:(\R^4\setminus \d)^2 \to \R$ that can be written in the form
$
Z(z,z')=\sum_{n\geq0}Z_n(z,z')
$
where, for each $n$, $Z_n$ satisfies the following
\begin{claim}
\item $Z_n$ is supported on
$\{(z,z')=((t,x),(t',x')):\,|x|_{\d}+|x'|_{\d}+|t-t'|^{1/2}\leq 3 (2^{-n})\}$, 
where $C$ is a fixed constant depending only on the domain $D$.
\item For any multiindices $k$ and $\ell$ with $|k|,|\ell|\leq 2$,
\begin{equ}\label{eq:Z bound}
|D_1^kD_2^\ell Z_n(z,z')|\lesssim2^{n(\frs+|k+\ell|-\beta)},
\end{equ}
where the proportionality constant may depend on $k$ and $\ell$,
but not on $n$, $z$, $z'$.
\end{claim}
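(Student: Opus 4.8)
The plan is to write the Robin kernels explicitly as image series and to read the claimed decomposition and the rate in $a$ directly off that series. Let $\Gamma$ be the group of isometries of $\R^3$ generated by the reflections $\sigma_1,\dots,\sigma_6$ in the six faces of $D=(-1,1)^3$ (acting trivially on the time coordinate); its orbit of $D$ tiles $\R^3$. For a single face the half-space formula \eqref{eq:Robin half space} replaces the pure reflection $\sigma_i$ by the family of shifted reflections $\sigma_i^r$, $r\ge 0$, averaged against the sub-probability $2ae^{-ar}\,dr$ of mass $2$, and inserts a sign for the unshifted piece. Iterating over all faces, I would set, for $z=(t,x),z'=(t',x')\in\R\times D$,
\[
\cG_a(z,z')=\bone_{x,x'\in D}\sum_{w}(-1)^{|w|}\int \cK\bigl(z-w^{\underline{r}}(z')\bigr)\,\prod_{j=1}^{|w|}2ae^{-ar_j}\,dr_j,
\]
the sum over reduced words $w=\sigma_{i_1}\cdots\sigma_{i_k}$ in the generators, $|w|=k$, and $w^{\underline{r}}=\sigma_{i_1}^{r_1}\cdots\sigma_{i_k}^{r_k}$. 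Convergence of this series and the verification that $\cG_a$ solves the Robin heat equation reduce to the half-space computation around \eqref{eq:Robin half space} and Remark~\ref{rem:uniform-in-a} run face by face: each summand $\bone_{x,x'\in D}\cK(z-w^{\underline{r}}(z'))$ fits the framework of Section~\ref{sec:kernel computations} with $T=w^{\underline{r}}$ and $A_1=A_2=\R\times D$, and away from the edges $\d^2$ a word of length $k$ sends $z'$ to distance $\gtrsim k$, so the geometric decay of $\cK$ dominates the sum uniformly in $a\in[0,\infty]$. Letting $a\to\infty$, the measures $2ae^{-ar}\,dr$ converge weakly to $2\delta_0$ and the resulting alternating-sign pure-image series is the classical Dirichlet heat kernel, which we take to be $\cG_\infty$.

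With this representation in hand, I would extract the decomposition and the estimates. The word $w=\id$ contributes $\bone_{x,x'\in D}\cK(z-z')$; a smooth dyadic splitting $\cK=\sum_n\cK_n$ separates from it a translation-invariant singular kernel $K$ of degree $\frb-\frs$ (localised near the diagonal in the sense of \cite{GH17}) plus a piece supported where $|x|_\d+|x'|_\d+|t-t'|^{1/2}\lesssim 2^{-n}$ at scale $n$, which already lies in $\scZ_{\frs-\frb,\d}$. For $w\ne\id$ the image $w^{\underline{r}}(z')$ is reflected across at least one face $F$, so $\|z-w^{\underline{r}}(z')\|\gtrsim |x|_F+|x'|_F$; inserting $\cK=\sum_n\cK_n$, summing at each scale, and differentiating (each derivative of $\cK_n$ costing a factor $2^n$, the weights $2ae^{-ar_j}\,dr_j$ being sub-probabilities) produces functions supported in $\{|x|_\d+|x'|_\d+|t-t'|^{1/2}\lesssim 2^{-n}\}$ with $|D_1^kD_2^\ell(\cdot)|\lesssim 2^{n(\frs+|k+\ell|-\beta)}$, $\beta=\frs-\frb$. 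Hence $\cG_a-K$, and in particular $\cG_a-\cG_\infty$, lies in $\scZ_{\frs-\frb,\d}$ uniformly over $a\in[0,\infty]$. For the convergence of the Robin kernels one gains one degree: using $\cG_a-\cG_\infty=\sum_w(\pm)\int_0^\infty 2ae^{-ar}\bigl(\cK(z-w^0(z'))-\cK(z-w^r(z'))\bigr)\,dr$ (telescoping the product of weights so that a single reflection is ``shifted'' while the rest are pure), and writing $\zeta=z-w^0(z')$ with $w^r(z')=w^0(z')-r\nu$ for a unit vector $\nu$, the elementary bounds $|\cK(\zeta)-\cK(\zeta+r\nu)|\lesssim\|\zeta\|^{-(\frs-1)}\min(r,\|\zeta\|)$ and $\int_0^\infty 2ae^{-ar}\min(r,\|\zeta\|)\,dr\lesssim\min(a^{-1},\|\zeta\|)$ give, at scale $n$, a bound $\min\bigl(a^{-1}2^{n(\frs-1)},2^{n(\frs-2)}\bigr)\le a^{-1}2^{n(\frs-1)}$ (with an extra $2^{n|k+\ell|}$ after differentiation). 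Therefore $\cG_a-\cG_\infty\in\scZ_{\frs-\frb-1,\d}$ with $\|\cG_a-\cG_\infty\|_{\scZ_{\frs-\frb-1,\d}}\lesssim a^{-1}$, which tends to $0$ as $a\to\infty$.

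The step I expect to be the real obstacle is uniformity near the edges $\d^2$ and the corners of $D$, where the $\Gamma$-orbit accumulates and images $w^{\underline{r}}(z')$ can be anomalously close to $z$; there a word of length $k$ no longer moves $z'$ away monotonically, words of many different lengths contribute to a single dyadic scale, and one must check that the contributing words are controlled by the (finite) local reflection group of the nearest corner up to shifts of size $\lesssim 2^{-n}$, and that summing their derivative bounds preserves $|D_1^kD_2^\ell(\cdot)|\lesssim 2^{n(\frs+|k+\ell|-\beta)}$. This is where the hypothesis that $D$ is a finite product of intervals is essential, so that $\Gamma$ is an iterated semidirect product of one-dimensional reflection groups whose copies of $D$ tile $\R^3$ with bounded geometry; the required bookkeeping is the cube analogue of the corner analysis in \cite{GH17}. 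Once that is settled, the remaining content is the half-space computation of \eqref{eq:Robin half space} and Remark~\ref{rem:uniform-in-a} performed in parallel over the six faces, together with the elementary heat-kernel estimates above.
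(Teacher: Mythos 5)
Your route differs from the paper's in a way that matters. You work with the \emph{full} heat kernel $\cK$ and the \emph{infinite} image series over the reflection group, whereas the paper first splits $\cK=\bar\cK+R$ and then runs the reflection-and-average construction separately on the compactly supported $\bar\cK$ and on the globally smooth $R$. That split is the key simplification: since $\bar\cK(\cdot,y)$ is supported in a unit ball and $D$ has unit size, only the \emph{finite} set $A$ of reduced words containing at most one reflection per axis (at most $3^3$ of them) contributes to the singular sum $G^{1,a}=\sum_{g\in A}T^a_g\bar\cK$, so the corner/edge accumulation you correctly flag as ``the real obstacle'' simply never arises -- each $T^a_g\bar\cK$ is a single compactly supported kernel treated word-by-word, and the infinite tail of the group only touches the smooth $R$, where absolute convergence uniformly in $a\in[0,\infty]$ is immediate from the decay of $R$. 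Your version has to control infinitely many words contributing to the singular part near $\d^2$, and you state that this is ``the cube analogue of the corner analysis in \cite{GH17}'' but do not carry it out; this is the genuine gap in your argument. (For a rectangular tiling this analysis is indeed doable, but it has to be done.)

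Two smaller points. First, your quantitative continuity claim $\|\cG_a-\cG_\infty\|_{\scZ_{\frs-\frb-1,\d}}\lesssim a^{-1}$ lives in $\scZ_{1,\d}$ (here $\frs-\frb-1=1$), whereas the lemma asserts -- and Corollary~\ref{cor:restrict1} uses -- continuity in $\scZ_{\beta,\d}$ for every $\beta<2$, with the rate $a^{\beta-2}$ from \eqref{eq:Zn}. Your bound is essentially the $\beta=1$ endpoint of the same estimate $2^{n(\beta-1)}(2^{-n}\wedge a^{-1})\le a^{\beta-2}$; you need to keep the $2^{-n}$ alternative in the minimum to reach $\beta$ close to $2$, or supply an interpolation against a uniform $\scZ_{\beta,\d}$ bound, which you assert but do not derive. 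Second, your explicit formula for $\cG_a$ (the $(-1)^{|w|}$ times the product of $2ae^{-ar_j}\,dr_j$) is not consistent with the half-space formula \eqref{eq:Robin half space}: a single reflection contributes $+\cK(z-(z')^0)-\int_0^\infty 2ae^{-ar}\cK(z-(z')^r)\,dr$, not just the integral with a sign, and your prose (``inserts a sign for the unshifted piece'') suggests you know this, but the displayed formula drops the unshifted term. The paper's $T^a_{\pm i}$ encode exactly this $\delta_0$-plus-exponential weight via the $(-1+2e^{-as})$ factor acting on a derivative of $F$, which is what makes the $a=0,\infty$ endpoints degenerate cleanly to Neumann and Dirichlet.
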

\end{definition}
Clearly $\scZ_{\beta,\d}$ is a vector space, on which
the best proportionality constant in \eqref{eq:Z bound}
defines a norm $\|\cdot\|_{\scZ_{\beta,\d}}$.
Let us decompose the $1+3$-dimensional heat kernel as $\cK=\bar\cK+R$ in such a way that
$\bar\cK=\cK$ on the ball of radius $1/2$ around the origin and  vanishes outside the ball of radius $1$.
Furthermore, $\bar\cK$ can be chosen to satisfy \cite[Ass.~5.1]{H0}
and $R$ is globally smooth with any derivatives having faster than polynomial spatial decay.

Whenever $a\in(-\infty,\infty)\setminus\{0\}$, the boundary conditions $\big(\frac{\d_n}{a}+1\big)f=0$ and $(\d_n+a)f=0$ are equivalent. For $a=0$ only the latter makes sense (and gives the homogeneous Neumann boundary conditions), while for $a=\infty$ only the former does (and gives the homogeneous Dirichlet). In the lemma below we use the former form, with the understanding of the obvious modification for $a=0$.

\begin{lemma}\label{lem:robin}
There exists a family of remainders
$(Z^{(a)})_{a\in(-\infty,\infty]}$ such that:
\begin{enumerate}[(i)]
\item $(\partial_{t}-\Delta)(\bar \cK+Z^{(a)})(t,x,t',y)=\delta_{t=t',x=y}$
on $\big([0,1]\times D\big)^2$;
\item $\big(\tfrac{\partial_{n}}{a}+1\big)(\bar\cK+Z^{(a)})(t,x,t',y)=0$
on $[0,1]\times \partial D\times[0,1]\times D$;
\item For all $\beta<2$, $(Z^{(a)})_{a\in(-\infty,\infty]}$ is continuous with respect to the natural topology of $(-\infty,\infty]$, as a function with values in $\scZ_{\beta,\d}$.
\end{enumerate}
\end{lemma}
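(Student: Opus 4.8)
The plan is to construct $Z^{(a)}$ explicitly by mimicking the half-space construction \eqref{eq:Robin half space}, iterated over the faces of the cube, and then to organise the sum so that the dependence on $a$ enters only through the measures $a e^{-ar}\,dr$. Concretely, let $\mathcal{F}$ be the set of the six affine reflections in the faces of $D$; products of these generate a group-like family of "folded" reflections $T_w$ indexed by finite words $w$ in $\mathcal{F}$, and the classical Neumann/Dirichlet Green's functions on $D$ are obtained by summing $\pm\bar\cK(z - T_w z')$ over all such $w$ (with appropriate signs in the Dirichlet case). For the Robin kernel one replaces each single reflection $y \mapsto y^0$ in a face by the averaged reflection $\int_0^\infty 2a e^{-ar}(\,\cdot\,)^r\,dr$ in that face, where the offset $r$ is taken in the inward normal direction. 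This produces a candidate kernel $\bar\cK + Z^{(a)}$; the term $Z^{(a)}$ collects all contributions coming from at least one reflection, i.e. all words $w$ of length $\geq 1$.

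The first step is to verify properties (i) and (ii). Property (i) is immediate since each reflected copy $\bar\cK(\cdot - T_w z')$ solves the heat equation in the bulk with no new singularity inside $[0,1]\times D$ (the reflected source sits outside $\bar D$), and the correction $R = \cK - \bar\cK$ is smooth. Property (ii) is checked face by face: on a given face $F$, the reflection $\sigma_F$ in $F$ acts as an involution on the set of words, pairing $w$ with $\sigma_F w$; the half-space identity from \eqref{eq:Robin half space} shows that each such pair contributes zero to $(\tfrac{\d_n}{a}+1)$ on $F$, exactly as in the warm-up computation, with the $a=0$ and $a=\infty$ modifications as stated. One should be slightly careful that reflections in the \emph{other} faces commute appropriately with this pairing near $F$, which they do because near the interior of $F$ only the reflection $\sigma_F$ is relevant.

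The second step, and the substance of the lemma, is to show $Z^{(a)} \in \scZ_{\beta,\d}$ for every $\beta < 2$ with norm bounded uniformly in $a$, and that $a \mapsto Z^{(a)}$ is continuous into $\scZ_{\beta,\d}$. The natural decomposition is $Z^{(a)} = \sum_{n\geq 0} Z_n^{(a)}$ where $Z_n^{(a)}$ collects all words $w$ such that the associated reflected source $T_w z'$ lies at distance $\sim 2^{-n}$ from $z$; because $\bar\cK$ is supported in a ball of radius $1$ and each reflection in a word moves the point by a bounded amount (or, in the Robin case, by the averaged amount $\int_0^\infty 2a e^{-ar}\, r\, dr = 2/a$ plus the reflection), only finitely many words contribute at each scale, and for $(z,z')$ with $|x|_\d + |x'|_\d + |t-t'|^{1/2} \sim 2^{-n}$ every surviving word forces $z,z'$ both within $O(2^{-n})$ of $\d$. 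This gives the support property in Definition \ref{def:Z}. For the derivative bound \eqref{eq:Z bound}, one uses that $\bar\cK$ satisfies the standard kernel bounds, that differentiating the averaged reflection $\int_0^\infty 2a e^{-ar}\bar\cK(z - (z')^r)\,dr$ in $z$ or $z'$ just differentiates $\bar\cK$ (the measure has total mass $2$ regardless of $a$), and that summing the finitely many words at scale $n$ costs only a constant. This is exactly the mechanism flagged in Remark \ref{rem:uniform-in-a}: the Robin piece is a convex combination, with $a$-independent total mass, of kernels each obeying the Section \ref{sec:kernel computations} bounds, so the estimates are uniform over $a \in [0,\infty]$ — and the same argument handles $a \in (-\infty,0)$ since $\int_0^\infty 2|a| e^{-ar}\,dr$ is still finite.

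The continuity in $a$ — including the endpoint $a = \infty$, where $2a e^{-ar}\,dr \rightharpoonup 2\delta_0(dr)$ and one recovers the Dirichlet kernel — follows from the same representation: $Z^{(a)} - Z^{(a')}$ is a sum over words of differences of the corresponding averaged reflections, and $\|2a e^{-ar}\,dr - 2a' e^{-a'r}\,dr\|_{TV} \to 0$ as $a' \to a$ in $(-\infty,\infty]$, uniformly enough (after absorbing the $\bar\cK$-derivative bounds, which are uniform in the offset $r$) that the scale-$n$ pieces converge in the $\scZ_{\beta,\d}$ seminorm with a summable-in-$n$ bound. \textbf{The main obstacle} I anticipate is purely bookkeeping: setting up the word/reflection indexing for the cube so that the face-by-face cancellation in (ii) is transparent and the counting of words-per-scale in the support property is clean, since unlike the half-space there are infinitely many reflected copies and their geometry near the edges $\d^2$ needs care — but since Definition \ref{def:Z} only asks for control near $\d$ (not a refined edge weight), the crude bound "finitely many words at each dyadic scale, each contributing a standard kernel bound" suffices, and no genuinely new estimate beyond \eqref{eq:Robin half space} and Remark \ref{rem:uniform-in-a} is needed.
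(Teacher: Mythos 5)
Your overall architecture (iterated averaged reflections, face-by-face cancellation for (ii), dyadic organisation for the $\scZ_{\beta,\d}$ bounds) matches the paper's, with the cosmetic difference that the paper keeps only the finitely many reflections in the set $A$ for the compactly supported part $\bar\cK$ and treats the smooth remainder $R=\cK-\bar\cK$ by a separately convergent infinite sum. But there is a genuine gap exactly at the heart of part (iii), the continuity at $a=\infty$. You claim $\|2ae^{-ar}\,dr-2\delta_0\|_{TV}\to 0$ as $a\to\infty$; this is false — an absolutely continuous measure and an atom are mutually singular, so the total variation distance equals the sum of the masses and stays equal to $4$ for all $a$. The convergence is only weak, and a weak-convergence argument gives no rate when tested against the sup norm of the integrand. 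What actually makes the argument work is that the scale-$n$ piece of the kernel, as a function of the offset $r$, is supported in $r\lesssim 2^{-n}$ and is Lipschitz with constant one power of $2^{n}$ worse than its sup bound; pairing this with the measure yields a factor $\int_0^{C2^{-n}}e^{-ar}\,dr\le 2^{-n}\wedge a^{-1}$, and it is only because $\beta<2$ that the resulting bound $2^{n(\beta-1)}(2^{-n}\wedge a^{-1})\le a^{\beta-2}$ is uniform in $n$ and tends to $0$. This is the estimate \eqref{eq:Zn} in the paper and it is the one nontrivial quantitative step of the lemma; your proposal replaces it with an assertion that is not true.

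A second, smaller error: your uniformity mechanism "the measure has total mass $2$ regardless of $a$", and the claim that negative $a$ is covered because "$\int_0^\infty 2|a|e^{-ar}\,dr$ is still finite", both fail for $a<0$, where $e^{-ar}=e^{|a|r}$ grows and that integral diverges. The construction still makes sense there because the scale-$n$ kernel pieces are compactly supported in $r$ (so the weight is locally bounded in $a$), and only local-in-$a$ bounds are needed for negative $a$ (indeed the limit $a\to-\infty$ does not exist); but the reason you give is not the right one. The cleanest fix for both points is the paper's device of writing the averaged reflection in integrated-by-parts form, acting on $\partial_{e_{\pm i}}\bar\cK_n$ with the bounded-on-$\supp\bar\cK_n$ weight $(-1+2e^{-as})$, so that the support restriction $s\lesssim 2^{-n}$ is manifest and \eqref{eq:Zn} drops out.
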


We then denote the Robin heat kernels on $D$ by
\begin{equ}\label{eq:robin-HK-def}
\cG_a=\bar\cK+Z^{(a)}.
\end{equ}

\begin{proof}
Let us preface that since the continuity property is only easier in $(-\infty,\infty)$, we will only deal with it at the endpoint case $a=\infty$.
Denote by $S$ the group of transformations of $\R^3$
generated by the reflections $g_{\pm i}$ on the hyperplanes
$\R^{i-1}\times\{\pm 1\}\times\R^{3-i}$, $i=1,2,3$,
and the different pieces of the boundary by
$D^{\pm i}:=(-1,1)^{i-1}\times\{\pm1\}\times(-1,1)^{3-i}$. 
Also let $e_{\pm i}$ denote the outward normal vector on the boundary piece $D^{\pm i}$. 

Take $a\in[1,\infty]$ and a function $F:(\R^4)^2\rightarrow\R$
with all derivatives having faster than polynomial spatial decay,
which furthermore has a `sign' $b\in\{\pm1\}^3$ with the property
$\partial_{x_i}F(t,x,t',y)=b_i\partial_{y_i}F(t,x,t',y)$.
Let us denote by $T^a_{\pm i}F$ the function
\begin{equ}
(T^a_{\pm i}F)(t,x,t',y)
=\int_0^\infty b_i(-1+2e^{-as})
\partial_{e_{\pm i}}F(t,x,t',g_{\pm i}(y)+se_{\pm i})\,ds.
\end{equ}
Here and below the partial derivative $\partial_{e_{\pm i}}$
is understood to act onto the $x$ coordinate.
Note that $T^a_{\pm i}F$ also has the above mentioned properties,
with its `sign' switched in the $i$-th coordinate.
The construction is such that one has
\begin{equ}\label{eq:robin reflection}
\big(\tfrac{\partial_{\pm e_i}}{a}+1\big)(F+T^a_{\pm i} F)
=0\quad\text{on }\R\times D^{\pm i}\times \R\times \R^3.
\end{equ}
Notice also that for $i\neq j$, one has
$T_{\pm i}T_{\pm j}=T_{\pm j}T_{\pm i}$,
and so for any $g\in S$ for any minimal
(with respect to the length) representation
$g=h_1h_2\cdots h_n$, where $h_k=g_{\pm i}$,
the corresponding mapping $T^a_g = T^a_{h_1}\cdots T^a_{h_n}$
is well-defined.
We also write $T^a_{\text{id}}=\text{id}$.

Let $A$ be the set of elements of $S$  whose minimal representation contains at most
one of $g_{-i}$ and $g_{+i}$ for all $i$.
Consider
\begin{equ}
G^{1,a}:=\sum_{g\in A} T^a_{g}\bar \cK.
\end{equ}
Note that for any fixed $\pm i$, all $g\in A$ are of one of three types:
not containing either $g_{-i}$ or $g_{+i}$ for all $i$,
of the form $g_{\pm i}g$ with $g$ of the previous type,
or containing $g_{\mp i}$.
Since for those of the last type, $(T^a_g\bar \cK)(t,\cdot,t',y)$ vanishes
near $D^{\pm i}$ whenever $y\in D$,
one can `pair up' elements of the first two types,
and conclude by \eqref{eq:robin reflection} that
\begin{equ}
\big(\tfrac{\partial_{\pm e_i}}{a}+1\big)G^{1,a}
=0\quad\text{on }\R\times D^{\pm i}\times \R\times D.
\end{equ}
Fix $\beta<2$, for convenience and without loss of generality we also assume $\beta\geq1$.
Let us use the notation $f\sim \bar f$ for functions $f$, $\bar f$
on $(\R^4)^2$ whenever $f=\bar f$ on $([0,1]\times D)^2$.
Note for instance, one has $T_g^a\bar\cK\sim 0$ for all $g\in S\setminus A$.
We then claim that for all $g\in A\setminus\{\text{id}\}$,
there exists a $Z^a_g\in\scZ_{\beta,\d}$ such that $T^a_g\bar \cK\sim Z^a_g$,
and $\|Z^a_g-Z^\infty_g\|_{\scZ_{\beta,\d}}\rightarrow 0$
as $a\rightarrow\infty$.
For convenience let us illustrate the argument for $g=g_{+i}$:
first note that if $x,y\in D$ are such that
$(t,x,t',g_{+i}(y)+se_{+i})\in\text{supp }\bar \cK_n$
for some $t,t',s\geq 0$, then
$d(x,\partial D)\vee d(y,\partial D)\leq 2^{-n-1}$.
Take now smooth functions $\varphi_n$ on $(\R^3)^2$, which are $1$ on
$\{(x,y):d(x,\partial D)\vee d(y,\partial D)\leq 2^{-n-1}\}$,
supported on $\{(x,y):d(x,\partial D)\vee d(y,\partial D)\leq 2^{-n}\}$,
and for all multiindices $k$ and $\ell$, $D_1^kD_2^\ell\varphi$ is bounded by
$2^{n(|k+\ell|)}$, up to a constant uniform in $n$. One then has, on $([0,1]\times D)^2$,
\begin{equs}
T_{g_{+i}}^a\bar \cK&
=\sum_{n\geq 0}\int_0^\infty-(-1+2e^{-as})
\partial_{e_{+ i}}\bar \cK_n(t,x,t',g_{+i}(y)+se_{+ i})\,ds
\\
&\sim\sum_{n\geq 0}\varphi_n(x,y)\int_0^\infty-(-1+2e^{-as})
\partial_{e_{+ i}}\bar \cK_n(t,x,t',g_{+i}(y)+se_{+ i})\,ds
\\
&=:Z^a_{g_{+i}}=Z^\infty_{g_{+i}}+(Z^a_{g_{+i}}-Z^\infty_{g_{+i}}).
\end{equs}
Noticing that the values of $s$ with non-zero contribution to the integral above are of size at most 
$\cO(2^{-n})$, one has
\begin{equs}
2^{-n(\frs+|k+\ell|-\beta)}&|D_1^kD_2^\ell(Z^a_{g_{+i}}-Z^\infty_{g_{+i}})_n|\lesssim
2^{n(\beta-1)}\int_0^{C2^{-n}}e^{-as}\,ds \\
&\le 2^{n(\beta-1)}(2^{-n} \wedge a^{-1}) \label{eq:Zn}
\le a^{\beta-2}\;,
\end{equs}
which indeed converges to $0$
uniformly in $n$, as $a\rightarrow \infty$, yielding our claim.

Consider next 
\begin{equ}
G^{2,a}:=\varphi_{0}\sum_{g\in S}T^{a}_g R.
\end{equ}
Thanks to the spatial decay properties of $R$,
this sum converges as a smooth function on $([0,1]\times D)^2$,
uniformly in $a\in[0,\infty]$.
In particular, this is enough to infer from \eqref{eq:robin reflection}
that it also satisfies the boundary condition, and also that since for each $g$,
$\|\varphi_{-1}T^{a}_g R-\varphi_{-1}T^{\infty}_g R\|_{\scZ_{\beta,\d}}\rightarrow 0$,
one has $\|G^{2,a}-G^{2,\infty}\|_{\scZ_{\beta,\d}}\rightarrow 0$.

Since clearly $(\partial_{t}-\Delta)(T_g^a(\bar \cK+R))=0$ on
$([0,1]\times D)^2$ for $g\neq \text{id}$, setting
\begin{equ}
Z^{(a)}:=G^{2,a}+\sum_{g\in A\setminus\{\text{\scriptsize{id}}\}}Z^a_g,
\end{equ}
completes the proof of the lemma.
\end{proof}

\begin{remark}
As can be seen from the explicit expression \eqref{eq:Robin half space}, the limit $a\to -\infty$ does not exist,
contrarily to what (ii) may suggest.
\end{remark}

\begin{remark}
In the special cases $a=0,\infty$ the above construction coincides with the one in \cite[Ex.~4.15]{GH17} for the homogeneous Neumann and Dirichlet heat kernels, respectively.
\end{remark}

\section{Explicit boundary corrections}\label{sec:calculations}
In this section we perform the boundary renormalisation of expectations of some concrete stochastic objects.
Recall that in the simplest situation of translation invariant renormalisation without subdivergences 
one considers a sequence of random distributions $X_\eps$, where $\E X_\eps=C_\eps$ diverges but does \emph{not} 
depend on the space-time variable.
In the case with boundaries the prototypical situation will be
\begin{equ}\label{eq:decomposition}
\E X_\eps=C_\eps+c_\eps\delta_\partial+\bar R^1_\eps+\bar R^2_\eps+\bar R^3_\eps,
\end{equ}
with divergent $C_\eps$ and $c_\eps$ and three different types of remainder distributions $\bar R^i_\eps$
that each converge to a finite limit.

\subsection{PAM}\label{sec:PAM square}
We denote $\Psi_\eps=\nabla Y_\eps$, where $Y_\eps$ is defined in \eqref{eq:Y}.
Let $K(x)=\tfrac{1}{4\pi|x|}$ be the Green's function of the $3$-dimensional Poisson equation
and fix a compactly supported function $\bar K$ such that $K-\bar K$ is smooth and vanishes
in a neighbourhood of the origin.
Set
\begin{equ}\label{eq:RC def 1}
\ell_\eps(\<PAM-2a-Small>)
=\E|\nabla\bar K\ast\xi_\eps|^2.
\end{equ}
As mentioned in Remark \ref{rem:reno-PAM}, the quantity $\ell_\eps(\<PAM-2a-Small>)$ arises from the BPHZ renormalisation of the regularity structure associated to \eqref{e:PAM}.
We will give more details in Section \ref{sec:reg str} below, but the reader may freely take 
\eqref{eq:RC def 1} as a definition for now.

We will show a decomposition of the type \eqref{eq:decomposition}
on the tetrahedron $Q$.		
\begin{lemma}\label{lem:reno Psi^2 PAM}
For all $\eps>0$, on $Q$ one has the decomposition
\begin{equ}
\E|\Psi_\eps|^2=\ell_\eps(\<PAM-2a-Small>)+\big(a_\rho+\tfrac{|{\log \eps}|}{8\pi}\big)\delta_\partial+\delta_\d  R^1_\eps+\scR R^2_\eps+R^3_\eps,
\end{equ}
where $a_\rho$ is a constant, and the remainders satisfy:
\begin{enumerate}[(i)]
\item $R^1_\eps\to R^1_0$ in $\CC^{1-\kappa,-\kappa}_{\d^2}(Q_\d)$;\label{i}
\item $R^2_\eps\to R^2_0$ in $\CC^{1-\kappa,-1-\kappa}_{\d }(Q)$ and $D_{x_1}R^2_\eps=D_{x_2}R^2_\eps=0$;\label{ii}
\item $R^3_\eps\to R^3_0$ in $\CC^{1-\kappa,-1-\kappa}_{\d^2}(Q)$;\label{iii}
\end{enumerate} 
where $\kappa>0$ is arbitrarily small and the limits do not depend on $\rho$.
\end{lemma}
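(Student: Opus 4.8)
The starting point is the explicit representation $\Psi_\eps = \nabla Y_\eps$ with $Y_\eps(x)=\int_D G(x,y)\xi_\eps(y)\,dy$, $G$ the Neumann Green's function on $D=(-1,1)^3$ (shifted to $[0,2]^3$). Since $\xi$ is white noise and $\xi_\eps=\rho_\eps * \xi$, Wick's theorem gives
\begin{equ}
\E|\Psi_\eps(x)|^2 = \int_D |\nabla_x G_\eps(x,y)|^2\,dy,
\end{equ}
where $G_\eps(x,\cdot)=(\rho_\eps * G(x,\cdot))$. The plan is to write $G$ as the free Poisson kernel $K(x-y)=\tfrac1{4\pi|x-y|}$ plus its images under the reflection group generated by the six faces of the cube (as in the construction of Robin kernels in Section~\ref{sec:robin}, or directly via the method of images for the Neumann Laplacian), plus a globally smooth remainder; equivalently, decompose $G=\bar K + (\text{reflected copies}) + (\text{smooth})$ with $\bar K$ the truncated $K$ used in \eqref{eq:RC def 1}. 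Then $\E|\Psi_\eps|^2$ splits into: (a) the diagonal term $\E|\nabla \bar K * \xi_\eps|^2=\ell_\eps(\<PAM-2a-Small>)$ plus a smooth correction from $K-\bar K$ on the diagonal; (b) cross terms between $\bar K$ and the single reflections across the three nearest faces — these are the source of the boundary blow-up; (c) cross terms involving edge reflections, reflections across two or three faces, and all interactions with the smooth part — these are lower order.

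The heart of the matter is the single-reflection cross terms. Restricting to $Q$, the nearest face is $\{x_3=0\}$ and $|x|_\d=x_3$; the relevant contribution is (twice) $\int_Q \nabla_x\bar K_\eps(x,y)\cdot\nabla_x\bar K_\eps(x,\bar y)\,dy$ where $\bar y$ is the reflection of $y$ across $\{x_3=0\}$. For $\eps=0$ this is $\int_{x_3>0}\nabla K(x-y)\cdot\nabla K(x-\bar y)\,dy$, which scales like $x_3^{-1}$ near the face, i.e. it is a non-integrable singularity of order $-1$: exactly the borderline case handled by Proposition~\ref{prop:boundary correction} via the operator $\scR$. The key computation is to extract the precise coefficient of the $x_3^{-1}$ blow-up (integrated against the $\eps$-dependence this produces the $\tfrac{|\log\eps|}{8\pi}\delta_\partial$ term — the constant $\tfrac1{8\pi}$ must come out of an explicit integral of $\nabla K\cdot\nabla(\text{reflected }K)$), and to show the remainder, after subtracting this leading part and applying $\scR$, lies in $\CC^{1-\kappa,-1-\kappa}_{\d^2}(Q)$ with a $\rho$-independent limit. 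I would carry this out by writing $\E|\Psi_\eps|^2 = \int_Q(\cdots)\,dy$ and, on the region $\eps \lesssim x_3$, using Lemmas~\ref{lem:kernels1}--\ref{lem:kernels3} (with $\frs=3,\frb=2$, so indeed $2\frb-\frs=1$) to control differences in $x$ and in $\eps$; on the region $x_3\lesssim\eps$ the kernel estimates give the $|\log\eps|$ accumulation directly. The $\delta_\d R^1_\eps$ term collects the genuinely boundary-supported piece of the double reflections (reflections across a single face but evaluated near an edge, producing a distribution on $Q_\d$ with an integrable $\d^2$-singularity), and the $\delta_\partial$ coefficient — after subtracting the divergent $|\log\eps|/8\pi$ — leaves the finite, $\rho$-dependent constant $a_\rho$; choosing $a_\rho$ appropriately (it absorbs the $\eps\to0$ limit of the regularised self-interaction on the boundary, which depends on $\rho$ through $\int|\nabla\bar K * \rho_\eps|^2$-type corrections) makes every surviving limit $\rho$-independent.

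The decomposition into $R^1,R^2,R^3$ then follows by bookkeeping: $R^3_\eps$ absorbs all terms that are already in $\CC^{1-\kappa,-1-\kappa}_{\d^2}(Q)$ — these are the edge-reflection cross terms, the triple-face reflections, everything involving the smooth remainder of $G$, and the $K-\bar K$ diagonal correction (all of these have at worst an $x_1^{-1-\kappa}$ edge singularity by the same kernel-bound arguments applied near $\d^2$ rather than $\d$); $R^2_\eps=\scR(\text{single-reflection remainder})$ is the face term, which by construction of the images depends only on $x_3$ (hence $D_{x_1}R^2_\eps=D_{x_2}R^2_\eps=0$) and lands in $\CC^{1-\kappa,-1-\kappa}_{\d}(Q)$ after one application of Proposition~\ref{prop:boundary correction}; and $\delta_\d R^1_\eps$ is the boundary-supported remainder with $R^1_\eps\in\CC^{1-\kappa,-\kappa}_{\d^2}(Q_\d)$. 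Convergence of each $R^i_\eps$ as $\eps\to0$ is obtained by the same estimates applied to differences $G_\eps - G_0$, using Lemma~\ref{lem:kernels2} to gain the factor $\eps^\gamma$. The main obstacle, and the only place where real work (as opposed to routine power-counting) is needed, is the exact evaluation of the constant $\tfrac1{8\pi}$: one must compute $\lim_{\eps\to0}\tfrac1{|\log\eps|}\int\nabla\bar K(x-y)\cdot\nabla\bar K_\eps(x-\bar y)\,dy$ integrated against a boundary test function, and verify it equals $\tfrac1{8\pi}$ independently of $\rho$ (the $\rho$-dependence being pushed entirely into $a_\rho$); this is a concrete but delicate Fourier/real-space computation with the explicit Green's function $1/(4\pi|\cdot|)$ and its normal derivative on $\{x_3=0\}$.
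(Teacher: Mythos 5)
Your strategy coincides with the paper's: decompose the Neumann Green's function by the method of images, isolate the single-reflection cross-term across the nearest face as the sole source of the non-integrable $x_3^{-1}$ singularity, push all other reflections and smooth corrections into $R^3_\eps$ via Lemmas \ref{lem:kernels1}--\ref{lem:kernels3}, and recover the boundary Dirac terms from the mismatch between the face term and its $\scR$-extension. The genuine gap is that you leave open precisely the step the lemma is quantitatively about: the evaluation of the constant $\tfrac1{8\pi}$. You call it ``a concrete but delicate Fourier/real-space computation'' and stop, but without it neither the coefficient of $|\log\eps|\,\delta_\partial$ nor the split between the universal logarithm and the $\rho$-dependent constant $a_\rho$ is established. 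The paper's computation rests on two specific identities you would need to find: writing $I_0(s)=\int_{\R^3}\bigl(\nabla_1K\cdot\nabla_1K^{(1,1,-1)}\bigr)(\us,x')\,dx'$, one converts the $x$-gradients into $x'$-gradients (with a sign flip in the third component for the reflected kernel) and integrates by parts to obtain $-\int(\Delta K)K^{(1,1,-1)}-2\int\d_{x_3'}K\,\d_{x_3'}K^{(1,1,-1)}$; the first integral equals $\tfrac1{8\pi s}$ exactly because $-\Delta K=\delta_0$, and the second vanishes identically under the inversion $\bar x_3=1/x_3$, $\bar x_i=x_i/x_3$, which maps the integrand to its own negative. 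Some such exact mechanism is unavoidable here; power counting alone only gives that the coefficient is \emph{some} multiple of $s^{-1}$.

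A secondary inaccuracy: you attribute $\delta_\d R^1_\eps$ to ``double reflections evaluated near an edge''. In the actual decomposition $R^1_\eps$ has nothing to do with edge reflections: it is (up to the constant $a_\rho$ and the divergent part) the function $m_\eps(y)=\int_0^{y_1}R^2_\eps(0,0,s)\,ds$ arising from $R^2_\eps-\scR R^2_\eps=\delta_\d m_\eps$, and its $\d^2$-singularity is the explicit $\tfrac{\log y_1}{8\pi}$ left over from $\int_\eps^{y_1}s^{-1}\,ds$ --- the upper limit $y_1$ being dictated by the geometry of $Q$ --- plus a remainder $R^{1,1}_\eps\to0$. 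Relatedly, the clean way to see that $a_\rho$ is a genuine constant carrying all the $\rho$-dependence is the scaling invariance $I_{\lambda\eps}(\lambda s)=\lambda^{-1}I_\eps(s)$, which makes $\int_0^\eps I_\eps(s)\,ds$ and $\int_\eps^\infty(I_\eps-I_0)(s)\,ds$ independent of $\eps$; your proposal gestures at this but the scaling observation is what actually closes that step.
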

\begin{proof}
Introduce the shorthand, for functions $f$ on $(\R^3)^2$, $\eps\in[0,1]$, and
$S\subset\R^3$,
\begin{equ}
(\frD^\eps_S f)(x,x'):=\big(\rho_\eps\ast(\nabla_xf(x,\cdot)\bone_{S}(\cdot))\big)(x')
\end{equ}
with the convention that for $\eps=0$ we replace the convolution
with $\rho_\eps$ by the identity.
With this notation we have 
\begin{equ}
\ell_\eps(\<PAM-2a-Small>) = \int |\frD^\eps_{\R^3}\bar K(x,x')|^2\, dx',
\end{equ}
which of course does not actually depend on $x$ since $\bar K$ depends only on the difference of its arguments.
One can then write, for $\eps>0$,
\begin{equ}
\E\Psi_\eps^2(x)-\ell_\eps(\<PAM-2a-Small>)=\int \bigl(|\frD^\eps_D G(x,x')|^2-|\frD^\eps_{\R^3}\bar K(x,x')|^2\bigr)\, dx'.
\end{equ}
As a first step, we truncate the infinite sum in $G$ and remove the truncation of $K$.
Let $B=\{\pm1\}^3$, and for $b\in B$ and $x\in\R^3$, let $x^{b}$ be
the vector obtained by switching the signs of the coordinates of $x$
according to $b$. Denote furthermore $K^b(x,\bar x):=K(x,\bar x^b)$.
We then write
\begin{equ}\label{eq:sim1}
\E\Psi_\eps^2(x)-\ell_\eps(\<PAM-2a-Small>)-R^{3,1}_\eps(x)= \int\Big(\Big|\frD^\eps_D \Big(\sum_{b\in B}K^b\Big)(x,x')\Big|^2-|\frD^\eps_{\R^3} K(x,x')|^2\Big)\, dx',
\end{equ}
interpreting this as a definition of $R^{3,1}_\eps$.
It follows from the reflection principle that, on $Q$, $R^{3,1}_\eps$ converges 
as a smooth function to a $\rho$-independent limit.
Next we claim that with $\R^3_u=\{(x_1,x_2,x_3):\,x_3>0\}$,
\begin{equ}
R^{3,2}_\eps(x)=
\int \Big|\frD^\eps_D \Big(\sum_{b\in B}K^b\Big)(x,x')\Big|^2	-
\big|\frD^\eps_{\R^3_u} 
\big(K^{(1,1,1)}+K^{(1,1,-1)}\big)(x,x')\big|^2\,dx'
\end{equ}
converges in $\CC^{1-\kappa,-1-\kappa}_{\d^2}(Q)$ to a $\rho$-independent limit.
The function  $R^{3,2}_\eps$ can be written as a finite linear combination of terms of the type
\begin{equs}
\tilde R_\eps(x):=\int\big(\frD_{S}^\eps K^b\cdot\frD_{\hat S}^\eps K^{\hat b}\big)(x,x')\,dx'\;,
\end{equs}
where either $\hat b\in B\setminus\{(1,1,1),(1,1,-1)\}$ and $\hat S=D$, or $\hat b\in \{(1,1,1),(1,1,-1)\}$ and $\hat S=\R^3_u\setminus D$. The choice of $b$ and $S$ will not play a role.
Notice that we are in the setting of Section \ref{sec:prepare}: 
$\nabla_x K^b(x,y)\bone_{y\in S}\in\scG$, with $\frs=3$, $\frb=2$, $T(z)=z^b$, $A_x=\R^3$, $A_y=S$.
Moreover, for $x\in Q$, one has $d(x,\hat S^{\hat b})\geq x_1=|x|_{\d^2}$ for each choice of $\hat b$ and $\hat S$ as above,
and therefore Lemma \ref{lem:kernels1} provides a bound for $\tilde R_\eps$ in $\CC^{1-\kappa,-1}_{\d^2}(Q)$, uniformly in $\eps$.
Since on $Q\setminus \d^2$, $\tilde R^\eps$ converges locally in $\CC^{1-\kappa}$, this proves the convergence in $\CC^{1-\kappa,-1-\kappa}_{\d^2}(Q)$ for each $\tilde R_\eps$, and consequently for $R^{3,2}_\eps$ as well.
The function $R^3_\eps=R^{3,1}_\eps+R^{3,2}_\eps$ therefore satisfies \eqref{iii} and we have so far proved the following decomposition on $Q$:
\begin{equ}
\E\Psi_\eps^2(x)-\ell_\eps(\<PAM-2a-Small>)-R^{3}_\eps(x)= \int|\frD^\eps_{\R^3_u} 
\Big(K^{(1,1,1)}+K^{(1,1,-1)}\Big)(x,x')|^2-|\frD^\eps_{\R^3} K(x,x')|^2\, dx'.
\end{equ}
Define $R^2_\eps$ as the right-hand side of the above equality.
It is clear that $R^2_\eps$ does not depend on $x_1$ and $x_2$, so it remains to check its convergence in $\CC^{1-\kappa,-1-\kappa}_\d(Q)$.
By Lemma~\ref{lem:kernels3}, both of the functions
\begin{equs}
\,&\int \big|\frD^\eps_{\R^3_u} K^{(1,1,1)}(x,x')\big|^2-|\frD^\eps_{\R^3} K(x,x')|^2\bone_{x'\in\R^3_u}\,dx',
\\
&\int \big|\frD^\eps_{\R^3_u} K^{(1,1,-1)}(x,x')\big|^2-|\frD^\eps_{\R^3} K(x,x')|^2\bone_{x'\in(\R^3\setminus \R^3_u)}\,dx',
\end{equs}
converge to $0$ in $\CC^{1-\kappa,-1-\kappa}_\d(Q)$.
Concerning the cross term in $R^2_\eps$, its convergence to
\begin{equ}
R^2_0(x)=2\int \bone_{x'\in \R^3_u}\big(\nabla_1 K\cdot\nabla_1 K^{(1,1,-1)}\big)(x,x')\,dx'
\end{equ}
follows as above: Lemma~\ref{lem:kernels1} yields a uniform bound in $\CC^{1-\kappa,-1}_{\d}(Q)$, and away from  $\d$ the convergence in $\CC^{1-\kappa}$ is quite clear.
Therefore, $R^2_\eps$ satisfies \eqref{ii}.
It remains to show that the difference $R^2_\eps-\scR R^2_\eps$ is of the claimed form.

As noted in Remark \ref{rem:mini}, this difference is of the form $\delta_\d  m_\eps$, and on $Q_\delta$ one can express the function $m_\eps$ by
\begin{equ}
m_\eps(y)=\int_0^{y_1}R^2_\eps(y_1,y_2,s)\,ds=\int_0^{y_1}R^2_\eps(0,0,s)\,ds.
\end{equ}
Let us use the shorthand $R^2_\eps(0,0,s)=I_\eps(s)$,
for which we have the bound $\eps^{-1}$ from \eqref{eq:triv eps bound}.
One can rewrite the above integral as
\begin{equ}
m_\eps(y)=\int_0^\eps I_\eps(s)\,ds+\int_\eps^\infty I_\eps(s)-I_0(s)\,ds-\int_{y_1}^\infty I_\eps(s)-I_0(s)\,ds+\int_\eps^{y_1}I_0(s)\,ds.
\end{equ}
By Lemma \ref{lem:kernels2} and \ref{lem:kernels3}, one has the bound
$|I_\eps(s)-I_0(s)|\lesssim\eps^{1-\kappa}/s^{2-\kappa}$.
Therefore, the second term above is finite and by scaling invariance,
is independent of $\eps$, so it is just a ($\rho$-dependent) constant. 
The first term is also independent of $\eps$, also by scaling invariance.
Therefore,
\begin{equ}
m_\eps(y)=a_\rho+\int_{y_1}^\infty I_\eps(s)-I_0(s)\,ds+\int_\eps^{y_1}I_0(s)\,ds.
\end{equ}
Denote the second term by $R^{1,1}_\eps$.
Invoking Lemma \ref{lem:kernels2} again, we have
$|R^{1,1}_\eps(y)|\leq  \eps^{1-\kappa}/y_1^{1-\kappa}$,
and by \eqref{eq:kernels1-a} we have $\nabla_y R^{1,1}_\eps(y)\leq 1/y_1$. This is enough to conclude $R^{1,1}_\eps\to 0$ in $\CC^{1-\kappa,-\kappa}_{\d^2}(Q_\d)$.
Moving on to the third term on the right-hand side 
we write, with $\us=(0,0,s)$,
\begin{equs}
I_0(s)
&=\int_{\R^3}\big(\nabla_1 K\cdot\nabla_1 K^{(1,1,-1)}\big)(\us,x')\,dx'
\\
&=\int_{\R^3}\big((-\d_{x_1'},-\d_{x_2'},-\d_{x_3'})K\cdot(-\d_{x_1'},-\d_{x_2'},\d_{x_3'})K^{(1,1,-1)}\big)(\us,x')\,dx'
\\
&=-\int_{\R^3}\big((\Delta K)K^{(1,1,-1)}\big)(\us,x')\,dx'
-2\int_{\R^3}\big(\d_{x_3'}K\d_{x_3'}K^{(1,1,-1)}\big)(\us,x')\,dx'
\\
&=:I_0^1(s)+\frac{1}{16\pi^2}I_0^2(s).
\end{equs}
Since
$-\Delta K=\delta_0$, one easily gets
\begin{equ}
I_0^1(s)=\int_{\R^3}\delta_{x'=\us}K^{(1,1,-1)}(\us,x')\,dx'=\frac{1}{8\pi s}.
\end{equ}
Next we rewrite $I_0^2(s)$ by change of variables: first by setting $x_i=x_i'/s$ and then $\bar x_3=1/x_3$, $\bar x_1=x_1/x_3,$ $\bar x_2=x_2/x_3$, one gets
\begin{equs}
I_0^2(s)&=\int_{\R^3_u}\frac{(s-x_3')(s+x_3')}{|(x_1',x_2',x_3'-s)|^3|(x_1',x_2',x_3'+s)|^3}\,dx'
\\
&=\frac{1}{s}\int_{\R^3_u}\frac{1-x_3^2}{|(x_1,x_2,x_3-1)|^3|(x_1,x_2,x_3+1)|^3}\,dx
\\
&=\frac{1}{s}\int_{\R^3_u}\frac{\bar x_3^2-1}{|(\bar x_1,\bar x_2,\bar x_3-1)|^3|(\bar x_1,\bar x_2,\bar x_3+1)|^3}\,d\bar x=-I_0^2(s),
\end{equs}
and so $I_0^2(s)=0$.
We can conclude that
\begin{equ}
m^\eps(y)= a_\rho+R^{1,1}_\eps(y)+\tfrac{\log y_1}{8\pi}-\tfrac{\log \eps}{8\pi}\;,
\end{equ}
and, setting $R^1_\eps(y)=R^{1,1}_\eps(y)+\tfrac{\log y_1}{8\pi}$, this completes the proof.
\end{proof}

\subsection{\texorpdfstring{$\Phi^4_3$}{Phi\^4\_3} - the quadratic term}\label{sec:Phi4 square}
Let $\cK$ be the heat kernel on $\R\times\R^3$
and fix a compactly supported function $\bar\cK$ such that $\cK-\bar\cK$ is smooth and vanishes in a neighborhood of the origin. We then define
\begin{equ}
\ell_\eps(\<Phi-2-Small>)
=\E(\bar \cK\ast\xi_\eps)^2.
\end{equ}

As before, we will show a decomposition of the type \eqref{eq:decomposition}
on $Q$. 

\begin{lemma}\label{lem:reno Phi43 square}
Let $(b_\eps)_{\eps\in(0,1]}\subset \R$ be a sequence such that $\eps b_\eps\to 0$ and
\begin{equ}
\lim_{\eps\to 0}\Big(\frac{|{\log \eps}|}{32\pi}-b_\eps \Big)
=b\in[0,\infty].
\end{equ}
Then there exist a sequence $(c_\eps)_{\eps\in(0,1]}$ such that $c_\eps\to b$ and 
such that on $Q$ one has the decomposition
\begin{equ}
\E|\Psi_{\eps,c_\eps}|^2=\ell_\eps(\<Phi-2-Small>)+\big(a_\rho+b_\eps+c_\eps\big)\delta_\partial+\delta_\d R^1_\eps+\scR R^2_\eps+R^3_\eps,
\end{equ}
where $a_\rho$ is a constant and the remainders $R^i_\eps$ satisfy the properties in Lemma \ref{lem:reno Psi^2 PAM} \eqref{i}-\eqref{iii}.
\end{lemma}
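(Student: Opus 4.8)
The plan is to run the proof of Lemma~\ref{lem:reno Psi^2 PAM} again in the $1+3$-dimensional parabolic setting ($\frs=5$, $\frb=3$), with the fixed Neumann kernel replaced by the Robin heat kernels $\cG_{3a}=\bar\cK+Z^{(a)}$ of Section~\ref{sec:robin}, and then to fix the Robin parameter $c_\eps$ by closing a scalar self-consistency relation. As in the PAM case the white-noise covariance gives $\E\Psi_{\eps,a}^2=\int(\cG_{3a})_\eps(z,u)^2\,du$ and $\ell_\eps(\<Phi-2-Small>)=\int\bigl((\bar\cK*\rho_\eps)(v)\bigr)^2\,dv$, so for fixed $a\in[0,\infty)$ I would analyse $\int(\cG_{3a})_\eps(z,u)^2\,du-\ell_\eps(\<Phi-2-Small>)$ on $Q$. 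Inserting $\cG_{3a}=\bar\cK+Z^{(a)}$ and, inside $Z^{(a)}$, the explicit reflection/averaging construction from the proof of Lemma~\ref{lem:robin}, this splits into three pieces exactly as in Lemma~\ref{lem:reno Psi^2 PAM}: (a) a globally smooth, $\rho$-independent remainder $R^{3,a}_\eps$ gathering the tail $R=\cK-\bar\cK$ of the heat kernel and all reflections across more than one face, which converges in $\CC^{1-\kappa,-1-\kappa}_{\d^2}(Q)$ (uniform bounds from Lemma~\ref{lem:kernels1}, convergence from Lemma~\ref{lem:kernels2}, uniformity in $a$ from Remark~\ref{rem:uniform-in-a} and Lemma~\ref{lem:robin}(iii)); (b) the ``square'' terms $\cK_\eps(\cdot-\cdot)^2$ and their single-reflection images, whose difference from the flat bulk restricted to a half-space tends to $0$ in $\CC^{1-\kappa,-1-\kappa}_\d(Q)$ by Lemma~\ref{lem:kernels3}; (c) the genuine cross terms of the half-space Robin kernel $\hat\cG_{3a}$ of \eqref{eq:Robin half space}, gathered into $R^{2,a}_\eps$. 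Since $\hat\cG_{3a}$ is translation invariant in $(x_0,x_1,x_2)$, $R^{2,a}_\eps$ depends only on $x_3$, so $D_{x_0}R^{2,a}_\eps=D_{x_1}R^{2,a}_\eps=D_{x_2}R^{2,a}_\eps=0$, and Lemmas~\ref{lem:kernels1}--\ref{lem:kernels2} give its convergence in $\CC^{1-\kappa,-1-\kappa}_\d(Q)$, continuously in $a\in[0,\infty]$.

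As in Lemma~\ref{lem:reno Psi^2 PAM} (Remark~\ref{rem:mini}), $R^{2,a}_\eps-\scR R^{2,a}_\eps=\delta_\d m^{(a)}_\eps$ with $m^{(a)}_\eps(y)=\int_0^{y_1}I^{(a)}_\eps(s)\,ds$, $I^{(a)}_\eps(s):=R^{2,a}_\eps(\cdot,\cdot,\cdot,s)$, and $|I^{(a)}_\eps(s)|\lesssim\eps^{-1}\wedge s^{-1}$. Parabolic scaling forces $I^{(a)}_0(s)=s^{-1}\Xi(3as)$ for a bounded $\Xi$, and I would compute the residue of the leading cross term $\cK(x-y)\cK(x-y^0)$ as $x$ approaches the face, using the explicit Gaussian (equivalently $(\partial_t-\Delta)\cK=\delta$), to get $\Xi(0)=\tfrac{1}{32\pi}$ as in \eqref{eq:boundary-reno}, the other cross pieces being finite at the face or cancelling by the change-of-variables symmetry that killed $I_0^2$ in PAM. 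Splitting $\int_0^{y_1}=\int_0^\eps+\int_\eps^{y_1}$, substituting $u=3as$, and handling $\int_\eps^{y_1}(I^{(a)}_\eps-I^{(a)}_0)$ as in PAM, one reaches
\begin{equ}
m^{(a)}_\eps(y)=\hat a_\rho+\frac{|\log\eps|}{32\pi}+\frac{\log y_1}{32\pi}+G(3ay_1)+o_\eps(1),\qquad G(T):=\int_0^T\frac{\Xi(u)-\Xi(0)}{u}\,du,
\end{equ}
with $\hat a_\rho$ the $\eps$-independent (in the limit, using $\eps a\to0$) constant from $\int_0^\eps I^{(a)}_\eps$, and $G(T)\to0$ as $T\to0$. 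Writing $m^{(a)}_\eps=\tilde c_\eps(a)+R^{1,a}_\eps$ with $\tilde c_\eps(a)$ its $y$-independent part, one gets $\tilde c_\eps(a)=\hat a_\rho+\tfrac{|\log\eps|}{32\pi}+\theta_\eps(a)$, where $\theta_\eps(a)$ — which absorbs the $y$-independent (possibly $a$-divergent) part of $G(3a\,\cdot)$ — is $O(\log a)$, converges for fixed $a$ to a continuous $\theta(a)$ with $\theta_\eps'(a)\to0$ as $a\to\infty$, and all the $R^{i,a}_\eps$ satisfy \eqref{i}-\eqref{iii} and depend continuously on $a\in[0,\infty]$.

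Finally I would choose $c_\eps$ by solving the scalar relation $\tilde c_\eps(c_\eps)=a_\rho+b_\eps+c_\eps$, i.e.\ $c_\eps-\theta_\eps(c_\eps)=\hat a_\rho-a_\rho+\bigl(\tfrac{|\log\eps|}{32\pi}-b_\eps\bigr)$. Since $\theta_\eps'(a)\to0$, the left-hand side is, for $\eps$ small, strictly increasing and unbounded in $c_\eps$, so the relation is solvable; if $b<\infty$ one picks $a_\rho$ so that $b-\theta(b)=\hat a_\rho-a_\rho+b$, forcing $c_\eps\to b$ via $\theta_\eps\to\theta$, while if $b=\infty$ the right-hand side tends to $+\infty$, hence $c_\eps\to\infty$, and $c_\eps\lesssim|\log\eps|$ together with $\eps b_\eps\to0$ gives $\eps c_\eps\to0$. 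Continuity of $R^{i,a}_\eps$ and of $R^{i,a}_0$ in $a\in[0,\infty]$ (ultimately Lemma~\ref{lem:robin}(iii) and Remark~\ref{rem:uniform-in-a}) then upgrades the fixed-$a$ convergence of the remainders to convergence along $a=c_\eps$, giving the claimed decomposition. The main obstacle is step~(c): the half-space cross-term bookkeeping needed to identify $\Xi$ and to verify that the only surviving divergence is $\tfrac{1}{32\pi}|\log\eps|\,\delta_\partial$, together with controlling the $a$-dependence uniformly up to $a=\infty$ so that the self-consistent choice of $c_\eps$ keeps every remainder convergent.
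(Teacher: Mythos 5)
Your plan is essentially the paper's proof: the same three-way splitting of the Robin covariance into $R^3_\eps$, $\scR R^2_\eps$ and a boundary function $m_\eps$, the same use of parabolic scaling to extract $\tfrac{|\log\eps|}{32\pi}+\tfrac{\log y_1}{32\pi}$ plus an $a$-dependent correction coming from the averaging part $\cR_{3a}$ of the Robin kernel, and the same closure by a scalar self-consistency equation for $c_\eps$. The only structural difference is bookkeeping: you analyse a fixed Robin parameter $a$ and then substitute $a=c_\eps$ by continuity in $a\in[0,\infty]$, whereas the paper runs the computation directly with an $\eps$-dependent sequence $\tilde c_\eps$ and uses the exact scaling identity $\cJ^\eps_a(s)=\lambda\,\cJ^{\lambda\eps}_{\lambda^{-1}a}(\lambda s)$ to reduce to $\eps$-independent quantities. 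For your version to close you need joint control (uniformity in $a$ of the $\eps\to0$ convergence together with continuity of the limits) — Remark \ref{rem:uniform-in-a} and Lemma \ref{lem:robin}(iii) do supply this, but separate continuity of $R^{i,a}_\eps$ and of $R^{i,a}_0$ in $a$ is not by itself enough.

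The one substantive gap is in the properties you assign to $\Xi$. Boundedness of $\Xi$ does not make $G(T)=\int_0^T(\Xi(u)-\Xi(0))u^{-1}\,du$ finite, let alone $o(1)$ as $T\to0$: you need the quantitative small-$a$ bound on the Robin correction, $|\cJ^0_a(1)|\lesssim a|\log a|$, which is the content of Proposition \ref{prop:bound for cJ} and is obtained there from the explicit Gaussian computation \eqref{eq:big integral} — the same computation that yields $\Xi(0)=\cI_0(1)=\tfrac1{32\pi}$; note that there is no analogue here of the symmetry cancellation that killed $I_0^2$ in the PAM case, so the constant really comes from evaluating the integral. This small-$a$ bound is exactly what makes $d_0$ finite when $b<\infty$ and what controls the $\log|\log\eps|$ correction when $b=\infty$. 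Likewise, the invertibility of $c\mapsto c-\theta_\eps(c)$ in the $b=\infty$ case rests on the large-$a$ limit $\cJ^0_a(1)\to-2\cI_0(1)$, which gives the sandwich $c+\lambda\log c\le f(c)\le c+\lambda^{-1}\log c$; the assertion ``$\theta_\eps'(a)\to0$'' should be derived from this asymptotic rather than assumed. You did flag the cross-term bookkeeping as the main obstacle, but be aware that what is missing is a sharp rate at $a=0$ and an asymptotic at $a=\infty$, not merely the identification of $\Xi(0)$.
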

\begin{remark}
While for  $b<\infty$ one may take $c_\eps\equiv b$, for $b=\infty$
the sequence $c_\eps$ is \emph{not} obtained in the trivial way $c_\eps=\tfrac{|{\log \eps}|}{32\pi}-b_\eps$. For example, when $b_\eps\equiv 0$, the difference $ c_\eps - \tfrac{|{\log \eps}|}{32\pi}$ should actually be chosen to diverge at order
$\log|{\log \eps}|$. This is left as an exercise to the interested reader.
\end{remark}
\begin{proof}
Let us first take an arbitrary sequence $\tilde c_\eps\to b$ such that $\eps\tilde c_\eps\to 0$.
The first part of the argument is then virtually identical to that in the proof of Lemma \ref{lem:reno Psi^2 PAM}.
By following the same steps, we can conclude that on $Q$ one has
\begin{equ}
\E\Psi_{\eps,\tilde c_\eps}^2-\ell_\eps(\<Phi-2-Small>)-R^3_\eps-\scR R^2_\eps=\delta_\d m_\eps
\end{equ}
with $R^2_\eps$ and $R^3_\eps$ satisfying \eqref{ii} and \eqref{iii}, respectively.
It is also clear that $R^2_0$ and $R^3_0$ do not depend on $\tilde c_\eps$ but only on $b$
(since they can be expressed from heat kernels for the $-3b$-Robin boundary condition).
The function
$m_\eps$ is given on $Q_\d$ by
\begin{equ}\label{eq:m-eps 1}
m_\eps(y)=\int_0^{y_1}
\int\big|\big(\rho_\eps\ast\hat\cG_{3\tilde c_\eps}(\us,\cdot)\big)(x)\big|^2
-\big|\big(\rho_\eps\ast\cK(\us,\cdot)\big)(x)\big|^2
\,dx\,ds,
\end{equ}
where $\us=(0,0,0,s)$ and $\hat\cG_{3\tilde c_\eps}$ is the Robin heat kernel on the upper half space
$\R^4_u=\{(x_0,x_1,x_2,x_3):\,x_3>0\}$.
Recall from \eqref{eq:Robin half space} that
it is given, with the notation $z^r=(z_0,z_1,z_2,-z_3-r)$, by
\begin{equs}
\hat \cG_a(z,\tilde z)&= \bone_{\tilde z_3\geq 0}\cK(z-\tilde z)+\bone_{\tilde z_3\geq 0}\cK(z-\tilde z^0)-\cR_a(z,\tilde z)
\\
&=\bone_{\tilde z_3\geq 0}\cK(z-\tilde z)+\bone_{\tilde z_3\geq 0}\cK(z-\tilde z^0)-\bone_{\tilde z_3\geq 0}\int_0^\infty 2 a e^{-ar}\cK(z-\tilde z^r)\,dr.
\end{equs}
Now we would like to proceed similarly to Lemma \ref{lem:reno Psi^2 PAM} by simplifying the integral in \eqref{eq:m-eps 1}.
However, since the kernels $\tilde\cG_{3\tilde c_\eps}$ themselves depend on $\eps$, some of the scaling arguments break down.
Therefore let us separate the $\eps$-dependent part from the kernel.
Let $\cK^\eps(z,\tilde z)=\big(\rho_\eps\ast(\cK(z-\cdot))\big)(\tilde z)$,
$\cK^\eps_+(z,\tilde z)=\big(\rho_\eps\ast(\bone_{\cdot\in\R^4_u}\cK(z-\cdot))\big)(\tilde z)$,
$\cK^\eps_-(z,\tilde z)=\big(\rho_\eps\ast(\bone_{\cdot\in\R^4_u}\cK(z-(\cdot)^0))\big)(\tilde z)$,
and $\cR_a^\eps(z,\tilde z)=\big(\rho_\eps\ast(\cR_a(z,\cdot))\big)(\tilde z)$.
and define
\begin{equ}\label{eq:cJ 0}
\cJ^\eps_a(s)=\int_{\R^4_u}
-2\cK^\eps_+(\us,z)\cR_{3a}^\eps(\us,z)
-2\cK^\eps_-(\us,z)\cR_{3a}^\eps(\us,z)
+|\cR_{3a}^\eps(\us,z)|^2\,dz.
\end{equ}
We then have
\begin{equ}
m_\eps(y)=
\int_0^{y_1}\int|(\cK^\eps_+ + \cK^\eps_-)(\us,z)|^2-|\cK^\eps(\us,z)|^2\,dz\,ds
+\int_0^{y_1}\cJ_{\tilde c_\eps}^\eps(s)\,ds.
\end{equ}
Now the first integral can be treated by the scaling arguments as in Lemma \ref{lem:reno Psi^2 PAM}.
That is, there exists a constant $a_\rho$ and a sequence of functions $R^{1,1}_\eps$ converging to $0$ in $\CC^{1-\kappa,-\kappa}_{\d^2}(Q_\d)$ such that
\begin{equ}
m_\eps(y)=a_\rho+R^{1,1}_\eps(y)+\int_\eps^{y_1}\cI_0(s)\,ds+\int_0^{y_1}\cJ_{\tilde c_\eps}^\eps(s)\,ds,
\end{equ}
where
the function $\cI_0$ is given by
\begin{equ}
\cI_0(s)=\int_{\R^4_u}2\cK(\us-z)\cK(\us-z^0)\,dz.
\end{equ}
By scaling invariance once again, we have $\cI_0(s)=\tfrac{1}{s}\cI_0(1)$.
The value of $\cI_0(1)$ can be found by an explicit computation, which we perform in a more general setting below,
see \eqref{eq:big integral}, which yields the value 
\begin{equ}\label{eq:I0-value}
\cI_0(1)=\frac{1}{32\pi}.
\end{equ}
Therefore,
\begin{equ}\label{eq:m-eps 2}
m_\eps(y)= a_\rho+\tfrac{|{\log \eps}|}{32\pi}+R^{1,1}_\eps(y)+\tfrac{\log y_1}{32\pi}+\int_0^{y_1}\cJ^\eps_{\tilde c_\eps}(s)\,ds.
\end{equ}
Clearly, $R^{1,2}_\eps(y):=\tfrac{\log y_1}{32\pi}$ satisfies \eqref{i}. Moving on the last term,
we rewrite it as
\begin{equ}\label{eq:integrals}
\int_0^{\eps}\cJ^\eps_{\tilde c_\eps}(s)\,ds
+\int_\eps^{y_1}\big(\cJ^\eps_{\tilde c_\eps}(s)-\cJ_{\tilde c_\eps}^0(s)\big)\,ds
-\int_{y_1}^{1}\cJ^0_{\tilde c_\eps}(s)\,ds
+\int_{\eps}^1\cJ^0_{\tilde c_\eps}(s)\,ds.
\end{equ}
By the scaling relation $\cR^\eps_a(z,\tilde z)=\cR^{\lambda\eps}_{\lambda^{-1}a}(\lambda z,\lambda\tilde z)\lambda^3$, we have
$\cJ^\eps_a(s)=\cJ^{\lambda\eps}_{\lambda^{-1}a}(\lambda s)\lambda$
and therefore
\begin{equ}\label{eq:cJ 1}
\int_0^\eps\cJ_{\tilde c_\eps}^\eps(s)\,ds =
\int_0^1\cJ^1_{\eps\tilde c_\eps}(s)\,ds\;.
\end{equ}
Denote
$\cK^\eps_r(z,\tilde z)=\big(\rho_\eps\ast(\bone_{\cdot\in\R^4_u}\cK(z-(\cdot)^r))\big)(\tilde z)$,
which is just the generalisation of $\cK^\eps_-=\cK^\eps_0$.
From \eqref{eq:kernels1-a} one has the bounds
\begin{equ}
\Big|\int\cK^1_+(\us,z)\cK^1_r(\us,z)\,dz\Big|\, ,\,
\Big|\int\cK^1_{r'}(\us,z)\cK^1_r(\us,z)\,dz\Big|
\lesssim 1\wedge r^{-1}
\end{equ}
uniformly in $s,r,r'\geq 0$.
Since $\eps\tilde c_\eps\to0$, this implies $\int_0^1\cJ^1_{\eps\tilde c_\eps}(s)\,ds\to0$.
Denote the second integral in \eqref{eq:integrals} by $R^{1,3}_\eps(y)$.
From Lemma \ref{lem:kernels2}
one has the bound $|\cJ^\eps_a(s)-\cJ^0_a(s)|\lesssim \eps^{1-\kappa}/ s^{2-\kappa}$,
uniformly in $a$
(see Remark \ref{rem:uniform-in-a}).
Therefore, if similarly to \eqref{eq:cJ 1} we write
\begin{equ}
R^{1,3}_\eps(y)=\int_{\eps}^{y_1} \big(\cJ^\eps_{\tilde c_\eps}(s)-\cJ^0_{\tilde c_\eps}(s)\big)\,ds=
\int_{1}^{\eps^{-1}y_1} \big(\cJ^1_{\eps\tilde c_\eps}(s)-\cJ^0_{\eps\tilde c_\eps}(s)\big)\,ds,
\end{equ}
then by the dominated convergence theorem, the right-hand side goes to $0$. Moreover, one has the bound $D_{y_1} R^{1,3}(y)\lesssim 1/y_1$ from \eqref{eq:kernels1-a}. Therefore $R^{1,3}_\eps\to0$ in $\CC^{1-\kappa,-\kappa}_{\d^2}(Q_\d)$.
Next, denote the third integral in \eqref{eq:integrals} by $R^{1,4}_\eps(y)$.
Its convergence away from $\{y_1=0\}$ is clear, and therefore a uniform in $\eps$ bound in $\CC^{1,0}_{\d^2}(Q_\d)$ suffices to conclude the property \eqref{i}.
This is quite immediate: $R^{1,4}_\eps$ vanishes on the hyperplane $\{y_1=1\}$, and one has the bound $D_{y_1}R^{1,3}_\eps(y)\lesssim 1/y_1$ from \eqref{eq:kernels1-a} as before.

So $R^1_\eps=R^{1,1}_\eps+R^{1,2}_\eps+R^{1,3}_\eps+ R^{1,4}_\eps$ satisfies \eqref{i}, and one can rewrite \eqref{eq:m-eps 2} as
\begin{equs}
m_\eps(y)&= a_\rho+\tfrac{|{\log \eps}|}{32\pi}+R^{1}_\eps(y)+\int_\eps^{1}\cJ^0_{\tilde c_\eps}(s)\,ds
\\
&=a_\rho+\tfrac{|{\log \eps}|}{32\pi}+R^{1}_\eps(y)+\int_\eps^{1}\frac{1}{s}\cJ^0_{s\tilde c_\eps}(1)\,ds.
\end{equs}
We are therefore finished as soon as we show that there exist solutions $c_\eps,d_\eps$ to
\begin{equ}
b_\eps+c_\eps+d_\eps=\tfrac{|{\log \eps}|}{32\pi}+\int_\eps^{1}\frac{1}{s}\cJ^0_{sc_\eps}(1)\,ds
\end{equ}
that furthermore satisfy $c_\eps\to b$, $\eps c_\eps\to 0$, and $d_\eps\to d_0$ for some finite and $\rho$-independent $d_0$.
We claim the following properties of the function $\cJ^0_a(1)$, whose proof we postpone so that the present proof can be concluded.
%

\begin{proposition}\label{prop:bound for cJ}
The function $a \mapsto \cJ_a^0(1)$ is continuous on $(0,\infty)$,
$\lim_{a\to\infty}\cJ_a^0(1)=-2\cI_0(1)$,
and the bound
\begin{equs}
|\cJ_a^0(1)|\lesssim  a|{\log a}|\label{eq:bound for cJ}
\end{equs}
holds for $a\in(0,1/3]$.
\end{proposition}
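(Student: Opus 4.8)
The plan is to specialise \eqref{eq:cJ 0} to $\eps=0$, $s=1$, so that (writing $\us=(0,0,0,1)$, and using that $\cK^0_\pm$ and $\cR^0_{3a}$ are the un-mollified kernels, whose indicators are automatic on $\R^4_u$)
\[
\cJ^0_a(1)=\int_{\R^4_u}\Big(-2\cK(\us-z)\,\cR_{3a}(\us,z)-2\cK(\us-z^0)\,\cR_{3a}(\us,z)+\cR_{3a}(\us,z)^2\Big)\,dz .
\]
Recalling from \eqref{eq:Robin half space} that $\cR_{3a}(\us,z)=2\bone_{z\in\R^4_u}\int_0^\infty\mu_{3a}(dr)\,\cK(\us-z^r)$ with $\mu_{3a}(dr):=3ae^{-3ar}\,dr$ a probability measure on $[0,\infty)$, I record two facts. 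First, since $z_3>0$ on $\R^4_u$ the last spatial coordinate of $\us-z^r$ equals $1+z_3+r$, increasing in $r$, so the Gaussian form of $\cK$ gives the pointwise bound $\cK(\us-z^r)\le\cK(\us-z^0)$ and hence $\cR_{3a}(\us,z)\le 2\cK(\us-z^0)$, \emph{uniformly} in $a\in(0,\infty]$. Consequently the integrand above is dominated in absolute value by $4\cK(\us-z)\cK(\us-z^0)+8\cK(\us-z^0)^2$, which is integrable over $\R^4_u$: the first summand integrates to $2\cI_0(1)$, and the second is finite by Lemma~\ref{lem:kernels1} applied to $z\mapsto\cK(\us-z^0)\bone_{z\in\R^4_u}\in\scG$ (reflection $T(z)=z^0$, $d(\us,T\R^4_u)=1$). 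Second, $\mu_{3a}\to\delta_0$ weakly as $a\to\infty$, and for fixed $z$ the map $a\mapsto\cR_{3a}(\us,z)$ is continuous on $(0,\infty)$ (dominated convergence in the $r$-integral, the integrand near $a=a_0$ being bounded by a fixed multiple of $\cK(\us-z^r)$, which is integrable in $r$).

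Given this, items (i) and (ii) become dominated-convergence arguments with the above majorant. For continuity on $(0,\infty)$ one passes to the limit $a\to a_0$ inside the $z$-integral using the pointwise continuity of $\cR_{3a}(\us,z)$. For the limit at infinity, $\cR_{3a}(\us,z)\to 2\cK(\us-z^0)$ (since $\mu_{3a}\to\delta_0$ and $r\mapsto\cK(\us-z^r)$ is bounded and continuous), and substituting this into the integrand the $\cK(\us-z^0)\cR_{3a}$ and $\cR_{3a}^2$ contributions cancel, leaving
\[
\lim_{a\to\infty}\cJ^0_a(1)=-4\int_{\R^4_u}\cK(\us-z)\cK(\us-z^0)\,dz=-2\cI_0(1).
\]

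For item (iii) the plan is to unfold $\cR_{3a}$ into its defining average and estimate reflection by reflection. Writing $\cR_{3a}(\us,z)=2\int\mu_{3a}(dr)\cK(\us-z^r)$ and $\cR_{3a}(\us,z)^2=4\int_0^\infty\int_0^\infty\mu_{3a}(dr)\mu_{3a}(dr')\,\cK(\us-z^r)\cK(\us-z^{r'})$, Tonelli moves the $\mu_{3a}$-integrations outside, leaving $z$-integrals $\int_{\R^4_u}\cK(\us-z^{r_1})\cK(\us-z^{r_2})\,dz$ in which the two factors carry \emph{distinct} reflections (one of the two always coming from $\cR_{3a}$), so the product is locally integrable. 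Each factor $\cK(\us-(\cdot)^{\rho})\bone_{\R^4_u}$ lies in $\scG$ with $\frs=5$, $\frb=3$, reflection $T(z)=z^{\rho}$, $A_2=\R^4_u$ and $d(\us,T\R^4_u)=1+\rho$; hence Lemma~\ref{lem:kernels1} (with $\eps=0$, $x=x'=\us$, choosing for $\hat G$ the factor with the larger reflection parameter) gives
\[
\int_{\R^4_u}\cK(\us-z^{r_1})\cK(\us-z^{r_2})\,dz\ \lesssim\ \big(1+\max(r_1,r_2)\big)^{-1}.
\]
Feeding this back in yields
\[
|\cJ^0_a(1)|\ \lesssim\ \int_0^\infty\frac{\mu_{3a}(dr)}{1+r}+\int_0^\infty\int_0^\infty\frac{\mu_{3a}(dr)\,\mu_{3a}(dr')}{1+(r\wedge r')}.
\]
Since $r\wedge r'$ has an exponential law with parameter $6a$ when $r,r'$ are independent and each distributed as $\mu_{3a}$, both terms have the shape $\int_0^\infty be^{-bx}(1+x)^{-1}\,dx$ with $b\in\{3a,6a\}$; the substitution $v=bx$ turns this into $b\int_0^\infty e^{-v}(b+v)^{-1}\,dv\lesssim b(1+|\log b|)$, and for $a\le 1/3$ (so that $b\le 2$ and $|\log a|>1$, whence the lower-order terms are absorbed) this is $\lesssim a|\log a|$. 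This establishes \eqref{eq:bound for cJ}.

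The main obstacle is precisely this last step: one has to recognise that, after the reflection-by-reflection reduction, the only mechanism producing a divergence as $a\to 0$ is the slow $(1+r)^{-1}$ decay of the elementary kernel integrals, and that averaging it against the measure $\mu_{3a}$ — which concentrates near $r\sim(3a)^{-1}$ — upgrades the naive $O(a)$ bound to $O(a|\log a|)$. Everything else is bookkeeping with the single uniform majorant $4\cK(\us-z)\cK(\us-z^0)+8\cK(\us-z^0)^2$.
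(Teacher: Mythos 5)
Your proof is correct and follows the same overall strategy as the paper's: treat (i) and (ii) by dominated convergence (the paper dismisses these as obvious), and for (iii) unfold $\cR_{3a}$ into its exponential average over reflections, bound the elementary two-kernel integrals by $(1+r)^{-1}$-type decay, and then integrate against $\mu_{3a}$ to produce the $a|\log a|$. The one genuine difference is how the elementary bounds are obtained: the paper computes $\scJ(a,b)$ in closed form via Gaussian identities and the $\Ercf$/$\tan^{-1}$ calculation in \eqref{eq:big integral} (which it needs anyway to extract the exact value $\cI_0(1)=\tfrac{1}{32\pi}$), and then reads off $\scJ(1+r,1+\bar r)\lesssim 1\wedge(r+\bar r)^{-1}$, whereas you derive the equivalent bound softly from Lemma \ref{lem:kernels1} together with Remark \ref{rem:uniform-in-a}; for the purposes of this proposition alone your route is lighter. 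A minor further divergence: for the double integral the paper changes variables to $\tilde r=r+\bar r$ and gets the sharper $O(a)$, while your replacement of $(1+(r\vee r'))^{-1}$ by $(1+(r\wedge r'))^{-1}$ and the $\mathrm{Exp}(6a)$ law of the minimum gives only $O(a|\log a|)$ for that term --- which is still sufficient for \eqref{eq:bound for cJ}.
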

When $b<\infty$, we simply choose $c_\eps=b$, and so 
\begin{equ}
d_\eps\to d_0=\int_0^1\frac{1}{s}\cJ_{sb}^0(1)\,ds
\end{equ}
which by Proposition \ref{prop:bound for cJ} is finite.
In the case $b=\infty$, first choose a $K>0$ such that $\cJ_a^0(1)\in[-3\cI_0(1), -\cI_0(1)]=\big[-\tfrac{3}{32\pi},-\tfrac{1}{32\pi}\big]$ for $a\geq K$, which is possible thanks to Proposition \ref{prop:bound for cJ}.
Define the map, for $c\geq 1 $,
\begin{equ}
f(c)=c-\int_{Kc^{-1}}^1\frac{1}{s}\cJ^0_{s c}(1)\,ds.
\end{equ}
Clearly $f$ is continuous and $f(c)\geq c$, in fact one has the bounds $c+\lambda\log c\leq f(c)\leq c+\lambda^{-1}\log c$
with some $\lambda>0$ for large enough $c$.
Therefore, there exists a function $\bar f$ so that $f(\bar f(c))=c$ for all sufficiently large $c$.
We then set
$c_\eps=\bar f\big(\tfrac{|{\log \eps}|}{32\pi}-b_\eps\big).$
Clearly $c_\eps\to\infty$, from $c_\eps\leq\tfrac{|{\log \eps}|}{32\pi}-b_\eps$ we have $\eps c_\eps\to0$, and
\begin{equ}
d_0=\lim_{\eps\to0}\int_\eps^{Kc_\eps^{-1}}\frac{1}{s}\cJ^0_{sc_\eps}(1)\,ds=0,
\end{equ}
using Proposition \ref{prop:bound for cJ} once more.
\end{proof}

\begin{proof}[Proof of Proposition \ref{prop:bound for cJ}]
For this proof we denote points in $\R^4$ as $z=(t,x_1,x_2,x_3)$.
The first two claims of the proposition are obvious, the third one requires some calculation.
Denote
\begin{equ}
N_t(x)=\frac{\one_{t>0}}{\sqrt{\pi t}}\exp\Big(-\frac{x^2}{t}\Big)
\end{equ}
and note the identities
\begin{equ}
N_t(x)=\lambda N_{\lambda^{2}t}(\lambda x),\qquad N_t(x)N_t(y)=N_{2t}(x+y)N_{t/2}((x-y)/2).
\end{equ}
We will also use the complementary error function
$\Ercf(s)=2\int_s^\infty N_1(x)\,dx$.
With these notation one has the following identity for $a\geq 0$:
\begin{equ}\label{eq:random identity}
\int_0^\infty \Ercf\Big(\frac{1}{\sqrt{t}}\Big)N_t(a)t^{-1}\,dt=\frac{2\tan^{-1}(a)}{\pi a}.
\end{equ}
Here we use the standard branch of $\tan^{-1}$, that is, $\tan^{-1}(0)=0$, and so $\frac{\tan^{-1}(a)}{a}\to 1$ as $a\to 0$.
To see \eqref{eq:random identity}, denote $\pi$ times the left-hand side by $S(a)$. One then sees that
\begin{equ}
S'(a)=\int_0^\infty\sqrt{\pi}\Ercf\Big(\frac{1}{\sqrt{t}}\Big)t^{-1/2}\Big[\frac{a^2}{t^2}\exp\Big(-\frac{a^2}{t}\Big)\Big]\,dt\Big(-\frac{2a}{a^2}\Big).
\end{equ}
Since the quantity in $[\cdot]$ is a total derivative, one can integrate by parts and find the relation
\begin{equ}\label{eq:ode1}
S'(a)=\frac{2}{a+a^3}-\frac{1}{a}S(a).
\end{equ}
One furthermore notices that
\begin{equ}\label{eq:ode2}
S(1)=\int_0^\infty\frac{1}{2}\d_t\Big(2\int_{1/\sqrt{t}}^\infty e^{-x^2}\,dx\Big)^2\,dt=\frac{\pi}{2}.
\end{equ}
The differential equation \eqref{eq:ode1}-\eqref{eq:ode2} defines $S$ uniquely and one can easily verify that $S(a)=\frac{2\tan^{-1}(a)}{a}$ is a solution. This proves \eqref{eq:random identity} and by scaling one gets \begin{equ}
\int_0^\infty \Ercf\Big(\frac{b}{\sqrt{t}}\Big)N_t(a)t^{-1}\,dt=\frac{2\tan^{-1}(a/b)}{\pi a},
\end{equ}
which also holds in the limiting case $b=0$.

Note now that we can write
\begin{equs}[eq:messy-calc]
-\cJ^0_{a/3}(1)&=\int_{\R^4_u}2\cK(\uone-z)\cR_{a}(\uone,z)+2\cK(\uone-z^0)\cR_{a}(\uone,z)-|\cR_{a}(\uone,z)|^2\,dz
\\
&=4\int_0^\infty a e^{-ar}\scJ(-1,1+r)\,dr+4\int_0^\infty a e^{-ar}\scJ(1,1+r)\,dr
\\
&\qquad-4\int_0^\infty\int_0^\infty a^2e^{-a(r+\bar r)}\scJ(1+r,1+\bar r)\,dr\,d\bar r\;,
\end{equs}
where $\scJ$ is defined by
\begin{equ}
\scJ(a,b)
:=\int_{\R^4_u}N_{4t}(x_1)N_{4t}(x_2)N_{4t}(x_3+a)N_{4t}(x_1)N_{4t}(x_2)N_{4t}(x_3+b)\,dz\;.
\end{equ}
For $a,b$ such that $a+b\geq0$, we then have
\begin{equs}[eq:big integral]
\scJ(a,b)&=\int_{\R^4_u}N_{8t}(2x_1)N_{8t}(2x_2)N_{8t}(2x_3+a+b)(N_{2t}(0))^{2}N_{2t}(a-b)\,dz
\\
&=\int_{\R^4_u}\frac{1}{2^{3}\pi t}N_{2t}(x_1)N_{2t}(x_2)N_{8t}(2x_3+a+b)N_{2t}(a-b)\,dz
\\
&=\int_{t,x_3>0}\frac{1}{2^{5}\pi t^{3/2}}N_1\Big(\frac{2x_3+a+b}{\sqrt{8t}}\Big)N_t\Big(\frac{a-b}{\sqrt{2}}\Big)\,dx_3\,dt
\\
&=\int_{t,\tilde x_3>0}\frac{1}{2^{9/2}\pi t}N_1\Big(\tilde x_3+\frac{a+b}{\sqrt{8t}}\Big)N_t\Big(\frac{a-b}{\sqrt{2}}\Big)\,d\tilde x_3\,dt
\\
&=\int_{t>0}\frac{1}{2^{11/2}\pi t} \Ercf\Big(\frac{a+b}{\sqrt{8t}}\Big)N_{t}\Big(\frac{a-b}{\sqrt{2}}\Big)\,dt
\\
&=\frac{1}{2^4\pi^2}\frac{\tan^{-1}\Big(2\frac{a-b}{a+b}\Big)}{a-b}\;,
\end{equs}
and in particular $\scJ(1,-1)=\frac{1}{64\pi}$,
which proves \eqref{eq:I0-value} as promised.

To show \eqref{eq:bound for cJ}, we bound each integral appearing in \eqref{eq:messy-calc} separately.
To bound the first two, notice that $\scJ(-1,1+r),\scJ(1,1+r)\lesssim 1\wedge r^{-1}$.
One can then decompose the integral as
\begin{equ}
a\Big(\int_0^1e^{-ar}\,dr+\int_1^{a^{-1}}e^{-ar}r^{-1}\,dr+\int_{a^{-1}}^\infty e^{-ar}r^{-1}\,dr\Big).
\end{equ}
Here the first integral is clearly bounded by $1$,
the second one by $|{\log a}|$, and
the third one is independent of $a$, so it is also of order $1$.
Moving on to the last term in \eqref{eq:messy-calc}, we use the the bound $\scJ(1+r,1+\bar r)\lesssim 1\wedge(r+\bar r)^{-1}$. 
One can then write
\begin{equs}
\int_0^\infty \int_0^\infty a^2 e^{-a(r+\bar r)}\big(1\wedge(r+\bar r)^{-1}\big)\,dr\,d\bar r
&=\int_0^\infty\int_0^{\tilde r} a^2 e^{-a \tilde r}\big(1\wedge \tilde r^{-1}\big)\,dr\,d\tilde r
\\
&=a^2\Big(\int_0^1e^{-a\tilde r}\tilde r\,d\tilde r+\int_1^{\infty}e^{-a\tilde r}\,d\tilde r\Big).
\end{equs}
Now the first integral is bounded by $1$ and the second by $a^{-1}$. This completes the proof.
\end{proof}

\subsection{\texorpdfstring{$\Phi^4_3$}{Phi\^4\_3} - the cubic term}\label{sec:restrict}
For the $\Phi^4_3$ equation, there is one more term that is well below regularity $-1$, which is the cube of $\Psi$.
While it does not require additional boundary renormalisation, the fact that it is `compatible' with the boundary renormalisation of the square is far from obvious.
This `compatibility' is formulated in the following lemma, whose proof is the goal of this section.
\begin{lemma}\label{lem:restrict2}
In the setting of Lemma \ref{lem:reno Phi43 square}, for any sufficiently small $\kappa>0$ the sequences $\delta_\d R^1_\eps\Psi_{\eps,c_\eps}$, $(\scR R^2_\eps)\Psi_{\eps,c_\eps}$, and $R^3_\eps\Psi_{\eps,c_\eps}$ converge in $\CC^{-3/2-\kappa}(\R^4)$ to limits that do not depend on $\rho$.
\end{lemma}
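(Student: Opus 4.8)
\emph{Plan.} All three objects belong to the first Wiener chaos generated by $\xi$: each is a deterministic element (a weighted function on $Q$, or a distribution supported on $\d$, extended by reflection to $\R\times D$ resp.\ $\d$) multiplied by the centred Gaussian field $\Psi_{\eps,c_\eps}$. Hence, by equivalence of moments on a fixed chaos together with a Kolmogorov/Besov criterion as in \cite{H0,GH17}, it suffices to prove for each such $X_\eps$ and each small $\kappa>0$ a uniform bound $\E\big[X_\eps(\psi_z^\lambda)^2\big]\lesssim\lambda^{-3-\kappa}$ and a Cauchy bound $\E\big[(X_\eps-X_{\eps'})(\psi_z^\lambda)^2\big]\lesssim(\eps\vee\eps')^{\theta}\lambda^{-3-\kappa}$ for some $\theta>0$, uniformly over $z$ in compacts, $\lambda\in(0,1]$ and normalised test functions. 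Writing $\cG^\eps_{3c_\eps}$ for the mollification of the Robin kernel $\cG_{3c_\eps}$ of Lemma~\ref{lem:robin} and $P_\eps(w,w')=\int\cG^\eps_{3c_\eps}(w,v)\cG^\eps_{3c_\eps}(w',v)\,dv$ for the covariance of $\Psi_{\eps,c_\eps}$ (together with the corresponding cross-covariances at two different values of $\eps$), each of these second moments is an explicit double integral of $P_\eps$, possibly with one or both arguments projected onto $\d$ via $\pi_\d$, against $\psi_z^\lambda\otimes\psi_z^\lambda$ weighted by the relevant deterministic factors. Thus the statement reduces to kernel bounds for $P_\eps$ and its boundary restrictions.

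\emph{Covariance bounds.} By Lemma~\ref{lem:robin} and Remark~\ref{rem:uniform-in-a}, $\cG_{3c_\eps}=\bar\cK+Z^{(c_\eps)}$ with $Z^{(c_\eps)}\to Z^{(b)}$ in $\scZ_{\beta,\d}$, and every building block of $\cG_{3c_\eps}$ fits the framework of Section~\ref{sec:kernel computations} \emph{uniformly} in $\eps\in(0,1]$ and in the constant $c_\eps$ (which may diverge when $b=\infty$). Feeding this into Lemmas~\ref{lem:kernels1}--\ref{lem:kernels3}, and their adaptations to base points lying on $\d$ or $\d^2$, yields $|P_\eps(w,w')|\lesssim(\|w-w'\|\vee\eps)^{-1}$ together with the matching H\"older-in-$w$ and near-$\d$ refinements (in particular a bound $|P_\eps(w,w')-P_\eps(\pi_\d w,w')|\lesssim(|w|_\d\wedge\|w-w'\|)(\|w-w'\|\vee\eps)^{-2}$), the analogous bounds for $P_\eps(w,\pi_\d w')$ and $P_\eps(\pi_\d w,\pi_\d w')$, and the convergence of $P_\eps$ and of its boundary restrictions at rate $(\eps\vee\eps')^\theta$. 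The well-definedness and uniform control of the boundary-restricted covariances is precisely the rigorous content of ``$\Psi_{0,c}$ admits a restriction to $\d$''.

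\emph{The three terms.} For $R^3_\eps\Psi_{\eps,c_\eps}$ the product is classical (a $\CC^{1-\kappa}$ function times a $\CC^{-1/2-\kappa}$ field, hence $\CC^{-1/2-\kappa}$) away from the edges $\d^2$; in a $\lambda$-neighbourhood of $\d^2$ one inserts $|R^3_\eps(w)|\lesssim|w|_{\d^2}^{-1-\kappa}$ (from $R^3_\eps\in\CC^{1-\kappa,-1-\kappa}_{\d^2}(Q)$) and the covariance bound into the double integral and uses that $\sigma\mapsto\sigma^{-1-\kappa}$ is integrable in the two directions transverse to $\d^2$, obtaining $\E[(R^3_\eps\Psi_{\eps,c_\eps})(\psi_z^\lambda)^2]\lesssim\lambda^{-3-C\kappa}$; the Cauchy bound is identical upon using $R^3_\eps\to R^3_0$ and $P_\eps-P_{\eps'}\to0$. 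The term $\delta_\d R^1_\eps\Psi_{\eps,c_\eps}$ saturates the exponent: for $\eps>0$ it equals $\delta_\d(R^1_\eps\Tr_\d\Psi_{\eps,c_\eps})$, so its second moment is $\iint_{\d\times\d}R^1_\eps(y)R^1_\eps(y')\psi_z^\lambda(y)\psi_z^\lambda(y')P_\eps(y,y')\,dy\,dy'$; with $|R^1_\eps|\lesssim|y|_{\d^2}^{-\kappa}$ (from $R^1_\eps\in\CC^{1-\kappa,-\kappa}_{\d^2}(Q_\d)$) and the boundary covariance bound, and integrating $\|y-y'\|^{-1}$ over the scaled $4$-dimensional set $\d$, one gets exactly $\lambda^{-3-C\kappa}$, which through \eqref{eq:extending from boundary easy} places the limit in $\CC^{-3/2-\kappa}(\R^4)$. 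For $(\scR R^2_\eps)\Psi_{\eps,c_\eps}$ one unfolds the definition of $\scR$: for $\eps>0$, $\big((\scR R^2_\eps)\Psi_{\eps,c_\eps}\big)(\psi_z^\lambda)=\int_Q R^2_\eps(w)\big(\Psi_{\eps,c_\eps}(w)\psi_z^\lambda(w)-\Psi_{\eps,c_\eps}(\pi_\d w)\psi_z^\lambda(\pi_\d w)\big)\,dw$ (recall $R^2_\eps$ depends only on $|w|_\d$), so its second moment is a double integral of the double difference of $P_\eps(w,w')\psi_z^\lambda(w)\psi_z^\lambda(w')$ under $w\mapsto\pi_\d w$, $w'\mapsto\pi_\d w'$; this double difference vanishes as $w\to\pi_\d w$, and the gain it carries (of order $|w|_\d\lambda^{-1}$, from the Lipschitz bound on $\psi_z^\lambda$ and the normal-derivative bound on $P_\eps$, after splitting off the regime $|w|_\d\lesssim\|w-w'\|$) compensates the non-integrable weight $|w|_\d^{-1-\kappa}$ exactly as in the proof of Proposition~\ref{prop:boundary correction}, again yielding $\lesssim\lambda^{-3-C\kappa}$ and the corresponding Cauchy bound.

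\emph{Main obstacle.} The heart of the argument is the boundary covariance estimate: controlling $\int\cG^\eps_{3c_\eps}(y,v)\cG^\eps_{3c_\eps}(y',v)\,dv$ and its mixed variants with base points $y,y'$ on, or projected to, $\d$ or $\d^2$, \emph{uniformly} in $\eps\in(0,1]$ and in the possibly divergent Robin constant $c_\eps$, and showing these converge as $\eps\to0$. Once $\cG_{3c_\eps}$ is expressed in the $\scG$-language via Lemma~\ref{lem:robin} and Remark~\ref{rem:uniform-in-a}, this is a family of variants of Lemmas~\ref{lem:kernels1}--\ref{lem:kernels3} in which the bulk integration variable runs over $\R\times D$ while the base points sit on a lower-dimensional face; tracking which reflections contribute, with what blow-up near $\d$ and $\d^2$, is the genuinely technical part. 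The second delicate point is making the cancellation underlying $\scR$ effective at the level of the random object (not merely pointwise) in the $R^2_\eps$ term; everything else is bookkeeping with the weighted H\"older norms already supplied by Lemma~\ref{lem:reno Phi43 square}.
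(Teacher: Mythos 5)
Your proposal is correct in substance but organises the argument quite differently from the paper. The paper first proves a standalone trace theorem for the Gaussian field: Lemma~\ref{lem:restriction} sets up abstract classes $\scG_\beta$, $\scG_\beta^\d$ of kernels, proves a second-moment bound of exactly the type you describe (but only for the field itself, restricted to a hyperplane), and Corollary~\ref{cor:restrict1} upgrades this to convergence of $r\mapsto\Psi_{\eps,c_\eps}(\cdot,r)$ in $\CC^{\kappa}([0,1],\CC^{-1/2-6\kappa}(\R^3))$. With that probabilistic input in hand, the three products are then treated \emph{pathwise}: the $R^1_\eps$ and $R^3_\eps$ terms follow from the deterministic multiplication and extension results of Section~\ref{sec:prepare} (Propositions~\ref{prop:weighted holder extend}, \ref{prop:weighted holder mult}, \ref{prop:weighted holder2}), and the $(\scR R^2_\eps)$ term is handled by testing against $\varphi^\lambda$ and applying a four-term bilinear identity to the traces, whose off-diagonal factors are controlled precisely by the $\kappa/6$-H\"older continuity in $r$ supplied by Corollary~\ref{cor:restrict1}. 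You instead push the second-moment computation all the way to the final tested objects, reducing everything to bounds on the covariance $P_\eps$ and its boundary restrictions; your mixed second difference of $P_\eps\,\psi^\lambda\otimes\psi^\lambda$ under $\pi_\d$ is the covariance-level shadow of the paper's four-term identity, and your ``main obstacle'' is exactly the content the paper isolates in Lemma~\ref{lem:restriction} and Remark~\ref{rem:uniform-in-a}. Both routes rest on the same facts (first chaos, the constraint $2\frb-\frs=1$ from Section~\ref{sec:kernel computations}, uniformity of the Robin kernels in $a\in[0,\infty]$ from Lemma~\ref{lem:robin}); what the paper's factorisation buys is reusability (the trace is needed again in Remark~\ref{rem:Dirichlet}) and a clean separation between the one genuinely probabilistic estimate and the deterministic weighted-H\"older bookkeeping, whereas your approach is more self-contained but must re-derive, inside each of the three second-moment integrals, boundary-restricted covariance estimates (including the mixed $D_1D_2$ regularity of $P_\eps$ near the diagonal and the edge behaviour near $\d^2$) that are equivalent in strength to the trace theorem. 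Be aware that those deferred estimates are the real work: in particular the uniformity in the possibly divergent constant $c_\eps$ must come from writing $\cG_{3c_\eps}$ as a convex combination of reflected kernels as in Remark~\ref{rem:uniform-in-a}, not from the kernels individually.
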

The first step is to show that the free field $\Psi_{0,a}$ can be restricted to the boundary $\partial D$ -- even though it is only a distribution.
This is not unlike the temporal restriction in \cite[Sec.~9.4]{H0}.
However, the present setting gets somewhat involved as we will need continuity of this restriction not only with respect to the mollification, but also with respect to the kernel as well as the hyperplane on which we restrict.

We start by introducing a few notations.
Recall that $\R^4_u=\{(x_0,x_1,x_2,x_3)\in\R^4:\,x_3>0\}$.
Take $\beta\in(0,5)$. Let $\mathscr{G}_\beta$ be the set of functions $G$ of the form
\begin{equ}\label{eq:dec1}
G=\sum_{n\in \N}G^n,
\end{equ}
where $G^n:\R^4\times\R^4\to\R$ such that $\supp G^n\subset\{(x,y):\,\|x-y\|\lesssim 2^{-n}\}$ and $|D_x^k D_y^\ell G^n(x,y)|\lesssim C 2^{n(5+|k|+|\ell|-\beta)}$ for $(x,y)\in(\R^4_u\cup-\R^4_u)^2$ with some $C$ for all $|k|\leq K$ and $\ell\leq L$.
The optimal choice of $C$ in the optimal choice of the decomposition \eqref{eq:dec1} yields a norm on $\scG_\beta$.
 The values $K$ and $L$ will be occasionally relevant, in this case we use the notation $\scG_\beta^{(K,L)}$.

For $y\in\R^4_u$ denote by $\bar y$ its projection to its first three coordinates.
Let $\scG_\beta^\d$ be the set of functions $G$ of the form \eqref{eq:dec1}
where this time $G^n:\R^3\times\R^4_u\to\R$ such that $\supp G^n\subset\{(x,y):\,\| x-\bar y\|\lesssim 2^{-n},\,y_3\in I_n\}$, where $I_n\subset\R$ are of size $2^{-n}$, and $| G^n|\lesssim C 2^{n(5-\beta)}$ with some $C$ for all $n\in\N$.
The optimal choice of $C$ in the optimal choice of the decomposition \eqref{eq:dec1} yields a `norm' on $\scG_\beta^\d$\footnote{Note that $\scG_\beta^\d$ is not actually a vector space! Scalar multiplication however is well-defined and our
`norm' is positive and one-homogeneous.}. On both  $\scG_\beta$ and  $\scG_\beta^\d$ we denote by $W^\eta$ the multiplication by $(|y_3|\wedge 1)^{\eta}$.
The relevant properties of these spaces are summarised below.
\begin{lemma}\label{lem:restriction}
\begin{enumerate}[(i)]
\item Define $G_{(\eps)}$ by
\begin{equ}
G_{(\eps)}(x,y)=\int G(x,z)\rho_\eps(y-z)\,dz,\qquad x,y,\in\R^4_u.
\end{equ}
Then for all $\eta>0$, $\beta'\in(\beta-1,\beta)$, $\eps>0$, and $G\in\scG_\beta^{(1,1)}$ one has
\begin{equ}
\|W^\eta \big(G- G_{(\eps)}\big)\|_{\scG_{\beta'}^{(1,0)}}\lesssim 
\eps^{\eta\wedge(\beta-\beta')}\|G\|_{\scG_{\beta}^{(1,1)}}.
\end{equ}
\item Define $G^{(r)}$ by
\begin{equ}
G^{(r)}(x,y)=G((x,r),y),\qquad x\in\R^3,y\in\R^4_u.
\end{equ}
Then for all $\beta'\in(\beta-1,\beta)$, $r,r'\geq 0$, and $G\in\scG_\beta^{(1,0)}$ one has
\begin{equ}
\|G^{(r)}-G^{(r')}\|_{\scG_{\beta'}^\d}\lesssim |r-r'|^{\beta-\beta'} \|G\|_{\scG_\beta^{(1,0)}}.
\end{equ}
\item Define the random variables $\Phi_G(\varphi)$ by
\begin{equ}
\Phi_G(\varphi)=\xi\Big(y\mapsto\int_{\R^3}G(x,y)\varphi(x)\,dx\Big),\qquad\varphi\in\CC^\infty(\R^3).
\end{equ}
Then for all $\eta\in(0,1/2)$, $\beta\in(0,5/2)$, $\kappa>0$ and $G\in W^{-\eta}\scG_\beta^\d$ there is a random distribution $\hat\Phi_G$ such that for all test functions $\varphi$ one has $\hat\Phi_G(\varphi)=\Phi_G(\varphi)$ almost surely. Furthermore for all $p>0$ and compact $\frK\subset\R^3$ one has the bound
\begin{equ}
\E\|\hat\Phi_G\|_{\CC^{-5/2+\beta-\eta-\kappa}(\frK)}^p\lesssim\|G\|_{W^{-\eta}\scG_\beta^\d}^p.
\end{equ}
\end{enumerate}
\end{lemma}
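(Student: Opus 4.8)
The three parts are, respectively, a mollification estimate, a ``freeze one coordinate'' estimate, and the genuinely probabilistic statement; I would treat them in that order, with the last one carrying most of the work.

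\emph{Parts (i) and (ii).} Both reduce to interpolating between a crude supremum bound and a refined bound obtained from one derivative. Write $G=\sum_nG^n$. For (i): at scale $n$ one has on the one hand $|D_1^k(G^n-G^n_{(\eps)})|\lesssim 2^{n(5+|k|-\beta)}$, since convolution against $\rho_\eps$ does not increase the supremum, and on the other hand, provided $2^{-n}\gtrsim\eps$ and one stays at distance $\gtrsim\eps$ from the hyperplane $\{y_3=0\}$, the mean value theorem in the second variable together with the $|\ell|=1$ bound gives $|D_1^k(G^n-G^n_{(\eps)})|\lesssim\eps\,2^{n(6+|k|-\beta)}$; the geometric mean with weight $\beta-\beta'\in(0,1)$ is then $\eps^{\beta-\beta'}2^{n(5+|k|-\beta')}$. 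Within the $\eps$-neighbourhood of the hyperplane one instead bounds the factor $(|y_3|\wedge1)^\eta$ carried by $W^\eta$ by $\eps^\eta$ and uses the crude bound, obtaining $\eps^\eta2^{n(5+|k|-\beta)}\lesssim\eps^{\eta\wedge(\beta-\beta')}2^{n(5+|k|-\beta')}$; and the fine scales $2^{-n}<\eps$, whose mollified parts are all supported in a ball of radius $\sim\eps$, are collected into a single scale-$\eps$ block whose size is again $\lesssim\eps^{\beta-\beta'}2^{m(5-\beta')}$ for $2^{-m}\sim\eps$. Since $W^\eta$ commutes with $D_1$ and only the $|k|\le1$, $|\ell|=0$ derivatives of the output are needed, reassembling these pieces gives the claimed bound in $\scG_{\beta'}^{(1,0)}$. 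Part (ii) is the same computation with $\eps$ replaced by $|r-r'|$ and the second-variable derivative replaced by $\partial_{x_3}$ (which lies among the $D_1$-derivatives controlled in $\scG_\beta^{(1,0)}$): the two bounds are $2^{n(5-\beta)}$ and $|r-r'|\,2^{n(6-\beta)}$, their geometric mean $|r-r'|^{\beta-\beta'}2^{n(5-\beta')}$, and $\supp(G^n)^{(r)}\subset\{(x,y):\|x-\bar y\|\lesssim2^{-n},\ |y_3-r|\lesssim2^{-n}\}$ is automatically of $\scG^\d$-type.

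\emph{Part (iii).} The functional $\varphi\mapsto\Phi_G(\varphi)=\xi(g_\varphi)$ with $g_\varphi(y)=\int_{\R^3}G(x,y)\varphi(x)\,dx$ is linear in the white noise, hence a Gaussian field once $g_\varphi\in L^2(\R^4_u)$; the heart of the matter is the scaling bound
\begin{equ}
\E\big|\Phi_G(\psi^\lambda_x)\big|^2=\|g_{\psi^\lambda_x}\|_{L^2(\R^4_u)}^2\lesssim\|G\|_{W^{-\eta}\scG_\beta^\d}^2\,\lambda^{2(-5/2+\beta-\eta)},
\end{equ}
uniform over $x$ in compacts, $\lambda\in(0,1]$ and normalised $\psi$. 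To get it, decompose $G=\sum_nG^n$, so $|G^n(x,y)|\lesssim\|G\|\,(|y_3|\wedge1)^{-\eta}2^{n(5-\beta)}$ on $\{\|x-\bar y\|\lesssim2^{-n},\ 0<y_3\lesssim2^{-n}\}$, and bound $F^n(y):=\int G^n(x,y)\psi^\lambda_x(x)\,dx$ by $\|\psi^\lambda_x\|_{L^1}\sim1$ when $2^{-n}\gtrsim\lambda$ and by $2^{-4n}\|\psi^\lambda_x\|_{L^\infty}\sim2^{-4n}\lambda^{-4}$ when $2^{-n}<\lambda$ (parabolic balls of radius $r$ in the $3$-dimensional slice having volume $\sim r^4$). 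Integrating $|F^n|^2$ over the slab $\{0<y_3\lesssim2^{-n}\}$ and using $\int_0^{c2^{-n}}y_3^{-2\eta}\,dy_3\lesssim2^{-n(1-2\eta)}$ — which is exactly where $\eta<1/2$ enters, and also what gives $g_\varphi\in L^2$ in the first place — yields $\|F^n\|_{L^2}\lesssim\|G\|\,2^{n(5/2-\beta+\eta)}$ in the first regime and a matching bound in the second; as $5/2-\beta>0$, the geometric sum in $n$ is dominated by the scale $2^{-n}\sim\lambda$ and produces the displayed estimate. Since $\Phi_G$ lies in the first Wiener chaos, Gaussian hypercontractivity upgrades this to $\E|\Phi_G(\psi^\lambda_x)|^p\lesssim_p\|G\|^p\lambda^{p(-5/2+\beta-\eta)}$ for every $p$, and a Kolmogorov-type criterion for negative Hölder spaces (cf.\ \cite{H0}) then provides a modification $\hat\Phi_G$ with $\E\|\hat\Phi_G\|_{\CC^{-5/2+\beta-\eta-\kappa}(\frK)}^p\lesssim\|G\|^p$; finally $\hat\Phi_G(\varphi)=\Phi_G(\varphi)$ almost surely for each fixed $\varphi$ follows by testing against a countable dense family.

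\emph{Main obstacle.} The crux is the bookkeeping in (iii): carrying the weight $(|y_3|\wedge1)^{-\eta}$ through the two successive integrations — first against $\psi^\lambda_x$ in the $\bar y$-variables, then over the thin slab $0<y_3\lesssim2^{-n}$ — and checking that in every regime of $2^{-n}$ versus $\lambda$ the resulting exponents fall on the correct side of $-5/2+\beta-\eta$, so that the geometric sums converge to exactly that power (and that $g_\varphi$ is genuinely square-integrable). A secondary subtlety, already present in (i), is that the refined mollification bound fails within distance $\eps$ of the hyperplane: this is precisely why the weight $W^\eta$ is present and why the loss is $\eps^{\eta\wedge(\beta-\beta')}$ rather than $\eps^{\beta-\beta'}$, and why one must be careful never to differentiate in the second variable when the target space is $\scG^{(1,0)}$.
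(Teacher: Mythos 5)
Your proposal is correct and follows essentially the same route as the paper: parts (i) and (ii) are the same interpolation between the crude supremum bound and the one\dash derivative bound (with the weight $W^\eta$ absorbing the region $|y_3|\le\eps$), and part (iii) reduces to the same $L^2$\slash Kolmogorov estimate via the same dyadic bookkeeping in $2^{-n}$ versus $\lambda$ together with $\int_{I_n} y_3^{-2\eta}\,dy_3\lesssim 2^{-n(1-2\eta)}$. The only organisational difference is that in (iii) you sum $\|F^n\|_{L^2}$ over scales by the triangle inequality whereas the paper partitions the $y$\dash region into disjoint sets $Q^n$ and bounds the full kernel pointwise on each; both versions of the fine\dash scale sum implicitly use that $\beta$ is not too small, which is exactly the regime of the application and no worse than the paper's own argument.
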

\begin{proof}
In all of the proofs by homogeneity we may and will assume that the norms appearing on the right-hand side equal to $1$.

(i)
Take $\eps\in(0,1]$ and write the trivial bound
\begin{equs}\label{eq:trivi0}
|\d_x^k (G^n- G^n_{(\eps)})(x,y)|&\leq 2^{n(5+|k|-\beta)}
\end{equs}
for $|k|\leq 1$.
If $\eps\geq |y_3|$, then this yields
\begin{equ}
|\d_x^k(G^n-G^n_{(\eps)})(x,y)|\leq \eps^\eta |y_3|^{-\eta} 2^{n(5+|k|-\beta)}.
\end{equ}
If $\eps<|y_3|$, then we can also write
\begin{equ}
|\d_x^k(G^n- G^n_{(\eps)})(x,y)|\lesssim \eps \sup_{y'}|\d_x^k\nabla_y G^n(x,y')|\leq \eps 2^{n(5+|k|+1-\beta)}.
\end{equ}
Interpolating between this and \eqref{eq:trivi0} gives the required bound.

(ii) The definitions immediately yield that for $|\alpha|\leq1$ one has
\begin{equ}
|(G^{(r),n}-G^{(r'),n})(x,y)|\leq  |r-r'|^\alpha\sup_{x'}|\d_{x_3}^{|\alpha|} G^n(x',y)|\leq |r-r'|^{|\alpha|} 2^{n(5+\alpha-\beta)}.
\end{equ}
By interpolation it also holds for $|\alpha|=\beta-\beta'$, giving the desired bounds on $G^{(r)}-G^{(r')}$.
As for the support,
$(G^{(r),n}-G^{(r'),n})(x,y)$ is supported on $\{(x,y):\,\| x-\bar y\|\lesssim 2^{-n},\,y_3\in I_n\}$ with $I_n=[r-2^{-n},r+2^{-n}]\cup[r'-2^{-n},r'+2^{-n}]$.

(iii) Let $\varphi^\lambda$ be a test function on $\R^3$ on scale $\lambda$. It follows from Gaussianity and Kolmogorov's H\"older estimate (for negative exponents) that it suffices to show that
\begin{equ}\label{eq:cube-Kolmogorov}
\Big\|y\mapsto \int_{\R^3}G(x,y)\varphi^\lambda(x)\,dx\Big\|_{L_2(\R^4)}\lesssim \lambda^{-5/2+\beta-\eta}.
\end{equ}
Set $\tilde Q^n=\{y:\,d(\bar y,\supp \varphi^\lambda)\lesssim 2^{-n}, y_3\in I_n\}$ and $Q^n=\tilde Q^n\setminus\cup_{j={n+1}}^\infty\tilde Q^n$.
It is clear that one has
\begin{equ}[eq:something0]
\Big|\int_{Q^n} |y_3|^{-2\eta}\,dy\Big|\lesssim \begin{cases}
2^{n(-5+2\eta)} &\text{if $2^{-n}\geq\lambda$},\\
\lambda^42^{n(-1+2\eta)} &\text{if $2^{-n}<\lambda$}.
\end{cases}
\end{equ}
We also claim that one has on $Q^n$,
\begin{equ}[eq:something]
\Big||y_3|^\eta\int_{\R^3}G(x,y)\varphi^\lambda(x)\,dx\Big|\lesssim
\begin{cases}
 2^{n(5-\beta)} &\text{if $2^{-n}\geq\lambda$},\\
 \lambda^{-(5-\beta)} &\text{if $2^{-n}<\lambda$}.
\end{cases}
\end{equ}
To see the first case in \eqref{eq:something}, notice that in the sum
\begin{equ}\label{eq:some sum}
y_3^\eta\sum_{m\in\N}\int_{\R^3} G^m(x,y)\varphi^\lambda(x)\,dx
\end{equ}
only terms with $2^{-m}\gtrsim 2^{-n}$ contribute. For each of these terms we use supremum bound on $W^\eta G^{n}$, and recall that $\varphi^\lambda$ is normalised in $L_1(\R^3)$.
Therefore we get a bound of order
\begin{equ}
\sum_{2^{-m}\gtrsim 2^{-n}}2^{m(5-\beta)}\lesssim 2^{n(5-\beta)}
\end{equ}
as claimed.
To see the second case in \eqref{eq:something}, the sum over the terms $2^{-m}\gtrsim\lambda$ yields the required bound precisely as above.
On the terms $2^{-m}\lesssim\lambda$
we bound $\varphi^\lambda$ by its supremum and use that $|{\supp G^m(\cdot,y)}|\lesssim 2^{-4m}$, and therefore
\begin{equ}
\sum_{2^{-m}\lesssim\lambda}\lambda^{-4} 2^{(5-\beta)m}2^{-4m}\lesssim\lambda^{-(5-\beta)}
\end{equ}
as claimed.
From \eqref{eq:something0}-\eqref{eq:something}
we can write
\begin{equs}
\Big\|y\mapsto &\int_{\R^3}G(x,y)\varphi^\lambda(x)\,dx\Big\|_{L_2(\R^4)}^2
\\
&\lesssim\sum_{n\in\N}
\Big|\int_{Q^n} |y_3|^{-2\eta}\,dy\Big|
\sup_{y\in Q^n}\Big||y_3|^\eta\int_{\R^3}G(x,y)\varphi^\lambda(x)\,dx\Big|^2
\\
&\lesssim\sum_{2^{-n}\geq\lambda} 2^{(5-2\beta+2\eta) n}+\sum_{2^{-n}<\lambda}\lambda^{-(6-2\beta)}2^{n(-1+2\eta)}
\lesssim\lambda^{-5+2\beta-2\eta},
\end{equs}
which is precisely \eqref{eq:cube-Kolmogorov}.
\end{proof}
Notice that for $\eps>0$, one has
\begin{equ}\label{eq:restriction identity}
\hat\Phi_{\cG_{3 c_\eps,(\eps)}^{(r)}}=\Psi_{\eps,c_\eps}(\cdot,r).
\end{equ}
For $\eps=0$, the right-hand side looks like the restriction of a distribution to a hyperplane, which is in general not allowed.
However, the left-hand side is perfectly meaningful thanks to Lemma \ref{lem:restriction}, so we can use \eqref{eq:restriction identity} to \emph{define} the restriction of $\Psi_{0,b}$ to the hyperplanes $\R^4_u+(0,0,0,r)$.
\begin{corollary}\label{cor:restrict1}
In the setting of Lemma \ref{lem:reno Phi43 square}, for all $\kappa>0$  the functions
$
r\to\Psi_{\eps,c_\eps}(\cdot,r)
$
and
$
r\to\Psi_{\eps,\infty}(\cdot,r)
$
converge in $\CC^{\kappa}\big([0,1],\CC^{-1/2-6\kappa}(\R^3)\big)$ in probability as $\eps\to 0$.
\end{corollary}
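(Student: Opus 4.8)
The plan is to read the restriction $\Psi_{\eps,c_\eps}(\cdot,r)$ off the identity \eqref{eq:restriction identity} and to control the underlying kernels with Lemma~\ref{lem:restriction}, using Lemma~\ref{lem:robin} to handle the dependence on the Robin parameter. Since $\CC^{\theta}(\R^{3})$ is built from the seminorms on compacts, it suffices to prove convergence in $\CC^{\kappa}\big([0,1],\CC^{-1/2-6\kappa}(\frK)\big)$ for each fixed compact $\frK\subset\R^{3}$. Write $G^{a}:=\cG_{3a}=\bar\cK+Z^{(3a)}$ as in \eqref{eq:robin-HK-def}. The truncated heat kernel $\bar\cK$ is translation invariant and supported near the diagonal, hence lies in $\scG_{\beta}^{(1,1)}$ for every $\beta<2$; and, inspecting the proof of Lemma~\ref{lem:robin}, $Z^{(a)}$ is a finite sum of pieces of the form $\varphi_{n}(x,y)\int_{0}^{\infty}(\cdots)\,\partial\bar\cK_{n}(\cdots)\,ds$ attached to the finitely many isometries $g$ of $\R^{3}$ preserving $D$ that are relevant near the diagonal in $D\times D$; reparametrising the integration variable in \eqref{eq:Psi2} by such a $g$ — an operation leaving the law of $\xi$ unchanged — turns each such piece into a near-diagonal kernel to which Lemma~\ref{lem:restriction} applies, with $\scG_{\beta}^{(1,1)}$-norm bounded uniformly over $a$ in compact subsets of $(-\infty,\infty]$ and with $a\mapsto G^{a}$ continuous into $\scG_{\beta}^{(1,1)}$ for each $\beta<2$ (here Lemma~\ref{lem:robin}(iii) is used, $\bar\cK$ being independent of $a$). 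Below we therefore argue as if $G^{a}\in\scG_{\beta}^{(1,1)}$ directly.

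Fix $\beta,\beta',\eta$ with $\beta-1<\beta'<\beta<2$, $\eta\in(0,1/2)$ and $\beta-\beta'>\kappa$, chosen (e.g.\ $\beta=2-\kappa$, $\beta'=2-3\kappa$, $\eta=\kappa$) so that $\alpha:=-5/2+\beta'-\eta-\kappa\ge-1/2-6\kappa$. By \eqref{eq:restriction identity}, $\Psi_{\eps,c_\eps}(\cdot,r)=\hat\Phi_{(G^{c_\eps}_{(\eps)})^{(r)}}$ for $\eps>0$, and the right-hand side with $G^{c_\eps}_{(\eps)}$ replaced by the bare kernel $G^{b}$ (with $\cG_{3\infty}:=\cG_{\infty}$) is by definition the candidate limit at $\eps=0$; the same holds with $c_\eps$ replaced throughout by $\infty$. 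Decompose
\begin{equ}
\big(G^{c_\eps}_{(\eps)}\big)^{(r)}-\big(G^{b}\big)^{(r)}=\big(G^{c_\eps}_{(\eps)}-G^{c_\eps}\big)^{(r)}+\big(G^{c_\eps}-G^{b}\big)^{(r)}.
\end{equ}
Lemma~\ref{lem:restriction}(i) (whose mollification acts on the $y$-variable and so commutes with the restriction in $x$) bounds the first term in $W^{-\eta}\scG^{\d}_{\beta'}$ by $\eps^{\eta\wedge(\beta-\beta')}$, uniformly in $r$; continuity of $a\mapsto G^{a}$ together with $c_\eps\to b$ shows the second term tends to $0$ in $\scG^{\d}_{\beta}\subset W^{-\eta}\scG^{\d}_{\beta'}$, uniformly in $r$. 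Applying Lemma~\ref{lem:restriction}(iii) (with parameter $\beta'$) to the sum, and using Gaussianity to upgrade the $L^{2}$-bound to convergence in probability, gives for every fixed $r\in[0,1]$ that $\Psi_{\eps,c_\eps}(\cdot,r)\to\hat\Phi_{(G^{b})^{(r)}}$ in probability in $\CC^{\alpha}(\frK)\subset\CC^{-1/2-6\kappa}(\frK)$, and likewise with $c_\eps$ replaced by $\infty$; the limits are $\rho$-independent since $G^{b}$ and $G^{\infty}$ are.

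To upgrade this to convergence in $\CC^{\kappa}\big([0,1],\CC^{-1/2-6\kappa}(\frK)\big)$ we produce an $\eps$-uniform modulus of continuity in $r$. The mollified kernels $G^{c_\eps}_{(\eps)}$ lie in $W^{-\eta}\scG^{(1,0)}_{\beta}$ uniformly in $\eps\ge0$ (mollification preserves the order of a near-diagonal kernel, the weight absorbing the $\eps$-scale mismatch near $\d$); since $(|y_{3}|\wedge1)^{-\eta}$ is inert under $\d_{x_{3}}$, the proof of Lemma~\ref{lem:restriction}(ii) applies verbatim with this weight and yields $\|\big(G^{c_\eps}_{(\eps)}\big)^{(r)}-\big(G^{c_\eps}_{(\eps)}\big)^{(r')}\|_{W^{-\eta}\scG^{\d}_{\beta'}}\lesssim|r-r'|^{\beta-\beta'}$ uniformly in $\eps$. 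Combined with Lemma~\ref{lem:restriction}(iii) and Gaussian hypercontractivity this gives $\E\|\Psi_{\eps,c_\eps}(\cdot,r)-\Psi_{\eps,c_\eps}(\cdot,r')\|_{\CC^{\alpha}(\frK)}^{p}\lesssim|r-r'|^{p(\beta-\beta')}$ for all $p$, uniformly in $\eps$; with a uniform bound at $r=0$ and $\beta-\beta'>\kappa$, Kolmogorov's criterion makes $\{r\mapsto\Psi_{\eps,c_\eps}(\cdot,r)\}_{\eps}$ tight in $\CC^{\kappa}\big([0,1],\CC^{\alpha}(\frK)\big)$, and similarly for the family with $c_\eps$ replaced by $\infty$. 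Since the finite-dimensional-in-$r$ limits were identified above, convergence in probability in $\CC^{\kappa}\big([0,1],\CC^{\alpha}(\frK)\big)\hookrightarrow\CC^{\kappa}\big([0,1],\CC^{-1/2-6\kappa}(\frK)\big)$ follows, whence the claim.

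The genuine difficulty is the reduction in the first paragraph. The space $\scZ_{\beta,\d}$ only constrains the distances of the two arguments to $\d$ and not their tangential separation, whereas Lemma~\ref{lem:restriction} is phrased for near-diagonal kernels; one must use the explicit form of $Z^{(a)}$ from the proof of Lemma~\ref{lem:robin} to see, face by face, that the tangential separation on the support of each summand is also of order $2^{-n}$, and to keep careful track of which cube isometry is applied to the integration variable in \eqref{eq:Psi2}. Propagating the weight $W^{-\eta}$ and the uniformity over $a$ — in particular through the Dirichlet endpoint $a=\infty$, where the continuity rests on Lemma~\ref{lem:robin}(iii) — through this rewriting is where essentially all the care is needed; everything after it is a routine application of Lemma~\ref{lem:restriction} and Kolmogorov's criterion.
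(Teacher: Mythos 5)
Your argument follows the same route as the paper's proof: Lemma \ref{lem:robin} gives convergence of the kernels $\cG_{3c_\eps,(\eps)}$ in $\scG_{2-\kappa}^{(1,1)}$, Lemma \ref{lem:restriction}(i)--(iii) transfer this to the restricted fields, and Kolmogorov's criterion upgrades to H\"older continuity in $r$; the paper phrases the last step as convergence in $\CC^{\kappa}\big([0,1],L_p(\Omega,\CC^{-1/2-5\kappa})\big)$ followed by a swap of the two outer spaces, which is equivalent to your tightness-plus-identification-of-fixed-$r$-limits version.

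The one step I would push back on is the reparametrisation in your first paragraph. Substituting $z'\mapsto g(z')$ separately in each summand of \eqref{eq:Psi2} does \emph{not} leave the random field unchanged: white noise is invariant under isometries only in law, and applying a different isometry to the noise variable in different summands alters the cross-covariances between the terms, hence the law of their sum. As written, that step is not valid. Fortunately it is also unnecessary. On the support of the $n$-th piece $Z^a_{g,n}$ both arguments already lie in $D$ within $\cO(2^{-n})$ of the faces involved in the minimal representation of $g$, and there the constraint $\|x-(g(y)+se)\|\lesssim 2^{-n}$ with $s\in[0,C2^{-n}]$ forces $\|x-y\|\lesssim 2^{-n}$ coordinate by coordinate (the reflections act on orthogonal faces), so each piece is genuinely near-diagonal on $(\R\times D)^2$ and Lemma \ref{lem:restriction} applies to it directly, with the derivative bounds inherited from the $\scZ_{\beta,\d}$ estimates of Lemma \ref{lem:robin}. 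With that repair --- which is exactly the verification the paper leaves implicit when it asserts $\cG_{3c_\eps,(\eps)}\to\cG_{3b}$ in $\scG_{2-\kappa}^{(1,1)}$ --- the rest of your argument (the weighted version of Lemma \ref{lem:restriction}(ii), the uniformity in $a$ over compacts of $(-\infty,\infty]$ including the Dirichlet endpoint, and the Kolmogorov step) is correct.
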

\begin{proof}
We only provide the argument for the first case since the second is easier.
It follows from Lemma \ref{lem:robin}
that the kernels $\cG_{3 c_\eps,(\eps)}$ converge in $\scG_{2-\kappa}^{(1,1)}$.
By Lemma~\ref{lem:restriction} (i) the convergence also holds in $W^{-\kappa}\scG_{2-2\kappa}^{(1,0)}$.
By Lemma~\ref{lem:restriction} (ii)--(iii), the functions
$r\mapsto \hat\Phi_{\cG_{3 c_\eps,(\eps)}^{(r)}}=\Psi_{\eps,c_\eps}(\cdot,r)$ converge in
$\CC^{\kappa}\big([0,1],L_p(\Omega,\CC^{-1/2-5\kappa}(Q_1))\big)$.
Kolmogorov's continuity theorem
shows that if $p$ is large enough, then the convergence holds in
$L_p\big(\Omega,\cC^{(5/6)\kappa}([0,1],\CC^{1/2-5\kappa}(Q_1))\big)$ which is as required,
provided we substitute $(5/6)\kappa\to\kappa$.
\end{proof}
Now we have all we need for the proof of Lemma \ref{lem:restrict2}.
Recall from Section \ref{sec:prepare} the notation $Q$ and $Q_\d$ and denote
$Q_r=\{(x_0,x_1,x_2):\,(x_0,x_1,x_2,r)\in Q\}$.
\begin{proof}[Proof of Lemma \ref{lem:restrict2}]
It follows from Proposition \ref{prop:weighted holder mult} and Corollary \ref{cor:restrict1} that the products $R^1_\eps\Tr_{\d}\Psi_{\eps,c_\eps}$ converge in $\CC^{-1/2-\kappa,-1/2-2\kappa}_{\d^2}(Q_\d)$. By Proposition \ref{prop:weighted holder extend}, this implies the convergence in $\CC^{-1/2-2\kappa}(\d)$.
It remains to use the basic fact that $\delta_\d  \CC^{\alpha}(\d)$ continuously embeds into $\CC^{\alpha-1-\kappa}(\R^4)$.

The last term is even easier: Proposition \ref{prop:weighted holder mult} and the convergence of $\Psi_{\eps,c_\eps}$ in $\CC^{-1/2-\kappa}(\R^d)$ implies the convergence of $R^3_\eps\Psi_{\eps,c_\eps}$ in $\CC^{1/2-\kappa,-3/2-2\kappa}_{\d^2}(Q)$, which by Proposition \ref{prop:weighted holder extend} implies the claim.

The statement concerning $R^2_\eps$ is more involved. 
First recall that $R^2_\eps$ does not depend on the variables $x_0,x_1,x_2$.
Take a test function $\varphi^\lambda$ on $\R^4$ on scale $\lambda$.
Our goal is then to show
\begin{equs}[some label]
\Big|\int_{0}^1 & \int_{\R^3} \bone_{y\in Q_r}\Big( R^2_\eps(r)\big(
\Psi_{\eps,c_\eps}(y,r)\varphi^\lambda(y,r)
-
\Psi_{\eps,c_\eps}(y,0)\varphi^\lambda(y,0)\big)
\\
&
-R^2_0(r)\big(
\Psi_{0,b}(y,r)\varphi^\lambda(y,r)
-
\Psi_{0,b}(y,0)\varphi^\lambda(y,0)\big)\Big)\,dy \,dr\Big|\lesssim o(1)\lambda^{-3/2-\kappa},
\end{equs}
where $o(1)\to0$ in the $\eps\to 0$ limit.
Note that the inner integral has to be understood in a distributional sense.
This understanding is justified by Corollary \ref{cor:restrict1} and by the fact that muliplying with $\bone_{Q_r}$ is a well-defined and continuous operation on $\CC^{-1/2-6\kappa}$.
First we treat the easy case when the support of $\varphi^\lambda$ is separated from $Q_\d$ by at least $\lambda$.
In this case the integrand simplifies to
\begin{equ}
\bone_{y\in Q_r}\big(R^2_\eps(r)(\Psi_{\eps,c_\eps}(y,r)-\Psi_{0,b}(y,r))+(R^2_\eps(r)-R^2_0(r))\Psi_{0,b}(y,r)\big)\varphi^\lambda(y,r)
\end{equ}
and keep in mind that $\lambda\leq r$.
By Corollary \ref{cor:restrict1} we have the bounds
$\|\Psi_{\eps,c_\eps}(\cdot,r)-\Psi_{0,b}(\cdot,r)\|_{\CC^{-1/2-\kappa}}\lesssim o(1)$ (in $\eps$) uniformly in $r$ as well as
$\|\Psi_{0,b}(\cdot,r)\|_{\CC^{-1/2-\kappa}}\lesssim 1$.
By Lemma \ref{lem:reno Phi43 square} (ii) we further have
$|R_\eps^2(r)|\lesssim r^{-1-\kappa}$ uniformly in $\eps,r$ as well as
$|R_\eps^2(r)-R_0^\eps(r)|\lesssim o(1)r^{-1-\kappa}$ uniformly in $r$.
Finally, notice that $\varphi^\lambda(\cdot,r)$ can be seen as $\lambda^{-1}$ times a test function on scale $\lambda$ on $\R^3$.
Combining these bounds show that the left-hand side of \eqref{some label} is bounded by
\begin{equ}
\int_0^1o(1)\lambda^{-3/2-\kappa}r^{-1-\kappa}\bone_{\lambda\leq r}\,dr
\leq \int_0^1 o(1)\lambda^{-3/2-3\kappa}r^{-1+\kappa}\,dr\lesssim o(1)\lambda^{-3/2-3\kappa}.
\end{equ}
This is the required bound with $3\kappa$ in place of $\kappa$, yielding the claim
in the case when the support of $\varphi^\lambda$ is separated from $Q_\d$ by at least $\lambda$.

In the alternative case the boundary terms in \eqref{some label} have to be taken into account.
Recall the elementary identity
\begin{equs}
a_1b_1-a_2b_2-a_3b_3+a_4b_4&=(a_1-a_2-a_3+a_4)b_1+(a_3-a_4)(b_1-b_3)
\\&\quad +(a_2-a_4)(b_1-b_2)+a_4(b_1-b_2-b_3+b_4),
\end{equs}
which holds for any "product" that is bilinear. In our situation the "product" will be the action of $\CC^{-1/2-\kappa}(\R^3)$ on $\CC^{1/2+\kappa}(\R^3)$, and the terms will be
$a_1=\Psi_{\eps,c_\eps}(\cdot,r)$,
$a_2=\Psi_{\eps,c_\eps}(\cdot,0)$,
$a_3=\Psi_{0,b}(\cdot,r)$,
$a_4=\Psi_{0,b}(\cdot,0)$, and
$b_1=R^2_\eps(r)\varphi^\lambda(\cdot,r)$,
$b_2=R^2_\eps(r)\varphi^\lambda(\cdot,0)$,
$b_3=R^2_0(r)\varphi^\lambda(\cdot,r)$,
$b_4=R^2_0(r)\varphi^\lambda(\cdot,0)$.
From Corollary \ref{cor:restrict1} we have the uniform bounds
\begin{equs}
\|a_1-a_2-a_3+a_4\|_{\CC^{-1/2-\kappa}} & \lesssim r^{\kappa/6}o(1),
\\
\|a_3-a_4\|_{\CC^{-1/2-\kappa}} & \lesssim r^{\kappa/6},
\\
\|a_2-a_4\|_{\CC^{-1/2-\kappa}}	& \lesssim o(1),
\\
\|a_4\|_{\CC^{-1/2-\kappa}} & \lesssim 1.
\end{equs}
Furthermore, not only can $\varphi^\lambda(\cdot,r)$ be seen as $\lambda^{-1}$ times a test function on scale $\lambda$ on $\R^3$, also
$\varphi^\lambda(\cdot,r)-\varphi^\lambda(\cdot,0)$ can be seen as $r^{\kappa/6}\lambda^{-1-\kappa/6}$ times a test function on scale $\lambda$ on $\R^3$, for sufficiently small $\kappa>0$.
Hence using Lemma \ref{lem:reno Phi43 square} with $\kappa/12$ in place of $\kappa$,
\begin{equs}
\|b_1\|_{\CC^{1/2+\kappa}} & \lesssim r^{-1-\kappa/12}\lambda^{-3/2-\kappa},
\\
\|b_1-b_3\|_{\CC^{1/2+\kappa}} & \lesssim r^{-1-\kappa/12}o(1)\lambda^{-3/2-\kappa},
\\
\|b_1-b_2\|_{\CC^{1/2+\kappa}}	& \lesssim r^{-1-\kappa/12+\kappa/6}\lambda^{-3/2-\kappa-\kappa/6},
\\
\|b_1-b_2-b_3+b_4\|_{\CC^{1/2+\kappa}}	& \lesssim r^{-1-\kappa/12+\kappa/12}o(1)\lambda^{-3/2-\kappa-\kappa/6}.
\end{equs}
Since the exponent of $r$ is greater than $-1$ in each of these terms,
the outer integration in \eqref{some label} can be performed as before and and the claimed bound holds.
\end{proof}

\section{Regularity structures and models}\label{sec:reg str}

\subsection{General remarks}
We will use a mild modification of the general black box theory of regularity structures. The first tweak is a slight relaxation of the required bounds on models. Its formulation is somewhat technical, but the moral of it is simply that it is sufficient to assume the bounds of the correct order from the models on test functions supported away from the boundary for any symbol of degree above the codimension of a given boundary.
\begin{proposition}\label{prop:model boundary extension}
Let $P$ be a boundary of codimension $k$ and let $\scT=(A,T,G)$ be a regularity structure. Assume that we are given mappings $\Pi$, $\bar \Pi$, and $\Gamma$ such that:
\begin{claim}
\item $\Gamma:\R^d\times\R^d\to G$ is continuous and satisfies $\Gamma_{xx}=1$ and $\Gamma_{xy}\Gamma_{yz}=\Gamma_{xz}$;
\item For all $x\in \R^d$, $\Pi_x$ maps $T_{\leq -k}$ to $\cS'(\R^d)$, while for all $x\in \R^d\setminus P$, $\bar\Pi_x$ maps $T$ to $\cS'(\R^d\setminus P)$, such that as elements of $\cS'(\R^d\setminus P)$, $\Pi_x\tau=\bar\Pi_x\tau$ for all $\tau\in T_{\leq-k}$;
\item the identities $\Pi_y=\Pi_x\Gamma_{xy}$ and $\bar\Pi_y=\bar\Pi_x\Gamma_{xy}$ hold;
\item on $T_{\leq -k}$, $(\Pi,\Gamma)$ is a model with norm bounded by $1$;
\item the following bounds hold:
\begin{equ}
|(\bar\Pi_z\tau)(\varphi_z^\lambda)|\leq \lambda^\alpha,\qquad|\Gamma_{xy}\tau|_\beta\leq\|x-y\|^{\beta-\alpha},
\end{equ}
for all $\alpha\in A$, $\tau\in T_\alpha$ with $|\tau|_\alpha=1$, $z\in \R^d\setminus P$, $\lambda\in(0,1]$ such that $\lambda\leq (1/2) |z|_P$, $y\in \R^d\setminus P$ such that $\|z-y\|\leq(1/2)|z|_P$, and $\beta<\alpha$.
\end{claim}
Then there exists a unique model of the form $(\hat\Pi, \Gamma)$ such that, as elements of $\cS'(\R^d)$, 
$\hat\Pi_x\tau=\Pi_x\tau$ for all $\tau\in T_{\leq-k}$ and as elements of $\cS'(\R^d\setminus P)$, $\hat\Pi_x\tau=\bar\Pi_x\tau$ for all $\tau\in T$. Furthermore, the norm of $(\hat\Pi, \Gamma)$ is bounded by a constant depending only on $\scT$ and $P$
and the map $(\Pi,\bar \Pi,\Gamma) \mapsto (\hat \Pi,\Gamma)$ is continuous in its natural topology. 
\end{proposition}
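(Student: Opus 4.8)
The plan is to build $\hat\Pi$ recursively over the (locally finite) set of homogeneities $A$ in increasing order, the only nontrivial work being the extension across $P$ of the $\bar\Pi_x\tau$ with $|\tau|>-k$. For $\tau$ of degree $\le -k$ one simply sets $\hat\Pi_x\tau:=\Pi_x\tau$; by (iv) this already behaves like a model and by (ii) it agrees with $\bar\Pi_x\tau$ off $P$. Before the induction I would upgrade the local $\Gamma$-bound of (v) to the genuine model bound $|\Gamma_{xy}\tau|_\beta\lesssim\|x-y\|^{\beta-\alpha}$ for all $x,y$ with $\|x-y\|\le 1$ and $x$ in a compact: this follows from the cocycle identity $\Gamma_{xy}=\Gamma_{xz}\Gamma_{zy}$ by chaining through a controlled number of intermediate points, where assumption (iv) is used to absorb the finitely many steps entering the region $\{|\cdot|_P\lesssim\|x-y\|\}$.

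\textbf{The inductive step.} Fix $\alpha\in A$ with $\alpha>-k$ and $\tau\in T_\alpha$, $|\tau|_\alpha=1$, and assume $\hat\Pi_z\mu$ has been constructed for every $\mu$ of degree $<\alpha$, as an element of $\cS'(\R^d)$ which agrees with $\bar\Pi_z\mu$ off $P$, with $\Pi_z\mu$ when $|\mu|\le-k$, is covariant, and satisfies the model bounds. To define $\hat\Pi_x\tau$ I would reconstruct the (flat) modelled distribution $w\mapsto\Gamma_{wx}\tau$ by the usual multiscale/reconstruction scheme adapted to the weight, splitting $\Gamma_{wx}\tau=\Gamma_{wx}^{\le}\tau+\Gamma_{wx}^{>}\tau$ into its projections onto $T_{\le-k}$ and $T_{>-k}$: the $T_{\le-k}$-part is reconstructed using the \emph{given} model $(\Pi,\Gamma)$ on that sector, yielding an honest element of $\cS'(\R^d)$; the $T_{>-k}$-part is handled by the multiscale sum over a dyadic partition of unity subordinate to the slabs $\{|w|_P\sim2^{-n}\}$ (with a far piece on $\{|w|_P\gtrsim\lambda\}$), each slab further split into $O((2^n\lambda)^{\frs-k})$ bumps of scale $2^{-n}$ centred at points at distance $\sim 2^{-n}$ from $P$, on which the bounds of (v) apply. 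Since a bump at level $n$ carries weight $\lesssim\lambda^{-\frs}2^{-n\frs}$ while its contribution is $\lesssim(2^{-n})^{\alpha}$, the level-$n$ total is $\lesssim\lambda^{-k}2^{-n(k+\alpha)}$, which sums geometrically over $2^{-n}<\lambda$ to $\lesssim\lambda^\alpha$ precisely because $k+\alpha>0$; the far piece contributes $\lesssim\lambda^\alpha$ directly. Partition-independence (hence a well-defined element of $\cS'(\R^d)$) is checked as in the reconstruction theorem, using that the local models $\bar\Pi_w\Gamma_{wx}\tau$ all coincide off $P$. This produces $\hat\Pi_x\tau$ with $|(\hat\Pi_x\tau)(\psi_x^\lambda)|\lesssim\lambda^\alpha$ uniformly, agreeing with $\bar\Pi_x\tau$ off $P$ and with $\Pi_x\tau$ on $T_{\le-k}$, the constants depending only on $\scT$ and $P$. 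Covariance $\hat\Pi_y\tau=\hat\Pi_x\Gamma_{xy}\tau$ then follows because both sides extend $\bar\Pi_y\tau$ from $\R^d\setminus P$ and obey the $\lambda^\alpha$ bound, hence coincide by uniqueness.

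\textbf{Uniqueness, continuity, and the main obstacle.} Uniqueness is the easy half: if $(\hat\Pi',\Gamma)$ is another model with the stated agreement properties, then $\hat\Pi_x\tau-\hat\Pi'_x\tau$ is supported on $P$, vanishes when $|\tau|\le-k$ by hypothesis, and for $|\tau|=\alpha>-k$ satisfies $|(\cdot)(\psi_x^\lambda)|\lesssim\lambda^\alpha$, so it vanishes since a nonzero distribution carried by a set of scaled codimension $k$ has Hölder regularity at most $-k<\alpha$ (compare \eqref{eq:extending from boundary easy} and Proposition~\ref{prop:weighted holder extend}). Continuity of $(\Pi,\bar\Pi,\Gamma)\mapsto(\hat\Pi,\Gamma)$ is automatic: the whole construction is linear and all the estimates above are linear bounds, giving local Lipschitz dependence in the natural metrics on models. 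The genuinely delicate point, on which I would spend most of the effort, is the inductive step — organising the multiscale sum so that the $T_{\le-k}$-component of $\Gamma_{wx}\tau$ is dispatched to the \emph{given} global distributions $\Pi_w$ while only the degrees in $(-k,\alpha)$ are treated by the inductive hypothesis, verifying that $O((2^n\lambda)^{\frs-k})$ patches times the per-patch estimate really telescope (this is exactly where $|\tau|>-k$, equivalently the fact that $\Pi$ need only be prescribed on $T_{\le-k}$, is used), together with the partition-independence argument and the bookkeeping needed to globalise the $\Gamma$-bounds.
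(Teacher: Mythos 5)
Your argument is correct and is essentially the paper's proof: the paper simply invokes \cite[Thm~C.5]{HP19}, and your multiscale extension across $P$ (dyadic slabs at distance $2^{-n}$ from $P$, dispatching the $T_{\le -k}$ components to the globally defined $\Pi$, summing geometrically thanks to $\alpha+k>0$, and getting uniqueness because a distribution carried by $P$ with local bound $\lambda^\alpha$, $\alpha>-k$, must vanish) is precisely the mechanism of that cited result. The only cosmetic point is that the recentering bound in (v) should read $\|x-y\|^{\alpha-\beta}$ (as you implicitly use); with that reading your per-slab bookkeeping, including the lower-degree components $\beta\in(-k,\alpha)$, closes as claimed.
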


\begin{proof}
The proof is virtually identical to that of \cite[Thm~C.5]{HP19}.
\end{proof}

\begin{remark}
In this statement, the various `norms' are taken over the entire space $\R^d$, but since the operation is local
this can clearly be localised to compact regions.
\end{remark}

The convergence of models in our setting do not directly follow from \cite{CH}, but we aim to minimise the additional arguments.
Let us first very briefly summarise how the convergence results are obtained in \cite[Sec~10]{H0}, loosely following the notation therein.
To each basis symbol $\tau$ one associates functions
$\cW^{(\eps;k)}_i\tau(z;x;y_1,\ldots,y_k)$ in $k+2$ variables. Here $\eps\in[0,1]$, $k$ is a natural number, and $i$ runs over some finite set. 
For any fixed $z$, by Wiener's isometry $I_k$, any such function yields a distribution in the variable $x$, living in the $k$-th homogeneous Wiener chaos. The distribution $\Pi_z^\eps\tau$ is then defined as the sum of all these random distributions over all the indices $k,i$.
Let us point out here the the first slight difference to \cite{H0}: therein, due to the translation invariance, the dependence on $z$ can freely be ignored.

By \cite[Thm~10.7]{H0}, the convergence of a sequence of models $(\Pi^\eps,\Gamma^\eps)_{\eps\in[0,1]}$ of the above form follow from the bounds
\begin{equs}
\E\big|(\Pi_z^0\tau)(\varphi_z^\lambda)\big|^2
&\lesssim\lambda^{2|\tau|+\kappa},\label{eq:model-conv1}
\\
\E\big|(\Pi_z^0\tau-\Pi_z^\eps\tau)(\varphi_z^\lambda)\big|^2
&\lesssim\eps^{2\theta}\lambda^{2|\tau|+\kappa},\label{eq:model-conv2}
\end{equs}
where $\kappa,\theta>0$ are arbitrary. 
By \cite[Prop~10.11]{H0}, these bounds follow from
\begin{equs}
\big|\scal{ &\cW^{(0;k)}_i  \tau(z;x;\cdot), \cW^{(0;k)}_i\tau(z;\bar x;\cdot)}\big|
\\
&\lesssim
\sum_{\zeta}\big(\|x-z\|+\|\bar x-z\|\big)^\zeta\|x-\bar x\|^{2|\tau|+\kappa-\zeta}, \label{eq:model-conv3}
\\
\big|\scal{&(\cW^{(0;k)}_i-\cW^{(\eps;k)}_i) \tau(z;x;\cdot),(\cW^{(0;k)}_i-\cW^{(\eps;k)}_i)\tau(z;\bar x;\cdot)}\big|
\\
&\lesssim
\eps^{2\theta}\sum_{\zeta}\big(\|x-z\|+\|\bar x-z\|\big)^\zeta\|x-\bar x\|^{2|\tau|+\kappa-\zeta},\label{eq:model-conv4}
\end{equs}
where the scalar product is understood in the $k$-fold tensor product of $L_2(\R^{d_1})$ and the sum ranges over a finite set of values of $\zeta\in[0,2|\tau|+\kappa+|\frs|)$.
Moreover, due to Proposition \ref{prop:model boundary extension}, whenever $|\tau|>-1$, one only needs these bounds to hold for $x,\bar x$ with $|x|_\d\lesssim \|x-z\|, |\bar x|_\d\lesssim \|\bar x-z\|$.

Unfortunately, even with this modification in mind, \eqref{eq:model-conv4} will not always hold in our setting. It will be complemented with the following criterion.
\begin{proposition}
Let $z\in\R^d$, $P\subset\R^d$ a boundary, $\eps>0$, and $\tilde\cW:\R^d\times\R^{kd_1}\to\R$ be a function that satisfies the bounds
\begin{equ}
\big|\scal{\tilde\cW(x,\cdot),\tilde\cW(\bar x,\cdot)}\big|\lesssim \bone_{|x|_P\leq\eps} \sum_{\zeta}\big(\|x-z\|+\|\bar x-z\|\big)^\zeta\|x-\bar x\|^{2|\tau|+\kappa-\zeta},
\end{equ}
with the sum as above. 
Then one has the bounds, with any sufficiently small $\theta>0$,
\begin{equ}\label{eq:modified-crit}
\E\Big|I_k\Big(\int\varphi_z^\lambda(x)\tilde\cW(x,\cdot)\,dx\Big)\Big|^2\lesssim \eps^{2\theta}\lambda^{2|\tau|+\kappa-2\theta}.
\end{equ}
\end{proposition}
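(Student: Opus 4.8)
The plan is to use Wiener's isometry to turn the left-hand side of \eqref{eq:modified-crit} into a deterministic double integral, and then to exploit the localising indicator $\bone_{|x|_P\le\eps}$ by a soft power-weight interpolation.

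Writing $g(\cdot)=\int\varphi_z^\lambda(x)\tilde\cW(x,\cdot)\,dx\in L^2(\R^{kd_1})$, Wiener's isometry (up to the symmetrisation factor $k!$, which we absorb into $\lesssim$) gives $\E|I_k(g)|^2\le k!\,\|g\|_{L^2(\R^{kd_1})}^2=k!\iint\varphi_z^\lambda(x)\varphi_z^\lambda(\bar x)\scal{\tilde\cW(x,\cdot),\tilde\cW(\bar x,\cdot)}\,dx\,d\bar x$. Hence, by the triangle inequality and the assumed bound on $|\scal{\tilde\cW(x,\cdot),\tilde\cW(\bar x,\cdot)}|$, the left-hand side of \eqref{eq:modified-crit} is controlled, up to a constant, by
\begin{equ}
\sum_{\zeta}\iint|\varphi_z^\lambda(x)|\,|\varphi_z^\lambda(\bar x)|\,\bone_{|x|_P\le\eps}\,\big(\|x-z\|+\|\bar x-z\|\big)^\zeta\,\|x-\bar x\|^{2|\tau|+\kappa-\zeta}\,dx\,d\bar x\;,
\end{equ}
the sum being over the same finite set of $\zeta\in[0,2|\tau|+\kappa+|\frs|)$ as in \eqref{eq:model-conv3}.

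Next I would bound the indicator by $\bone_{|x|_P\le\eps}\le(\eps/|x|_P)^{2\theta}$, valid pointwise for every $\theta>0$ and $x\notin P$ (hence a.e.), pull out the factor $\eps^{2\theta}$, and reduce to showing, for each relevant $\zeta$,
\begin{equ}
\iint|\varphi_z^\lambda(x)|\,|x|_P^{-2\theta}\,|\varphi_z^\lambda(\bar x)|\,\big(\|x-z\|+\|\bar x-z\|\big)^\zeta\,\|x-\bar x\|^{2|\tau|+\kappa-\zeta}\,dx\,d\bar x\lesssim\lambda^{2|\tau|+\kappa-2\theta}\;.
\end{equ}
This is a standard scaling estimate with one extra weight: on the support of $\varphi_z^\lambda(x)\varphi_z^\lambda(\bar x)$ one has $\|x-z\|\vee\|\bar x-z\|\lesssim\lambda$, so that factor contributes $\lambda^\zeta$; integrating in $\bar x$ first gives $\int|\varphi_z^\lambda(\bar x)|\,\|x-\bar x\|^{2|\tau|+\kappa-\zeta}\,d\bar x\lesssim\lambda^{2|\tau|+\kappa-\zeta}$ uniformly in $x$ (using $\|\varphi_z^\lambda\|_\infty\lesssim\lambda^{-|\frs|}$ and $2|\tau|+\kappa-\zeta>-|\frs|$); and finally $\int|\varphi_z^\lambda(x)|\,|x|_P^{-2\theta}\,dx\lesssim\lambda^{-2\theta}$, which follows from $\|\varphi_z^\lambda\|_\infty\lesssim\lambda^{-|\frs|}$ together with the volume bound $\int_{\|x-z\|\le C\lambda}|x|_P^{-2\theta}\,dx\lesssim\lambda^{|\frs|-2\theta}$, uniform in $z$. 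Multiplying the three factors yields $\lambda^{2|\tau|+\kappa-2\theta}$, and summing over the finite set of $\zeta$ completes the argument.

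There is no deep obstacle here; the two points needing care are the uniformity in $z$ of the weighted volume bound $\int_{\|x-z\|\le C\lambda}|x|_P^{-2\theta}\,dx\lesssim\lambda^{|\frs|-2\theta}$ — obtained by integrating first along the directions parallel to $P$ and then across its $\le k$ transverse directions, the worst case being $z\in P$ — and the constraint this imposes, namely that $2\theta$ stay strictly below the scaled codimension of $P$. Since the boundaries occurring in the paper ($P\in\{\d,\d^2,P_0\}$) have scaled codimension $\ge 1$, any $\theta\in(0,1/2)$ works, which is the meaning of ``sufficiently small $\theta$''. In the intended application one may, by the reduction recorded after \eqref{eq:model-conv4}, additionally assume $|x|_P\lesssim\|x-z\|\lesssim\lambda$ when $|\tau|>-1$, which only simplifies the weighted volume estimate.
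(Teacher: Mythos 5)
Your proof is correct and follows essentially the same route as the paper's: Wiener's isometry reduces the claim to the deterministic double integral, which is then estimated by the standard scaling bounds together with the thinness of the region $\{|x|_P\le\eps\}$. The only (harmless) difference is where the interpolation happens: the paper first splits into the cases $\lambda\lesssim\eps$ and $\eps\lesssim\lambda$, uses the volume bound $|\{|x|_P\le\eps\}\cap B(z,\lambda)|\lesssim\eps\lambda^{|\frs|-1}$ to gain a full factor $\eps$, and only then writes $\eps\lambda^{-1}\le(\eps/\lambda)^{2\theta}$, whereas you interpolate pointwise via $\bone_{|x|_P\le\eps}\le(\eps/|x|_P)^{2\theta}$, which avoids the case split at the cost of the (correctly identified) integrability constraint that $2\theta$ be below the scaled codimension of $P$.
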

\begin{proof}
First note that the $\lambda\lesssim\eps$ case is straightforward, even without using the indicator function $\bone_{|x|_P\leq\eps}$.
Indeed, just as one proves \eqref{eq:model-conv1} from \eqref{eq:model-conv3}, one can bound the left-hand side of \eqref{eq:modified-crit} by $\lambda^{2|\tau|+\kappa}\lesssim\eps^{2\theta}\lambda^{2|\tau|+\kappa-\theta}$ as required.

We therefore may assume $\eps\lesssim \lambda$. The left-hand side of \eqref{eq:modified-crit} can clearly be bounded by
\begin{equs}
\,&\Big|\int  \int\varphi_z^\lambda(x)\varphi_z^\lambda(\bar x)
\scal{\tilde\cW(x,\cdot),\tilde\cW(\bar x,\cdot)}\,dx\,d\bar x\Big|
\\
&\lesssim \lambda^{-2|\frs|}\sum_{\zeta}\int\int\bone_{|x|_P\leq\eps,\|x-z\|\leq\lambda,\|\bar x-z\|\leq \lambda}
\big(\|x-z\|+\|\bar x-z\|\big)^\zeta\|x-\bar x\|^{2|\tau|+\kappa-\zeta}\,dx\,d\bar x
\\
&\lesssim \lambda^{-2|\frs|}\sum_{\zeta}\lambda^\zeta
\int\int\bone_{|x|_P\leq \eps,\|x-z\|\leq\lambda,\|\bar x-z\|\leq \lambda}
\|x-\bar x\|^{2|\tau|+\kappa-\zeta}\,dx\,d\bar x
\\
&\lesssim\lambda^{-2|\frs|}\sum_{\zeta}\lambda^\zeta
\int\int\bone_{|x|_P\leq \eps,\|x-z\|\leq\lambda,\|\hat x\|\leq \lambda}
\|\hat x\|^{2|\tau|+\kappa-\zeta}\,dx\,d\hat x.
\end{equs}
By the assumptions on $\zeta$, the integral in $\hat x$ is finite and bounded by $\lambda^{2|\tau|+\kappa-\zeta+|\frs|}$. The integral in $x$ is trivially bounded by $\eps\lambda^{|\frs|-1}$, and hence we get the desired bound.
\end{proof}
Applying the proposition with $\tilde\cW=\delta\cW^{(\eps;k)}_i\tau(z;\cdot;\cdot)$, we see that if instead of \eqref{eq:model-conv4} one has the bounds
\begin{equs}
\big|\scal{&(\cW^{(0;k)}_i-\cW^{(\eps;k)}_i) \tau(z;x;\cdot),(\cW^{(0;k)}_i-\cW^{(\eps;k)}_i)\tau(z;\bar x;\cdot)}\big|
\\
&\lesssim
\bone_{|x|_P\leq \eps}\sum_{\zeta}\big(\|x-z\|+\|\bar x-z\|\big)^\zeta\|x-\bar x\|^{2|\tau|+\kappa-\zeta},\label{eq:model-conv5}
\end{equs}
then \eqref{eq:model-conv2} holds.
\begin{remark}\label{rem:indicator}
Clearly the convergence criteria \eqref{eq:model-conv3}, \eqref{eq:model-conv4} and \eqref{eq:model-conv5} have the property that if they hold for $\cW^{(\eps;k)}_i\tau$, then they also hold for $\bone_{x\in A}\cW^{(\eps;k)}_i\tau$, for any measurable set $A$.
Informally speaking, the multiplication of Gaussian models with indicator functions is straightforward.
\end{remark}

Next, we need some analogues of \cite[Lem~10.14]{H0} in the case where
the blowup of a kernel is not only controlled by the distance to the diagonal but also to the boundary.
For $\alpha,\gamma\leq 0$, denote by $\|K\|_{\alpha,\gamma}$ the best proportionality constant in the bound
\begin{equ}\label{eq:kernel-singu}
|K(x,y)|\lesssim \big(|x|_\d+\|x-y\|)^\alpha\|x-y\|^\gamma.
\end{equ}
When $\alpha=0$, then $\|\cdot\|_{0,\gamma}$ coincides with $\|\cdot\|_{\gamma;0}$ from \cite[Sec~10.3]{H0}, with the latter $0$ indicating that no derivatives are involved in the bounds.
Note also the trivial property that negative powers can be transferred from the first component to the second, that is, $\|K\|_{\alpha+\alpha',\gamma-\alpha'}\leq\|K\|_{\alpha,\gamma}$ for $\alpha'\in[0,-\alpha]$.
\begin{example}
Consider the $1+1$-dimensional homogeneous Neumann heat kernel on the positive half line as in Example \ref{example1} and let the two terms in \eqref{eq:example1} be denoted by $K_1$ and $K_2$. Then $\|K_1\|_{0,-1}\lesssim 1$ and $\|K_2\|_{-1,0}\lesssim 1$.
\end{example}
\begin{lemma}\label{lem:kernel-convolution}
(i) One has the bounds
\begin{equ}
\|K\bar K\|_{\alpha+\bar\alpha,\gamma+\bar\gamma}\leq
\|K\|_{\alpha,\gamma}
\|\bar K\|_{\bar\alpha,\bar\gamma}.
\end{equ}
(ii) Set $\tilde{K}(x,z)=\int K(x,y)\bar K(y,z)\,dy$.
Then in case
\begin{equ}
\gamma+\bar\gamma>-|\frs|,\quad\alpha+\gamma>-|\frs|,\quad\alpha+\gamma+\bar\gamma<-|\frs|
\end{equ}
one has the bounds
\begin{equ}
\|\tilde K\|_{\alpha+\gamma+\bar\gamma+|\frs|,0}\lesssim
\|K\|_{\alpha,\gamma}
\|\bar K\|_{0,\bar\gamma}.
\end{equ}
(iii) In case
\begin{equ}
\gamma,\bar\gamma>-|\frs|,\quad\gamma+\bar\gamma<-|\frs|,\quad\alpha+\gamma<-|\frs|
\end{equ}
one has the bounds
\begin{equ}
\big\||x|_\d^{-\alpha-\gamma-|\frs|}\tilde K\big\|_{0,\bar\gamma}\lesssim
\|K\|_{\alpha,\gamma}
\|\bar K\|_{0,\bar\gamma}.
\end{equ}
(iv) In case
\begin{equ}
\gamma,\bar\gamma>-|\frs|,\quad\gamma+\bar\gamma<-|\frs|,
\end{equ}
one has the bounds
\begin{equ}
\|\tilde K\|_{0,\gamma+\bar\gamma+|\frs|}\lesssim\|K\|_{0,\gamma}
\|\bar K\|_{0,\bar\gamma}.
\end{equ}
\end{lemma}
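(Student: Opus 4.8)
Part~(i) is immediate: the pointwise product of the defining bounds for $K$ and $\bar K$ gives $|(K\bar K)(x,y)|\lesssim(|x|_\d+\|x-y\|)^{\alpha+\bar\alpha}\|x-y\|^{\gamma+\bar\gamma}\,\|K\|_{\alpha,\gamma}\|\bar K\|_{\bar\alpha,\bar\gamma}$. For (ii)--(iv) we may normalise $\|K\|_{\alpha,\gamma}=\|\bar K\|_{0,\bar\gamma}=1$ and proceed by a dyadic decomposition in $\|x-y\|$. Fix $x\notin\d$ and $z$, abbreviate $a=|x|_\d$, $d=\|x-z\|$, and split $\R^d\setminus\{x\}$ into shells $A_n=\{y:\|x-y\|\sim 2^{-n}\}$, $n\in\Z$. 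On $A_n$ one has $(a+\|x-y\|)^\alpha\|x-y\|^\gamma\sim(a\vee 2^{-n})^\alpha(2^{-n})^\gamma$; and if $2^{-n}\lesssim d$ then $\|y-z\|\sim d$ on $A_n$, so $\int_{A_n}\|y-z\|^{\bar\gamma}\,dy\lesssim d^{\bar\gamma}2^{-n|\frs|}$, whereas if $2^{-n}\gtrsim d$ then $A_n\subset\{\|y-z\|\lesssim 2^{-n}\}$ and $\int_{A_n}\|y-z\|^{\bar\gamma}\,dy\lesssim(2^{-n})^{\bar\gamma+|\frs|}$, using $\bar\gamma>-|\frs|$ (which holds throughout (ii)--(iv); in (ii) it follows from $\gamma+\bar\gamma>-|\frs|$ and $\gamma\le0$). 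Summing over the shells one gets
\[
|\tilde K(x,z)|\lesssim d^{\bar\gamma}\sum_{2^{-n}\lesssim d}(a\vee 2^{-n})^\alpha(2^{-n})^{\gamma+|\frs|}+\sum_{2^{-n}\gtrsim d}(a\vee 2^{-n})^\alpha(2^{-n})^{\gamma+\bar\gamma+|\frs|},
\]
the finiteness of the right-hand side also giving absolute convergence of the defining integral for $\tilde K$.

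Next I would evaluate the two sums by splitting the range of summation at $2^{-n}=a$ (where $(a\vee 2^{-n})^\alpha$ changes from $a^\alpha$ to $(2^{-n})^\alpha$) together with an outer case distinction $a\le d$ versus $a>d$. Every resulting piece is a geometric series in $2^{-n}$ whose exponent is one of $\gamma+|\frs|$, $\alpha+\gamma+|\frs|$, $\gamma+\bar\gamma+|\frs|$, $\alpha+\gamma+\bar\gamma+|\frs|$; the hypotheses fix the sign of each ($\gamma+|\frs|>0$ always; $\alpha+\gamma+|\frs|>0$ in (ii) and $<0$ in (iii); $\gamma+\bar\gamma+|\frs|>0$ in (ii) and $<0$ in (iii)--(iv); $\alpha+\gamma+\bar\gamma+|\frs|<0$ in (ii)--(iii)), so each series is comparable to its endpoint term at $2^{-n}\sim a$ or $2^{-n}\sim d$. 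Collecting the dominant endpoints and using the monotonicity of $t\mapsto t^\sigma$ to trade a power of $d$ against a power of $a$ (when $a\le d$, resp.\ $a>d$) recombines the contributions into $(a+d)^{\alpha+\gamma+\bar\gamma+|\frs|}$ for (ii), $a^{\alpha+\gamma+|\frs|}d^{\bar\gamma}$ for (iii), and $d^{\gamma+\bar\gamma+|\frs|}$ for (iv)---exactly the claimed bounds. Case~(iv), where $\alpha=0$ so there is no $a$-split at all, is just \cite[Lem~10.14]{H0} shorn of the derivative bounds.

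The only ingredient not already present in the translation-invariant estimates of \cite{H0} is the behaviour on $\{\|x-y\|\ge|x|_\d\}$, where $(|x|_\d+\|x-y\|)^\alpha$ degenerates to $\|x-y\|^\alpha$ and so turns the effective diagonal exponent $\gamma$ into $\alpha+\gamma$; the three sets of hypotheses are precisely those making the $n$-sum over that region converge in the direction we need, which is also why (ii) keeps a bound of the same shape $\|\cdot\|_{\cdot,0}$ whereas (iii) acquires the genuine \emph{positive} power $-\alpha-\gamma-|\frs|>0$ of $|x|_\d$. Accordingly I expect the real work to be purely organisational: keeping the roughly half a dozen sub-cases per part (arising from the ordering of $|x|_\d$, $\|x-z\|$, $\|x-y\|$) straight and checking in each that the surviving powers of $|x|_\d$ and $\|x-z\|$ reassemble into the asserted expression. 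No estimate beyond summing geometric series is needed.
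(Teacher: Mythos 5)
Your argument is correct, and it reaches the same estimates as the paper by a somewhat different route. The paper proves (ii) and (iii) by cutting the $y$-integral into finitely many regions determined by the relative sizes of $\|x-y\|$, $\|y-z\|$, $|x|_\d$ and $\|x-z\|$ (four regions in each of the two cases $2|x|_\d\lessgtr\|x-z\|$) and applying a standard convolution bound on each region separately, with the final comparison of powers of $|x|_\d$ and $\|x-z\|$ done region by region; part (iv) is quoted from \cite[Lem~10.14]{H0}, exactly as you do. Your dyadic-shell decomposition in $\|x-y\|$ is essentially a refinement of that region splitting: the paper's regions $A_1,A_2$ (resp.\ $B_1,B_3$) correspond to your ranges $2^{-n}\le|x|_\d$ and $|x|_\d\le 2^{-n}\lesssim\|x-z\|$, and $A_3\cup A_4$ (resp.\ $B_2\cup B_4$) to $2^{-n}\gtrsim\|x-z\|$. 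What your version buys is a single master formula treating (ii)--(iv) uniformly, with the hypotheses entering only as sign conditions on the exponents of the geometric series, which makes it transparent why (ii) retains a bound of the form $\|\cdot\|_{\cdot,0}$ while (iii) produces the positive power $|x|_\d^{-\alpha-\gamma-|\frs|}$; the cost is that the endpoint bookkeeping (splitting at $2^{-n}=|x|_\d$ inside the outer dichotomy $|x|_\d\lessgtr\|x-z\|$) is deferred rather than displayed. I checked the resulting case analysis: with $\alpha,\gamma,\bar\alpha,\bar\gamma\le 0$ as in the definition of the norms (which is what justifies your claim that $\bar\gamma>-|\frs|$ and $\gamma>-|\frs|$ hold throughout), every sub-series is indeed comparable to its endpoint term and the contributions recombine into the asserted bounds, so no step is missing.
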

\begin{proof}
Claim (i) is trivial. Claim (iv) follows from \cite[Lem~10.14]{H0}.
Concerning (ii) and (iii), we divide the integral defining $\tilde K$ into separate regions. In the case $2|x|_\d\leq\|x-z\|$, we set
$A_1=\{\|x-y\|\leq|x|_\d\}$,
$A_2=\{|x|_\d\leq\|x-y\|\leq(1/2)\|x-z\|\}$,
$A_3=\{\|z-y\|\leq (1/2)\|x-z\|\}$,
$A_4=\R^d\setminus(A_1\cup A_2\cup A_3)$.
In the case $2|x|_\d\geq\|x-z\|$, we set
$B_1=\{\|x-y\|\leq(1/2)\|x-z\|\}$,
$B_2=\{\|z-y\|\leq(1/2)\|x-z\|\}$,
$B_3=\{\|x-y\|\leq 4|x|_\d\}\setminus(B_1\cup B_2)$,
$B_4=\R^d\setminus B_3$.
A schematic picture of the different regions is given below, where thick lines denote balls with radii of order $|x|_\d$ and thin lines denote balls with radii of order $\|x-z\|$.
\begin{center}
\begin{tikzpicture}
\draw[black!70!white] (0,0) circle (1cm);
\draw[very thick] (0,0) circle (0.5cm);
\draw[black!70!white] (2,0) circle (1cm);
\draw[fill=black] (0,0) circle (0.2mm);
\draw[fill=black] (2,0) circle (0.2mm);
\draw (0,-0.2) node {\footnotesize $x$};
\draw (2,-0.2) node {\footnotesize $z$};
\draw (0,0.2) node {\footnotesize $A_1$};
\draw (0,0.7) node {\footnotesize $A_2$};
\draw (2,0.7) node {\footnotesize $A_3$};
\draw (1,1.2) node {\footnotesize $A_4$};

\draw[very thick](6.5,0) circle (1.5cm);
\draw[black!70!white] (6.5,0) circle (0.5cm);
\draw[black!70!white] (7.5,0) circle (0.5cm);
\draw[fill=black] (6.5,0) circle (0.2mm);
\draw[fill=black] (7.5,0) circle (0.2mm);
\draw (6.5,-0.2) node {\footnotesize $x$};
\draw (7.5,-0.2) node {\footnotesize $z$};
\draw (6.5,0.2) node {\footnotesize $B_1$};
\draw (7.5,0.2) node {\footnotesize $B_2$};
\draw (6.5,1.2) node {\footnotesize $B_3$};
\draw (6.5,1.7) node {\footnotesize $B_4$};
\end{tikzpicture}
\end{center}
First, we have
\begin{equ}
\Big|\int_{A_1}K(x,y)\bar K(y,z)\,dy\Big|\lesssim
|x|_\d^\alpha\int_{A_1}\|x-y\|^\gamma\|y-z\|^{\bar \gamma}\,dy\lesssim|x|_\d^{\alpha+\gamma+|\frs|}\|x-z\|^{\bar\gamma},
\end{equ}
where we have used that $\gamma>-|\frs|$ in both of (ii) and (iii). This bound is clearly the right order for (iii), while for (ii) the condition $\alpha+\gamma>-|\frs|$ implies $|x|_\d^{\alpha+\gamma+|\frs|}\leq\|x-z\|^{\alpha+\gamma+|\frs|}$. Next,
\begin{equ}
\Big|\int_{A_2}K(x,y)\bar K(y,z)\,dy\Big|\lesssim
\|x-z\|^{\bar \gamma}\int_{A_2}\|x-y\|^{\alpha+\gamma}\,dy.
\end{equ}
For (ii), the condition $\alpha+\gamma>-|\frs|$ implies that the integral is of order $\|x-z\|^{\alpha+\gamma+|\frs|}$, yielding the required bound.
For (iii), the integral is of order $|x|_{\d}^{\alpha+\gamma+|\frs|}$, also as required. Further,
\begin{equ}
\Big|\int_{A_3}K(x,y)\bar K(y,z)\,dy\Big|\lesssim
\|x-z\|^{\alpha+\gamma}\int_{A_3}\|y-z\|^{\bar \gamma}\,dy\lesssim\|x-z\|^{\alpha+\gamma+\bar\gamma+|\frs|},
\end{equ}
using $\bar \gamma>-|\frs|$. For (ii), this is as desired. For (iii)
the condition $\alpha+\gamma<-|\frs|$ implies $\|x-z\|^{\alpha+\gamma+|\frs|}\lesssim|x|_\d^{\alpha+\gamma+|\frs|}$, yielding also a bound of the correct order. Finally,
\begin{equ}
\Big|\int_{A_4}K(x,y)\bar K(y,z)\,dy\Big|\lesssim
\int_{A_4}\|x-y\|^{\alpha+\gamma}\|y-z\|^{\bar \gamma}\,dy\lesssim\|x-z\|^{\alpha+\gamma+\bar\gamma+|\frs|},
\end{equ}
where we have used that $\alpha+\gamma+\bar\gamma<-|\frs|$ in both of (ii) and (iii). This is the same bound as in the case of $A_3$, and so it is of the right order.

We now move on the case $2|x|_\d\geq\|x-z\|$.
First, we have
\begin{equ}
\Big|\int_{B_1}K(x,y)\bar K(y,z)\,dy\Big|\lesssim
|x|_\d^\alpha\int_{B_1}\|x-y\|^\gamma\|y-z\|^{\bar \gamma}\,dy
\lesssim|x|_\d^{\alpha}\|x-z\|^{\gamma+\bar\gamma+|\frs|}.
\end{equ}
For (ii), one has
$\|x-z\|^{\gamma+\bar\gamma+|\frs|}\lesssim|x|_\d^{\gamma+\bar\gamma+|\frs|}$, giving the required bound.
For (iii), we bound $\|x-z\|^{\gamma+|\frs|}\lesssim|x|_\d^{\gamma+|\frs|}$.
The integral over $B_2$ is treated in exactly the same way.
Further,
\begin{equ}
\Big|\int_{B_3}K(x,y)\bar K(y,z)\,dy\Big|\lesssim
|x|_\d^\alpha\int_{B_3}\|x-y\|^\gamma\|y-z\|^{\bar \gamma}\,dy
\lesssim|x|_\d^\alpha\int_{B_3}\|x-y\|^{\gamma+\bar\gamma}\,dy.
\end{equ}
For (ii), the condition $\gamma+\bar\gamma>-|\frs|$ implies that the integral is of order $|x|_\d^{\gamma+\bar\gamma+|\frs|}$, yielding the required bound.
For (iii), the integral is of order $\|x-z\|^{\gamma+\bar\gamma+|\frs|}\lesssim |x|_\d^{\gamma+|\frs|}\|x-z\|^{\bar\gamma}$, also as required.
Finally, 
\begin{equ}
\Big|\int_{B_4}K(x,y)\bar K(y,z)\,dy\Big|\lesssim
\int_{B_4}\|x-y\|^{\alpha+\gamma}\|y-z\|^{\bar \gamma}\,dy
\lesssim|x|_\d^{\alpha+\gamma+\bar\gamma+|\frs|},
\end{equ}
where we have used that $\alpha+\gamma+\bar\gamma<-|\frs|$ in both of (ii) and (iii).
For (ii), this is the required bound.
For (iii), one can write $|x|_\d^{\bar\gamma}\lesssim\|x-z\|^{\bar\gamma}$. This exhausts all cases and finishes the proof.
\end{proof}
We will also use the following fact frequently used in the singular SPDE literature (see e.g. \cite{HS15, HM18}).
It is a simple tool to switch between convergences with respect to different parameters, which can be useful in situations where convergence of an auxiliary approximation is much easier to show.
\begin{proposition}\label{prop:triv-but-nice}Let $(a^{\eps,\delta})_{\eps,\delta\in[0,1]}$ be a two-parameter family in a metric space $(A,d)$. Suppose that $a_{\eps,\delta}\to a_{\eps,0}$ as $\delta\to 0$ uniformly in $\eps\in[0,1]$, and that $a_{\eps,\delta}\to a_{0,\delta}$ as $\eps\to 0$ for any $\delta\in(0,1]$. Then $a_{\eps,0}\to a_{0,0}$.
A quantitative version also holds: if for some $C_1,C_2,\gamma_1,\gamma_2>0$, $\gamma_0\in\R$ one has $d(a_{\eps,\delta},a_{\eps,0})\leq C_1\delta^{-\gamma_1}$ and $d(a_{\eps,\delta},a_{0,\delta})\leq C_2\eps^{-\gamma_2}\delta^{\gamma_0}$,
then for some $C_3,\gamma_3>0$ one has $d(a_{\eps,0},a_{0,0})\leq C_3\eps^{-\gamma_3}$.\end{proposition}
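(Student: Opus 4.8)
The plan is to prove the qualitative claim by a two-parameter triangle-inequality ($3\eta$) argument, choosing $\delta$ \emph{first} using the uniformity in the first hypothesis, and only then sending $\eps\to0$; the quantitative claim then follows by coupling the two parameters along a curve $\delta=\eps^\theta$ and optimising $\theta$.

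For the qualitative part, fix $\eta>0$. For any $\delta\in(0,1]$ the triangle inequality gives
\[
d(a_{\eps,0},a_{0,0})\le d(a_{\eps,0},a_{\eps,\delta})+d(a_{\eps,\delta},a_{0,\delta})+d(a_{0,\delta},a_{0,0}).
\]
By the assumed convergence $a_{\eps,\delta}\to a_{\eps,0}$ as $\delta\to0$ \emph{uniformly in $\eps\in[0,1]$}, pick $\delta_0$ so that $d(a_{\eps,\delta_0},a_{\eps,0})<\eta/3$ for every $\eps\in[0,1]$; this controls the first term uniformly and, specialising to $\eps=0$, also the third term. With $\delta_0$ now frozen, the hypothesis $a_{\eps,\delta_0}\to a_{0,\delta_0}$ as $\eps\to0$ gives an $\eps_0>0$ with $d(a_{\eps,\delta_0},a_{0,\delta_0})<\eta/3$ for $\eps<\eps_0$. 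Summing, $d(a_{\eps,0},a_{0,0})<\eta$ for $\eps<\eps_0$, which is the claim.

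For the quantitative version one substitutes $\delta=\eps^\theta$ into the same three-term bound. The first and third terms are then bounded by $C_1\eps^{\theta\gamma_1}$, while the middle term is bounded by a fixed positive power of $\eps$ times a fixed power of $\delta=\eps^\theta$; choosing $\theta>0$ small enough (only a genuine constraint when the $\delta$-exponent in the second bound is negative, in which case $\theta$ must be below an explicit threshold determined by $\gamma_0,\gamma_2$) makes both resulting $\eps$-exponents strictly positive, and one concludes with $\gamma_3$ equal to their minimum and $C_3$ the sum of the constants. There is no real obstacle here — the statement is elementary; the only subtlety, and the reason the first convergence must be assumed uniform in $\eps$, is that $\delta$ has to be selected before $\eps$ is sent to $0$, so that the "vertical" legs $d(a_{\eps,0},a_{\eps,\delta})$ can be made small simultaneously for all small $\eps$.
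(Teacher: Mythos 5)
Your argument is correct, and it is the standard one: the paper states this proposition without proof (citing the analogous lemmas in the singular SPDE literature), so there is nothing to compare against beyond noting that your $3\eta$ decomposition with $\delta$ chosen before $\eps$, followed by the coupling $\delta=\eps^\theta$ for the quantitative part, is exactly the intended reasoning. One remark: your quantitative computation implicitly reads the hypotheses as $d(a_{\eps,\delta},a_{\eps,0})\leq C_1\delta^{\gamma_1}$ and $d(a_{\eps,\delta},a_{0,\delta})\leq C_2\eps^{\gamma_2}\delta^{\gamma_0}$ with conclusion $d(a_{\eps,0},a_{0,0})\leq C_3\eps^{\gamma_3}$, whereas the proposition as printed carries minus signs on $\gamma_1,\gamma_2,\gamma_3$; as literally written the statement is vacuous (the conclusion already follows from taking $\delta=1$), so your sign-corrected reading is the only meaningful one, and under it your choice of $\theta$ small enough that $\gamma_2+\theta\gamma_0>0$ and $\gamma_3=\min(\theta\gamma_1,\gamma_2+\theta\gamma_0)$ is exactly right.
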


\subsection{PAM}
In the case of the \eqref{e:PAM}, the model is time-homogeneous.
By the general machinery developed in \cite{BHZ}, the system formally given by
\begin{equ}
\Delta Y=\xi,\qquad
\Delta v=v|\nabla Y|^2-2\nabla v\cdot\nabla Y,
\end{equ}
determines a regularity structure $\scT_0=(T_0,G_0,A_0)$, with
$|\Xi|=-3/2-\kappa$ for some sufficiently small $\kappa>0$.
We use common pictorial representations of some elements of $T_0$ by writing $\<0>$ for $\Xi$, thin lines for $\cI$, thick red lines for the abstract gradient of $\cI$, and when joining red lines at their root we understand the scalar product of the terms.
For example, $\<PAM-2I>=\sum_{i=1}^3\cI(\cD_i \cI\Xi)^2$

We then take a new regularity structure $\scT=(T,G,A)$ by adding three new symbols: we set 
$T=T_0\oplus \scal{\<PAM-2new>,\<PAM-4new>,\<PAM-2new>X}$. The homogeneities of the new symbols are the same 
as for the corresponding `red' symbols, namely $|\,\<PAM-2new>\,|=-1-2\kappa$, $|\,\<PAM-4new>\,|=-4\kappa$, 
$|\,\<PAM-2new>X\,|=-2\kappa$, so $A=A_0$.
The group $G$ is isomorphic to $G_0$, the action of its elements on the new symbols being uniquely determined 
by setting $G\<PAM-2new>=\<PAM-2new>$ and requiring the product to be regular.
We will also use the notation $\cF_\d=\{\<PAM-2new>,\<PAM-4new>,\<PAM-2new>X\}$.

Recall from the setup of Section \ref{sec:PAM square} that $\bar K$ stands for the truncated Green's function of the $3$-dimensional Poisson equation and satisfies \cite[Ass.~5.1]{H0}.
One can then use the results of \cite{CH} to build the BPHZ models
$(\Pi^\eps,\Gamma^\eps)_{\eps\in[0,1]}$ for $\scT_0$,
which converge in probability as $\eps\rightarrow0$,
are admissible with respect to $\bar K$,
and satisfy $\Pi_x^\eps\Xi=\xi_\eps$, where $\xi^\eps$ denotes a mollified stationary white noise
defined on all of $\R^3$.
One easily sees that $\Pi^\eps_x\<PAM-2a>=|\nabla \bar K\ast\xi_\eps|^2-\ell_\eps(\<PAM-2a-Small>)$ with $\ell_\eps(\<PAM-2a-Small>)$ satisfying \eqref{eq:RC def 1}.
Let us furthermore recall the notation $G$ for the Green's function of the $3$-dimensional Neumann Green's function on $D$. As such, $(x,y)\mapsto G(x,y)$ is $0$ outside $D\times D$.
As a convention, by $\nabla_1 G$ we mean the function that equals to the gradient in the $x$ direction of $G$ on $D\times D$
and $0$ outside $D\times D$
(as opposed to the distributional derivative of $x\mapsto G(x,y)$, which has an additional boundary term).

As mentioned above, the extension of the $\Gamma^\eps$ component to $\scT$ is automatic. Concerning the extension of $\Pi^\eps$, we set for $\eps>0$
\begin{equs}
\big(\Pi^\eps_x\<PAM-2new>\big)(y)&=|\Psi_\eps|^2(y)-\E|\Psi_\eps|^2(y),
\\
\big(\Pi^\eps_x\<PAM-4new>\big)(y)&=\Big(\big(\Pi^\eps_x\<PAM-I2a>\big)(y)\Big)\Big(\big(\Pi^\eps_x\<PAM-2new>\big)(y)\Big)- \ell_\eps(\<PAM-4a-Small>),
\\
\big(\Pi^\eps_x\<PAM-2new>X\big)(y)&=\Big(\big(\Pi^\eps_x\<PAM-2new>\big)(y)\Big)(y-x).
\end{equs}
As in \cite[App C]{HP19}, we also consider the models $(\Pi^{+,\eps},\Gamma^\eps)$ and $(\Pi^{-,\eps},\Gamma^\eps)$ defined by $(\Pi^{+,\eps}_x\tau)(y)=(\Pi^\eps_x\tau)(y)\bone_{\R\times D}(y)$ and $(\Pi^{-,\eps}_x\tau)(y)=(\Pi^\eps_x\tau)(y)\bone_{\R\times D^c}(y)$. These models are of course \emph{not} admissible, but will be very handy in the analysis of the reconstruction operator in Section \ref{sec:proofs} below.

\begin{lemma}\label{lem:PAM model convergence}
The three sequences of models $(\Pi^{\eps},\Gamma^\eps)$, $(\Pi^{+,\eps},\Gamma^\eps)$, and $(\Pi^{-,\eps},\Gamma^\eps)$, converge.
\end{lemma}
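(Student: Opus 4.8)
The plan is to verify the convergence criteria \eqref{eq:model-conv3}, \eqref{eq:model-conv4}/\eqref{eq:model-conv5} only for the three new symbols in $\cF_\d=\{\<PAM-2new>,\<PAM-4new>,\<PAM-2new>X\}$, since for all symbols in $\scT_0$ the convergence of $(\Pi^\eps,\Gamma^\eps)$ is already provided by \cite{CH}, and $\Gamma^\eps$ on $\scT$ agrees with $\Gamma^\eps$ on $\scT_0$ under the canonical isomorphism. For the $\pm$-decorated models, Remark \ref{rem:indicator} says the same bounds automatically transfer once multiplied by $\bone_{\R\times D}$ or $\bone_{\R\times D^c}$, so it suffices to treat the plain model $(\Pi^\eps,\Gamma^\eps)$. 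Since all three new symbols have homogeneity $>-1$ (namely $-1-2\kappa$, $-4\kappa$, $-2\kappa$), Proposition \ref{prop:model boundary extension} applies: we only need the bounds for $x,\bar x$ with $|x|_\d\lesssim\|x-z\|$, $|\bar x|_\d\lesssim\|\bar x-z\|$, and then the model extends uniquely and continuously from $T_{\le -1}$ to all of $T$. Concretely, what must be checked is that $\bar\Pi_z^\eps\tau$ (the model on test functions supported away from $\d$), defined by the formulas given just before the lemma, satisfies the stochastic bounds, and that these pass to the limit.

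The key computational input is Lemma \ref{lem:reno Psi^2 PAM}: on $Q$ one has $\E|\Psi_\eps|^2=\ell_\eps(\<PAM-2a-Small>)+(a_\rho+\tfrac{|\log\eps|}{8\pi})\delta_\d+\delta_\d R^1_\eps+\scR R^2_\eps+R^3_\eps$, with the $R^i_\eps$ converging in the stated weighted spaces. Therefore $\Pi_x^\eps\<PAM-2new>=|\Psi_\eps|^2-\E|\Psi_\eps|^2$ is, chaos-wise, a second Wiener chaos object built from $\Psi_\eps=\nabla Y_\eps=\nabla_x\big(\rho_\eps*(G(\cdot,\cdot))\big)$, and its covariance kernel $\scal{\cW^{(\eps;2)}(x;\cdot),\cW^{(\eps;2)}(\bar x;\cdot)}$ is an integral of products of $\frD_D^\eps G$ over two slots — precisely the kind of kernel controlled by Lemmas \ref{lem:kernels1}, \ref{lem:kernels2}, \ref{lem:kernels3} with $\frs=3$, $\frb=2$. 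Thus the bound \eqref{eq:model-conv3} with $2|\tau|=-2-4\kappa$ follows from \eqref{eq:kernels1-b} (reading off the $\|x-\bar x\|^\gamma$ and the $|x|_\d^{-1-\gamma}$ blowup, which is allowed since the weighted-space machinery of Section \ref{sec:prepare} is compatible with homogeneity $>-1$), and the $\eps$-difference bound follows either from \eqref{eq:kernels1-b}–\eqref{eq:kernels3} when $|x|_\d\gtrsim\eps$, or, when $|x|_\d\lesssim\eps$, from the modified criterion \eqref{eq:model-conv5} with the indicator $\bone_{|x|_\d\le\eps}$. For $\<PAM-2new>X=\Pi_x^\eps\<PAM-2new>\cdot(y-x)$ one just gains a factor $\|y-x\|$, shifting the homogeneity by $1$, so the same estimates apply verbatim. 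For $\<PAM-4new>=\big(\Pi_x^\eps\<PAM-I2a>\big)\cdot\big(\Pi_x^\eps\<PAM-2new>\big)-\ell_\eps(\<PAM-4a-Small>)$ one has a product of a $\scT_0$-object (already controlled) with the new chaos-$2$ object; the Wick product lives in chaoses $0,2,4$, the chaos-$0$ part is exactly the subtracted constant $\ell_\eps(\<PAM-4a-Small>)$, and the remaining pieces are estimated by the standard product bounds (\cite[Sec~10]{H0}) together with, again, Lemmas \ref{lem:kernels1}–\ref{lem:kernels3} and the weighted-multiplication Proposition \ref{prop:weighted holder mult}.

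The main obstacle I expect is \emph{not} the existence of the $\eps$-uniform bounds — those are largely a bookkeeping exercise with the kernel lemmas already proved — but the passage to the $\eps=0$ limit near the boundary, specifically disentangling the divergent pieces. The subtraction of $\E|\Psi_\eps|^2$ removes not just the bulk constant $\ell_\eps(\<PAM-2a-Small>)$ but also the divergent boundary mass $(a_\rho+\tfrac{|\log\eps|}{8\pi})\delta_\d$; I must confirm that the renormalised object $\Pi_x^\eps\<PAM-2new>$ tested against functions supported away from $\d$ no longer sees this divergence, i.e. that the $\delta_\d$-terms genuinely drop out of $\bar\Pi$ and are reinstated only through the canonical extension $\hat\Pi$ of Proposition \ref{prop:model boundary extension}. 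Equivalently, one must check that the renormalisation constants built into $\Pi^\eps$ are exactly the BPHZ constants $\ell_\eps(\cdot)$ (the bulk part), and that the extra $\delta_\d$ is \emph{not} subtracted at the level of the model but is handled by the boundary-renormalised equation later — this is the content of Remark \ref{rem:paracontrolled} and must be made precise here. Once that is settled, the convergence of $\bar R^i_\eps$ in Lemma \ref{lem:reno Psi^2 PAM}, combined with the equicontinuity from the kernel bounds and the continuity statement in Proposition \ref{prop:model boundary extension}, yields convergence of all three models.
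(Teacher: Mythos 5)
There is a genuine gap, and it sits exactly where you wave your hands. You assert that in the product $\big(\Pi^\eps_x\<PAM-I2a>\big)\big(\Pi^\eps_x\<PAM-2new>\big)$ ``the chaos-$0$ part is exactly the subtracted constant $\ell_\eps(\<PAM-4a-Small>)$''. It is not: $\ell_\eps(\<PAM-4a-Small>)$ is the translation-invariant BPHZ constant built entirely from $\bar K$, whereas the zeroth-chaos contraction of the product involves two $\nabla_1 G$ edges. Their difference — the term $\cW^{(\eps;0)}_2\<PAM-4new>$ in the paper's proof — is a nonconstant deterministic function of $x$ that is \emph{not} cancelled by any renormalisation and diverges like $\big|{\log|x|_\d}\big|$ as $x$ approaches the boundary. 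Controlling this term is the actual crux of the proof: one rewrites it via the difference kernel $\nabla_1 G-\nabla\bar K$ (which trades one power of $\|x-y\|$ for a factor $|x|_\d^{-1}$) and then applies the boundary-weighted kernel calculus of Lemma \ref{lem:kernel-convolution}; only the resulting logarithmic bound, combined with $|\,\<PAM-4new>\,|>-1$ and Proposition \ref{prop:model boundary extension}, closes the argument. None of the tools you invoke for this symbol (Lemmas \ref{lem:kernels1}--\ref{lem:kernels3}, Proposition \ref{prop:weighted holder mult}, the standard chaos product bounds) address this term.

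Conversely, the difficulty you single out as the ``main obstacle'' — disentangling the divergent $\delta_\d$ mass inside $\E|\Psi_\eps|^2$ — is a non-issue at the level of the model: $\Pi^\eps_x\<PAM-2new>$ subtracts the \emph{full} expectation, so it is an exact mean-zero second-chaos object with no deterministic component, and Lemma \ref{lem:reno Psi^2 PAM} enters only later, in the construction of $\hat\cR^\eps$ (Lemma \ref{lem:hatR-PAM}). Finally, for the convergence statement (as opposed to uniform bounds) the paper does not difference in $\eps$ directly but introduces an auxiliary mollification $\rho_\delta$ of the kernel and invokes Proposition \ref{prop:triv-but-nice}; your direct route via Lemmas \ref{lem:kernels2}--\ref{lem:kernels3} together with \eqref{eq:model-conv5} is plausible for the second-chaos symbols, but you would still need an $\eps$-difference estimate for the zeroth-chaos piece of $\<PAM-4new>$, which is precisely the term your argument omits.
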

\begin{proof}
We start by establishing a priori bounds on the models $(\Pi^{\eps},\Gamma^\eps)$, including a candidate limit model for $\eps=0$.
As discussed above, we phrase everything in terms of the functions $\cW$ and borrow some some graphical notations commonplace in the singular SPDE literature. 
Each function $\cW$ is given as an integral, which we represent by a graph as follows.
Each vertex of the graph represents a variable, and each edge a function of the two variables corresponding to the two vertices it connects. 
The node
\begin{tikzpicture}[baseline={(0,-0.1)}]
\draw (0,0) node[base]{};
\end{tikzpicture}
denotes the basepoint $z$;
\begin{tikzpicture}[baseline={(0,-0.1)}]
\draw (0,0) node[vari]{};
\end{tikzpicture}
denotes the running variable $x$;
\begin{tikzpicture}[baseline={(0,-0.1)}]
\draw (0,0) node[noise]{};
\end{tikzpicture}
denote the variables $y_1,\ldots,y_k$ (on which Wiener's isometry acts);
\begin{tikzpicture}[baseline={(0,-0.1)}]
\draw (0,0) node[int]{};
\end{tikzpicture}
denotes an auxiliary variable that is integrated out.
The edge
\begin{tikzpicture}[baseline={(0,-0.1)}]
\draw (-0.5,0) -- (0,0);
\end{tikzpicture}
denotes a factor $\bar K(u-v)$, where $u$ and $v$ are the two variables corresponding to the the two endpoints;
\begin{tikzpicture}[baseline={(0,-0.1)}]
\draw[kernels2] (-0.5,0) -- (0,0);
\end{tikzpicture}
denotes a factor $\nabla\bar K(u-v)$;
\begin{tikzpicture}[baseline={(0,-0.1)}]
\draw[rho] (-0.5,0) -- (0,0);
\end{tikzpicture}
denotes a factor $\rho_\eps(u-v)$;
\begin{tikzpicture}[baseline={(0,-0.1)}]
\draw[kernels3] (-0.5,0) -- (0,0);
\end{tikzpicture}
denotes a factor $\nabla_1 G(u,v)$.
For example, $\Pi^\eps_{z}\<PAM-2a>$ has only one component in its chaos decomposition, which is the image under Wiener's isometry of the function $\cW^{(\eps;2)}\<PAM-2a>$ given by
\begin{equ}
\cW^{(\eps;2)}\<PAM-2a>(z,x,y_1,y_2)=\bar K^\eps(x-y_1)\bar K^\eps(x-y_2)=
\begin{tikzpicture}[scale=0.7,baseline={(0,0.3)}]
\draw[kernels2] (-1,1) -- (0,0) node[vari]{} -- (1,1) ;
\draw[rho] (-1,1) node[int]{} -- (-1,1.5);
\draw[rho] (1,1) node[int]{} -- (1,1.5);
\draw (-1,1.5) node[noise]{};
\draw (1,1.5) node[noise]{};
\end{tikzpicture}\quad.
\end{equ}
In particular, $\cW^{(\eps;2)}\<PAM-2a>$ does not depend on $z$.
Similarly, for the first `new' symbol we have
\begin{equ}
\cW^{(\eps;2)}\<PAM-2new>=
\begin{tikzpicture}[scale=0.7,baseline={(0,0.3)}]
\draw[kernels3] (-1,1) -- (0,0) node[vari]{} -- (1,1) ;
\draw[rho] (-1,1) node[int]{} -- (-1,1.5);
\draw[rho] (1,1) node[int]{} -- (1,1.5);
\draw (-1,1.5) node[noise]{};
\draw (1,1.5) node[noise]{};
\end{tikzpicture}\quad.
\end{equ}
Note that in the $\eps=0$ case the edge
\begin{tikzpicture}[baseline={(0,-0.1)}]
\draw[rho] (-0.5,0) -- (0,0) node[int]{};
\end{tikzpicture}
together with its endpoint becomes an integration against a Dirac-$\delta$ so it can be dropped.
This then \emph{defines} $\cW^{(0;2)}\<PAM-2a>$ and $\cW^{(0;2)}\<PAM-2new>$.

Later on we will also use the shorthand 
$\begin{tikzpicture}[baseline={(0,-0.1)}]
\draw (-0.5,0) node[int]{} -- (0,0) node[vari]{};
\draw (-0.25,0.07) -- (-0.25,-0.07);
\end{tikzpicture}=
\begin{tikzpicture}[baseline={(0,-0.1)}]
\draw (-0.5,0) node[int]{} -- (0,0) node[vari]{};
\end{tikzpicture}-\begin{tikzpicture}[baseline={(0,-0.1)}]
\draw (-0.5,0) node[int]{} -- (0,0) node[base]{};
\end{tikzpicture}$
for the recentering of the kernel $\cK$.
A lot of our simplification relies on the following property: if a given edge appears without any recentering in a graph, then the bound obtained in \cite{CH} on that graph only uses the (pointwise) upper bound on that kernel.
For example, the bound \eqref{eq:model-conv3} on the function $\cW^{(\eps;2)}\<PAM-2new>$ follow exactly in the same way as in the translation invariant case for $\cW^{(\eps;2)}\<PAM-2a>$, since the kernels $G^\eps$ and $\bar K^\eps$ both satisfy the required bound of order $-2$.
This also immediately implies the required bounds for the symbol $\<PAM-2new>X$.

Concerning the last remaining `new' symbol $\<PAM-4new>$ and its translation invariant counterpart $\<PAM-4a>$, we have
\begin{equ}
\cW^{(\eps;4)}\<PAM-4a>=
\begin{tikzpicture}[scale=0.7,baseline={(0,0.3)}]
\draw[kernels2] (-1,1) -- (0,0) -- (1,1) ;
\draw[kernels2] (-1,2.5) -- (0,1.5) -- (1,2.5) ;
\draw (0,0) -- (0,1.5);
\draw[rho] (-1,1) node[int]{} -- (-1,1.5);
\draw[rho] (1,1) node[int]{} -- (1,1.5);
\draw[rho] (-1,2.5) node[int]{} -- (-1,3);
\draw[rho] (1,2.5) node[int]{} -- (1,3);
\draw (-0.1,0.75)--(0.1,0.75);
\draw (0,0) node[vari]{};
\draw (-1,1.5) node[noise]{};
\draw (1,1.5) node[noise]{};
\draw (0,1.5) node[int]{};
\draw (-1,3) node[noise]{};
\draw (1,3) node[noise]{};
\end{tikzpicture}
\quad,\quad
\cW^{(\eps;2)}\<PAM-4a>=2\,
\begin{tikzpicture}[scale=0.7,baseline={(0,0.3)}]
\draw[kernels2] (-1,1) -- (0,0) -- (1,1) ;
\draw[kernels2] (-1,2.5) -- (0,1.5) -- (1,2.5) ;
\draw (0,0) -- (0,1.5);
\draw[rho] (-1,1) node[int]{} -- (-1,1.75);
\draw[rho] (1,1) node[int]{} -- (1,1.5);
\draw[rho] (-1,2.5) node[int]{} -- (-1,1.75);
\draw[rho] (1,2.5) node[int]{} -- (1,3);
\draw (-0.1,0.75)--(0.1,0.75);
\draw (0,0) node[vari]{};
\draw (1,1.5) node[noise]{};
\draw (0,1.5) node[int]{};
\draw (1,3) node[noise]{};
\draw (-1,1.75) node[int]{};
\end{tikzpicture}
\quad,\quad
\cW^{(\eps;0)}\<PAM-4a>=-2\,
\begin{tikzpicture}[scale=0.7,baseline={(0,0.3)}]
\draw[kernels2] (-1,1) -- (0,0) -- (1,1) ;
\draw[kernels2] (-1,2.5) -- (0,1.5) -- (1,2.5) ;
\draw[rho] (-1,1) node[int]{} -- (-1,1.75);
\draw[rho] (1,1) node[int]{} -- (1,1.75);
\draw[rho] (-1,2.5) node[int]{} -- (-1,1.75);
\draw[rho] (1,2.5) node[int]{} -- (1,1.75);
\draw (0,1.5) -- (0,3);
\draw (0,3) node[base]{};
\draw (0,0) node[vari]{};
\draw (0,1.5) node[int]{};
\draw (-1,1.75) node[int]{};
\draw (1,1.75) node[int]{};
\end{tikzpicture}
\quad,
\end{equ}
\begin{equ}
\cW^{(\eps;4)}\<PAM-4new>=
\begin{tikzpicture}[scale=0.7,baseline={(0,0.3)}]
\draw[kernels3] (-1,1) -- (0,0) -- (1,1) ;
\draw[kernels2] (-1,2.5) -- (0,1.5) -- (1,2.5) ;
\draw (0,0) -- (0,1.5);
\draw[rho] (-1,1) node[int]{} -- (-1,1.5);
\draw[rho] (1,1) node[int]{} -- (1,1.5);
\draw[rho] (-1,2.5) node[int]{} -- (-1,3);
\draw[rho] (1,2.5) node[int]{} -- (1,3);
\draw (-0.1,0.75)--(0.1,0.75);
\draw (0,0) node[vari]{};
\draw (-1,1.5) node[noise]{};
\draw (1,1.5) node[noise]{};
\draw (0,1.5) node[int]{};
\draw (-1,3) node[noise]{};
\draw (1,3) node[noise]{};
\end{tikzpicture}
\quad,\quad
\cW^{(\eps;2)}\<PAM-4new>=2\,
\begin{tikzpicture}[scale=0.7,baseline={(0,0.3)}]
\draw[kernels3] (-1,1) -- (0,0) -- (1,1) ;
\draw[kernels2] (-1,2.5) -- (0,1.5) -- (1,2.5) ;
\draw (0,0) -- (0,1.5);
\draw[rho] (-1,1) node[int]{} -- (-1,1.75);
\draw[rho] (1,1) node[int]{} -- (1,1.5);
\draw[rho] (-1,2.5) node[int]{} -- (-1,1.75);
\draw[rho] (1,2.5) node[int]{} -- (1,3);
\draw (-0.1,0.75)--(0.1,0.75);
\draw (0,0) node[vari]{};
\draw (1,1.5) node[noise]{};
\draw (0,1.5) node[int]{};
\draw (1,3) node[noise]{};
\draw (-1,1.75) node[int]{};
\end{tikzpicture}
\quad,\quad
\cW^{(\eps;0)}_1\<PAM-4new>=-2\,
\begin{tikzpicture}[scale=0.7,baseline={(0,0.3)}]
\draw[kernels3] (-1,1) -- (0,0) -- (1,1) ;
\draw[kernels2] (-1,2.5) -- (0,1.5) -- (1,2.5) ;
\draw[rho] (-1,1) node[int]{} -- (-1,1.75);
\draw[rho] (1,1) node[int]{} -- (1,1.75);
\draw[rho] (-1,2.5) node[int]{} -- (-1,1.75);
\draw[rho] (1,2.5) node[int]{} -- (1,1.75);
\draw (0,1.5) -- (0,3);
\draw (0,3) node[base]{};
\draw (0,0) node[vari]{};
\draw (0,1.5) node[int]{};
\draw (-1,1.75) node[int]{};
\draw (1,1.75) node[int]{};
\end{tikzpicture}
\quad,
\end{equ}
\begin{equ}\label{eq:model building 1}
\cW^{(\eps;0)}_2\<PAM-4new>=2\,
\begin{tikzpicture}[scale=0.7,baseline={(0,0.3)}]
\draw[kernels3] (-1,1) -- (0,0) -- (1,1) ;
\draw[kernels2] (-1,2.5) -- (0,1.5) -- (1,2.5) ;
\draw[rho] (-1,1) node[int]{} -- (-1,1.75);
\draw[rho] (1,1) node[int]{} -- (1,1.75);
\draw[rho] (-1,2.5) node[int]{} -- (-1,1.75);
\draw[rho] (1,2.5) node[int]{} -- (1,1.75);
\draw (0,0) -- (0,1.5);
\draw (0,0) node[vari]{};
\draw (0,1.5) node[int]{};
\draw (-1,1.75) node[int]{};
\draw (1,1.75) node[int]{};
\end{tikzpicture}
\quad - \quad
2\,\begin{tikzpicture}[scale=0.7,baseline={(0,0.3)}]
\draw[kernels2] (-1,1) -- (0,0) -- (1,1) ;
\draw[kernels2] (-1,2.5) -- (0,1.5) -- (1,2.5) ;
\draw[rho] (-1,1) node[int]{} -- (-1,1.75);
\draw[rho] (1,1) node[int]{} -- (1,1.75);
\draw[rho] (-1,2.5) node[int]{} -- (-1,1.75);
\draw[rho] (1,2.5) node[int]{} -- (1,1.75);
\draw (0,0) -- (0,1.5);
\draw (0,0) node[vari]{};
\draw (0,1.5) node[int]{};
\draw (-1,1.75) node[int]{};
\draw (1,1.75) node[int]{};
\end{tikzpicture}
\quad.
\end{equ}
The same argument as above provides the bound \eqref{eq:model-conv3} for $\cW^{(\eps;4)}\<PAM-4new>$, $\cW^{(\eps;2)}\<PAM-4new>$, and $\cW^{(\eps;0)}_1\<PAM-4new>$.
However, for $\cW^{(\eps;0)}_2\<PAM-4new>$ there is no corresponding term in the translation invariant case and one has to work a bit more. First of all, the $\eps=0$ meaning of \eqref{eq:model building 1} is not obvious, since both terms on their own diverge in the $\eps\to 0$ limit.
Introducing
$\begin{tikzpicture}[baseline={(0,-0.1)}]
\draw[kerneldiff] (-0.5,0) -- (0,0) ;
\end{tikzpicture}=
\begin{tikzpicture}[baseline={(0,-0.1)}]
\draw[kernels3] (-0.5,0) -- (0,0);
\end{tikzpicture}-\begin{tikzpicture}[baseline={(0,-0.1)}]
\draw[kernels2] (-0.5,0) -- (0,0) ;
\end{tikzpicture}$, we can rewrite \eqref{eq:model building 1} as
\begin{equ}\label{eq:model building 2}
\cW^{(\eps;0)}_2\<PAM-4new>=2\,
\begin{tikzpicture}[scale=0.7,baseline={(0,0.3)}]
\draw[kerneldiff] (-1,1) -- (0,0);
\draw[kernels3] (0,0) -- (1,1) ;
\draw[kernels2] (-1,2.5) -- (0,1.5) -- (1,2.5) ;
\draw[rho] (-1,1) node[int]{} -- (-1,1.75);
\draw[rho] (1,1) node[int]{} -- (1,1.75);
\draw[rho] (-1,2.5) node[int]{} -- (-1,1.75);
\draw[rho] (1,2.5) node[int]{} -- (1,1.75);
\draw (0,0) -- (0,1.5);
\draw (0,0) node[vari]{};
\draw (0,1.5) node[int]{};
\draw (-1,1.75) node[int]{};
\draw (1,1.75) node[int]{};
\end{tikzpicture}
\quad + \quad
2\,\begin{tikzpicture}[scale=0.7,baseline={(0,0.3)}]
\draw[kerneldiff] (-1,1) -- (0,0);
\draw[kernels2] (0,0) -- (1,1) ;
\draw[kernels2] (-1,2.5) -- (0,1.5) -- (1,2.5) ;
\draw[rho] (-1,1) node[int]{} -- (-1,1.75);
\draw[rho] (1,1) node[int]{} -- (1,1.75);
\draw[rho] (-1,2.5) node[int]{} -- (-1,1.75);
\draw[rho] (1,2.5) node[int]{} -- (1,1.75);
\draw (0,0) -- (0,1.5);
\draw (0,0) node[vari]{};
\draw (0,1.5) node[int]{};
\draw (-1,1.75) node[int]{};
\draw (1,1.75) node[int]{};
\end{tikzpicture}
\quad.
\end{equ}
We now argue that these terms are each well-defined in the $\eps\to0$ limit and satisfy the bounds \eqref{eq:model-conv2}.
In this computation we simplify the notation by only keeping track of the orders of the singularities as defined in \eqref{eq:kernel-singu} of (combination of) edges, which we denote by writing the order on the given edge. For example, instead of 
\begin{tikzpicture}[baseline={(0,-0.1)}]
\draw (-0.5,0) -- (0,0);
\end{tikzpicture}
we write
\begin{tikzpicture}[baseline={(0,-0.1)}]
\draw (-0.5,0) -- (1,0);
\draw (0.25,0) node[labl] {\tiny $0,-1$};
\end{tikzpicture},
or instead of
\begin{tikzpicture}[baseline={(0,-0.1)}]
\draw[kerneldiff] (0,0) -- (1,0);
\draw[rho] (-0.5,0) -- (0,0) node[int]{};
\end{tikzpicture}
we write
\begin{tikzpicture}[baseline={(0,-0.1)}]
\draw (-0.5,0) -- (1,0);
\draw (0.25,0) node[labl] {\tiny $-2,0$};
\end{tikzpicture}.
The graphs are then manipulated by using the rules in Lemma \ref{lem:kernel-convolution}.
Both terms in \eqref{eq:model building 2} can be bounded by using Lemma \ref{lem:kernel-convolution} (ii) and (iv) in the first inequality and (i) in the second one: 
\begin{equ}
\begin{tikzpicture}[scale=0.9,baseline={(0,0.8)}]
\draw (-1,1) -- (0,0) -- (1,1) node[int] {} -- (0,2) -- (-1,1) node[int] {};
\draw (0,0) node[vari]{} -- (0,2) node[int]{} ;
\draw (0,1) node[labl] {\tiny $0,-1$};
\draw (-0.6,0.5) node[labl] {\tiny $-2,0$};
\draw (-0.6,1.5) node[labl] {\tiny $0,-2$};
\draw (0.6,1.5) node[labl] {\tiny $0,-2$};
\draw (0.6,0.5) node[labl] {\tiny $0,-2$};
\end{tikzpicture}
\quad\lesssim
\begin{tikzpicture}[scale=0.9,baseline={(0,0.8)}]
\draw (0,0) to[out=45,in=-45,distance=1.5cm] (0,2);
\draw (0,0) to[out=135,in=-135,distance=1.5cm] (0,2) node[int]{};
\draw (0,0) node[vari]{} -- (0,2) node[int]{} ;
\draw (0,1) node[labl] {\tiny $0,-1$};
\draw (-0.6,0.5) node[labl] {\tiny $-1,0$};
\draw (0.6,0.5) node[labl] {\tiny $0,-1$};
\end{tikzpicture}
\quad\lesssim
\begin{tikzpicture}[scale=0.9,baseline={(0,0.8)}]
\draw (0,0) node[vari]{} -- (0,2) node[int]{} ;
\draw (0,1) node[labl] {\tiny $-1,-2$};
\end{tikzpicture}
\quad\lesssim
\big|{\log|x|_\d}\big|.
\end{equ}
Keeping in mind that since $|\,\<PAM-4new>\,|>-1$ , it suffices to consider the case $|x|_\d\lesssim\|x-z\|$, this verifies \eqref{eq:model-conv3} (notice that since $k=0$ in this case, the $L^2$-scalar product simplifies to product of real numbers).

We now turn to the proof of convergence, for which we use Proposition \ref{prop:triv-but-nice}. The auxiliary approximations $\cW^{(\eps,\delta)}$ (and from them the models $(\Pi^{\eps,\delta},\Gamma^{\eps,\delta})$) are built as follows.
The translation invariant part is simply the BPHZ model associated to the system
\begin{equ}
Y=\rho_\delta\ast\bar K\ast\rho_\eps\ast\xi,\qquad v=\rho_\delta\ast\bar K\ast\big(v|\nabla Y|^2-2\nabla v\cdot\nabla Y\big).
\end{equ}
On `new' symbols we proceed similarly as above, by simply replacing the appropriate red edges in the trees describing the functions $\cW^{(\eps,\delta)}$ by green ones.

For any fixed $\delta$, the functions $\cW^{(\eps,\delta)}$ are uniformly smooth over $\eps\in[0,1]$ in the $x$ variable (in fact with any $\cC^k$ norm bounded by a power of $\delta$).
As a consequence, the condition $a_{\eps,\delta}\to a_{0,\delta}$ as $\eps\to 0$ of Proposition \ref{prop:triv-but-nice} is met.
The other condition will be an easy consequence of the already established a priori estimates and the bound
\begin{equ}
\Big|\int \rho_\delta(x-u)\big(\nabla_1 G(u,v)-\nabla_1 G(x,v)\big)\,du\Big|\lesssim\delta^{\theta}\|x-v\|^{-2-\theta}+\bone_{|x|_\d\leq\delta}\|x-v\|^{-2}
\end{equ}
for any $\theta\in(0,1)$.
Recall that in the translation invariant case the analogous bounds take the simpler form
\begin{equ}
\Big|\int \rho_\delta(u-v)\big(\d_1^k\bar K(v,w)-\d_1^k\bar K(u,w)\big)\,dv\Big|\lesssim\delta^{\theta}\|x-w\|^{-1-|k|-\theta}
\end{equ}
for any $\theta\in(0,1)$ and $|k|=0,1$.
Therefore, for any $\tau$, $k$, and $i$ the function $\cW^{(\eps,\delta;k)}_i\tau-\cW^{(\eps,0;k)}_i\tau$ has the following form: it can be represented by a tree similar to $\cW^{(0,0;k)}_i\tau$, with each edge admitting the same bound as the corresponding edge in $\cW^{(0,0;k)}_i\tau$, and at least one edge is further multiplied with either a factor $\delta^{-\theta}$ or $\bone_{|x|_\d\leq\delta}$.
Therefore a bound similar of the form \eqref{eq:model-conv4} or \eqref{eq:model-conv5} is satisfied, and the convergence $(\Pi^{\eps,\delta},\Gamma^{\eps,\delta})\to(\Pi^{\eps,0},\Gamma^{\eps,0})$ as $\delta\to 0$, uniformly over $\eps\in[0,1]$, follows.
The convergence of the models $(\Pi^{+,\eps},\Gamma^\eps)$, and $(\Pi^{-,\eps},\Gamma^\eps)$ then immediately follows from Remark \ref{rem:indicator}.
\end{proof}
 
\subsection{\texorpdfstring{$\Phi^4_3$}{Phi\^4\_3}} 
We proceed similarly to the above.
Take the regularity structure $\scT_0=(T_0,G_0,A_0)$ and the corresponding sequence of renormalised models $(\Pi^\eps,\Gamma^\eps)$ built for the
$\Phi^4_3$ equation in \cite{H0}.
We again use the periodic models with fundamental domain $(-2,2)^3$ and assume the integration kernel being $\bar\cK$, the truncated heat kernel in $1+3$ dimensions, satisfying \cite[Ass.~5.1]{H0}.
We then take a new regularity structure $\scT=(T,G,A)$ by adding five new symbols: we set $T=T_0\oplus\scal{\cF_\d}$ with $\cF_\d=\{\<PAM-2new>,\<Phi-5new>,\<Phi-3new>,\<Phi-4new>,\<PAM-2new>X\}$.
The homogeneities of the new symbols are given by $|\,\<PAM-2new>\,|=-1-2\kappa$, $|\,\<Phi-5new>\,|=-1/2-5\kappa$, $|\,\<Phi-3new>\,|=-1/2-3\kappa$, $|\,\<Phi-4new>\,|=-4\kappa$, $|\,\<PAM-2new>X\,|=-2\kappa$.

For $\eps>0$ we then extend $(\Pi^\eps,\Gamma^\eps)$ to a family of models $(\Pi^{\eps,a},\Gamma^\eps)$, $a\in[0,\infty]$, by setting $\Pi^{\eps,a}\tau=\Pi^\eps\tau$ for $\tau\in T_0$ and
\begin{equs}
\big(\Pi^{\eps,a}_x\<PAM-2new>\big)(y)&=\Psi_{\eps,a}^2(y)-\E\Psi_{\eps,a}^2(y),
\\
\big(\Pi^{\eps,a}_x\<Phi-5new>\big)(y)&=\Big(\big(\Pi^\eps_x\<Phi-I3>\big)(y)\Big)\Big(\big(\Pi^{\eps,a}_x\<PAM-2new>\big)(y)\Big)-3 \ell_\eps(\<Phi-4-Small>)\big(\Pi^\eps_x\<lolly>\big)(y),
\\
\big(\Pi^{\eps,a}_x\<Phi-3new>\big)(y)&=\Psi_{\eps,a}^3(y)-3\big(\E\Psi_{\eps,a}^2(y)\big)\Psi_{\eps,a},
\\
\big(\Pi^{\eps,a}_x\<Phi-4new>\big)(y)&=\Big(\big(\Pi^{\eps}_x\<Phi-I2>\big)(y)\Big)\Big(\big(\Pi^{\eps,a}_x\<PAM-2new>\big)(y)\Big)- \ell_\eps(\<Phi-4-Small>),
\\
\big(\Pi^{\eps,a}_x\<PAM-2new>X\big)(y)&=\Big(\big(\Pi^{\eps,a}_x\<PAM-2new>\big)(y)\Big)(y-x).
\end{equs}
Here the functions $\Psi_{\eps,a}$ are defined as in \eqref{eq:Psi2}, using the Robin heat kernels $\cG_{3a}$ that are discussed in Section \ref{sec:robin}.
The extension of $\Gamma^\eps$ to the new symbols is again straightforward.
We also define the models $(\Pi^{+,\eps,a},\Gamma^\eps)$, $(\Pi^{-,\eps,a},\Gamma^\eps)$ exactly as before.
 
\begin{lemma}\label{lem:Phi model convergence}
Take a sequence $(a_\eps)_{\eps\in[0,1]}\subset[0,\infty]$ that is convergent in the natural topology.
Then the three sequences of models $(\Pi^{\eps,a_\eps},\Gamma^\eps)$, $(\Pi^{+,\eps,a_\eps},\Gamma^\eps)$, and $(\Pi^{-,\eps,a_\eps},\Gamma^\eps)$, converge.
\end{lemma}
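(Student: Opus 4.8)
The plan is to follow the proof of Lemma~\ref{lem:PAM model convergence} essentially line by line, the only new features being that the non-translation-invariant kernel is now the Robin heat kernel $\cG_{3a_\eps}=\bar\cK+Z^{(3a_\eps)}$ of \eqref{eq:robin-HK-def} rather than the Neumann Green's function, and that its parameter $a_\eps$ varies with $\eps$. First I would set $a_0:=\lim_\eps a_\eps\in[0,\infty]$ and take as candidate $\eps=0$ limit the model given by the prescriptions of this subsection with $\eps=0$ and $a=a_0$. Since the restriction of $(\Pi^{\eps,a_\eps},\Gamma^\eps)$ to $T_0$ is the translation-invariant model of \cite{H0}, which converges, everything reduces to the five symbols of $\cF_\d$, and I would argue as in the PAM case through the functions $\cW^{(\eps;k)}_i\tau$ and their graphical calculus, verifying the criteria \eqref{eq:model-conv3}--\eqref{eq:model-conv5}.

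The heart of the matter is the \emph{a priori} bound. Inserting $\cG_{3a}=\bar\cK+Z^{(3a)}$ into each $\cW^{(\eps;k)}_i\tau$ decomposes it into a sum of terms: those built only from $\bar\cK$-edges reproduce (the $\rho_\eps$-mollifications of) the corresponding translation-invariant stochastic objects, whose bounds come from \cite{H0,CH}; and those containing at least one $Z^{(3a)}$-edge, which are the boundary contributions. For the latter I would use that, by Lemma~\ref{lem:robin} and Remark~\ref{rem:uniform-in-a}, the $Z^{(3a)}$ satisfy---\emph{uniformly} over $a\in[0,\infty]$---pointwise bounds of essentially the same order ($\frb=3$, up to an arbitrarily small loss) as $\bar\cK$, while being supported in a neighbourhood of $\d$; these terms are then estimated exactly as the boundary remainders in the proof of Lemma~\ref{lem:PAM model convergence}, using the kernel estimates of Section~\ref{sec:kernel computations} together with Lemma~\ref{lem:kernel-convolution}. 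For the symbols $\<Phi-5new>$, $\<Phi-4new>$, $\<PAM-2new>X$, whose homogeneities exceed $-1$, Proposition~\ref{prop:model boundary extension} confines the required estimates to arguments with $|x|_\d\lesssim\|x-z\|$, where each $Z^{(3a)}$-edge occurs without recentering so that only its supremum bound enters; there the analysis is identical to the translation-invariant one, with the usual BPHZ constants $\ell_\eps(\<Phi-2-Small>)$ and $\ell_\eps(\<Phi-4-Small>)$. For the renormalised square $\<PAM-2new>$ and the renormalised cube $\<Phi-3new>$, both of homogeneity below $-1$, the full bounds are needed; here I would use that $\Pi^{\eps,a}_x\<PAM-2new>$ and $\Pi^{\eps,a}_x\<Phi-3new>$ are pure Wick powers (of order $2$, resp.\ $3$), so their $\cW$-functions have only the highest-chaos component, and the scalar products in \eqref{eq:model-conv3}--\eqref{eq:model-conv5} reduce to powers of $\int\cG_{3a,(\eps)}(x,y)\,\cG_{3a,(\eps)}(\bar x,y)\,dy$. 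Applying Lemma~\ref{lem:kernels1} to the various pairs of pieces, and noting that the supports of $Z^{(3a)}(x,\cdot)$ and $Z^{(3a)}(\bar x,\cdot)$ become disjoint once $|x|_\d\vee|\bar x|_\d\ll\|x-\bar x\|$, yields a bound of order $(|x|_\d+\|x-\bar x\|)^{-1}\le\|x-\bar x\|^{-1}$, uniform in $a$, whose $k$-th power is integrable against $\varphi^\lambda_z\otimes\varphi^\lambda_z$ with exactly the power of $\lambda$ needed for \eqref{eq:model-conv3}; Lemmas~\ref{lem:kernels2} and \ref{lem:kernels3} supply the matching difference bounds \eqref{eq:model-conv4}--\eqref{eq:model-conv5}.

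For the passage to the limit I would reuse the two-parameter device of the PAM proof: introduce an auxiliary mollification parameter $\delta$ (regularising the relevant edges by $\rho_\delta$), observe that for fixed $\delta$ the functions $\cW^{(\eps,\delta)}$ are uniformly smooth in $x$ over $\eps\in[0,1]$ so that convergence as $\eps\to0$ is immediate, and then invoke Proposition~\ref{prop:triv-but-nice} once the $\delta\to0$ convergence uniform in $\eps$ is shown. This last step rests on the bound for the error committed when $\rho_\delta$-smoothing a $\cG_{3a}$-edge, which is $O(\delta^\theta)\|x-y\|^{-\frb-\theta}$ away from $\d$ and carries an extra factor $\bone_{|x|_\d\le\delta}$ near $\d$; the $\bone_{|x|_\d\le\delta}$ contribution is absorbed through the modified criterion \eqref{eq:model-conv5}, everything being uniform over $a\in[0,\infty]$. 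The $\eps$-dependence of $a_\eps$ itself is handled by the continuity of $a\mapsto Z^{(3a)}$ in $\scZ_{\beta,\d}$, $\beta<2$, from Lemma~\ref{lem:robin}, which crucially includes the endpoint $a_\eps\to\infty$ (the Dirichlet kernel). The convergence of $(\Pi^{+,\eps,a_\eps},\Gamma^\eps)$ and $(\Pi^{-,\eps,a_\eps},\Gamma^\eps)$ then follows from Remark~\ref{rem:indicator}.

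The step I expect to be the main obstacle is the \emph{a priori} bound, uniform over $a\in[0,\infty]$, for the sub-threshold symbols $\<PAM-2new>$ and especially $\<Phi-3new>$: near $\d$ the reflection part $Z^{(3a)}$ of the Robin kernel is, up to an arbitrarily small loss, as singular as its diagonal part---this is precisely the mechanism responsible for the logarithmically divergent boundary renormalisation---so these bounds cannot be reduced to the near-boundary regime via Proposition~\ref{prop:model boundary extension} and must be obtained through the bespoke kernel machinery of Sections~\ref{sec:kernel computations} and \ref{sec:robin}; keeping every estimate uniform in $a$ while simultaneously tracking the $\eps$-dependence of $a_\eps$ is where the bookkeeping is heaviest.
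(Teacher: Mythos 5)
Your overall architecture (reduce to the symbols in $\cF_\d$, uniform-in-$a$ kernel bounds via Remark~\ref{rem:uniform-in-a} and Lemma~\ref{lem:robin}, graphical verification of \eqref{eq:model-conv3}--\eqref{eq:model-conv5}, the two-parameter mollification with Proposition~\ref{prop:triv-but-nice}) matches the paper's. But there is a genuine gap, and it sits exactly where you claim the analysis is ``identical to the translation-invariant one.'' For $\<Phi-4new>$ and $\<Phi-5new>$ the renormalisation constant $\ell_\eps(\<Phi-4-Small>)$ is computed from the translation-invariant kernel $\bar\cK$, whereas the stochastic objects are built from $\cG_{3a}=\bar\cK+Z^{(3a)}$. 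Consequently the subtraction of $\ell_\eps(\<Phi-4-Small>)$ does \emph{not} cancel the corresponding contraction: what survives is the non-recentered zeroth-chaos component $\cW^{(\eps;0)}_2\<Phi-4new>$ and the non-recentered first-chaos component $\cW^{(\eps;1)}_2\<Phi-5new>$, both carrying a ``difference'' edge $\cG_{3a}-\bar\cK$. These have \emph{no} counterpart in the translation-invariant theory, so neither \cite{CH} nor the ``only sup bounds on non-recentered edges enter'' principle applies to them; they must be estimated from scratch using the boundary-weighted singularity classes $\|\cdot\|_{\alpha,\gamma}$ of \eqref{eq:kernel-singu} and Lemma~\ref{lem:kernel-convolution}(ii)--(iv), which yield $|\log|x|_\d|$ for the former and $|x|_\d^{\eta}|\bar x|_\d^{\eta}\|x-\bar x\|^{-1}$ for the latter; only \emph{then} does the restriction $|x|_\d\lesssim\|x-z\|$ from Proposition~\ref{prop:model boundary extension} convert these weighted bounds into \eqref{eq:model-conv3}. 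Your proposal asserts the conclusion of this step without containing the argument, and the logarithmic boundary blow-up of exactly these terms is the mechanism behind the whole boundary renormalisation, so it cannot be waved away.

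Conversely, you locate the ``main obstacle'' in the wrong place: $\<PAM-2new>$ and $\<Phi-3new>$ are pure Wick powers, so their $\cW$-functions consist of a single top-chaos component in which every $\cG_{3a}$-edge appears without recentering; the required covariance bounds then follow from the pointwise kernel estimates exactly as in the translation-invariant case (uniformly in $a$ by Remark~\ref{rem:uniform-in-a}), with no new difficulty despite their homogeneities lying below $-1$. Your computation for these two symbols is correct but routine; the substance of the lemma is the treatment of the two uncancelled non-recentered components described above.
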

 
\begin{proof}
Following the argument in the proof of Lemma \ref{lem:PAM model convergence}, we only really need to establish a priori bounds for components of $\Pi^{\eps,a}\tau$
whose counterpart for the corresponding translation invariant symbol
is completely cancelled by the renormalisation procedure.

For the $\Phi^4_3$ equation there are altogether two such terms: the non-recentered parts of the $0$-th chaos component of $\<Phi-4new>$ and the first chaos component of $\<Phi-5new>$.
For the first one, the computation is very similar to the previous proof, and we use the graphical notation from therein, but this time
\begin{tikzpicture}[baseline={(0,-0.1)}]
\draw (-0.5,0) -- (0,0);
\end{tikzpicture}
denoting a factor $\bar \cK(u-v)$ and
\begin{tikzpicture}[baseline={(0,-0.1)}]
\draw[kernels3] (-0.5,0) -- (0,0);
\end{tikzpicture}
denoting a factor $\cG_{3a_\eps}(u,v)$.
The function $\cW^{(\eps;0)}_2\<Phi-4new>$ is then given by
\begin{equ}\label{eq:model building 3}
\cW^{(\eps;0)}_2\<Phi-4new>=2\,
\begin{tikzpicture}[scale=0.7,baseline={(0,0.3)}]
\draw[kerneldiff] (-1,1) -- (0,0);
\draw[kernels3] (0,0) -- (1,1) ;
\draw (-1,2.5) -- (0,1.5) -- (1,2.5) ;
\draw[rho] (-1,1) node[int]{} -- (-1,1.75);
\draw[rho] (1,1) node[int]{} -- (1,1.75);
\draw[rho] (-1,2.5) node[int]{} -- (-1,1.75);
\draw[rho] (1,2.5) node[int]{} -- (1,1.75);
\draw (0,0) -- (0,1.5);
\draw (0,0) node[vari]{};
\draw (0,1.5) node[int]{};
\draw (-1,1.75) node[int]{};
\draw (1,1.75) node[int]{};
\end{tikzpicture}
\quad + \quad
2\,\begin{tikzpicture}[scale=0.7,baseline={(0,0.3)}]
\draw[kerneldiff] (-1,1) -- (0,0);
\draw (0,0) -- (1,1) ;
\draw (-1,2.5) -- (0,1.5) -- (1,2.5) ;
\draw[rho] (-1,1) node[int]{} -- (-1,1.75);
\draw[rho] (1,1) node[int]{} -- (1,1.75);
\draw[rho] (-1,2.5) node[int]{} -- (-1,1.75);
\draw[rho] (1,2.5) node[int]{} -- (1,1.75);
\draw (0,0) -- (0,1.5);
\draw (0,0) node[vari]{};
\draw (0,1.5) node[int]{};
\draw (-1,1.75) node[int]{};
\draw (1,1.75) node[int]{};
\end{tikzpicture}
\quad.
\end{equ}
The power counting also changes as now we have $|\frs|=5$ and \begin{tikzpicture}[baseline={(0,-0.1)}]
\draw[kerneldiff] (0,0) -- (1,0);
\draw[rho] (-0.5,0) -- (0,0) node[int]{};
\end{tikzpicture}
now admits the bound
\begin{tikzpicture}[baseline={(0,-0.1)}]
\draw (-0.5,0) -- (1,0);
\draw (0.25,0) node[labl] {\tiny $-3,0$};
\end{tikzpicture}.
Consequently, we can bound both terms in \eqref{eq:model building 3} using Lemma \ref{lem:kernel-convolution} (ii) and (iv) in the first inequality and (i) in the second one: 
\begin{equ}
\begin{tikzpicture}[scale=0.9,baseline={(0,0.8)}]
\draw (-1,1) -- (0,0) -- (1,1) node[int] {} -- (0,2) -- (-1,1) node[int] {};
\draw (0,0) node[vari]{} -- (0,2) node[int]{} ;
\draw (0,1) node[labl] {\tiny $0,-3$};
\draw (-0.6,0.5) node[labl] {\tiny $-3,0$};
\draw (-0.6,1.5) node[labl] {\tiny $0,-3$};
\draw (0.6,1.5) node[labl] {\tiny $0,-3$};
\draw (0.6,0.5) node[labl] {\tiny $0,-3$};
\end{tikzpicture}
\quad\lesssim
\begin{tikzpicture}[scale=0.9,baseline={(0,0.8)}]
\draw (0,0) to[out=45,in=-45,distance=1.5cm] (0,2);
\draw (0,0) to[out=135,in=-135,distance=1.5cm] (0,2) node[int]{};
\draw (0,0) node[vari]{} -- (0,2) node[int]{} ;
\draw (0,1) node[labl] {\tiny $0,-3$};
\draw (-0.6,0.5) node[labl] {\tiny $-1,0$};
\draw (0.6,0.5) node[labl] {\tiny $0,-1$};
\end{tikzpicture}
\quad\lesssim
\begin{tikzpicture}[scale=0.9,baseline={(0,0.8)}]
\draw (0,0) node[vari]{} -- (0,2) node[int]{} ;
\draw (0,1) node[labl] {\tiny $-1,-4$};
\end{tikzpicture}
\quad\lesssim
\big|{\log|x|_\d}\big|.
\end{equ}
Concerning the non-recentered term in the first chaos component of $\<Phi-5new>$, it can be written as
\begin{equ}
\cW^{(\eps;1)}_2\<Phi-5new>=6\,
\begin{tikzpicture}[scale=0.7,baseline={(0,0.3)}]
\draw[kerneldiff] (-1,1) -- (0,0);
\draw[kernels3] (0,0) -- (1,1) ;
\draw (-1,2.5) -- (0,1.5) -- (1,2.5) ;
\draw (0,1.5) -- (0,3) node [noise]{};
\draw[rho] (-1,1) node[int]{} -- (-1,1.75);
\draw[rho] (1,1) node[int]{} -- (1,1.75);
\draw[rho] (-1,2.5) node[int]{} -- (-1,1.75);
\draw[rho] (1,2.5) node[int]{} -- (1,1.75);
\draw (0,0) -- (0,1.5);
\draw (0,0) node[vari]{};
\draw (0,1.5) node[int]{};
\draw (-1,1.75) node[int]{};
\draw (1,1.75) node[int]{};
\end{tikzpicture}
\quad + \quad
6\,\begin{tikzpicture}[scale=0.7,baseline={(0,0.3)}]
\draw[kerneldiff] (-1,1) -- (0,0);
\draw (0,0) -- (1,1) ;
\draw (-1,2.5) -- (0,1.5) -- (1,2.5) ;
\draw (0,1.5) -- (0,3) node [noise]{};
\draw[rho] (-1,1) node[int]{} -- (-1,1.75);
\draw[rho] (1,1) node[int]{} -- (1,1.75);
\draw[rho] (-1,2.5) node[int]{} -- (-1,1.75);
\draw[rho] (1,2.5) node[int]{} -- (1,1.75);
\draw (0,0) -- (0,1.5);
\draw (0,0) node[vari]{};
\draw (0,1.5) node[int]{};
\draw (-1,1.75) node[int]{};
\draw (1,1.75) node[int]{};
\end{tikzpicture}
\quad.
\end{equ}
Therefore we have (using two instances of
\begin{tikzpicture}[baseline={(0,-0.1)}]
\draw (0,0) node[vari]{};
\end{tikzpicture}
to denote the two different running variables $x$ and $\bar x$)
by applying Lemma \ref{lem:kernel-convolution} (ii) and (iv) in the second inequality, (i) in the third one, and (iii) in the fourth and fifth one
\begin{equs}\label{eq:graph-calc-2}
\big|\scal{\cW^{(\eps;1)}_0\<Phi-5new>\,(x;\cdot),\cW^{(\eps;1)}_0\<Phi-5new>\,(\bar x;\cdot)}\big|  &  \lesssim\,
\begin{tikzpicture}[scale=0.9,baseline={(0,0.8)}]
\draw (-1,1) -- (0,0) -- (1,1) node[int] {} -- (0,2) -- (-1,1) node[int] {};
\draw (0,0) node[vari]{} -- (0,2) node[int]{} ;
\draw (0,1) node[labl] {\tiny $0,-3$};
\draw (-0.6,0.5) node[labl] {\tiny $-3,0$};
\draw (-0.6,1.5) node[labl] {\tiny $0,-3$};
\draw (0.6,1.5) node[labl] {\tiny $0,-3$};
\draw (0.6,0.5) node[labl] {\tiny $0,-3$};
\draw (2,1) -- (3,0) -- (4,1) node[int] {} -- (3,2) -- (2,1) node[int] {};
\draw (0,2) -- (1.5,2) node[int] {} -- (3,2);
\draw (0.75,2) node[labl] {\tiny $0,-3$};
\draw (2.25,2) node[labl] {\tiny $0,-3$};
\draw (3,0) node[vari]{} -- (3,2) node[int]{} ;
\draw (3,1) node[labl] {\tiny $0,-3$};
\draw (2.4,0.5) node[labl] {\tiny $-3,0$};
\draw (2.4,1.5) node[labl] {\tiny $0,-3$};
\draw (3.6,1.5) node[labl] {\tiny $0,-3$};
\draw (3.6,0.5) node[labl] {\tiny $0,-3$};
\end{tikzpicture}
\\&\lesssim\begin{tikzpicture}[scale=0.9,baseline={(0,0.8)}]
\draw (0,0) to[out=45,in=-45,distance=1.5cm] (0,2);
\draw (0,0) to[out=135,in=-135,distance=1.5cm] (0,2) node[int]{};
\draw (0,0) node[vari]{} -- (0,2) node[int]{} ;
\draw (0,1) node[labl] {\tiny $0,-3$};
\draw (-0.6,0.5) node[labl] {\tiny $-1,0$};
\draw (0.6,0.5) node[labl] {\tiny $0,-1$};
\draw (0,2) -- (3,2);
\draw (1.5,2) node[labl] {\tiny $0,-1$};
\draw (3,0) to[out=45,in=-45,distance=1.5cm] (3,2);
\draw (3,0) to[out=135,in=-135,distance=1.5cm] (3,2) node[int]{};
\draw (3,0) node[vari]{} -- (3,2) node[int]{} ;
\draw (3,1) node[labl] {\tiny $0,-3$};
\draw (2.4,0.5) node[labl] {\tiny $-1,0$};
\draw (3.6,0.5) node[labl] {\tiny $0,-1$};
\end{tikzpicture}
\\
&\lesssim
\quad
\begin{tikzpicture}[scale=0.9,baseline={(0,0.4)}]
\draw (0,0) node[vari]{} -- (0,1) node[int]{} ;
\draw (0,0.5) node[labl] {\tiny $-1+\eta,-4$};
\draw (0,1) -- (3,1);
\draw (1.5,1) node[labl] {\tiny $0,-1$};
\draw (3,0) node[vari]{} -- (3,1) node[int]{} ;
\draw (3,0.5) node[labl] {\tiny $-1+\eta,-4$};
\end{tikzpicture}
\\
&\lesssim |x|_\d^{\eta}\quad
\begin{tikzpicture}[scale=0.9,baseline={(0,-0.1)}]
\draw (0,0) node[vari]{} -- (2,0) node[int]{} -- (4,0) node[vari]{};
\draw (1,0) node[labl] {\tiny $0,-1$};
\draw (3,0) node[labl] {\tiny $-1+\eta,-4$};
\end{tikzpicture}
\\
&\lesssim |x|_\d^{\eta}|\bar x|_\d^\eta\quad
\begin{tikzpicture}[scale=0.9,baseline={(0,-0.1)}]
\draw (0,0) node[vari]{} -- (2,0) node[vari]{};
\draw (1,0) node[labl] {\tiny $0,-1$};
\end{tikzpicture}
\\
&=|x|_\d^{\eta}|\bar x|^\eta_\d\|x-\bar x\|^{-1}\;.
\end{equs}
Here $\eta<0$ is arbitrary and introduced only to avoid equalities in the conditions of Lemma~\ref{lem:kernel-convolution}.
Since $|x|_\d\lesssim\|x-z\|$, $|\bar x|_\d\lesssim\|\bar x-z\|$ can be assumed, this proves \eqref{eq:model-conv3}. The remaining arguments are then exactly as in the proof of Lemma \ref{lem:PAM model convergence}.
\end{proof}

\section{Proofs of the main results}\label{sec:proofs}

The convergence results are obtained via \cite[Thm.~5.6]{GH17}
(or a slight modification of it),
the notation of which we shall use without repeating them here.
Let us however summarise the step which is the main difference
to most of the literature and the source of the boundary renormalisation.
This concerns setting up the convolution operator of a fixed point problem.
Given a modelled distribution $f$ -- think of the right-hand side of an abstract equation -- we will define an abstract convolution $\scP_\eps(f),$
where the operator $\scP_\eps$ is built from the following ingredients:
\begin{claim}
\item A kernel $\bar \cK$ that is $2$-smoothing in the sense of \cite[Ass.~5.1]{H0} -- this will always be the truncated (translation invariant) heat kernel for us;
\item A model $(\Pi^\eps,\Gamma^\eps)$ admissible for $\bar\cK$ -- we will always choose from the models discussed above;
\item A remainder kernel $Z_\eps$ that is smooth locally, but not necessarily globally up to the boundary;
\item A distribution $\zeta$ that agrees with $\cR^\eps f$ away from the spatial boundary $\R\times\d D$.
\end{claim}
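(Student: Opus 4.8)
The plan is to follow the scheme of \cite[Sec.~5]{GH17}, reducing the four main theorems to a single statement about the abstract convolution operator $\scP_\eps$ built from the four ingredients above: namely that (i) $\scP_\eps$ is bounded from the relevant weighted modelled-distribution space $\cD^{\gamma,w}$ (with weights adapted to the faces $\d$, the edges $\d^2$, and the initial time) into $\cD^{\gamma+2,\bar w}$, the exponent being raised by $2$ because $\bar\cK$ is $2$-smoothing and $\bar w$ the expected shift of the boundary weights; (ii) the reconstruction satisfies $\cR^\eps\scP_\eps f = (\bar\cK + Z_\eps)\ast\zeta + \delta_\partial g$ for an explicit trace $g$, so that choosing $Z_\eps$ as the Robin remainder of Lemma~\ref{lem:robin} and $\zeta$ as in \eqref{eq:decomposition} reproduces exactly the prescribed (divergently renormalised) boundary data; and (iii) the map $\big((\Pi^\eps,\Gamma^\eps),Z_\eps,\zeta\big)\mapsto\scP_\eps(\cdot)$ is jointly continuous in the natural topologies, with $Z_\eps$ measured in $\scZ_{\beta,\d}$. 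Granting (i)--(iii), the solution map of the abstract fixed point for \eqref{e:etPAM} (resp.\ for \eqref{eq:Phi4-lin}) is locally Lipschitz in all data on a short random time interval by the usual contraction argument, and the stated convergences follow by feeding in the convergence of the models (Lemmas~\ref{lem:PAM model convergence}, \ref{lem:Phi model convergence}), of the renormalised kernels (Lemma~\ref{lem:robin}), and of the stochastic inputs (Lemmas~\ref{lem:reno Psi^2 PAM}, \ref{lem:reno Phi43 square}, \ref{lem:restrict2}, Corollary~\ref{cor:restrict1}).

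For (i)--(ii) I would split $\bar\cK+Z_\eps$ into three pieces: the translation-invariant $\bar\cK$, handled by the Schauder estimate of \cite[Sec.~5]{H0} (applicable since the symbols added in Section~\ref{sec:reg str} have homogeneity $>-1$, so reconstruction is unambiguous modulo a $\delta_\partial$-term, cf.\ Proposition~\ref{prop:model boundary extension}); a globally smooth remainder contributing only lower-order terms; and the genuinely boundary-singular part of $Z_\eps$, which by Definition~\ref{def:Z} lives on $\d$ at the correct order. The last piece is estimated via the kernel bounds of Lemmas~\ref{lem:kernels1}--\ref{lem:kernels3} together with the weighted-H\"older multiplication and extension results of Propositions~\ref{prop:weighted holder extend}, \ref{prop:boundary correction}, \ref{prop:weighted holder mult} and \ref{prop:weighted holder2}; this is precisely where the three remainder types $\bar R^i_\eps$ of \eqref{eq:decomposition}, together with the explicit $\delta_\partial$-correction ($a_\rho+\tfrac{|\log\eps|}{8\pi}$ for PAM, $a_\rho+b_\eps+c_\eps$ for $\Phi^4_3$), enter and are seen to yield a convergent limit once the boundary condition has been renormalised. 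The reconstruction identity is obtained, as in \cite{GH17}, by first pinning down $\cR^\eps\scP_\eps f$ on test functions supported away from $\R\times\d D$ (where admissibility forces it to equal $(\bar\cK+Z_\eps)\ast\zeta$) and then invoking the uniqueness part of Proposition~\ref{prop:model boundary extension} and Remark~\ref{rem:mini} to control the extension across the boundary, the ambiguity being a single $\delta_\partial$-term.

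The main obstacle is (ii)--(iii) near $\R\times\d D$: since $Z_\eps$ is only locally smooth and $\zeta$ agrees with $\cR^\eps f$ only away from the boundary, one cannot write $\cR^\eps\scP_\eps f = (\bar\cK+Z_\eps)\ast\cR^\eps f$ globally. I would localise near $\d$, exploit that the Robin remainder $Z^{(a)}$ of Lemma~\ref{lem:robin} lies in $\scZ_{\beta,\d}$ with its singularity on $\d$ at the codimension-matching order, and bound the convolution against the weighted data by Lemmas~\ref{lem:kernels1}--\ref{lem:kernels3}, so that the $\eps$-dependence is captured by a term of size $\eps^\theta$ plus the explicit logarithmic $\delta_\partial$-correction; the $\eps$-uniform model bounds and the two-parameter interchange lemma (Proposition~\ref{prop:triv-but-nice}) then upgrade this to the required joint continuity. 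Finally, for Theorem~\ref{thm:Phi4-N} the limit $u^b$ depends continuously on $b\in[0,\infty]$ because all ingredients --- in particular the Robin kernels $\cG_{3a}$ through Lemma~\ref{lem:robin}(iii) and the stochastic data through Lemma~\ref{lem:restrict2} --- are continuous up to the endpoint $a=\infty$, which by construction of $\cG_\infty$ as the Dirichlet kernel forces $u^\infty=u^{\Dir}$.
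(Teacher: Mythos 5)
Your outline follows the paper's strategy at the level of architecture (reduce everything to boundedness, reconstruction identity, and joint continuity of $\scP_\eps$, then feed in Lemmas~\ref{lem:PAM model convergence}, \ref{lem:Phi model convergence}, \ref{lem:robin}, \ref{lem:reno Psi^2 PAM}, \ref{lem:reno Phi43 square}, \ref{lem:restrict2}), and your identification of where the $\delta_\partial$-corrections enter is right. But there is a genuine gap at the central step. You assert that ``the symbols added in Section~\ref{sec:reg str} have homogeneity $>-1$, so reconstruction is unambiguous modulo a $\delta_\partial$-term.'' This is false: the symbol $\<PAM-2new>$ has homogeneity $-1-2\kappa<-1$, and after multiplying by $f\in\cD^{\gamma,w}(V)$ the resulting modelled distribution lives in a sector of regularity $\alpha'=-1-2\kappa$ with $\sigma'$ also near $0$ but $\alpha'<-1$, so \cite[Sec~4.3]{GH17} provides \emph{no} canonical extension of the reconstruction across $\R\times\d D$, and the ambiguity is not merely a $\delta_\partial$ times a function. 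This is precisely the obstruction the whole paper is organised around. The mechanism that resolves it is missing from your proposal: one first rewrites $f|\bw_\eps|^2$ in terms of the new symbol via the isometry $\iota_\eps$ (replacing $\<PAM-2a>$ by $\<PAM-2new>$ and absorbing $\E|\Psi_\eps|^2-\ell_\eps(\<PAM-2a-Small>)$ into the polynomial part), then reconstructs $f\<PAM-2new>$ near the boundary using the decomposition $\Pi^\eps_x=\Pi^{+,\eps}_x+\Pi^{-,\eps}_x$ together with \cite[Thm~C.5]{HP19} --- this is why the models $(\Pi^{\pm,\eps},\Gamma^\eps)$ are introduced and shown to converge in Lemmas~\ref{lem:PAM model convergence} and \ref{lem:Phi model convergence} --- while the remaining deterministic terms $(\cR^\eps f)\delta_\d R^1_\eps$, $(\cR^\eps f)\scR R^2_\eps$, $(\cR^\eps f)R^3_\eps$ are handled by the weighted-H\"older calculus as you describe. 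Without the symbol replacement and the $\Pi^\pm$ splitting, step (ii) of your plan cannot be carried out.

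Two smaller points. First, for the Dirichlet theorems the paper does not construct $\hat\cR^\eps$ at all: it invokes Lemma~\ref{lem:Dirichlet-convolution}, which exploits the vanishing of the Dirichlet kernel on $P_1$ to define $\scP_\eps(f)$ directly for $\alpha\wedge\sigma\in(-2,-1)$; your route of constructing an extension plus a $\delta_\partial g$ term is unnecessary there and would require an argument that $g$ does not affect the limit. Second, for the $\Phi^4_3$ cubic term the reconstruction of $\bone_D^+\bw_{\eps,c_\eps}^3$ is defined by the explicit formula \eqref{eq:cubic2}, whose well-posedness at $\eps=0$ rests on the trace construction of Corollary~\ref{cor:restrict1} and Lemma~\ref{lem:restrict2}; your proposal cites these lemmas but does not explain that they are needed to make sense of multiplying $\delta_\d R^1_\eps$ and $\scR R^2_\eps$ by the distribution $\Psi_{0,b}$, which is the reason Section~\ref{sec:restrict} exists.
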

Given these ingredients, \cite[Lem~4.12 \& 4.16]{GH17} yields a modelled distribution $\scP_\eps(f)$ that satisfies 
\begin{equ}
\cR^\eps\big(\scP_\eps(f)\big)(x)=\scal{(K+Z_\eps)(x,\cdot),\zeta}.
\end{equ}
Moreover, $\scP_\eps$ is locally Lipschitz continuous in the natural topologies.
In particular, to get convergence results, one has to ensure that the choice of $\zeta$ is sufficiently stable in the $\eps\to 0$ limit.
This is far from obvious: 
for the type of weighted spaces we wish to use, the reconstruction theorem
\cite[Thm~3.10]{H0} can only be applied locally in the interior of 
$\R\times D$, which therefore yields a distribution $\cR^\eps f$ only on the interior.
Extending it to a distribution $\zeta$ \emph{on the whole space},
may not be obvious or indeed even unique.
Of course as long as $\eps>0$, there is trivial canonical extension, but it is \emph{not} true in general
that this choice will converge to a limit.
To ensure convergence, we modify the canonical choice by certain Dirac masses on the boundary, which in turn has the effect of changing the boundary conditions (see Section \ref{sec:lin} or \cite[Rem~1.5]{HP19}).
In line with \cite{GH17}, we will also use the notation $\zeta=\hat\cR^\eps f$, to emphasise its relation to the `usual' reconstruction operator from \cite{H0}.

Let us recall that in the weighted spaces of modelled distribution
used in \cite{GH17}
weights are given by triples $w=(\eta,\sigma,\mu)\in\R^3$ describing singularities near the temporal and spatial boundaries and their intersection, respectively.
It will often be convenient to use the notation $(\eta)_3=(\eta,\eta,\eta)$.
The lowest homogeneity symbol appearing in the target space of a given space of modelled distributions will usually be denoted by $\alpha$.
There are two general situations when the issue of choosing $\zeta$ can be avoided.
First, as long as $\sigma\wedge\alpha>-1$, there \emph{is} a canonical choice for $\hat\cR$, see \cite[Thm~4.9]{GH17}, which furthermore agrees with the natural extension for smooth models.
Second, if $\sigma\wedge\alpha>-2$ and $\bar\cK+Z_\eps$ is a Dirichlet kernel, then $\scP_\eps (f)$ can be defined \emph{without} defining $\hat\cR f$, see Lemma \ref{lem:Dirichlet-convolution} below.

Finally, we address the above mentioned `slight modification' of \cite[Thm~5.6]{GH17} that we will use in the case of $\Phi^4_3$,
namely that we do not consider the remainder part of the kernel fixed
as $\eps$ varies.
It is straightforward to check that the integration operation against smooth remainders with singularities at the boundary \cite[Lem~4.16]{GH17} is Lipschitz continuous with respect to the natural norm $\|\cdot\|_{\scZ_{\beta,P}}$ on the space of remainder kernels.
One then gets the following simple extension of \cite[Thm.~5.6]{GH17}.
\begin{proposition}\label{prop:tiny extension}
Assume the setting of \cite[Thm.~5.6]{GH17},
with however allowing for $Z\in\scZ_{\beta,P}$ to depend on $\eps\in[0,1]$.
Suppose that 
\begin{equ}
\|Z^\eps-Z^0\|_{\scZ_{\beta,P}}\rightarrow0.
\end{equ}
Then the conclusions of \cite[Thm.~5.6]{GH17} hold.
\end{proposition}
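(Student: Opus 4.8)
The plan is to carry the kernel remainder $Z$ along as one more converging parameter in the proof of \cite[Thm.~5.6]{GH17}. The key point is that $Z$ enters the construction of the abstract convolution operator $\scP_\eps$ \emph{only} through the remainder-integration step of \cite[Lem~4.16]{GH17}, i.e.\ through the operator sending a modelled distribution $f$ to $x\mapsto\scal{Z_\eps(x,\cdot),\hat\cR^\eps f}$ (together with its Taylor corrections); all the other ingredients of $\scP_\eps$ --- the $2$-smoothing kernel $\bar\cK$, the admissible model $(\Pi^\eps,\Gamma^\eps)$, and the choice of $\zeta=\hat\cR^\eps f$ away from the boundary --- are literally the same as in \cite{GH17}. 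It therefore suffices to upgrade the continuity statements underlying the fixed point argument of \cite[Thm.~5.6]{GH17} from continuity in the model (and the remaining data) to joint continuity in the model \emph{and} in $Z\in\scZ_{\beta,P}$, and then to repeat that argument verbatim.

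Concretely, I would first record that the map sending $Z\in\scZ_{\beta,P}$ to the associated remainder-integration operator is linear, and bounded into the space of operators between the relevant weighted modelled-distribution spaces, with operator norm bounded by $\|Z\|_{\scZ_{\beta,P}}$ times a constant depending only on the exponents and on the norm of the model --- this is exactly the Lipschitz continuity noted above. Since $Z^\eps\to Z^0$ in $\scZ_{\beta,P}$ one has $\sup_{\eps\in[0,1]}\|Z^\eps\|_{\scZ_{\beta,P}}<\infty$, so the operators $\scP_\eps$ are uniformly bounded in $\eps$, just as in the case of a fixed $Z$; hence the contraction estimates of \cite[Thm.~5.6]{GH17} and the induced lower bound on the (possibly random) existence time $T$ hold uniformly in $\eps$, and the abstract fixed point $U^\eps$ is defined on a common interval. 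Moreover the remainder-integration parts of $\scP_\eps$ built from $Z^\eps$ and from $Z^0$ differ, as operators, by at most $O\big(\|Z^\eps-Z^0\|_{\scZ_{\beta,P}}\big)=o(1)$, uniformly over their argument. Together with the convergence of the models and of the remaining data already assumed in \cite[Thm.~5.6]{GH17}, this shows that the full data of the fixed point problem converge in the natural topology, and the standard perturbation-of-fixed-points estimate --- a family of uniform contractions with converging coefficients has converging fixed points --- yields $U^\eps\to U^0$, hence the convergence of the reconstructions $\hat\cR^\eps U^\eps$ asserted in \cite[Thm.~5.6]{GH17}.

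I do not expect any genuine obstacle: the only thing that needs checking is that inserting a convergent sequence $Z^\eps$ in place of a fixed $Z$ does not spoil the uniform-in-$\eps$ estimates underlying the contraction and the existence time, and this is immediate from $\sup_\eps\|Z^\eps\|_{\scZ_{\beta,P}}<\infty$ and the linearity of $Z\mapsto$ (remainder operator). In short, $Z$ is ``just one more piece of converging data,'' and the proof is that of \cite[Thm.~5.6]{GH17} with this extra parameter carried through.
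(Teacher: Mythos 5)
Your proposal is correct and follows essentially the same route as the paper, which justifies the proposition by observing that the remainder-integration of \cite[Lem~4.16]{GH17} is Lipschitz (indeed linear and bounded) in $\|\cdot\|_{\scZ_{\beta,P}}$ and then treating $Z^\eps$ as one more piece of converging data in the fixed point argument of \cite[Thm.~5.6]{GH17}. Your write-up simply makes explicit the uniform-in-$\eps$ bounds and the perturbation-of-fixed-points step that the paper leaves implicit.
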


\subsection{PAM} 
As a warm-up example of the above setup without boundary renormalisation, define
the modelled distribution
\begin{equ}
\bY_\eps=-\scP_\eps(\bone_D\<0>),
\end{equ}
with $Z_\eps=Z$ being the Neumann remainder (see either \cite[Ex~4.15]{GH17}, \cite[App~A]{HP19}, or Section~\ref{sec:robin} above) and $\zeta=\bone_D\xi_\eps$.
Here and below $\bone_D$ is the indicator function of $D$.
Note that one can easily make sense (probabilistically) of the product $\bone_D\xi$,
and one moreover has $\bone_D\xi_\eps\rightarrow\bone_D\xi$ in $\cC^{-3/2-\kappa}$.
It is also easy to see that $\bone_D\<0>\in\cD_P^{\kappa,(-3/2-\kappa)_3}$.
Therefore,
setting $\bw_\eps=\scD\bY_\eps$,
we have that $\bw_\eps \to\bw_0\in \cD_P^{1+\kappa,(-1/2-\kappa)_3}$ and $\cR^\eps\bw_\eps=\Psi_\eps$, for all $\eps\in[0,1]$.

This allows us to set up the abstract fixed point problem corresponding to \eqref{e:etPAM} as follows.
Let $\bone_D^+$ denote the indicator function of $\R_+\times D$.
Let $Z_\eps=Z$ be the either the Dirichlet remainder (for Theorem \ref{thm:PAM-D})
or the Neumann remainder (for Theorem \ref{thm:PAM-N}).
Let furthermore $\bi_\eps$ be the modelled distribution
responsible for the contribution of the initial condition:
it is simply obtained by taking the Taylor polynomial lift of
the function $t,x\mapsto\scal{(\bar\cK+Z)_t(x,\cdot),u_0e^{Y_\eps}}$.
The abstract counterpart of \eqref{e:etPAM} then reads as
\begin{equ}\label{eq:PAM-abstract}
\bv_\eps=\scP_\eps\big(\bone_D^+ \bv_\eps|\bw_\eps|^2\big)
-2\scP_\eps'\big(\bone_D^+\scD\bv_\eps\cdot\bw_\eps\big)+\bi_\eps.
\end{equ}
The operators $\scP_\eps$ and $\scP_\eps'$ will only differ in the choice of $\hat\cR^\eps$. In fact as we will shortly see, for the latter this choice will be trivial.

We now specify the spaces of modelled distributions on which \eqref{eq:PAM-abstract} can be shown to be a contraction and therefore to have a unique solution.
Denote by $T_0^{\geq \alpha}$ the subspace of $T_0$ generated by basis vectors of homogeneity greater or equal to $\alpha$
(note that no new symbols are included in these spaces)
and by $\bar T$ the polynomial subspace.
Fix the exponents $\gamma=3/2+3\kappa$, $\gamma_0=\kappa$,
$\alpha=0$, $\alpha_0=-1-2\kappa$, $\alpha_1=-1/2-3\kappa$, the sectors
\begin{equ}
V=\cI(T_0^{\geq \alpha_0})+\bar T,\quad V_0=T_0^{\geq \alpha_0},\quad V_1=T_0^{\geq \alpha_1},
\end{equ}
as well as
\begin{equs}
\eta &=-1/2+3\kappa&\quad \sigma &=1-3\kappa&\quad \mu &=-1/2+3\kappa;\\
\eta_0 &=-3/2+\kappa,&\quad \sigma_0 &=-1-2\kappa,&\quad \mu_0 &=-3/2+\kappa;\\
\eta_1 &=-2+\kappa,&\quad \sigma_1 &=-1/2-4\kappa,&\quad \mu_1 &=-2+2\kappa.
\end{equs}
We use the shorthand $w=(\eta,\sigma,\mu)$ and similarly for $w_1$ and $w_2$.

It is easy to verify that, by \cite[Lem~5.4]{GH17}, $\bi_\eps\to\bi_0$ in $\cD^{\gamma,w}(V)$, provided $\kappa$ is sufficiently small.
It also follows from \cite[Lem.~4.3]{GH17} that the mappings
\begin{equ}
f\mapsto f|\bw_\eps|^2,\quad\quad f\mapsto \scD f\cdot\bw_\eps
\end{equ}
are globally Lipschitz continuous from $\cD^{\gamma,w}(V)$
to $\cD^{\gamma_0,w_0}(V_0)$ and
$\cD^{\gamma_0,w_1}(V_1)$, respectively.
Furthermore, since $\sigma_1\wedge\alpha_1>-1$,
\cite[Thm~4.9]{GH17} provides a continuous extension $\hat \cR^\eps$ of the reconstruction operator on $\cD^{\gamma_0,w_1}(V_1)$,
which agrees with the canonical extension for $\eps>0$.
This now completes the definition of $\scP_\eps'$ in \eqref{eq:PAM-abstract}.

Furthermore, in the case of Dirichlet boundary conditions, $\scP_\eps$ is also well-defined, thanks to Lemma \ref{lem:Dirichlet-convolution}.
By \cite[Thm~5.6]{GH17} one can conclude that,
provided $\kappa>0$ is sufficiently small, it has a unique local in time solution $\bv_\eps$.
Furthermore, since the right-hand side is globally Lipschitz continuous (in fact, linear), one can easily extend the local solutions to the whole time interval $[0,1]$.
Now the proof can be easily concluded.

\begin{proof}[Proof of Theorem \ref{thm:PAM-D}]
Invoking the usual arguments concerning the renormalisation in the bulk (see e.g. \cite[Sec~9]{H0}),
one sees that the function $v_\eps:=\cR^\eps\bv_\eps$ solves \eqref{e:etPAM} with $0$ Dirichlet boundary conditions.
Hence $v_\eps e^{-Y_\eps}$ solves solves \eqref{e:ePAM},
trivially also with $0$ Dirichlet boundary conditions,
and therefore it coincides with $u_\eps^{\Dir}$.
By \cite[Thm.~5.6]{GH17}, $\bv_\eps\to\bv_0$, the latter
clearly does not depend on $\rho$,
and so the limit $\lim_{\eps\to0}u_\eps^{\Dir}=(\cR^0\bv_0)e^{-Y_0}=:u^{\Dir}$ is also independent of $\rho$, yielding the claim.
\end{proof}

In the Neumann case, there is no easy way to construct (or avoid constructing, as above) $\hat\cR^\eps$ for the integration operator $\scP_\eps$.
The main remaining step is therefore to build $\hat\cR^\eps$ as a continuous map from
$\cV_\eps:=\cD^{\gamma,w}(V)|\bw_\eps|^2$ to $\CC^{-3/2+\kappa}$,
which is where all previous steps in the article will be used.
First of all, notice that $\bw_\eps$ is of the form
\begin{equ}\label{eq:qwe}
|\bw_\eps|^2(x)=\<PAM-2a>+2\varphi_\eps(x)\cdot\<PAM-1>+|\varphi_\eps(x)|^2
\end{equ}
for some $(\bar T)^3$-valued $\varphi_\eps$.
One however can use the newly added symbols to out regularity structure to write an equivalent function
\begin{equ}\label{eq:rty}
|\tilde\bw_\eps|^2(x):=\<PAM-2new>+\tilde\varphi_\eps(x),
\end{equ}
where $\tilde\varphi_\eps(x)=\E|\Psi_\eps|^2(x)-\ell_\eps(\<PAM-2a-Small>)$.
Although the latter expression seems ill-defined for $\eps=0$,
it is clear that $\tilde\varphi_\eps$ has pointwise limits away from the boundary.
In fact, in the notation of Lemma \ref{lem:reno Psi^2 PAM} we have the expression $\tilde \varphi_\eps(x)=R^2_\eps+R^3_\eps$.
The $|\cdot|^2$ in \eqref{eq:rty} is merely notational and serves only as an analogy with \eqref{eq:qwe}.
One then trivially has $\cR^\eps|\bw_\eps|^2=\cR^\eps|\tilde \bw_\eps|^2$,
in fact one has $\cR^\eps f|\bw_\eps|^2=\cR^\eps f|\tilde\bw_\eps|^2$
for all $f\in\cD^{\gamma,w}(V)$.
Let us therefore define the isometry $\iota_\eps:\cV_\eps\to\tilde \cV_\eps:=\cD^{\gamma,w}(V)|\tilde\bw_\eps|^2$ by replacing the $|\bw_\eps|^2$ factor with $|\tilde\bw_\eps|^2$.
We can conclude that it suffices to construct $\hat \cR^\eps$ on $\tilde \cV_\eps$, and then it can be pulled back to $\cV_\eps$ by $\iota_\eps$.

\begin{lemma}\label{lem:hatR-PAM}
For all $\eps\in[0,1]$ there exist maps $\hat \cR^\eps:\tilde\cV^\eps\to\CC^{-3/2+\kappa}$ with the following properties.
\begin{claim}
\item For all $\eps\in[0,1]$, $g\in\tilde\cV^\eps$, and $\psi\in\cC^\infty_0(\R_+\times D)$ one has
\begin{equ}\label{eq:hatR-PAM1}
\scal{\hat\cR^\eps g,\psi}=\scal{\cR^\eps g ,\psi};
\end{equ} 
\item The bound
\begin{equ}\label{eq:hatR-PAM2}
|\hat\cR^\eps\big(f^\eps|\tilde \bw_\eps|^2\big)-\hat\cR^0\big(f^0|\tilde \bw_0|^2\big)|_{-3/2+\kappa;\,T}
\lesssim\vn{f^\eps;f^0}_{\gamma,w;\,T}+o(1)
\end{equ}
holds as $\eps\to 0$, uniformly in modelled distributions $f^\eps\in\cD^{\gamma,w}(V,\Gamma^\eps)$, $f^0\in\cD^{\gamma,w}(V,\Gamma^0)$ bounded by a constant $C$, and in $T\in(0,1]$;
\item For $\eps>0$, $f\in\cD^{\gamma,w}(V)$, one has the identity
\begin{equ}\label{eq:hatR-PAM3}
\hat\cR^\eps\big(f|\tilde \bw_\eps|^2\big)=
\cR^\eps\big(f|\tilde \bw_\eps|^2\big)
-\big(a_\rho+\tfrac{|{\log \eps}|}{8\pi}\big)\delta_\d (\cR^\eps f),
\end{equ}
where $a_\rho$ is as in Lemma \ref{lem:reno Psi^2 PAM}.
\end{claim}
\end{lemma}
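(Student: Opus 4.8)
The plan is to read off $\hat\cR^\eps$ from \eqref{eq:hatR-PAM3} when $\eps>0$, to turn its right-hand side into a sum of manifestly convergent pieces using Lemma \ref{lem:reno Psi^2 PAM}, and to use this rewriting both to \emph{define} $\hat\cR^0$ and to prove \eqref{eq:hatR-PAM2}. For $\eps>0$ the model $(\Pi^\eps,\Gamma^\eps)$ is smooth, so $\cR^\eps\big(f|\tilde\bw_\eps|^2\big)$ is a genuine distribution, and since $f$ is valued in the sector $V$ of positive homogeneity, $\cR^\eps f$ is a function which, thanks to the spatial weight $\sigma=1-3\kappa>0$, extends $\CC^{1-3\kappa}$-continuously up to $\R\times\partial D$; hence $\delta_\d(\cR^\eps f):=\delta_\d\,\Tr_\d(\cR^\eps f)$ is well defined, \eqref{eq:hatR-PAM3} makes sense and produces an element of $\CC^{-3/2+\kappa}$, and \eqref{eq:hatR-PAM1} holds for $\eps>0$ because the subtracted term is supported on $\d$.

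For the rewriting, one uses that an admissible smooth model is multiplicative on the diagonal, so that, modulo the canonical $\scR$-type extension of the weighted reconstruction near $\d$, one has for $\eps>0$ the identity $\cR^\eps\big(f|\tilde\bw_\eps|^2\big)=(\cR^\eps f)\cdot\big(|\Psi_\eps|^2-\ell_\eps\big)$, using $|\tilde\bw_\eps|^2=\<PAM-2new>+\tilde\varphi_\eps\bone$, $\big(\Pi^\eps_x\<PAM-2new>\big)(x)=|\Psi_\eps(x)|^2-\E|\Psi_\eps(x)|^2$, and $\tilde\varphi_\eps=R^2_\eps+R^3_\eps$. Inserting the decomposition of Lemma \ref{lem:reno Psi^2 PAM} (stated on $Q$ and propagated to all of $D$ by the reflection symmetry of $\E|\Psi_\eps|^2$), namely
\[
|\Psi_\eps|^2-\ell_\eps=\big(|\Psi_\eps|^2-\E|\Psi_\eps|^2\big)+\big(a_\rho+\tfrac{|{\log\eps}|}{8\pi}\big)\delta_\d+\delta_\d R^1_\eps+\scR R^2_\eps+R^3_\eps,
\]
multiplying by $\cR^\eps f$ and using $(\cR^\eps f)\,\delta_\d=\delta_\d\,\Tr_\d(\cR^\eps f)$, the divergent contribution is cancelled exactly by the subtraction in \eqref{eq:hatR-PAM3}, leaving
\[
\hat\cR^\eps\big(f|\tilde\bw_\eps|^2\big)=\cR^\eps\big(f\,\<PAM-2new>\big)+\scR\!\big((\cR^\eps f)R^2_\eps\big)+(\cR^\eps f)R^3_\eps+\delta_\d\big(\Tr_\d(\cR^\eps f)\,R^1_\eps\big)+(\text{boundary Dirac corrections}),
\]
the last terms arising from the commutator between $\scR$ and multiplication (Remark \ref{rem:mini}). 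Read with $\eps=0$ this identity defines $\hat\cR^0$ and reduces \eqref{eq:hatR-PAM2} to the convergence of each summand, the Lipschitz dependence on $f$ being termwise.

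For the $R^1$-, $R^2$- and $R^3$-terms this is routine: Lemma \ref{lem:reno Psi^2 PAM} gives convergence of $R^i_\eps$ in the stated weighted spaces, the hypothesis $f^\eps\to f^0$ together with Lemma \ref{lem:PAM model convergence} gives convergence of $\cR^\eps f$ and of $\Tr_\d(\cR^\eps f)$, Proposition \ref{prop:weighted holder mult} then yields convergence of the products $(\cR^\eps f)R^2_\eps$, $(\cR^\eps f)R^3_\eps$ and $\Tr_\d(\cR^\eps f)R^1_\eps$ in the appropriate weighted spaces, and Propositions \ref{prop:boundary correction} and \ref{prop:weighted holder extend}, together with the embedding $\delta_\d\CC^\alpha(\d)\hookrightarrow\CC^{\alpha-1-\kappa}$, promote these to convergence in $\CC^{-3/2+\kappa}$. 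The genuinely delicate term is $\cR^\eps\big(f\,\<PAM-2new>\big)$: writing $f\,\<PAM-2new>=f_{\bone}\,\<PAM-2new>+\tilde g$, the part $\tilde g$ involves only symbols of homogeneity $>-1$ (the only component of homogeneity $\le-1$ being $f_{\bone}\,\<PAM-2new>$, of homogeneity $-1-2\kappa$), so it has a canonical reconstruction continuous in the model by Proposition \ref{prop:model boundary extension} (equivalently \cite[Thm~4.9]{GH17}), which converges by Lemma \ref{lem:PAM model convergence}; for $f_{\bone}\,\<PAM-2new>$ one builds the extension to $\R\times\partial D$ exactly as in \cite[App.~C]{HP19}, correcting the interior reconstruction by a boundary term assembled from $\Tr_\d f_{\bone}$ paired against the boundary blow-up of $\Pi^\eps\<PAM-2new>$ — whose order at $\d$ is only logarithmic by the a priori estimates in the proof of Lemma \ref{lem:PAM model convergence} — this correction again being continuous in $(\Pi,\Gamma)$ and Lipschitz in $f$. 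Assembling the pieces, with all bounds local and hence uniform in $T\in(0,1]$, gives \eqref{eq:hatR-PAM2}, while \eqref{eq:hatR-PAM3} holds by construction. The main obstacle is precisely this last step: constructing the boundary-compatible extension of $\cR^\eps\big(f_{\bone}\,\<PAM-2new>\big)$ and verifying that the divergent constant it forces one to subtract is exactly the $a_\rho+\tfrac{|{\log\eps}|}{8\pi}$ of Lemma \ref{lem:reno Psi^2 PAM}, which is what makes the cancellation in \eqref{eq:hatR-PAM3} — and hence the effect on the boundary condition — exact.
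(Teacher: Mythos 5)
Your overall architecture matches the paper's: start from the identity
$\cR^\eps\big(f|\tilde \bw_\eps|^2\big)=\cR^\eps(f\<PAM-2new>)+(\cR^\eps f)\delta_\d R^1_\eps+(\cR^\eps f)\scR R^2_\eps+(\cR^\eps f)R^3_\eps+(\cR^\eps f)\big(a_\rho+\tfrac{|\log\eps|}{8\pi}\big)\delta_\d$ coming from Lemma \ref{lem:reno Psi^2 PAM}, note that the last term is exactly what \eqref{eq:hatR-PAM3} subtracts, and show the remaining four terms extend continuously to $\eps=0$. However, your treatment of the term $\cR^\eps(f\<PAM-2new>)$ contains a genuine error. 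The divergent constant $a_\rho+\tfrac{|\log\eps|}{8\pi}$ comes \emph{entirely} from the deterministic part $\E|\Psi_\eps|^2-\ell_\eps(\<PAM-2a-Small>)$ via Lemma \ref{lem:reno Psi^2 PAM}; the reconstruction of $f\<PAM-2new>$ forces \emph{no} subtraction whatsoever, since $\Pi^\eps_x\<PAM-2new>$ is already recentred. Your closing claim that the main obstacle is ``verifying that the divergent constant it forces one to subtract is exactly $a_\rho+\tfrac{|\log\eps|}{8\pi}$'' contradicts your own (correct) earlier derivation and misattributes the source of the divergence. Moreover, the mechanism you describe for this term — splitting off $f_\bone\<PAM-2new>$ and pairing $\Tr_\d f_\bone$ against a ``logarithmic boundary blow-up of $\Pi^\eps\<PAM-2new>$'' — is not how this works: $\Pi^\eps_x\<PAM-2new>$ is a distribution of regularity $-1-2\kappa$, not a function with a deterministic blow-up (the logarithmic bounds in Lemma \ref{lem:PAM model convergence} concern $\<PAM-4new>$, not $\<PAM-2new>$), and the obstruction is precisely that a distribution of regularity below $-1$ has no canonical extension across a codimension-one set. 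The correct tool is the one the paper uses: $f\<PAM-2new>$ lies in $\cD^{\gamma',w'}$ with $\sigma'=-5\kappa>-1$ but $\alpha'=-1-2\kappa<-1$, and one invokes the splitting $\Pi^\eps_x=\Pi^{+,\eps}_x+\Pi^{-,\eps}_x$ into models supported on the two sides of $\R\times\d D$ together with \cite[Thm~C.5]{HP19}, which produces a canonical, continuous extension agreeing with $\cR^\eps$ for $\eps>0$ and converging as $\eps\to0$ with no further correction.

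A second, smaller gap: you dismiss the term $(\cR^\eps f)\,\scR R^2_\eps$ as routine via Proposition \ref{prop:weighted holder mult}, but that proposition does not cover it. Writing $F=\cR^\eps f\in\CC^{1-2\kappa,w}_P$ and $R^2_\eps\in\CC^{1-\kappa,-1-\kappa}_\d$, the pointwise product has a non-integrable weight near $P_1$ (of order $-3/2+O(\kappa)$), so Proposition \ref{prop:weighted holder extend} does not give a global distribution, and Proposition \ref{prop:boundary correction} does not apply to the two-boundary setting with the additional singularity at $P_0$. The paper must (and does) define $\scal{(\scR R)F,\varphi}=\int R(z)\big(F\varphi(z)-F\varphi(\pi_\d z)\big)\,dz$ directly and prove the bound $|\scal{(\scR R)F,\varphi_y^\lambda}|\lesssim\lambda^{-3/2+2\kappa}$ by a case analysis on $\lambda\lessgtr|y|_{P_1}$, using $D_{x_1}R^2_\eps=D_{x_2}R^2_\eps=0$. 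This estimate is one of the substantive points of the proof and cannot be delegated to the stated multiplication results.
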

\begin{proof}
By the definition of the model and Lemma \ref{lem:reno Psi^2 PAM}, for $\eps>0$ one has the identity
\begin{equs}
\cR^\eps\big(f|\tilde \bw_\eps|^2\big)
&=\cR^\eps(f\<PAM-2new>)+(\cR^\eps f)\delta_\d  R^1_\eps+(\cR^\eps f)\scR R^2_\eps+(\cR^\eps f)R^3_\eps
\\
&\qquad+(\cR^\eps f)\big(a_\rho+\tfrac{|{\log \eps}|}{8\pi}\big)\delta_\partial.\label{eq:decomposing}
\end{equs}
Therefore it suffices to show that the first four terms on the right-hand side extend in an appropriately continuous way to $\eps=0$.

First note that if $f\in\cD^{\gamma,w}(V)$, then $f\<PAM-2new>\in\cD^{\gamma',w'}$ with $\gamma'=\gamma-1-2\kappa=1/2+\kappa$ and $w'=w+(-1-2\kappa)_3$, that is, $\eta'=-3/2+\kappa$, $\sigma'=-5\kappa$, $\mu'=-3/2+\kappa$.
Although one has $\sigma>-1$, unfortunately $\alpha'=-1-2\kappa<1$, so
there is no canonical reconstruction on $\cD^{\gamma',w'}$ as \cite[Sec~4.3]{GH17} does not apply.
This barrier is exactly what is overcome in \cite[App~C]{HP19}:
since in our models we have a decomposition $\Pi_x^\eps=\Pi_x^{+,\eps}+\Pi_x^{-,\eps}$, by \cite[Thm~C.5]{HP19}, there exists maps
$\hat \cR^\eps_0:\cD^{\gamma',w'}\to\CC^{-3/2+\kappa}$ satisfying \eqref{eq:hatR-PAM1} and the desired continuity properties.
Furthermore, for $\eps>0$ one has the identity
$\hat \cR^\eps_0(f\<PAM-2new>)=\cR^\eps(f\<PAM-2new>),$ therefore indeed extending the first term on the right-hand side of \eqref{eq:decomposing}.

For the other terms we may forget that $f$ is a modelled distribution and consider $\cR^\eps f$ as a generic element $F$ of $\CC^{1-2\kappa,w}_P$.
By Proposition \ref{prop:weighted holder2} (a)  $\Tr_{\R_+\times \d D}$ maps $\CC^{1-2\kappa,w}_P$ continuously into
$\CC^{\sigma,\eta\wedge\mu}_{P_0}(\R_+\times\d D)=\CC^{1-3\kappa,-1/2+3\kappa}_{P_0}(\R_+\times\d D)$.
By Proposition \ref{prop:weighted holder2} (b) multiplication on the latter space with $\CC^{1-\kappa,-\kappa}_{\d^2}(Q_\d)$ is continuous in both arguments to $\CC^{1-3\kappa,(-1/2+3\kappa,-\kappa,-1/2+2\kappa)}_{P_0,\d^2}(\R_+\times\d D)$, which in turn is continuously embedded into $\CC^{-1/2+2\kappa}(\R\times\d D)$ by Proposition \ref{prop:weighted holder2} (c).
Finally, by Proposition \ref{prop:weighted holder extend}, $\CC^{-1/2+2\kappa}(\R\times\d D)$ continuously embeds into 
$\CC^{-3/2+2\kappa}(\R^4)$. This handles the second term on the right-hand side of \eqref{eq:decomposing}.

Next, we claim that for $F\in\CC^{1-2\kappa,w}_P(\R_+\times D)$, and $R\in\CC_\d^{1-\kappa,-1-\kappa}(Q)$ such that $D_{x_1}R=D_{x_2}R=0$, the distribution $(\scR R) F$ defined as
\begin{equ}
\scal{(\scR R) F,\varphi}=\int_{\R_+\times D}R(z)\Big(F\varphi(z)-F\varphi(\pi_\d z)\Big)\,dz
\end{equ}
is meaningful for all $\eps\in[0,1]$, belongs to $\CC^{-3/2+2\kappa}$,  and is a continuous function of both arguments.
First notice that away from the boundaries this is just a continuous function, which furthermore has an integrable blowup at $P_0$.
In particular, the behavior on the set $\{z:|z|_{P_0}\leq2|z|_{P_1}\}$ is easily controlled as in \cite[Sec.~4.3]{GH17}, and so we focus on test functions centered on the region $\{z:|z|_{P_0}\leq2|z|_{P_1}\}$ and supported away from $P_0$.
Take $R$ and $F$ as above with norm $1$, $y\in\{z:|z|_{P_0}\leq2|z|_{P_1}\}$ and a normalised test function $\varphi$.
First consider the case $\lambda\leq |y|_{P_1}/2$.
Then we can simply write
\begin{equ}
|\scal{(\scR R) F,\varphi_y^\lambda}|=|\scal{R F,\varphi_y^\lambda}|
\leq \sup_{z\in\supp\varphi_y^\lambda}|RF(z)|\lesssim |y|^{-1-\kappa}_{P_1}|y|_{P_0}^{-1/2+3\kappa}\leq \lambda^{-3/2+2\kappa},
\end{equ}
which is a bound of required order.
In the case $|y|_{P_1}\leq 2\lambda$ the boundary terms have to be taken into account. One has
\begin{equs}
|\scal{(\scR R) F,\varphi_{y}^\lambda}|&\lesssim
\int_{\supp\varphi_{y}^\lambda}|R(z)|\big|F(z)(\varphi(z)-\varphi(\pi_\d z))+(F(z)-F(\pi_\d z))\varphi(\pi_\d z)\big|\,dz
\\
&\lesssim\int_{\supp\varphi_{y}^\lambda}|z|_{P_1}^{-1-\kappa}
\big(|z|_{P_0}^{-1/2+3\kappa}\lambda^{-6}|z|_{P_1}+|z|_{P_0}^{-3/2+6\kappa}|z|_{P_1}^{1-3\kappa}\lambda^{-5}\big)\,dz
\\
&\lesssim |y|_{P_0}^{-1/2+3\kappa}\lambda^{-1-\kappa}+|y|_{P_0}^{-3/2+6\kappa}\lambda^{-4\kappa}\lesssim \lambda^{-3/2+2\kappa},
\end{equs}
where we have used that since the exponent of $|z|_{P_1}$ is above $-1$, the integral is finite.
Therefore, $(\scR R)F$ behaves as required.

Finally, for the fourth term, it suffices to notice that $\CC^{1-2\kappa,w}_P$ embeds continuously into $\CC^{-1/2+3\kappa}(\R^4)$,
on which by Proposition \ref{prop:weighted holder mult} multiplication with $\CC^{1-\kappa,-1-\kappa}_{\d^2}(Q)$ is continuous and maps into $\CC^{-1/2,-3/2+2\kappa}_{\d^2}(Q)$, which in turn by Proposition \ref{prop:weighted holder extend} embeds continuously into $\CC^{-3/2+2\kappa}(\R^4)$.
\end{proof}
Using $\hat\cR^\eps$ from Lemma \ref{lem:hatR-PAM} in the operator $\scP_\eps$, the fixed point problem \eqref{eq:PAM-abstract} is completely well-defined and one can use \cite[Thm~5.6]{GH17} to conclude that,
provided $\kappa>0$ is sufficiently small, \eqref{eq:PAM-abstract} has a unique local solution $\bv_\eps$.
Now the main statements follow easily.

\begin{proof}[Proof of Theorem \ref{thm:PAM-N}]
Unlike in the Dirichlet case, the difference between $\hat \cR^\eps$ and the canonical extension of $\cR^\eps$ \emph{does} effect the equation.
Using \eqref{eq:hatR-PAM3}, we have that the 
function $v_\eps:=\cR^\eps\bv_\eps$ do not exactly satisfy \eqref{e:etPAM},
but rather 
\begin{equ}
(\partial_t -\Delta )v_\eps
=v_\eps(|\Psi_\eps|^2-C_\eps)- 2\nabla v_\eps\cdot\Psi_\eps
-\big(a_\rho+\tfrac{|{\log \eps}|}{8\pi}\big)\delta_\d  v_\eps,
\end{equ}
with $0$ Neumann boundary condition.
That is however equivalent to \eqref{e:etPAM} with boundary condition
\begin{equ}
\partial_n v_\eps=-(a_\rho+\tfrac{|{\log \eps}|}{8\pi})v_\eps.
\end{equ}
Hence, $v_\eps e^{-Y_\eps}$ satisfies \eqref{e:ePAM} with boundary condition
\begin{equ}
\partial_n (v_\eps e^{-Y_\eps})
=-(\partial_n Y_\eps)(v_\eps e^{-Y_\eps})+(\partial_nv_\eps) e^{-Y_\eps}=
-\big(a_\rho+\tfrac{|{\log \eps}|}{8\pi}\big)v_\eps e^{-Y_\eps},
\end{equ}
and so it coincides with $u_\eps$.
The proof is then finished as above.
\end{proof}

\subsection{\texorpdfstring{$\Phi^4_3$}{Phi\^4\_3}}
In the case of the $\Phi^4_3$, one has to keep in mind that the remainder kernel may change as $\eps\to 0$.
As before, we start by defining the modelled distributions,
for $(\eps,a)\in[0,1]\times[0,\infty]$,
\begin{equ}
\bw_{\eps,a}=\scP_{\eps,a}(\bone_D\<0>).
\end{equ}
The reconstruction of $\bone_D\xi_\eps$ is again straightforward. As for $Z_\eps$, in the notation of Lemma \ref{lem:robin}, we choose it to be $Z^{(3a)}$.
As before, it is straightforward that $\bw_{\eps,a}$ belongs to $\cD^{1+3\kappa,(-1/2-\kappa)_3}$, it
depends continuously on $(\eps,a)\in[0,1]\times[0,\infty]$,
and that $\cR^\eps\bw_{\eps,a}=\Psi_{\eps,a}$.

Similarly to Section \ref{sec:restrict}, one can see that $\Psi_{\eps,a}$ can be restricted to the \emph{temporal} hyperplane $P_0=\{t=0\}$
(in fact this is already done in the case without boundaries in \cite{H0}).
Let again be $\bi_{\eps,a}$ be the modelled distribution
responsible for the contribution of the initial condition:
it is simply obtained by taking the Taylor polynomial lift of
the function $t,x\mapsto\scal{(\bar\cK+Z^{(3a)})_t(x,\cdot),u_0-\Psi_{\eps,a}\restriction_{P_0}}$.
Equation \eqref{eq:Phi4-lin} then motivates to consider the fixed point problems
\begin{equ}\label{eq:Phi-abstract}
\bv_{\eps,a}=-\scP_{\eps,a}\big(\bone_D^+(\bv_{\eps,a}^3+3\bv_{\eps,a}^2\bw_{\eps,a})\big)
-3\scP_{\eps,a}'\big(\bone_D^+\bv_{\eps,a}\bw_{\eps,a}^2\big)-\scP_{\eps,a}''(\bone_D^+\bw_{\eps,a}^3)+\bi_{\eps,a}.
\end{equ}
The operators $\scP_{\eps,a},\scP_{\eps,a}',\scP_{\eps,a}''$ are built from the models $(\Pi^{\eps,a},\Gamma^\eps)$ and the remainder $Z^{(3a)}$, the only difference in them will be the choice of $\hat\cR^\eps$.

We now specify the spaces of modelled distributions on which \eqref{eq:Phi-abstract} can be shown to be a contraction and therefore to have a unique solution.
Denote by $T_0^{\geq \alpha}$ the subspace of $T_0$ generated by basis vectors of homogeneity greater or equal to $\alpha$
(note that no new symbols are included in these spaces).
Fix the exponents $\gamma=1+3\kappa$, $\gamma_0=\kappa$,
$\alpha=0$, $\alpha_0=-1/2-\kappa$, $\alpha_1=-1-2\kappa$, $\alpha_2=-3/2-3\kappa$, the sectors
\begin{equ}
V=\cI(T_0^{\geq \alpha_0})+\bar T,\quad V_0=T_0^{\geq \alpha_0},\quad V_1=T_0^{\geq \alpha_1},\quad V_2=T_0^{\geq\alpha_2},
\end{equ}
as well as
\begin{equs}
\eta &=-2/3+\kappa&\quad \sigma &=1/2-4\kappa &\quad \mu &=-2/3+\kappa;\\
\eta_0 &=-2+3\kappa,&\quad \sigma_0 &=-1/2-\kappa,&\quad \mu_0 &=-2+\kappa;\\
\eta_1 &=-5/3-\kappa,&\quad \sigma_1 &=-1-2\kappa,&\quad \mu_1 &=-5/3-\kappa;\\
\eta_2 &=-3/2-3\kappa,&\quad \sigma_2 &=-3/2-3\kappa,&\quad\mu_2 &=-3/2-3\kappa.
\end{equs}
It is easy to verify that, by \cite[Lem~5.4]{GH17}, $\bi_{\eps,a}\in\cD^{\gamma,w}(V)$ depends continuously on $(\eps,a)\in[0,1]\times[0,\infty]$, provided $\kappa$ is sufficiently small.
It also follows from \cite[Lem.~4.3]{GH17} that the mappings
\begin{equ}
f\mapsto f^3+3f\bw_\eps,\quad\quad f\mapsto f\bw_\eps^2
\end{equ}
are locally Lipschitz continuous from $\cD^{\gamma,w}(V)$
to $\cD^{\gamma_0,w_0}(V_0)$ and
$\cD^{\gamma_0,w_1}(V_1)$, respectively,
and that $\bw_\eps^3\to\bw_0^3$ in $\scD^{\gamma_2,w_2}(V_2)$.
Furthermore, since $\sigma_0\wedge\alpha_0>-1$,
\cite[Thm~4.9]{GH17} provides a continuous extension $\hat \cR^\eps$ of the reconstruction operator on $\cD^{\gamma_0,w_0}(V_0)$,
which agrees with the canonical extension for $\eps>0$.
This now completes the definition of $\scP_\eps$ in \eqref{eq:Phi-abstract}.

Furthermore, in the case of Dirichlet boundary conditions, that is, $a=\infty$, we are in an even better shape. Indeed, $\scP_{\eps,\infty}'$ and $\scP_{\eps,\infty}''$ are also well-defined, thanks to Lemma \ref{lem:Dirichlet-convolution}.
By \cite[Thm~5.6]{GH17} one can conclude that,
provided $\kappa>0$ is sufficiently small, it has a unique local solution $\bv_{\eps,\infty}$. Now the proof can be easily concluded.
\begin{proof}[Proof of Theorem \ref{thm:Phi4-D}]
Invoking the usual arguments concerning the renormalisation in the bulk (see e.g. \cite[Sec~9]{H0}),
one sees that the function $v_{\eps,\infty}=\cR^\eps\bv_{\eps,\infty}$ solves
\begin{equ}
(\d_t-\Delta)v=-v^3-3v^2\Psi_{\eps,\infty}-3v(\Psi_{\eps,\infty}^2-C_\eps)
-(\Psi_{\eps,\infty}^3-3C_\eps\Psi_{\eps,\infty}),
\end{equ}
with $0$ Dirichlet boundary conditions.
Hence $v_{\eps,\infty} +\Psi_{\eps,\infty}$ solves solves \eqref{e:Phi43-eps},
trivially also with $0$ Dirichlet boundary conditions,
and therefore it coincides with $u_\eps^{\Dir}$.
By \cite[Thm.~5.6]{GH17}, $\bv_{\eps,\infty}\to\bv_{0,\infty}$, the latter
clearly does not depend on $\rho$,
and so the limit $\lim_{\eps\to0}u_\eps^{\Dir}=(\cR^0\bv_{0,\infty})+\Psi_{0,\infty}=:u^{\Dir}$ is also independent of $\rho$, yielding the claim.
\end{proof}

\begin{remark}\label{rem:Dirichlet}
To see in which sense $u^{\Dir}$ vanishes on the boundary, first note that $\cR^0\bv_{0,\infty}$ is a continuous function with $0$ trace.
It is also clear that by definition, $\Psi_{\eps,\infty}$ vanishes on the boundary for $\eps>0$.
By Corollary \ref{cor:restrict1}, $\Psi_{0,\infty}$ has a meaningful trace, which is also $0$.
\end{remark}

Now we move on to the setup of Theorem \ref{thm:Phi4-N}.
First consider the case $b\geq 0$.
Take $b_\eps$ as in the theorem and $c_\eps$ as constructed from $b_\eps$ in Lemma \ref{lem:reno Phi43 square}.
Consider the sequence $\bv_{\eps,c_\eps}$.
Since $c_\eps<\infty$, we are out of the scope of Lemma \ref{lem:Dirichlet-convolution}, so we have to construct $\hat \cR^\eps$ for the operators $\scP_{\eps,c_\eps}'$ and $\scP_{\eps,c_\eps}''$.

The former goes in essentially the same way as in the case of the PAM.
We can summarise the relevant properties of $\hat \cR^\eps$ in the following statement. Its proof follows from the same arguments as that of Lemma \ref{lem:hatR-PAM} 
(simply using Lemmas \ref{lem:reno Phi43 square} and \ref{lem:Phi model convergence} in place of Lemmas \ref{lem:reno Psi^2 PAM} and \ref{lem:PAM model convergence}, respectively)
and is therefore omitted.

\begin{lemma}\label{lem:hatR-Phi}
Let $a_\rho$, $b_\eps$, and $c_\eps$ as in Lemma \ref{lem:reno Phi43 square}.
Then for all $\eps\in[0,1]$ there exist maps $\hat \cR^\eps:\scD^{\gamma,w}(V)\bw_{\eps,c_\eps}^2\to\CC^{-1-2\kappa}$ with the following properties.
\begin{claim}
\item For all $\eps\in[0,1]$ and $\psi\in\cC^\infty_0(\R_+\times D)$ one has
\begin{equ}\label{eq:hatR-Phi1}
\scal{\hat\cR^\eps g,\psi}=\scal{\cR^\eps g ,\psi};
\end{equ} 
\item The bound
\begin{equ}\label{eq:hatR-Phi2}
|\hat\cR^\eps(f^\eps\bw_{\eps,c_\eps}^2)-\hat\cR^0(f^0\bw_{0,b}^2)|_{w_0;\,T}
\lesssim\vn{f^\eps;f^0}_{\gamma,w;\,T}+o(1)
\end{equ}
holds as $\eps\to 0$, uniformly in modelled distributions $f^\eps\in\cD^{\gamma,w}(V,\Gamma^\eps)$, $f^0\in\cD^{\gamma,w}(V,\Gamma^0)$ bounded by a constant $C$, and in $T\in(0,1]$;
\item For $\eps>0$, $f\in\cD^{\gamma,w}$, one has the identity
\begin{equ}\label{eq:hatR-Phi3}
\hat\cR^\eps(f \bw_{\eps,c_\eps}^2)=
\cR^\eps(f \bw_{\eps,c_\eps}^2)
-\big(a_\rho+b_\eps+c_\eps\big)\delta_\d (\cR^\eps f).
\end{equ}
\end{claim}
\end{lemma}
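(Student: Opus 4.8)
The plan is to reproduce the proof of Lemma~\ref{lem:hatR-PAM} essentially line by line, substituting the $\Phi^4_3$ ingredients for the PAM ones. The first step is the decomposition: using the definition $\big(\Pi^{\eps,c_\eps}_x\<PAM-2new>\big)(y)=\Psi_{\eps,c_\eps}^2(y)-\E\Psi_{\eps,c_\eps}^2(y)$ together with the decomposition of $\E|\Psi_{\eps,c_\eps}|^2$ on $Q$ supplied by Lemma~\ref{lem:reno Phi43 square} (after the same preliminary identification of $\bw_{\eps,c_\eps}^2$ with an expression involving $\<PAM-2new>$ as in the reduction preceding Lemma~\ref{lem:hatR-PAM}), one obtains for $\eps>0$ and $f\in\cD^{\gamma,w}(V)$ the exact analogue of \eqref{eq:decomposing}, namely
\begin{equs}
\cR^\eps\big(f\bw_{\eps,c_\eps}^2\big)&=\cR^\eps(f\<PAM-2new>)+(\cR^\eps f)\delta_\d R^1_\eps+(\cR^\eps f)\scR R^2_\eps+(\cR^\eps f)R^3_\eps\\
&\qquad+\big(a_\rho+b_\eps+c_\eps\big)\delta_\partial(\cR^\eps f).
\end{equs}
The last term is precisely the $\rho$-independent divergent correction appearing in \eqref{eq:hatR-Phi3}, so it remains to show that the other four terms extend continuously to $\eps=0$ with $\rho$-independent limit; summing the four extensions then defines $\hat\cR^\eps$ and yields \eqref{eq:hatR-Phi1}, \eqref{eq:hatR-Phi2} and \eqref{eq:hatR-Phi3}.

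For the first term, since $|\,\<PAM-2new>\,|=-1-2\kappa$ one has $f\<PAM-2new>\in\cD^{\gamma',w'}$ with $\gamma'=\gamma-1-2\kappa$ and $w'=w+(-1-2\kappa)_3$; one checks that $\sigma'>-1$ while the lowest homogeneity $\alpha'=-1-2\kappa<1$, so \cite[Sec~4.3]{GH17} does not apply and one invokes instead \cite[Thm~C.5]{HP19}. This is legitimate because Lemma~\ref{lem:Phi model convergence} supplies the decomposition $\Pi^{\eps,c_\eps}_x=\Pi^{+,\eps,c_\eps}_x+\Pi^{-,\eps,c_\eps}_x$ into convergent (non-admissible) pieces, and the cited theorem then produces continuous maps $\hat\cR^\eps_0$ satisfying \eqref{eq:hatR-Phi1}, agreeing with $\cR^\eps(f\<PAM-2new>)$ for $\eps>0$, and converging as $\eps\to0$. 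The remaining three terms are handled by the same computations as in Lemma~\ref{lem:hatR-PAM}, now fed by the convergences $R^1_\eps\to R^1_0$ in $\CC^{1-\kappa,-\kappa}_{\d^2}(Q_\d)$, $R^2_\eps\to R^2_0$ in $\CC^{1-\kappa,-1-\kappa}_\d(Q)$ (with $D_{x_1}R^2_\eps=D_{x_2}R^2_\eps=0$), and $R^3_\eps\to R^3_0$ in $\CC^{1-\kappa,-1-\kappa}_{\d^2}(Q)$ from Lemma~\ref{lem:reno Phi43 square}: the term $(\cR^\eps f)\delta_\d R^1_\eps$ via a trace onto $\R_+\times\d D$, a weighted multiplication, and the embeddings of Propositions~\ref{prop:weighted holder2} and \ref{prop:weighted holder extend}; the term $(\cR^\eps f)\scR R^2_\eps$ by repeating the direct estimate on test functions $\varphi_y^\lambda$ split according to whether $\lambda\leq\tfrac12|y|_{P_1}$; and the term $(\cR^\eps f)R^3_\eps$ by a plain weighted multiplication followed by Proposition~\ref{prop:weighted holder extend}.

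The main point requiring care -- and the only genuine obstacle compared with the PAM case -- is purely bookkeeping: one must recheck that, with the $\Phi^4_3$ values of $\gamma$ and $w=(\eta,\sigma,\mu)$, every intermediate regularity and weight exponent produced by these manipulations lands inside the admissible range of the cited propositions. In particular one needs $\sigma'=\sigma-1-2\kappa>-1$ for \cite[Thm~C.5]{HP19} to apply to $f\<PAM-2new>$, one needs the weight exponents appearing after multiplication by the $R^i$'s to stay above $-k$ for the relevant codimension $k$ when embedding into an unweighted Hölder space, and one must track the resulting Hölder regularity, which should come out no worse than the $-1-2\kappa$ appearing in the statement. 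Once these inequalities are verified for $\kappa$ small enough, the PAM estimates transfer verbatim and the proof is complete.
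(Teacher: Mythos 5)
Your proposal is correct and follows exactly the route the paper takes: the paper in fact omits this proof entirely, stating that it "follows from the same arguments as that of Lemma \ref{lem:hatR-PAM}, simply using Lemmas \ref{lem:reno Phi43 square} and \ref{lem:Phi model convergence} in place of Lemmas \ref{lem:reno Psi^2 PAM} and \ref{lem:PAM model convergence}", which is precisely the substitution you carry out. Your additional bookkeeping remarks (e.g.\ checking $\sigma'=\sigma-1-2\kappa>-1$ with the $\Phi^4_3$ exponents) are the right sanity checks and consistent with the stated choice of weights.
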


Moving on to $\scP_{\eps,c_\eps}''$, notice that
for each $\eps\in[0,1]$ there is only a single modelled distribution, $\bone_D^+\bw_{\eps,c_\eps}^3$, on which $\hat \cR^\eps$ needs to be constructed.
Also notice that one has for $\eps>0$
\begin{equ}\label{eq:cubic1}
\cR^\eps(\bone_D^+\bw_{\eps,c_\eps}^3)=\bone_D^+\big(\Psi_{\eps,c_\eps}^3-3\ell_\eps({\<Phi-2-Small>})\Psi_{\eps,c_\eps}\big).
\end{equ}
Let us then set, for $\eps\in[0,1]$,
\begin{equ}\label{eq:cubic2}
\hat\cR^\eps(\bone_D^+\bw_{\eps,c_\eps}^3)=\bone_D^+\Pi^{\eps,c_\eps}\<Phi-3new>+
3\bone^+\big((\delta_\d  R^1_\eps)\Psi_{\eps,c_\eps}+(\scR R^2_\eps)\Psi_{\eps,c_\eps}+R^3_\eps\Psi_{\eps,c_\eps}\big).
\end{equ}
First notice that by \eqref{eq:cubic1}, \eqref{eq:cubic2}, the definition of $\Pi^{\eps,c_\eps}\<Phi-3new>$, and Lemma \ref{lem:reno Phi43 square}, we have that for $\eps>0$
\begin{equ}\label{eq:cubic3}
\hat\cR^\eps(\bone_D^+\bw_{\eps,c_\eps}^3)=
\cR^\eps(\bone_D^+\bw_{\eps,c_\eps}^3)
-3\bone^+\big(a_\rho+b_\eps+c_\eps\big)\delta_\d (\Psi_{\eps,c_\eps}).
\end{equ}
In particular, $\hat\cR^\eps(\bone_D^+\bw_{\eps,c_\eps}^3)$ and $\cR^\eps(\bone_D^+\bw_{\eps,c_\eps}^3)$ coincide in the interior of $\R_+\times D$, as required.
It is also clear that $\hat\cR^\eps$ has the required continuity properties:
for the first term on the right-hand side of \eqref{eq:cubic2} this follows from the convergence of the models, while for the second this is precisely the content of Lemma \ref{lem:restrict2} (including that the expressions in \eqref{eq:cubic2} are actually meaningful when $\eps=0$).


\begin{proof}[Proof of Theorem \ref{thm:Phi4-N}]
It follows from the above that the abstract equation \eqref{eq:Phi-abstract} with $c_\eps$ in place of $a$ admits local solutions $\bv_{\eps,c_\eps}\in\cD^{\gamma,w}$.
Furthermore, by \eqref{eq:hatR-Phi3} and \eqref{eq:cubic3}, $v_{\eps,c_\eps}=\cR^\eps\bv_{\eps,c_\eps}$ satisfy \eqref{eq:Phi4-lin} with boundary conditions $\d_nv_{\eps,c_\eps}=-3c_\eps v_{\eps,c_\eps}$.
The argument leading to \eqref{eq:Phi4-lin} then shows that $\bar u_\eps:=v_{\eps,c_\eps}+\Psi_{\eps,c_\eps}$ satisfy \eqref{e:Phi43-eps} with boundary conditions $\d_n\bar u_\eps=3(a_\rho+b_\eps)\bar u_\eps$ and so it coincides with $u_\eps$.
By Proposition \ref{prop:tiny extension} we have that $\bv_{\eps,c_\eps}\to\bv_{0,b}$, the latter clearly does not depend on $\rho$, so the limit $\lim_{\eps\to0}u_\eps=(\cR^0\bv_{0,b})+\Psi_{0,b}=:u^b$ is also independent of $\rho$.
Finally, one notices that $u^{\infty}$ coincides with $u^{\Dir}$ as defined in the proof of Theorem \ref{thm:Phi4-D}.
\end{proof}

The case $b<0$ requires a very slight modification in order to reduce ourselves to the case $a = 0$ in
\eqref{eq:Psi}.  Defining $\tilde b_\eps=b_\eps-b$, the sequence $(\tilde b_\eps)_{\eps\in(0,1]}$ satisfies \eqref{eq:boundary-reno} with $\tilde b=0$.
Replacing the maps $\hat\cR^\eps$ from Lemma \ref{lem:hatR-Phi} and \eqref{eq:cubic2} by $\tilde\cR^\eps$ defined as
\begin{equs}
\tilde \cR^\eps (f\bw_{\eps,\tilde c_\eps}^2)&:=\hat \cR^\eps (f\bw_{\eps,\tilde c_\eps}^2)+b\delta_\d(\cR^\eps f),
\\
\tilde\cR^\eps(\bone_D^+\bw_{\eps,\tilde c_\eps}^3)&:=\hat\cR^\eps(\bone_D^+\bw_{\eps,\tilde c_\eps}^3)+3\bone^+ b\delta_\d\Psi_{\eps,\tilde c_\eps},
\end{equs}
the proof is concluded just as above.

\appendix
\section{Abstract Schauder estimate -- the Dirichlet case}
We wish to obtain a Schauder estimate in the special case of the Dirichlet heat kernel, but \emph{without} dealing with spatial extensions of $\cR f$, even in the regime $\alpha\wedge\sigma\in (-2,-1)$.
We remark that a similar issue was encountered in \cite[Sec~3]{Cyril19}, but therein the problem could be handled with a more ad hoc workaround.
Let us first formulate what we mean by a Dirichlet kernel.
Simplify notations by setting, in the context of \cite{GH17}, $P_0=\{0\}\times\R^{d-1}$, $P_1=\R\times \partial D$, and
\begin{equ}
\beta=2,\, \frm_0=2, \,\frm_1=1.
\end{equ}
Denote furthermore $Q=[-1,1]\times D$.
\begin{assumption}\label{as:Dirichlet}
Let $K$ (with a corresponding abstract integration map $\cI$) and $Z$ be as in \cite{GH17}.
Suppose furthermore that for all $n\in\N$, $G_n:=K_n+Z_n$ is non-anticipative, symmetric, and vanishes on $(\R^d\setminus P_1)\times P_1$.
\end{assumption}
Note that this implies the following: for any $y\in Q$, $D_2^kG_n(\bar x,y)=D_1^kG_n(y,\bar x)$, and the latter vanishes whenever $\bar x$ belongs to the boundary, so
\begin{equ}\label{eq:Dirichlet impr}
\sup_y |D^k_2G_n(x,y)|\lesssim |x|_{P_1}\sup_{y,z}|D^k_1 \nabla_2 G_n(y,z)|\lesssim |x|_{P_1}2^{n(|k|_\frs+|\frs|-1)}.
\end{equ}
So, in the case $|x|_{P_1}\les 2^{-n}$, we improve the usual bound $|D_2^kG_n|\les 2^{n(|k|_\frs+|\frs|-2)}$ to $|D_2^kG_n|\les |x|_{P_1}2^{n(|k|_\frs+|\frs|-1)}$.

Next, take a partition of unity (essentially the same as in the proof of \cite[Prop.~6.9]{H0}) with the following properties:
\begin{itemize}
\item One has the bounds $|D^k\varphi_{m,i}|\lesssim 2^{m|k|_{\frs}}$, $|\supp\varphi_{m,i}|\lesssim 2^{-m|\frs|}$, as well as the property $\supp\varphi_{m,i}\subset\{y:\,|y|_{P_1}\sim 2^{-m}\}$.
\item 
$\sum_{m\in \N}\sum_{i=1}^{\ell(m)}\varphi_{m,i}(y)=1$ for all $y\in Q$, and there exists a constant $C$ such that for all $m,i$, the support of $\varphi_{m,i}$ intersects with $\varphi_{m,j}$ for at most $C$ indices $j$ (in particular, $\ell(m)\les 2^{m(|\frs|-1)}$).
\end{itemize}
Let us set
\begin{equ}
G_{n}^{m,i}(x,y)=G_n(x,y)\varphi^{m,i}(y)
\end{equ}
We shall only use these functions when $2^{-n}\geq|x|_{P_1}$, in which case $G_n^{m,i}$ is nonzero only if $n\leq m$.
Also, for each $n\leq m$ there is a set $A_{n,m}\subset\{1,\ldots,\ell(m)\}$ with cardinality of order $2^{(m-n)(|\frs|-1)}$ such that $G_n^{m,i}=0$ for $i\notin A_{n,m}$.
We can now define our candidate for acting on $G(x,\cdot)$ by distributions $\zeta$ on $Q$:
\begin{equ}\label{eq:G in D}
\zeta(G(x,\cdot)):=\sum_{2^{-n}< |x|_{P_1}}\zeta\big(G_n^{}(x,\cdot)\big)
+
\sum_{2^{-n}\geq |x|_{P_1}}\sum_{m\in\N}\sum_{i\in A_{n,m}}\zeta\big(G_n^{m,i}(x,\cdot)\big),
\end{equ}
which is meaningful provided the infinite sums (the first one in $n$, the second one in $m$) converge. Note also that if $\zeta$ happens to be a piecewise smooth function up to $P_1$, vanishing for negative times, then the following identity holds on $Q_+=[0,1]\times D$:
\begin{equ}
\zeta(G(x,\cdot))=\int_Q G(x,y)\zeta(y)\,dy.
\end{equ}
The improvement in the Dirichlet case will follow from the following simple property. 
It is trivial from \eqref{eq:Dirichlet impr} that $G_n^{m,i}(x,\cdot)$ is $|x|_{P_1}2^{n(|\frs|-1)}2^{-m|\frs|}$ times an approximate Dirac-delta, that is, a function of the form $\psi_y^{2^{-m}}$, with some $y\in Q$ and $\psi\in\cB$ as usual.
However, thanks to the vanishing at the boundary, this can be improved as follows.
\begin{proposition}\label{prop:G vanish}
Suppose $2^{-n}\geq \big(2^{-m}\vee|x|_{P_1}\big)$. Define
\begin{equ}
\hat G_n^{m,i}(x,\cdot)=2^{m(|\frs|+1)}|x|_{P_1}^{-1}2^{-n|\frs|}G_n^{m,i}(x,\cdot).
\end{equ}
Then for any $x\in[-1,1]\times D$, the function $\hat G_n^{m,i}(x,\cdot)$ is of the form $\psi_y^{2^{-m}}$, with $\psi\in\cB$ as usual, and $|y|_{P_1}\geq 2^{-m+1}$.
\end{proposition}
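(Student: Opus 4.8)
Proof proposal (plan for Proposition~\ref{prop:G vanish}):

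The plan is to verify directly the two defining features of a rescaled test function $\psi_y^{2^{-m}}$: that $\hat G_n^{m,i}(x,\cdot)$ is supported in a parabolic ball of radius $\lesssim 2^{-m}$ around a suitable point $y$ with $|y|_{P_1}\ge 2^{-m+1}$, and that $|D^k\hat G_n^{m,i}(x,\cdot)|\lesssim 2^{m(|\frs|+|k|_\frs)}$ for all $k$ up to the fixed regularity of $\cB$. The support statement is immediate: $\hat G_n^{m,i}(x,\cdot)$ is supported inside $\supp\varphi^{m,i}$, which by the properties of the partition of unity ($|D^k\varphi_{m,i}|\lesssim 2^{m|k|_\frs}$, $|\supp\varphi_{m,i}|\lesssim 2^{-m|\frs|}$, $\supp\varphi_{m,i}\subset\{|y|_{P_1}\sim 2^{-m}\}$) lies in a ball of radius $\lesssim 2^{-m}$ centred at a point at distance $\sim 2^{-m}$ from $P_1$; one takes $y$ to be this centre, pushed slightly away from $P_1$ so that $|y|_{P_1}\ge 2^{-m+1}$, which is possible since the partition is built essentially as in \cite[Prop.~6.9]{H0}. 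After rescaling to scale $2^{-m}$ around $y$ the support sits in a fixed ball, so it remains only to prove the derivative bounds and identify the multiplicative constant.

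For the derivative bounds I would expand $D^k G_n^{m,i}(x,\cdot)$ by Leibniz as a finite sum of terms $D_2^{k_1}G_n(x,\cdot)\,D^{k_2}\varphi^{m,i}$ with $k_1+k_2=k$, using $|D^{k_2}\varphi^{m,i}|\lesssim 2^{m|k_2|_\frs}$. The crux is a bound on $D_2^{k_1}G_n(x,z)$ for $z\in\supp\varphi^{m,i}$, exploiting that $G_n$ vanishes on $(\R^d\setminus P_1)\times P_1$ and, by symmetry, on $P_1\times(\R^d\setminus P_1)$, together with $|x|_{P_1}\le 2^{-n}$ and $|z|_{P_1}\sim 2^{-m}\le 2^{-n}$ (the latter being exactly where the hypothesis $2^{-n}\ge 2^{-m}$ is used, so that the segments to the $P_1$-projections stay in the region where the derivative bounds of $G_n$ apply). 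If $k_1=0$, a two-variable Taylor expansion in the $P_1$-transverse directions — the three ``corner'' terms $G_n(\pi_{P_1}x,z)$, $G_n(x,\pi_{P_1}z)$, $G_n(\pi_{P_1}x,\pi_{P_1}z)$ all vanish — gives $|G_n(x,z)|\lesssim |x|_{P_1}|z|_{P_1}\sup|D_1^1D_2^1G_n|\lesssim |x|_{P_1}\,2^{-m}\,2^{n|\frs|}$. If $k_1\neq 0$, one expands only in the $x$-variable: $|D_2^{k_1}G_n(x,z)|\lesssim |x|_{P_1}\sup|D_1^1D_2^{k_1}G_n|\lesssim |x|_{P_1}\,2^{n(|\frs|+|k_1|_\frs-1)}$, and since $|k_1|_\frs\ge 1$ and $n\le m$ one has $2^{n(|k_1|_\frs-1)}\le 2^{m(|k_1|_\frs-1)}$, so this too is $\lesssim |x|_{P_1}\,2^{n|\frs|}\,2^{-m}\,2^{m|k_1|_\frs}$. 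In both cases $|D_2^{k_1}G_n(x,z)|\lesssim |x|_{P_1}\,2^{n|\frs|}\,2^{-m}\,2^{m|k_1|_\frs}$, and summing the Leibniz terms yields $|D^k G_n^{m,i}(x,z)|\lesssim |x|_{P_1}\,2^{n|\frs|}\,2^{-m}\,2^{m|k|_\frs}$.

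Multiplying by the normalising prefactor $2^{m(|\frs|+1)}|x|_{P_1}^{-1}2^{-n|\frs|}$ turns this into $|D^k\hat G_n^{m,i}(x,z)|\lesssim 2^{m(|\frs|+|k|_\frs)}$, which with the support statement exhibits $\hat G_n^{m,i}(x,\cdot)$ as a fixed constant multiple of some $\psi_y^{2^{-m}}$ with $\psi\in\cB$; absorbing the constant into $\psi$ (harmless, as elsewhere in the paper) completes the argument. The genuinely delicate point is the case $k_1\neq 0$: there the $P_1$-transverse vanishing of $G_n$ in the second variable cannot be turned into a factor $|z|_{P_1}$, because a normal derivative in $z$ destroys it, so one ``misses'' a power of $2^{-m}$ relative to the $k_1=0$ case and must recover it from $2^{(m-n)(|k_1|_\frs-1)}\ge 1$ — this is precisely why the hypothesis is $2^{-n}\ge 2^{-m}$ and why the plain $x$-only expansion is insufficient when $k_1=0$, forcing the two-sided Taylor expansion there.
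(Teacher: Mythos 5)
Your argument is correct and follows the paper's strategy: reduce the claim to the support properties (inherited from $\varphi_{m,i}$) plus the derivative bounds $|D^k\hat G_n^{m,i}(x,\cdot)|\lesssim 2^{m(|\frs|+|k|_\frs)}$, and prove the latter by Leibniz together with the vanishing of $G_n$ at $P_1$. The one place where you genuinely deviate is the bound on $D_2^{k_1}G_n(x,z)$ for $|z|_{P_1}\sim 2^{-m}$. The paper extracts the factor $2^{-m}$ in a single line by writing $\sup_{|z|_{P_1}\sim 2^{-m}}|D^{k_1}G_n(x,z)|\lesssim 2^{-m}\sup|\nabla D^{k_1}G_n(x,z)|$ and then applying \eqref{eq:Dirichlet impr} to the gradient; as you observe, that first inequality rests on $D_2^{k_1}G_n(x,\cdot)$ vanishing on $P_1$, which fails when $k_1$ contains a normal component, and in that regime the claimed intermediate bound $2^{-m}|x|_{P_1}2^{n(|\frs|+|k_1|_\frs)}$ is not even implied by \eqref{eq:Dirichlet impr} since $2^{-m}\le 2^{-n}$. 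Your case split repairs this: for $k_1=0$ the double Taylor expansion in both transverse variables legitimately yields $|x|_{P_1}|z|_{P_1}\cdot 2^{n|\frs|}$, while for $k_1\neq 0$ you fall back on the single expansion in the first variable and recover the missing $2^{-m}$ from $2^{n(|k_1|_\frs-1)}\le 2^{m(|k_1|_\frs-1)}$, which is exactly where the hypothesis $2^{-n}\ge 2^{-m}$ enters. Both routes land on the same bound $|x|_{P_1}2^{n|\frs|}2^{-m}2^{m|k_1|_\frs}$ and hence the same conclusion, so your version is a slightly more careful rendering of the paper's proof rather than a different one; no gaps.
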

\begin{proof}
It suffices to show that (i) $\supp \hat G_n^{m,i}(x,\cdot)\subset\{y:\,|y|_{P_1}\sim 2^{-m}\}$, (ii) $|\supp\hat G_n^{m,i}(x,\cdot)|\lesssim 2^{-m|\frs|}$, (iii) $|D^k\hat G_n^{m,i}(x,\cdot)|\lesssim 2^{m(|\frs|+|k|_\frs)}$.

Property (i) and (ii) follows easily from the properties of $\varphi_{m,i}$.
For property (iii), first write
\begin{equ}
|D^k\hat G_n^{m,i}(x,y)|\lesssim 2^{m(|\frs|+1)}|x|_{P_1}^{-1}2^{-n|\frs|}\sum_{k_1+k_2=k}\sup_{|z|_{P_1}\sim 2^{-m}}|D^{k_1} G_n(x,z)|\sup_z|D^{k_2}\varphi_{m,i}(z)|
\end{equ}
Since by assumption, is $G_n=0$ on $P_1$, one can write, using \eqref{eq:Dirichlet impr}
\begin{equ}
\sup_{|z|_{P_1}\sim 2^{-m}}|D^{k_1} G_n(x,z)|\lesssim 2^{-m}\sup_{|z|_{P_1}\lesssim 2^{-m}}|\nabla D^{k_1} G_n(x,z)|\les 2^{-m}|x|_{P_1}2^{n(|\frs|+|k_1|_\frs)}
\end{equ}
where $\nabla$ above denotes the spatial gradient. Along with the trivial bound $|D^{k_2}\varphi_{m,i}|\les 2^{m|k_2|_\frs}$, this yields (iii) as claimed.
\end{proof}

To illustrate how this bound can be used, consider $\zeta\in\cC^{\alpha}(Q)$ with $\alpha\in(-2,-1)$. Extending it by $0$ to $\R\times D$ is of course not a problem, since the time boundaries have codimension $2$.
One can write by \eqref{eq:G in D} and Proposition \ref{prop:G vanish}
\begin{equs}[eq:calculation0]
|\zeta(G(x,\cdot)|&\leq \sum_{2^{-n}< |x|_{P_1}}2^{-2n}|\zeta\big(2^{2n}G_n(x,\cdot)\big)|
\\
&\qquad+\sum_{2^{-n}\geq |x|_{P_1}}\sum_{m\in\N}\sum_{i\in A_{n,m}}2^{-m(|\frs|+1)}|x|_{P_1}2^{n|\frs|}|\zeta\big(\hat G_n^{m,i}(x,\cdot)\big)|
\\
&\les \sum_{2^{-n}< |x|_{P_1}}2^{-2n} 2^{-n\alpha}
+\sum_{2^{-n}\geq |x|_{P_1}}|x|_{P_1}2^{n|\frs|}\sum_{m\gtrsim n} 2^{(m-n)(|\frs|-1)}2^{-m(|\frs|+1)}2^{-m\alpha},
\end{equs}
where to get the second inequality, we used that in the first line $\zeta$ is always applied to approximate Dirac-deltas lying completely in $\R\times D$.
Note that the infinite sums converge: the exponent of $2^n$ in the first and of $2^m$ in the second, is $-2-\alpha<0$.
Hence,
\begin{equ}\label{eq:calculation1}
|\zeta(G(x,\cdot)|\les |x|_{P_1}^{2+\alpha}+\sum_{2^{-n}\geq |x|_{P_1}}2^{-n(2+\alpha)}2^{n}|x|_{P_1}\les |x|_{P_1}^{2+\alpha},
\end{equ}
where in the last inequality we used $1+\alpha<0$. So indeed $\zeta(G(x,\cdot)$ is a well-defined function, and in fact we even see the `improvement by order 2' of the weight at the boundary, since $\zeta(G(x,\cdot)$ vanishes on the boundary with speed $2+\alpha$.

Fixing a $\gamma>0$, we then define
\begin{equ}
\scP f(x)=\cI f(x)+\cJ(x)f(x)+Z\Pi_x f(x)+\bar\cN f(x),
\end{equ}
where we set
\begin{equ}
\bar\cN f(x)=\sum_{|k|_\frs<\gamma+2}\frac{X^k}{k!}\big(\cR f-\Pi_x f(x)\big)\big(D_x^kG(x,\cdot)\big),
\end{equ}
and the last term is understood in the sense of \eqref{eq:G in D}.
We then have the following extension of \cite[Lem~4.12]{GH17}.

\begin{lemma}\label{lem:Dirichlet-convolution}
Suppose that Assumption \ref{as:Dirichlet} holds.
Fix $\gamma > 0$, $w = (\eta,\sigma,\mu)$, and let $V$ be a sector of regularity $\alpha$.
Assume that these exponents satisfy
\begin{equ}
a_\wedge=\eta\wedge\sigma\wedge\mu\wedge\alpha\in(-2,-1),
\end{equ}
and take
\begin{equation}\label{eq:exponents2}
\bar \gamma = \gamma + 2,\quad
\bar\eta=(\eta\wedge\alpha)+2, \quad
\bar\sigma=(\sigma\wedge\alpha)+2,\quad
\bar\mu\leq(a_\wedge+2)\wedge 0.
\end{equation}
Suppose furthermore that none of  $\bar \gamma$, $\bar\eta$, $\bar\sigma$, or $\bar\mu$ are integers.
Then, for $f\in\cD_P^{\gamma,w}(V)$, one has
$\scP f\in\cD_P^{\bar \gamma,\bar w}$, where $\bar w=(\bar\eta,\bar\sigma,\bar\mu)$.

Furthermore, if $(\bar\Pi,\bar\Gamma)$ is a second model realising $K$ for $\cI$ and $\bar f\in\cD_P^{\gamma,w}(V,\bar\Gamma)$, then for any $C>0$ the bound
$$
\vn{\scP f;\bar\scP\bar f}_{\bar\gamma,\bar w;\frK}\lesssim\vn{f;\bar f}_{\gamma,w;\bar \frK}+\|\Pi-\bar\Pi\|_{\gamma;\bar\frK}+\|\Gamma-\bar\Gamma\|_{\bar\gamma;\bar\frK}
$$
holds uniformly in models and modelled distributions both satisfying $\vn{f}_{\gamma,w;\bar \frK}+\|\Pi\|_{\gamma;\bar\frK}+\|\Gamma\|_{\bar\gamma;\bar\frK}+\|\zeta\|_{a,\bar \frK}\leq C$, where $\bar\frK$ denotes the $1$-fattening of $\frK$.

Finally, the identity
\begin{equation}\label{eq:reco identity}
\cR\scP f(x)= \cR f(G(x,\cdot))
\end{equation}
holds on $Q_+$, for $f$ vanishing for negative times, where the right-hand side is understood in the sense of \eqref{eq:G in D}.
\end{lemma}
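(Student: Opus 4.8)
The plan is to run the proof of \cite[Lem~4.12]{GH17} essentially verbatim, the only substantive change being that wherever the original argument invokes a spatial extension $\zeta=\hat\cR f$ of the reconstruction in order to make sense of $\bar\cN f$, we instead use the direct definition \eqref{eq:G in D} together with the improved bound of Proposition~\ref{prop:G vanish}. Concretely, I would first record that under Assumption~\ref{as:Dirichlet}, for $\zeta\in\cC^{a}_{P_1}$-type spaces with $a\in(-2,-1)$, the series \eqref{eq:G in D} converges and defines a function behaving like $|x|_{P_1}^{a+2}$ near $P_1$ --- this is exactly the computation \eqref{eq:calculation0}--\eqref{eq:calculation1}, which I would upgrade from the plain $\cC^a$ case to the weighted case by inserting the weights $|y|_{P_0}^{\cdot}$, $|y|_{P_1}^{\cdot}$ into the test-function bounds and using that the exponents $\bar\eta,\bar\sigma,\bar\mu$ in \eqref{eq:exponents2} are the `shifted by $2$' versions of the input exponents, just as in the standard Schauder estimate. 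The key local ingredient is that near $P_1$, the Dirichlet vanishing gives us \emph{two} extra powers: one from the kernel degree $\beta=2$ as usual, and --- via \eqref{eq:Dirichlet impr} and Proposition~\ref{prop:G vanish} --- the crucial observation that $\zeta$ is always applied to rescaled bumps $\psi_y^{2^{-m}}$ supported with $|y|_{P_1}\gtrsim 2^{-m}$, so that the $\sigma$-weight of $\zeta$ only ever enters with its `good' (non-blowup past $-2$) behaviour.

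Next I would assemble the estimate on $\|\scP f\|_{\bar\gamma,\bar w}$ in the standard way: the terms $\cI f$, $\cJ f$, and $Z\Pi_x f$ are handled exactly as in \cite[Lem~4.12, 4.16]{GH17} (the kernel $Z$ contributes through the $\scZ_{\beta,\d}$-norm and the remark following Proposition~\ref{prop:tiny extension}), so the work is entirely in the `boundary Taylor remainder' term $\bar\cN f$. For that term I would split, as in \eqref{eq:G in D}, according to whether $2^{-n}\lessgtr |x|_{P_1}$; the first (near-diagonal) sum is controlled by the modelled-distribution bound on $\cR f-\Pi_x f(x)$ applied to honest rescaled bumps sitting inside $\R\times D$, while the second (far-from-diagonal) sum uses Proposition~\ref{prop:G vanish} to trade the factor $|x|_{P_1}$ against the loss of one kernel degree, yielding precisely the $+2$ improvement on the weights. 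The three-weight bookkeeping $(\eta,\sigma,\mu)\mapsto(\bar\eta,\bar\sigma,\bar\mu)$ is the same as in \cite[Lem~4.12]{GH17}; the hypothesis $a_\wedge\in(-2,-1)$ is what makes all the geometric series in $2^n$ and $2^m$ converge (the relevant exponents being $-(2+a_\wedge)<0$ and $1+a_\wedge<0$, cf.\ \eqref{eq:calculation1}). The Lipschitz-in-the-model bound follows by the usual telescoping argument, estimating $\scP f-\bar\scP\bar f$ term by term and using that \eqref{eq:G in D} is linear in $\zeta$ and that $\cR f-\bar\cR\bar f$ obeys the standard difference bound; no new idea is needed there.

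Finally, for the reconstruction identity \eqref{eq:reco identity} I would argue exactly as in \cite[Lem~4.12]{GH17}: one checks that $x\mapsto \cR f(G(x,\cdot))$ (in the sense of \eqref{eq:G in D}) is the reconstruction of $\scP f$ by verifying the defining local bound $|\big(\cR f(G(\cdot,\cdot)) - \Pi_x\scP f(x)\big)(\psi_x^\lambda)|\lesssim \lambda^{\bar\gamma}$, which reduces to the near-diagonal part of the sums already estimated above, together with the observation from the text that when $\zeta$ is piecewise smooth and vanishes for negative times the expression \eqref{eq:G in D} agrees with the honest integral $\int_Q G(x,y)\zeta(y)\,dy$ on $Q_+$; density and the continuity bound just proved then extend the identity. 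I expect the main obstacle to be purely bookkeeping: making sure that in the weighted setting the near-boundary region $\{|y|_{P_1}\lesssim 2^{-m}\}$ never forces us to test $\zeta$ against a bump on which its weight blows up faster than $|\cdot|_{P_1}^{-2}$ --- this is exactly what Proposition~\ref{prop:G vanish} is designed to prevent, so once that proposition is in hand the remaining estimates are routine adaptations of \cite{GH17}.
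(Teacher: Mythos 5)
Your proposal follows essentially the same route as the paper's proof: identify the (two) places in \cite[Lem~4.12]{GH17} where a spatial extension of $\cR f$ would be needed, replace them by the direct pairing \eqref{eq:G in D}, and use Proposition~\ref{prop:G vanish} together with the weighted bounds \eqref{eq:cR bound 1}--\eqref{eq:cR bound 2} and the convergence of the geometric series guaranteed by $a_\wedge\in(-2,-1)$. The only detail worth flagging is that, as the paper notes, the two regimes $|x|_{P_0}\lessgtr|x|_{P_1}$ are not symmetric here (the vanishing is only on $P_1$), but your outline is consistent with how this is handled.
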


\begin{remark}\label{rem:tilde Rf regularity}
Recall the regularity of $\cR f$, for $f\in\cD_P^{\gamma,w}(V)$ as above: It is a distribution on $Q_+:=[0,1]\times D$ that satisfies the bounds
\begin{equ}\label{eq:cR bound 1}
\cR f(\psi_x^\lambda)\les \lambda^{\eta\wedge\alpha}|x|_{P_1}^{\mu-\eta\wedge\alpha}
\end{equ}
on $Q_+\cap\{|x|_{P_0}\leq 2|x|_{P_1}\}$ for $4\lambda\leq|x|_{P_0}$, and
\begin{equ}\label{eq:cR bound 2}
\cR f(\psi_x^\lambda)\les \lambda^{\sigma\wedge\alpha}|x|_{P_0}^{\mu-\sigma\wedge\alpha}
\end{equ}
on $Q_+\cap\{|x|_{P_1}\leq|x|_{P_0}\}$ for $4\lambda\leq|x|_{P_1}$. It then follows (see e.g. \cite[Prop~2.15]{GH17}) that if $\eta\wedge\alpha>-2$, then $\cR f$ has a unique extension as a distribution on $Q$ such that it agrees with $\cR f$ on $Q_+$, it vanishes on $[-1,0]\times D$, and it satisfies \eqref{eq:cR bound 1} on $Q\cap\{|x|_{P_0}\leq 2|x|_{P_1}\}$ for $2\lambda\leq|x|_{P_1}$.
In the sequel we denote this extension also by $\cR f$.
In particular, $\cR f\in\cC^{\eta\wedge\sigma\wedge\mu\wedge\alpha}(Q)$.
\end{remark}

\begin{proof}
Inspecting the proof of \cite[Lem~4.12]{GH17}, there are two instances where an extension of $\cR f$ is required.
On one hand, when establishing the bound (including making sense of the left-hand side)
\begin{equ}\label{eq:int bound 1}
\sum_{2^{-n}\geq|x|_{P_0}}|\cR f(D^l_1G_n(x,\cdot))|\les |x|_{P_1}^{\bar\mu-|l|_\frs}\left(\frac{|x|_{P_0}}{|x|_{P_1}}\right)^{(\bar\eta-|l|_\frs)\wedge0}
\end{equ}
on the set $|x|_{P_0}\leq|x|_{P_1}$, and the symmetric analogue on the set $|x|_{P_1}\leq|x|_{P_0}$.
On the other hand, the extension of $\tilde \cR f$ is used in the estimate, for $|k+l|_s\geq\bar\gamma$,
\begin{equ}\label{eq:int bound 2}
\sum_{2^{-n}\geq|x|_{P_0}}\|x-y\|^{|k+l|_\frs-\bar\gamma}|\cR f(D_1^{k+l}G_n(\bar y,\cdot))|\les|x|_{P_0}^{\bar\eta-\bar\gamma}|x|_{P_1}^{\bar\mu-\bar\eta}
\end{equ}
whenever $x,y,\bar y$ satisfy $2\|x-\bar y\|\leq 2\|x-y\|\leq|x|_{P_0}\leq|x|_{P_1}$, as well as the symmetric analogue in the case $|x|_{P_1}\leq|x|_{P_0}$. 
Our goal is therefore to make sense of these expressions and derive the required bounds without extending $ \cR f$ spatially, making use of Assumption \ref{as:Dirichlet}.
We note that, unlike in \cite{GH17}, the two cases $|x|_{P_0}\lessgtr|x|_{P_1}$ have to be treated slightly differently, since the special property of vanishing on the boundary is only assumed on $P_1$.

We start with \eqref{eq:int bound 1}, when $|x|_{P_0}\leq|x|_{P_1}$, first considering the terms
$|x|_{P_0}\leq 2^{-n}\leq|x|_{P_1}$. By \eqref{eq:cR bound 1}, we get
\begin{equ}
|\cR f(D^l_1G_n(x,\cdot))|
\leq 
2^{-n(\eta\wedge\alpha+2-|l|_\frs)}|x|_{P_1}^{\mu-\eta\wedge\alpha}
\leq 
2^{-n(\bar\eta-|l|_\frs)}|x|_{P_1}^{\bar\mu-\bar\eta}.
\end{equ}
If $\bar\eta-|l|_\frs>0$, then summing over $|x|_{P_0}\leq 2^{-n}\leq|x|_{P_1}$ gives the bound
\begin{equ}
|x|_{P_1}^{\bar\mu-|l|_\frs},
\end{equ}
which is of the required order. For $\bar\eta-|l|_\frs<0$ (equality cannot occur by assumption), the summation yields
\begin{equ}\label{eq:00}
|x|_{P_0}^{\bar\eta-|l|_\frs}|x|_{P_1}^{\bar\mu-\bar\eta}\les
|x|_{P_1}^{\bar\mu-|l|_\frs}\left(\frac{|x|_{P_0}}{|x|_{P_1}}\right)^{\bar\eta-|l|_\frs}.
\end{equ}
On the scale $|x|_{P_1}\leq 2^{-n}$, we repeat the calculation in \eqref{eq:calculation0}-\eqref{eq:calculation1}, making use of $\cR f\in\cC^{a_\wedge}(Q)$ to obtain
\begin{equs}\label{eq:01}
\sum_{2^{-n}\geq |x|_{P_1}}&\sum_{m\in\N}\sum_{i\in A_{n,m}}|\cR f(D_1^lG_n^{m,i}(x,\cdot))|
\\
&\les \sum_{2^{-n}\geq |x|_{P_1}}|x|_{P_1}2^{n|\frs|}\sum_{m\gtrsim n}2^{(m-n)(|\frs|-1)}2^{-m(|\frs|+1-|l|_\frs)}2^{-m a_\wedge}\les|x|_{P_1}^{2+a_\wedge-|l|_\frs},
\end{equs}
where in the summation with respect to $n$, we used that $1+a_\wedge-|l|_\frs$ is negative, so the exponent of $2^n$ is positive.
Now suppose $|x|_{P_1}\leq|x|_{P_0}$ and consider the scale $|x|_{P_1}\leq 2^{-n}\leq|x|_{P_0}$.
Using again the argument in \eqref{eq:calculation0}-\eqref{eq:calculation1}, this time making use of the regularity of $\cR f$ given in \eqref{eq:cR bound 2}, we get
\begin{equs}
\sum_{|x|_{P_0}\geq 2^{-n}\geq |x|_{P_1}}&\sum_{m\in\N}\sum_{i\in A_{n,m}}|\cR f(D_1^lG_n^{m,i}(x,\cdot))|
\\
&\les \sum_{|x|_{P_0}\geq 2^{-n}\geq |x|_{P_1}}|x|_{P_1}2^n\sum_{m\gtrsim n}2^{-m(2-|l|_\frs)}2^{-m (\sigma\wedge\alpha)}|x|_{P_0}^{\mu-\sigma\wedge\alpha}
\\
&\les|x|_{P_0}^{\bar\mu-\bar\sigma}|x|_{P_1}\sum_{|x|_{P_0}\geq 2^{-n}\geq |x|_{P_1}}
2^{n(|l|_\frs+1-\bar\sigma)}.
\end{equs}
If $|l|_\frs+1-\bar\sigma>0$, then this gives a bound of order $|x|_{P_0}^{\bar\mu-\bar\sigma}|x|_{P_1}^{\bar\sigma-|l|_\frs}$, which can be seen to be of the right order just as in \eqref{eq:00}.
If $|l|_\frs+1-\bar\sigma<0$, then the above bound is of order $|x|_{P_0}^{\bar\mu-\bar\sigma-1}|x|_{P_1}\leq|x|_{P_0}^{\bar\mu-\bar\sigma}$, again as required. On the scales $2^{-n}\geq |x|_{P_0}$, we can write similarly to \eqref{eq:01},
\begin{equ}
\sum_{2^{-n}\geq |x|_{P_1}}\sum_{m\in\N}\sum_{i\in A_{n,m}}|\cR f(D_1^lG_n^{m,i}(x,\cdot))|
\les|x|_{P_1}|x|_{P_0}^{1+a_\wedge-|l|_\frs}\leq |x|_{P_0}^{2+a_\wedge-|l|_\frs}.
\end{equ}

To prove \eqref{eq:int bound 2}, first recall that in the situation therein $\|x-y\|\les|x|_{P_i}\sim|y|_{P_i}\sim|\bar y|_{P_i}.$ Therefore by \eqref{eq:int bound 1} one gets
\begin{equs}
\sum_{2^{-n}\geq|x|_{P_0}}&\|x-y\|^{|k+l|_\frs-\bar\gamma}|\cR f(D_1^{k+l}G_n(\bar y,\cdot))|
\\
&\les \|x-y\|^{|k+l|_\frs-\bar\gamma}|x|_{P_1}^{\bar\mu-\bar\eta}|x|_{P_0}^{\bar\eta-|k+l|_\frs}
\les |x|_{P_0}^{\bar\eta-\bar\gamma}|x|_{P_1}^{\bar\mu-\bar\eta},
\end{equs}
where the condition $|k+l|_\frs>\bar\gamma\geq\bar\eta$ was repeatedly used. The analogous estimate in the case $|x|_{P_1}\leq|x|_{P_0}$ in this case follows exactly the same way.
\end{proof}

\endappendix

\bibliography{PAM_Neumann}{}
\bibliographystyle{Martin}

\end{document}